\numberwithin{equation}{section}
\newtheorem{theorem}{Theorem}[section]
\newtheorem{proposition}[theorem]{Proposition}
\newtheorem{lemma}[theorem]{Lemma}
\newtheorem{remark}{Remark}[section]
\newtheorem{example}{Example}[section]
\newcommand{\RefPartI}[1]{{\color{red}#1}} 
 \renewcommand{\RefPartI}[1]{#1} 
\newcommand{\RR}{{\bf R}}
\newcommand{\ZZ}{{\bf Z}}
\newcommand{\dom}{{\rm dom\,}}
\newcommand{\argmax}{\arg \max}
\newcommand{\argmin}{\arg \min}
\newcommand{\suppp}{{\rm supp}\sp{+}}
\newcommand{\suppm}{{\rm supp}\sp{-}}
\newcommand{\finbox}{\hspace*{\fill}$\rule{0.17cm}{0.17cm}$}
\newcommand{\todaye}{\the\year/\the\month/\the\day}
\newcommand{\odotZ}{\overset{....}}
\begin{document}

\title{Discrete Decreasing Minimization, Part II:  \\
Views from Discrete Convex Analysis}

\author{Andr\'as Frank%
\thanks{MTA-ELTE Egerv\'ary Research Group,
Department of Operations Research, E\"otv\"os University, P\'azm\'any
P. s. 1/c, Budapest, Hungary, H-1117. 
e-mail:  {\tt frank\char'100 cs.elte.hu}. 
The research was partially supported by the
National Research, Development and Innovation Fund of Hungary
(FK\_18) -- No. NKFI-128673.
}
 \ \ and \ 
{Kazuo Murota%
\thanks{Department of Economics and Business Administration,
Tokyo Metropolitan University, Tokyo 192-0397, Japan, 
e-mail:  {\tt murota\char'100 tmu.ac.jp}. 
The research was supported by CREST, JST, Grant Number JPMJCR14D2, Japan, 
and JSPS KAKENHI Grant Number 26280004.  }}}


\date{August 2018 / May 2019 / July 2019 / June 2020}


\maketitle

\begin{abstract}
We continue to consider the discrete decreasing minimization problem on 
an integral base-polyhedron treated in Part~I.
The problem is to find a lexicographically minimal integral vector
in an integral base-polyhedron, 
where the components of a vector are arranged in a decreasing order.
This study can be regarded as a discrete counter-part of the work by Fujishige (1980) 
on the lexicographically optimal base 
and the principal partition of a base-polyhedron in continuous variables.
The objective of Part II is two-fold.
The first is to offer structural views from discrete convex analysis (DCA)
on the results of Part~I obtained by the constructive and algorithmic approach.
The second objective is to pave the way of DCA approach to 
discrete decreasing minimization on other discrete structures
such as the intersection of M-convex sets, flows, and submodular flows.

We derive the structural results in Part~I 
from fundamental facts on M-convex sets and M-convex functions in DCA.
The characterization of decreasing minimality
in terms of 1-tightening steps (exchange operations) 
is derived from the local condition of global minimality 
for M-convex functions, known as M-optimality criterion in DCA.
The min-max formulas, 
including the one for the square-sum of components, 
are derived as special cases of the Fenchel-type discrete duality in DCA.
A general result on the Fenchel-type discrete duality in DCA
offers a short alternative proof to  the statement that
the decreasingly minimal elements of an M-convex set form a matroidal M-convex set.

A direct characterization is given to the canonical partition,
which was constructed by an iterative procedure in Part~I.
This reveals the precise relationship 
between the canonical partition for the discrete case 
and the principal partition for the continuous case.
Moreover, this result entails a proximity theorem,
stating that every decreasingly minimal element 
is contained in the 
small box (unit box)
containing 
the (unique) fractional decreasingly minimal element 
(the minimum-norm point),
leading further to a continuous relaxation algorithm 
for finding a decreasingly minimal element of an M-convex set.
Thus the relationship between the continuous and discrete cases is completely clarified.

Furthermore, we present DCA min-max formulas  
for network flows, the intersection of two M-convex sets, and submodular flows.
\end{abstract}

{\bf Keywords}: \ 
base-polyhedra, 
discrete convex analysis, 
Fenchel-type min-max formula, 
lexicographically optimal, 
majorization, 
principal partition.

\newpage

\tableofcontents



\newpage


\section{Introduction}
\label{SCintro}

We continue to consider discrete decreasing minimization on 
an integral base-polyhedron studied in Part~I.
The problem is to find a lexicographically minimal (dec-min) integral vector
in an integral base-polyhedron, 
where the components of a vector are arranged in a decreasing order
(see Section~\ref{SCdefnotat} for precise description of the problem).
While our present study deals with the discrete case,
the continuous case 
was investigated by Fujishige \cite{Fuj80} around 1980
under the name of lexicographically optimal bases of a base-polyhedron,
as a generalization of lexicographically optimal maximal flows
considered by Megiddo \cite{Meg74}.
Our study can be regarded as a discrete counter-part of 
the work by Fujishige 
\cite{Fuj80}, \cite[Section 9]{Fuj05book}
on the lexicographically optimal base 
and the principal partition of a base-polyhedron.
The objective of Part II is two-fold.
The first is to offer structural views from discrete convex analysis (DCA)
on the results of Part~I obtained by the constructive and algorithmic approach.
The second objective is to pave the way of DCA approach to 
discrete decreasing minimization on other discrete structures
such as the intersection of M-convex sets, flows, and submodular flows.

In Part I of this paper, we have shown the following:

\begin{itemize}
\setlength{\itemsep}{0pt}%
\item
 A characterization of decreasing minimality by 1-tightening steps (exchange operations),

\item
A (dual) characterization of decreasing minimality by the canonical chain,

\item
The structure of the dec-min elements as a matroidal M-convex set,

\item
A characterization of a dec-min element
as a minimizer of square-sum of components,

\item
A min-max formula for the square-sum of components,

\item
A strongly polynomial algorithm for finding a dec-min element and the canonical chain,

\item
Applications.
\end{itemize}

In contrast to the constructive and algorithmic approach in Part~I,
Part~II offers structural views from discrete convex analysis (DCA)
as well as from majorization.
The concept of majorization ordering offers a useful general framework
to discuss decreasing minimality.
The relevance of DCA to decreasing minimization is not surprising,
since an M-convex set is nothing but the set of integral points of an integral
base-polyhedron and a separable convex function on an M-convex set is an M-convex function.
In particular, the square-sum of components of a vector in an M-convex set is an M-convex function.
It will be shown that most of the important structural results
obtained in Part~I can be derived from the Fenchel-type discrete duality theorem,
which is a main characteristic of DCA
as compared with other theories of discrete functions such as \cite{Onn10book}.

In Section~\ref{SCmajorconn} of this paper
the basic facts about majorization are described.
In Section~\ref{SCseparconvmin} 
we derive the characterization of decreasing minimality
in terms of 1-tightening steps
 (exchange operations) 
from the local characterization of global minimality 
for M-convex functions, known as M-optimality criterion in DCA.
In Section~\ref{SCminmaxformula}, the min-max formulas, 
including the one for the square-sum of components, 
are derived as special cases of the Fenchel-type discrete duality in DCA.
We also show a novel min-max formula, which reinforces the 
link between the present study and the theory of majorization.
In Section~\ref{SCsetoptsols} we use a general result on the Fenchel-type discrete duality in DCA
for a short alternative proof to  the statement that
the decreasingly minimal elements of an M-convex set form a matroidal M-convex set.
The relationship between the continuous and discrete cases is clarified in Section~\ref{SCcompRZ}.
We reveal the precise relation between the canonical partition 
and the principal partition
by establishing an alternative direct characterization of the canonical partition,
which was constructed by an iterative procedure in Part~I.
The obtained result provides a 
proximity theorem,
stating that every dec-min element is contained in the 
small box (unit box)
containing the (unique) fractional dec-min element (the minimum-norm point),
and hence a continuous relaxation algorithm
for finding a decreasingly minimal element of an M-convex set.
In Section \ref{SCdcaflowM2}
we present DCA results relevant 
discrete decreasing minimization
for the set of integral feasible flows,
the intersection of two M-convex sets, and 
the set of integral members of an integral submodular flow polyhedron.
In Appendix~\ref{SCprevwksurvey} we offer a brief survey of early papers and  books
related to decreasing minimization on base-polyhedra.

\subsection{Definition and notation}
\label{SCdefnotat}

We review some definitions and notations 
introduced in Part~I \cite{FM18part1}.

\subsection*{Decreasing minimality}

For a vector $x$, let $x {\downarrow}$ 
denote the vector obtained from $x$ by rearranging
its components in a decreasing order.  For example,
$x{\downarrow}=(5,5,4,2,1)$ when $x = (2,5,5,1,4)$.  
We call two vectors 
$x$ and $y$ (of same dimension) {\bf
value-equivalent} if 
$x{\downarrow}= y{\downarrow}$.  
For example, $(2,5,5,1,4)$ and
$(1,4,5,2,5)$ are value-equivalent while the vectors $(3,5,5,3,4)$ \
and \ $(3,4,5,4,4)$ \ are not.

A vector $x$ is {\bf decreasingly smaller} than vector $y$, 
in notation $x <_{\rm dec} y$,
 \ if $x{\downarrow}$ \ 
is lexicographically
smaller than  $y{\downarrow}$ in the sense that they are not
value-equivalent and 
$x{\downarrow}(j)<y{\downarrow}(j)$ 
for the smallest subscript $j$ for which $x{\downarrow}(j)$ and
$y{\downarrow}(j)$ differ.  
For example, $x = (2,5,5,1,4)$ is
decreasingly smaller than $y =(1,5,5,5,1)$ \ since 
$x{\downarrow}=(5,5,4,2,1)$ is \ lexicographically smaller than \
$y{\downarrow}=(5,5,5,1,1)$.
We write  $x\leq _{\rm dec} y$ to mean that
$x$ is decreasingly smaller than or value-equivalent to $y$.

For a set \ $Q$ \ of vectors, $x\in Q$ is 
 {\bf decreasingly minimal} ({\bf dec-min}, for short) 
if \ $x \leq _{\rm dec} y$ \ for every \ $y\in Q$.  
Note that the dec-min elements of $Q$ are value-equivalent.  
An element $m$ of $Q$ is dec-min if
its largest component is as small as possible, within this, its second
largest component (with the same or smaller value than the largest
one) is as small as possible, and so on.  
An element $x$ of $Q$ is said to be a {\bf max-minimized} element
 (a {\bf max-minimizer}, for short) 
if its largest component is as small as possible.

In an analogous way, for a vector $x$, we let $x {\uparrow}$ 
denote the vector obtained from $x$ by rearranging its
components in an increasing order.  
A vector $y$ is {\bf increasingly larger} than vector $x$, 
in notation $y >_{\rm inc} x$, if they are
not value-equivalent and 
$y{\uparrow}(j)> x{\uparrow}(j)$ holds 
for the smallest subscript $j$ for which 
$y{\uparrow}(j)$ and $x{\uparrow}(j)$ differ.  
We write $y \geq _{\rm inc} x$ if either $y >_{\rm inc} x$ 
or $x$ and $y$ are value-equivalent.  
Furthermore, we call an element $m$ of $Q$ 
{\bf increasingly maximal}
 ({\bf inc-max} for short) if its smallest component is as large as possible
over the elements of $Q$, within this its second smallest component is
as large as possible, and so on.

The {\bf decreasing minimization problem}
is to find a dec-min element of a given set $Q$ of vectors.
When the set $Q$ consists of integral vectors, we speak of
discrete decreasing minimization.
In Parts I and II of this series of papers, 
we deal with the case where 
the set $Q$ is an M-convex set,
i.e.,
the set of integral members of an integral base-polyhedron.
In Part III, the set $Q$ will be the integral feasible flows. 
The set $Q$ can be 
the intersection of two M-convex sets,
or more generally,
the set of integral members of an integral submodular flow polyhedron.

\subsection*{Base polyhedra}

Throughout the paper, $S$ denotes a finite nonempty ground-set.  
For a vector $m\in \RR\sp{S}$ (or function $m:S\rightarrow \RR$)
and a subset $X \subseteq S$, 
we use the notation
$\widetilde m(X)=\sum [m(v):  v\in X]$.  
The characteristic (or incidence) vector of a subset $Z \subseteq S$ is denoted by 
$\chi_{Z}$,  that is, 
$\chi_{Z}(v)=1$  if $v\in Z$ and 
$\chi_{Z}(v)=0$ otherwise.
For a polyhedron $B$, notation $\odotZ{B}$ 
(pronounced:  dotted $B$)
means the set of integral members
(elements, vectors, points) of $B$.

Let $b$ be a set-function for which 
$b(\emptyset)=0$ and
$b(X)=+\infty $ is allowed 
but $b(X)=-\infty $ is not.  
The submodular inequality for subsets 
$X,Y\subseteq S$ is defined by 
\begin{equation} \label{submodineq}
b(X) + b(Y) \geq b(X\cap Y) + b(X\cup Y).
\end{equation}
We say that $b$ is submodular if the submodular inequality holds
for every pair of subsets $X, Y\subseteq S$ with finite $b$-values.
A set-function $p$ is supermodular if $-p$ is submodular.  
A (possibly unbounded) 
{\bf base-polyhedron} $B$ in $\RR\sp{S}$ 
is defined by 
\begin{equation} \label{basepolysubmod}
B=B(b)=\{x\in \RR\sp{S}:  \widetilde x(S)=b(S), \
\widetilde x(Z)\leq b(Z) \ \hbox{ for every } \  Z\subset S\}.
\end{equation}
A nonempty base-polyhedron $B$ can also be defined by a supermodular
function $p$ for which $p(\emptyset )=0$ and $p(S)$ is finite as follows:  
\begin{equation} \label{basepolysupermod}
B=B'(p)=\{x\in \RR\sp{S}:  \widetilde x(S)=p(S), \
\widetilde x(Z)\geq p(Z) \ \hbox{ for every } \ Z\subset S\}.
\end{equation}

We call the set $\odotZ{B}$ 
of integral elements of an integral base-polyhedron $B$ an 
{\bf M-convex set}.  
Originally, this basic notion of 
discrete convex analysis was 
defined as a set of integral points in $\RR\sp{S}$ 
satisfying certain exchange axioms, 
and it has been known that 
the two properties are equivalent (\cite[Theorem 4.15]{Mdcasiam}).

\subsection*{Discrete convex functions}

For a function $\varphi: \ZZ \to \RR \cup \{ -\infty, +\infty \}$
the {\bf effective domain} of $\varphi$ is denoted as
$\dom \varphi = \{ k \in \ZZ : -\infty < \varphi(k) < +\infty \}$.
A function $\varphi: \ZZ \to \RR \cup \{ +\infty \}$
is called {\bf discrete convex} (or simply {\bf convex}) if 
\begin{equation}  \label{univarconvfndef}
  \varphi(k-1) + \varphi(k+1) \geq 2 \varphi(k)
\end{equation}
for all $k \in \dom \varphi$,
and  {\bf strictly convex}  if 
$\dom \varphi = \ZZ$ and
$\varphi(k-1) + \varphi(k+1) > 2 \varphi(k)$ 
for all $k \in \ZZ$.

A function 
$\Phi: \ZZ\sp{S} \to \RR \cup \{ +\infty \}$
of the form
\begin{equation}  \label{sepconvfndef}
  \Phi(x) = \sum [\varphi_{s}(x(s)):  s\in S] 
\end{equation}
is called a {\bf separable (discrete) convex function}
if, for each $s \in S$,
$\varphi_{s}: \ZZ \to \RR \cup \{ +\infty \}$
is a discrete convex function.
We call $\Phi$ a {\bf symmetric separable convex function} if 
$\varphi_{s}$ does not depend on $s$,
that is, if $\varphi_{s}=\varphi$ for all $s \in S$
for some discrete convex function $\varphi$.
We call $\Phi$ a
{\bf symmetric separable strictly convex function}
if  $\varphi$ is strictly convex.



\section{Connection to majorization}
\label{SCmajorconn}

Majorization ordering (or dominance ordering)
is a well-established notion studied in diverse contexts including statistics and economics,
as described in Arnold--Sarabia \cite{AS18} and Marshall--Olkin--Arnold \cite{MOA11}.
In this section we describe the relevant results known in the literature of majorization,
and indicate a close relationship to
decreasing minimality investigated in our series of papers.

We have dual objectives in this section.
First, we intend to reinforce the connection 
between majorization and combinatorial optimization.
It is also hoped that this will lead to future applications
of our results in areas like statistics and economics, 
in addition to those areas related to graphs, networks, and matroids mentioned in 
the introduction of Part~I \cite{FM18part1}.
In economics, for example, egalitarian allocation for indivisible goods
can possibly be formulated and analyzed by means of discrete decreasing minimization.

Second, we point out substantial technical connections between majorization and our results in Part~I. 
We argue that some of our results can be derived from the combination of  
the classical results about majorization and 
the results of Groenevelt \cite{Gro91} for the minimization of separable convex functions
over the integer points in an integral base-polyhedron.
We also point out 
that some of the standard characterizations for least majorization
are associated with min-max duality relations 
in the case where the underlying set is
the integer points of an integral base-polyhedron
or the intersection of two integral base-polyhedra.

\subsection{Majorization ordering}
\label{SCmajorord}

We review standard results known in the literature of majorization
in a way suitable for our discussion.

Recall that $x{\downarrow}$ denotes the vector
obtained from a vector $x\in \RR\sp{n}$ by rearranging its
components in a decreasing order.  
Let $\overline{x}$ denote the vector 
whose $k$-th component $\overline{x}(k)$ is equal to the sum of the first $k$ components of
$x{\downarrow}$.
A vector $x$ is said to be {\bf majorized} by another vector $y$,
in notation $x \prec y$,
if $\overline{x} \leq \overline{y}$ and $\overline{x}(n) = \overline{y}(n)$.
It is easy to see  \cite[p.13]{MOA11} that
\begin{equation} \label{xymajorminusxy}
 x \prec y  \iff -x \prec -y.
\end{equation}
(At first glance, the equivalence in \eqref{xymajorminusxy} may look strange,
but observe that $x \prec y$ means that $x$ is more uniform than $y$,
which is equivalent to saying that $-x$ is more uniform than $-y$.) 
Majorization is discussed more often for real vectors,
but here we are primarily interested in integer vectors.

As an immediate adaptation of the standard results \cite[1.A.3 in p.14]{MOA11}, 
the following proposition gives equivalent conditions for majorization for integer vectors.
A {$T$-transform} (also called a {Robin Hood operation}) 
means a linear transformation of the form
$T = (1 - \lambda ) I +  \lambda Q$, 
where $0 \leq \lambda \leq 1$ and $Q$ is a permutation matrix that interchanges just two elements
(transposition).
In other words,  a $T$-transform is a mapping of the form
$x \mapsto x + \hat \lambda (\chi_{s}-\chi_{t})$
with $0 \leq \hat \lambda \leq x(t)-x(s)$.
It is noteworthy that this operation with $\hat \lambda =1$ 
corresponds to the basis exchange in an integral base-polyhedron.

\begin{proposition}  \label{PRmajorchar}
The following conditions are equivalent for $x, y \in \ZZ\sp{n}:$

{\rm (i)}
 $x \prec y$ ($x$ is majorized by $y$), that is,
\begin{equation} \label{minklargestsum}
 \sum_{i=1}\sp{k} x{\downarrow}(i) \leq \sum_{i=1}\sp{k} y{\downarrow}(i)
\quad (k=1,\ldots,n-1),
\qquad
 \sum_{i=1}\sp{n} x{\downarrow}(i) = \sum_{i=1}\sp{n} y{\downarrow}(i).
\end{equation}

{\rm (ii)}
 $x = y P$ for some doubly stochastic matrix $P$,
where $x$ and $y$ are regarded as row vectors.

{\rm (iii)}
$x$ can be derived from y by successive applications of a finite
number of $T$-transforms.

{\rm (iv)}
$\displaystyle  \sum_{i=1}\sp{n} \varphi ( x(i) ) \leq \sum_{i=1}\sp{n} \varphi ( y(i) )$ \ 
for all discrete convex functions
$\varphi: \ZZ \to \RR$.

{\rm (v)}
$\displaystyle 
\sum_{i=1}\sp{n} x(i)= \sum_{i=1}\sp{n} y(i)$ \ 
and
$\displaystyle 
\sum_{i=1}\sp{n} (x(i) - a )\sp{+} \leq  \sum_{i=1}\sp{n} (y(i) - a )\sp{+}$  \ 
for all $a \in \ZZ$.
where $(z)\sp{+} = \max\{ 0, z \}$ for any $z \in \ZZ$.
\finbox 
\end{proposition}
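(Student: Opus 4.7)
The plan is to prove the cyclic chain (i) $\Rightarrow$ (v) $\Rightarrow$ (iv) $\Rightarrow$ (iii) $\Rightarrow$ (ii) $\Rightarrow$ (i), which is the integer analogue of the classical Hardy--Littlewood--P\'olya / Marshall--Olkin equivalence, so each step has a well-known real-valued counterpart and the only issue is to check that integrality does not cause trouble. The four ``easy'' links can be handled as follows. For (i) $\Rightarrow$ (v) I would use the identity
\begin{equation*}
\sum_{i=1}^n (x(i)-a)^+ \; = \; \max_{0 \le k \le n} \Bigl(\sum_{i=1}^k x{\downarrow}(i) - ka\Bigr),
\end{equation*}
whose maximum is attained at $k = |\{i : x(i) > a\}|$; fixing this $k$ for $x$ and applying the partial-sum inequality of (i) at the same $k$ to $y$ yields the required inequality, and the equality of total sums is just the $k=n$ case of \eqref{minklargestsum}. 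The implication (iii) $\Rightarrow$ (ii) is trivial because each $T$-transform is doubly stochastic and products of doubly stochastic matrices are doubly stochastic. For (ii) $\Rightarrow$ (i) the classical identity $\sum_{i=1}^k x{\downarrow}(i) = \max_{|A|=k} \sum_{i \in A} x(i)$ combined with $x = yP$ and the double stochasticity of $P$ gives the partial-sum inequalities.

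The link (v) $\Rightarrow$ (iv) rests on the structural fact that every discrete convex function $\varphi : \ZZ \to \RR$ can be written, on any bounded interval $[L,U] \cap \ZZ$, in the form
\begin{equation*}
\varphi(k) \; = \; \alpha + \beta k + \sum_{a=L}^{U-1} c_a \, (k-a)^+,
\end{equation*}
with nonnegative coefficients $c_a = \varphi(a+1) - 2\varphi(a) + \varphi(a-1) \ge 0$ by \eqref{univarconvfndef}. Choosing $[L,U]$ to contain all components of both $x$ and $y$, one reduces $\sum_i \varphi(x(i)) \le \sum_i \varphi(y(i))$ to the $(k-a)^+$ inequality in (v) summand by summand, because the constant and linear terms contribute the same to both sides by virtue of the equality of total sums in (v).

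The hardest step is (iv) $\Rightarrow$ (iii), where I must construct an explicit finite sequence of integer $T$-transforms carrying $y$ to $x$. After using the links already established to pass to condition (i), assume both $x$ and $y$ are sorted in decreasing order; let $i$ be the smallest index with $x(i) < y(i)$ and $j$ the smallest index $j > i$ with $x(j) > y(j)$, both of which exist because of the partial-sum and total-sum conditions. Apply to $y$ the $T$-transform with $\hat\lambda := \min\{y(i)-x(i),\; x(j)-y(j)\}$, a positive integer, which decreases $y(i)$ by $\hat\lambda$ and increases $y(j)$ by $\hat\lambda$. The inequalities $x(i) \ge x(j) > y(j)$ and $y(i) > x(i) \ge x(j)$, which follow from the sortedness of $x$ together with the defining properties of $i$ and $j$, give $\hat\lambda \le y(i) - y(j)$, so the operation is a legitimate $T$-transform in the sense of the paper. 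The main obstacle, and the only point where integrality plays a genuine role, is to verify that the new vector still majorizes $x$ and that a suitable potential function (say, the number of indices on which $y{\downarrow}$ and $x{\downarrow}$ agree) makes strict progress, guaranteeing termination in at most $n$ steps with $y = x$.
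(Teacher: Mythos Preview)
The paper does not actually prove this proposition: it is stated with a closing \finbox\ and introduced as ``an immediate adaptation of the standard results \cite[1.A.3 in p.14]{MOA11}'', so there is no in-paper argument to compare against. Your proposal supplies the standard Hardy--Littlewood--P\'olya proof and is essentially correct in all of its substantive steps.

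There is, however, one genuine logical gap in the way you organise the cyclic chain. You declare the scheme (i) $\Rightarrow$ (v) $\Rightarrow$ (iv) $\Rightarrow$ (iii) $\Rightarrow$ (ii) $\Rightarrow$ (i), yet in the step (iv) $\Rightarrow$ (iii) you write ``after using the links already established to pass to condition (i).'' At that point the only established links are (i) $\Rightarrow$ (v), (v) $\Rightarrow$ (iv), (iii) $\Rightarrow$ (ii), (ii) $\Rightarrow$ (i); none of these lets you go from (iv) to (i) without invoking the very implication (iv) $\Rightarrow$ (iii) you are trying to prove. The fix is easy and you almost have the ingredients: (iv) $\Rightarrow$ (v) is trivial because $k\mapsto(k-a)^{+}$ and $k\mapsto\pm k$ are discrete convex, and (v) $\Rightarrow$ (i) follows from the dual of the identity you used in (i) $\Rightarrow$ (v), namely
\[
\sum_{i=1}^{k} x{\downarrow}(i)\;=\;\min_{a\in\ZZ}\Bigl(ka+\sum_{i=1}^{n}(x(i)-a)^{+}\Bigr),
\]
which together with the inequality in (v) gives the partial-sum bound. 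You should insert this missing link explicitly (or, equivalently, restructure the chain so that the constructive step is labelled (i) $\Rightarrow$ (iii) rather than (iv) $\Rightarrow$ (iii)).

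Two minor remarks on the remaining steps. In your decomposition for (v) $\Rightarrow$ (iv) the coefficient $c_{a}=\varphi(a+1)-2\varphi(a)+\varphi(a-1)$ at $a=L$ would reference $\varphi(L-1)$; start the sum at $a=L+1$, absorbing the slope $\varphi(L+1)-\varphi(L)$ into $\beta$. Finally, your comment that integrality is ``the only point where it plays a genuine role'' in (iv) $\Rightarrow$ (iii) is slightly off: the paper's $T$-transforms allow real $\hat\lambda$, so intermediate vectors need not be integral, and your construction (while pleasantly integer-preserving) is really the classical real-variable argument verbatim.
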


Let $D$ be an arbitrary subset of $\ZZ\sp{n}$.
An element $x$ of $D$ is said to be {\bf least majorized} 
in $D$ 
if $x$ is majorized by all $y \in D$.
A least majorized element may not exist in general, as the following example shows.

\begin{example} \rm \label{EX}
Let $D= \{ (2, 0, 0, 0), \ (1, -1, 1, 1)\}$.
For $x = (2, 0, 0, 0)$ and $y = (1, -1, 1, 1)$ we have
$x{\downarrow} =(2,0,0,0)$ and $y{\downarrow} =(1,1,1,-1)$.
Therefore, $x = (2, 0, 0, 0)$ is increasingly maximal in $D$
and $y = (1, -1, 1, 1)$ is decreasingly minimal in $D$.
However, there exists no least majorized element in $D$,
since $\overline{x} = (2,2,2,2)$ and
$\overline{y} = (1,2,3,2)$,
for which neither
$\overline{x} \leq \overline{y}$
nor
$\overline{y} \leq \overline{x}$
holds.
We note that $D$ here arises 
from the intersection of two integral base-polyhedra
(see Section \RefPartI{3.4} of Part~I \cite{FM18part1}).
\finbox
\end{example}

\begin{remark} \rm \label{RMsubmajor}
In discussing the existence and properties of a least majorized element,
we are primarily concerned with a subset $D$ of $\ZZ\sp{n}$ 
whose elements have a constant component-sum.
If the component-sum is not constant on $D$, 
we need to introduce a more general notion \cite{Tami95}.
A vector $x$ is said  to be {\bf weakly submajorized}
by another vector $y$, denoted $x \prec_{\rm w} y$,
 if $\overline{x} \leq \overline{y}$.
An element $x$ of $D$ is said to be {\bf least weakly submajorized} 
in $D$ 
if $x$ is weakly submajorized by all $y \in D$.
The distinction of 
``weakly submajorized'' and  ``majorized'' is not necessary
for a base-polyhedron or the intersection of base-polyhedra,
whereas we have to distinguish these concepts for a g-polymatroid
and a submodular flow polyhedron.
\finbox
\end{remark}

\begin{remark} \rm \label{RMmajorminmax}
The characterization of a least majorized element
in (iv) in Proposition \ref{PRmajorchar} can be
associated with a min-max duality relation, 
which is given by 
(\ref{minmaxsymsep}) in Section \ref{SCfencsepar}
when the underlying set $D$ is 
an M-convex set (= the integer points of an integral base-polyhedron),
and by (\ref{minmaxsymsepInter}) in Section \ref{SCfencseparInter}
when $D$ is the intersection of two M-convex sets.
For an M-convex set,
the min-max formula associated with (v) in Proposition \ref{PRmajorchar} is given 
by \eqref{decmintruncsum}
in Theorem \ref{THtotalexcess} in Section \ref{SCtotalexcess}. 
\finbox
\end{remark}

\subsection{Majorization and decreasing-minimality}
\label{SCmajordecmin}

Majorization and decreasing-minimality are closely related,
as is explicit in Tamir \cite{Tami95}.

\begin{proposition}  \label{PRmajorisdec}
If $x \prec y$, then $x \leq_{\rm dec} y$ and $x \geq_{\rm inc} y$.
\end{proposition}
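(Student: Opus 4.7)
The plan is to derive both conclusions directly from characterization (i) of Proposition \ref{PRmajorchar}, namely the partial-sum inequalities defining $x \prec y$. No deeper machinery is needed; the argument is essentially a telescoping observation combined with the sign-flip symmetry (\ref{xymajorminusxy}).

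First I would dispose of the case when $x$ and $y$ are value-equivalent, where both conclusions hold trivially. So assume that $x{\downarrow} \neq y{\downarrow}$, and let $j$ be the smallest index at which they differ. For all $i < j$ we have $x{\downarrow}(i) = y{\downarrow}(i)$, so the partial sums up through $j-1$ are equal. The majorization inequality at $k=j$ reads $\sum_{i=1}\sp{j} x{\downarrow}(i) \leq \sum_{i=1}\sp{j} y{\downarrow}(i)$; subtracting the common first $j-1$ terms yields $x{\downarrow}(j) \leq y{\downarrow}(j)$. Since the two coordinates are unequal by the choice of $j$, the inequality is strict, which is precisely the condition $x <_{\rm dec} y$. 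Hence $x \leq_{\rm dec} y$.

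For the inc-max conclusion I would invoke the symmetry (\ref{xymajorminusxy}) to pass from $x \prec y$ to $-x \prec -y$. The dec-min part applied to this pair gives $-x \leq_{\rm dec} -y$. Translating back uses the elementary identity $(-z){\downarrow}(i) = -z{\uparrow}(i)$, which converts the statement $-x \leq_{\rm dec} -y$ coordinate by coordinate into $x \geq_{\rm inc} y$ (strict inequality at the first disagreeing index of the increasing rearrangements). This finishes the proof.

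There is no real obstacle here: the content is almost entirely a matter of unraveling the definitions, and the only non-trivial ingredient is the sign-flip equivalence (\ref{xymajorminusxy}), which is already recorded. The argument is therefore short and essentially bookkeeping.
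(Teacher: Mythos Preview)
Your proof is correct and follows essentially the same approach as the paper's own proof: handle the value-equivalent case separately, otherwise locate the first index where $x{\downarrow}$ and $y{\downarrow}$ differ and use the partial-sum inequality from majorization to force strict inequality there, then derive the inc-max conclusion via the sign-flip (\ref{xymajorminusxy}). Your version is slightly more explicit in spelling out the subtraction step and the identity $(-z){\downarrow}(i) = -z{\uparrow}(i)$, but the logic is the same.
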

\begin{proof}
Suppose that $x \prec y$.
If $\overline{x} = \overline{y}$, then
$x{\downarrow} = y{\downarrow}$, and hence $x$ and $y$ are value-equivalent.
If $\overline{x} < \overline{y}$, then there exists an index $k$ 
 with $1 \leq k \leq n$ such that
$x{\downarrow}(i) = y{\downarrow}(i)$ for $i=1,\ldots, k-1$ and
$x{\downarrow}(k) <  y{\downarrow}(k)$.
This shows that $x$ is decreasingly smaller than $y$.
In either case, we have $x \leq_{\rm dec} y$.
Since $x \prec y$,  we have $-x \prec -y$ by (\ref{xymajorminusxy}).
By the above argument applied to $(-x,-y)$, we obtain $-x \leq_{\rm dec} -y $,
which is equivalent to $x \geq_{\rm inc} y$.
\end{proof}

\begin{remark} \rm \label{RMmajorisdecconverse}
The converse of Proposition \ref{PRmajorisdec} is not true.
That is, 
$x \prec y$ does not follow from 
$x \leq_{\rm dec} y \ \ \mbox{\rm and} \ \  x \geq_{\rm inc} y$.
For instance, for
$x = (2, 2, -2, -2)$ and  $y = (3, 0, 0, -3)$
we have $x \leq_{\rm dec} y$ and $x \geq_{\rm inc} y$, but $x \not\prec y$
since $\overline{x}=(2,4,2,0)$ and $\overline{y}=(3,3,3,0)$.
\finbox
\end{remark}

\begin{proposition} \label{PRmajorsameasdec}
Let $D$ be an arbitrary subset of $\ZZ\sp{n}$ 
and assume that $D$ admits a least majorized element.
For any $x \in D$ the following three conditions are equivalent.

\smallskip \noindent {\rm (A)}
$x$ is least majorized in $D$.

\smallskip \noindent {\rm (B)}
$x$ is decreasingly minimal in $D$.

\smallskip \noindent {\rm (C)}
$x$ is increasingly maximal in $D$.
\end{proposition}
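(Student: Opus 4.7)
The plan is to leverage Proposition \ref{PRmajorisdec} together with the fact that dec-min (and, symmetrically, inc-max) elements of $D$ are value-equivalent. The underlying observation is that majorization depends only on the sorted vector: since $\overline{x}$ is determined by $x{\downarrow}$, two value-equivalent vectors $x$ and $x'$ satisfy $\overline{x} = \overline{x'}$, so they are majorized by exactly the same vectors and they majorize exactly the same vectors.

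The forward implications (A)$\Rightarrow$(B) and (A)$\Rightarrow$(C) drop out immediately from Proposition \ref{PRmajorisdec}: if $x$ is least majorized in $D$, then $x \prec y$ for every $y \in D$, and the proposition yields $x \leq_{\rm dec} y$ and $x \geq_{\rm inc} y$ for every $y \in D$, i.e., $x$ is dec-min and inc-max.

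For the converse (B)$\Rightarrow$(A), I would invoke the assumed existence of a least majorized element $x^{*} \in D$. By the already-proved (A)$\Rightarrow$(B), $x^{*}$ itself is dec-min; hence, if $x \in D$ is also dec-min, the value-equivalence of dec-min elements (noted in Section~\ref{SCdefnotat}) forces $x{\downarrow} = x^{*}{\downarrow}$, and therefore $\overline{x} = \overline{x^{*}}$. Since $x^{*} \prec y$ holds for every $y \in D$, i.e., $\overline{x^{*}} \leq \overline{y}$ with $\overline{x^{*}}(n) = \overline{y}(n)$, this transfers verbatim to $\overline{x} \leq \overline{y}$ with $\overline{x}(n) = \overline{y}(n)$, proving $x \prec y$ for every $y \in D$.

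The implication (C)$\Rightarrow$(A) is handled in exactly the same way once one knows that inc-max elements of $D$ are also value-equivalent. This is the only mildly nontrivial point in the argument, and it can be dispatched by the symmetry $x \prec y \iff -x \prec -y$ from \eqref{xymajorminusxy}: replacing $D$ by $-D$ exchanges dec-min with inc-max and preserves majorization, so value-equivalence of inc-max elements follows from that of dec-min elements applied to $-D$. No serious technical obstacle is expected.
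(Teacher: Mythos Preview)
Your proof is correct and takes essentially the same approach as the paper. The only minor difference is in (B)$\Rightarrow$(A): the paper compares the dec-min $x$ directly against a least majorized $y$, using $\overline{y}\leq\overline{x}$ together with $x\leq_{\rm dec}y$ to force $x{\downarrow}=y{\downarrow}$ via a one-line index argument, whereas you first observe (via (A)$\Rightarrow$(B)) that $y$ is itself dec-min and then invoke the value-equivalence of dec-min elements; for (A)$\Leftrightarrow$(C) both you and the paper pass to $-D$ using \eqref{xymajorminusxy}.
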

\begin{proof}
(A)$\rightarrow$(B) \ 
By Proposition \ref{PRmajorisdec}, 
a least majorized element is decreasingly minimal.

(B)$\rightarrow$(A) \ 
Take a least majorized element $y$, 
which exists by the assumption.
By definition we have
$\overline{y} \leq \overline{x}$.
Since $x \leq_{\rm dec} y$, we have
either
$x{\downarrow}= y{\downarrow}$ 
or there exists an index $k$ with $1 \leq k \leq n$ such that
$x{\downarrow}(i) = y{\downarrow}(i)$ for $i=1,\ldots, k-1$ and
$x{\downarrow}(k) <  y{\downarrow}(k)$.
In the latter case we have
$\overline{x}(k) < \overline{y}(k)$,
which contradicts $\overline{y} \leq \overline{x}$.
Therefore we have
$x{\downarrow}= y{\downarrow}$,
which implies that $x$ is a least majorized element.

(A)$\leftrightarrow$(C) \ 
For any $y \in D$, we have
\[
x \prec y 
\iff -x \prec -y 
\iff  -x \leq_{\rm dec} -y 
\iff x \geq_{\rm inc} y 
\]
by (\ref{xymajorminusxy}) and
(A)$\leftrightarrow$(B) for $(-x,-y)$.
\end{proof}

\subsection{Majorization in integral base-polyhedra}
\label{SCmajorbasepoly}

In this section we consider majorization ordering 
for integer points in an integral base-polyhedron.
In discrete convex analysis,
the set of the integer points of an integral base-polyhedron 
is called an M-convex set.

The following fundamental fact has long been recognized by experts, though 
it was difficult for the present authors to identify its origin in the literature
(see Remark~\ref{RMleastmajorbase}).

\begin{theorem} \label{THmajorExistInBase}
The set of the integer points of an integral base-polyhedron 
admits a least majorized element.  
\finbox
\end{theorem}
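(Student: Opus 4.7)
The plan is to prove existence constructively by exhibiting any decreasingly minimal element $x^*$ of $\odotZ{B}$ as a least majorized element; such an $x^*$ exists by Part~I, and all dec-min elements are value-equivalent.  Since every element of $\odotZ{B}$ has the same component sum $b(S)$, Proposition~\ref{PRmajorchar} reduces the task to verifying, for each $y \in \odotZ{B}$, any one of its equivalent characterizations of $x^* \prec y$.  I would use condition~(iii) and descend from $y$ to a dec-min element through a finite sequence of $T$-transforms inside $\odotZ{B}$.

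Concretely, fix $y \in \odotZ{B}$ and build a sequence $y = y_0, y_1, y_2, \ldots$ as follows.  As long as $y_k$ is not decreasingly minimal, the 1-tightening characterization of Part~I supplies indices $s, t \in S$ with $y_k(s) \geq y_k(t) + 2$ such that $y_{k+1} := y_k + \chi_t - \chi_s$ again lies in $\odotZ{B}$.  This step is a $T$-transform in the sense of Section~\ref{SCmajorord} with $\hat\lambda = 1 \leq y_k(s) - y_k(t)$, so Proposition~\ref{PRmajorchar}(iii) gives $y_{k+1} \prec y_k$.  A direct computation shows the square sum changes by $-2(y_k(s) - y_k(t)) + 2 \leq -2$ at each step, and since $T$-transforms only smooth components, the iterates remain inside the bounded box $[\min_i y(i), \max_i y(i)]^S$; hence the process halts after finitely many steps at some dec-min element $\tilde x$, and transitivity of $\prec$ yields $\tilde x \prec y$.

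Because all dec-min elements of $\odotZ{B}$ are value-equivalent, $\tilde x{\downarrow} = x^*{\downarrow}$, so $\overline{\tilde x} = \overline{x^*}$ and therefore $x^* \prec y$, as required.  The main technical point is the availability of a 1-tightening step whenever $y_k$ is not dec-min, and this is precisely what Part~I establishes; no new combinatorial argument is needed here.  An attractive alternative, closer in spirit to the rest of Part~II, would bypass the iterative construction: since $\Phi(x) = \sum_i \varphi(x(i))$ is an M-convex function on $\odotZ{B}$ for every discrete convex $\varphi$, the M-optimality criterion of DCA reduces $\Phi(x^*) \leq \Phi(y)$ to a local exchange condition at $x^*$, which again follows from 1-tightening, and then Proposition~\ref{PRmajorchar}(iv) yields $x^* \prec y$ directly.
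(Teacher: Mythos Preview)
Your main argument is correct but takes a genuinely different route from the paper. The paper starts from a square-sum minimizer $m$ (whose existence is elementary), applies Groenevelt's local optimality criterion (Proposition~\ref{PRgroenevelt}) in the only-if direction to deduce $m(t)\leq m(s)+1$ whenever $m+\chi_s-\chi_t\in\odotZ{B}$, observes that this single inequality forces $\varphi(m(s)+1)+\varphi(m(t)-1)\geq\varphi(m(s))+\varphi(m(t))$ for \emph{every} discrete convex $\varphi$, and then applies Groenevelt's criterion in the if direction together with Proposition~\ref{PRmajorchar}(iv). You instead take a dec-min element $x^*$ from Part~I and, for each $y\in\odotZ{B}$, descend by repeated $1$-tightening steps---each a $T$-transform---to reach a dec-min element, invoking Proposition~\ref{PRmajorchar}(iii) and the value-equivalence of dec-min elements. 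Your alternative sketched at the end is essentially the paper's argument. The paper's route stays within Part~II (Groenevelt's criterion is recoverable from the M-optimality criterion of Section~\ref{SCseparDCA}) and needs no iteration; your main route imports the $1$-tightening characterization from Part~I but is explicitly constructive and closer to the algorithmic spirit of the series. Remark~\ref{RMdecminmajor} notes yet a third route, via the min $k$-largest-sum property proved in Part~I, which is related to yours.
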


This fact can be regarded as a corollary of the following fundamental result 
of Groenevelt \cite{Gro91},
which is already mentioned in 
Section \RefPartI{6} of Part~I \cite{FM18part1}.

\begin{proposition}[Groenevelt \cite{Gro91}; cf.~{\cite[Theorem 8.1]{Fuj05book}}]
 \label{PRgroenevelt}
Let $B$ be an integral base-polyhedron,
$\odotZ{B}$ be the set of its integral elements,
and $\Phi(x) = \sum [\varphi_{s}(x(s)):  s\in S]$ \ for $x\in \ZZ\sp{S}$, 
where $\varphi_{s}: \ZZ \to \RR \cup \{ +\infty \}$
 is a discrete convex function for each $s\in S$.
An element $m$ of $\odotZ{B}$ is a minimizer of $\Phi(x)$
if and only if
$\varphi_{s}(m(s)+1) + \varphi_{t}(m(t)-1) \geq \varphi_{s}(m(s)) + \varphi_{t}(m(t))$
whenever 
$m+\chi_{s}-\chi_{t} \in \odotZ{B}$.
\finbox
\end{proposition}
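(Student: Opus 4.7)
The proposition has the familiar form of a local-global optimality criterion, so the plan is to reduce the "if" direction to a one-step exchange argument built on top of the M-convex exchange axiom, while the "only if" direction is immediate. For necessity, if $m$ minimizes $\Phi$ on $\odotZ{B}$ and $m+\chi_{s}-\chi_{t}\in\odotZ{B}$, then $\Phi(m+\chi_{s}-\chi_{t})\geq\Phi(m)$; after subtracting the $u\neq s,t$ terms this is exactly the stated inequality. So the real content is sufficiency, and that is where I would spend the effort.

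For sufficiency, I would argue by contradiction using a minimum-distance descent. Suppose $m$ satisfies the local condition but some $x\in\odotZ{B}$ has $\Phi(x)<\Phi(m)$, and among all such $x$ choose one minimizing $\|x-m\|_{1}$. Since $x\neq m$ and $\widetilde{x}(S)=\widetilde{m}(S)$, there is $s\in S$ with $x(s)>m(s)$. Now invoke the defining exchange axiom of an M-convex set, which is how $\odotZ{B}$ is characterized (this is exactly the equivalence cited as \cite[Theorem 4.15]{Mdcasiam}): there exists $t\in S$ with $x(t)<m(t)$ such that simultaneously
\begin{equation*}
x':=x-\chi_{s}+\chi_{t}\in\odotZ{B}, \qquad m':=m+\chi_{s}-\chi_{t}\in\odotZ{B}.
\end{equation*}
This simultaneous version of the exchange property is the key tool, and identifying it as the right technical input is the main obstacle in formalizing the argument; once it is in hand the computation is mechanical.

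Next I would quantify the four-way comparison $\Phi(x)+\Phi(m)$ versus $\Phi(x')+\Phi(m')$. All coordinates outside $\{s,t\}$ cancel, so the difference reduces to
\begin{equation*}
\bigl[\varphi_{s}(x(s))-\varphi_{s}(x(s)-1)\bigr]-\bigl[\varphi_{s}(m(s)+1)-\varphi_{s}(m(s))\bigr]
\end{equation*}
plus the analogous expression for $t$. Because $m(s)+1\leq x(s)$ and $\varphi_{s}$ is discrete convex, the first bracket dominates the second; the same holds for $t$ since $x(t)+1\leq m(t)$ and $\varphi_{t}$ is convex. Therefore $\Phi(x)+\Phi(m)\geq\Phi(x')+\Phi(m')$. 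Applying the hypothesized local condition to $m'\in\odotZ{B}$ gives $\Phi(m')\geq\Phi(m)$, and combining these two inequalities yields $\Phi(x')\leq\Phi(x)<\Phi(m)$.

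Finally, a direct check shows $\|x'-m\|_{1}=\|x-m\|_{1}-2$, because $x(s)-1$ moves strictly closer to $m(s)$ and $x(t)+1$ moves strictly closer to $m(t)$. Thus $x'$ is a strictly closer counterexample, contradicting the choice of $x$. This completes the sufficiency direction. The overall strategy is therefore: necessity by a single evaluation; sufficiency by choosing a closest counterexample, splitting it by the simultaneous M-exchange axiom, and using convexity of $\varphi_{s},\varphi_{t}$ together with the local inequality at $m$ to descend. The only nontrivial ingredient is the simultaneous exchange axiom for $\odotZ{B}$, everything else is bookkeeping with convexity.
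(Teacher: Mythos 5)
Your proof is correct, but it takes a genuinely different route from the paper. The paper never proves Proposition~\ref{PRgroenevelt} from scratch: it cites Groenevelt and, in Section~\ref{SCseparDCA}, obtains it by noting that $\Phi+\delta$ (with $\delta$ the indicator function of $\odotZ{B}$) is an M-convex function and then invoking the M-optimality criterion (Theorem~\ref{THmopt}) as a black box. You instead give a self-contained argument whose only structural input is the simultaneous exchange axiom for the M-convex set $\odotZ{B}$, combined with a closest-counterexample descent. In substance this reproves the M-optimality criterion in the separable special case: the general exchange inequality for M-convex functions is replaced by your explicit four-point comparison $\Phi(x)+\Phi(m)\geq\Phi(x')+\Phi(m')$, which follows from monotonicity of the difference functions $k\mapsto\varphi_s(k)-\varphi_s(k-1)$ together with $x(s)\geq m(s)+1$ and $x(t)+1\leq m(t)$. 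Your route buys elementarity — no appeal to M-convex functions is needed, only the exchange property of integral base-polyhedra and discrete convexity of each $\varphi_s$ — at the cost of redoing in a special case an argument the paper prefers to quote once in general form. All the steps check: necessity is the one-line evaluation; the simultaneous exchange (both $x-\chi_s+\chi_t$ and $m+\chi_s-\chi_t$ staying in $\odotZ{B}$) is exactly what the defining axiom of an M-convex set provides; and $\|x'-m\|_1=\|x-m\|_1-2$ holds because both modified coordinates move strictly toward $m$. The only cosmetic gap is the bookkeeping with $+\infty$ (one implicitly assumes $m\in\dom\Phi$, exactly as Theorem~\ref{THmopt} requires $x^*\in\dom f$), which is no worse than in the paper's own treatment.
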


Theorem \ref{THmajorExistInBase} can be derived from the combination of 
Proposition \ref{PRgroenevelt} with Proposition \ref{PRmajorchar}.
Let  $m\in \odotZ{B}$ be a minimizer of the square-sum
$\sum [x(s)\sp{2}:  s\in S]$ over $\odotZ{B}$;
note that such $m$ exists.
Then,  by Proposition \ref{PRgroenevelt} (only-if part),
we have 
$(m(s)+1)\sp{2} + (m(t)-1)\sp{2}  \geq m(s)\sp{2}  + m(t)\sp{2}$
whenever 
$m+\chi_{s}-\chi_{t} \in \odotZ{B}$.
Here the inequality 
$(m(s)+1)\sp{2} + (m(t)-1)\sp{2}  \geq m(s)\sp{2}  + m(t)\sp{2}$
is equivalent to 
$m(s) - m(t) + 1 \geq 0$,
which implies
$\varphi(m(s)+1) + \varphi(m(t)-1) \geq \varphi(m(s)) + \varphi(m(t))$
for any discrete convex function $\varphi: \ZZ \to \RR$.
Therefore,  by Proposition \ref{PRgroenevelt} (if part),
$m$ is a minimizer of any symmetric separable convex function
$\sum [\varphi(x(s)):  s\in S]$ over $\odotZ{B}$.
By the equivalence of (i) and (iv) in Proposition \ref{PRmajorchar},
this element $m$ is a least majorized element of $\odotZ{B}$.

The combination of Theorem \ref{THmajorExistInBase} and
Proposition \ref{PRmajorsameasdec} implies the following.

\begin{theorem} \label{decminmajorB} 
Let $B$ be an integral base-polyhedron
and $\odotZ{B}$ be the set of its integral elements.
An element $m$ of $\odotZ{B}$ is decreasingly minimal if and only if
$m$ is least majorized in $\odotZ{B}$.
\finbox
\end{theorem}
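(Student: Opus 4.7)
The plan is to derive this theorem as an immediate corollary of the two results that have just been established. Because Theorem \ref{THmajorExistInBase} guarantees that $\odotZ{B}$ admits a least majorized element, the hypothesis of Proposition \ref{PRmajorsameasdec} is satisfied when we specialize the arbitrary set $D \subseteq \ZZ\sp{S}$ to $D := \odotZ{B}$.

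With that hypothesis in place, I would simply invoke the equivalence (A)$\leftrightarrow$(B) of Proposition \ref{PRmajorsameasdec}, which asserts exactly that an element $m$ of $D$ is least majorized in $D$ if and only if $m$ is decreasingly minimal in $D$. Instantiating $D = \odotZ{B}$ yields the stated biconditional. It might also be worth remarking, in passing, that the equivalence (A)$\leftrightarrow$(C) of the same proposition additionally identifies the least majorized elements of $\odotZ{B}$ with its increasingly maximal elements, so the corollary is in fact three-way.

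No genuine obstacle arises at this final step: the substantive content was already absorbed into Theorem \ref{THmajorExistInBase}, whose own proof combined Groenevelt's optimality criterion (Proposition \ref{PRgroenevelt}) applied to the square-sum with the equivalence of parts (i) and (iv) of Proposition \ref{PRmajorchar}, together with the purely order-theoretic argument of Proposition \ref{PRmajorsameasdec} (whose key ingredient is in turn the symmetry \eqref{xymajorminusxy} between majorization of $(x,y)$ and of $(-x,-y)$). All that remains for Theorem \ref{decminmajorB} itself is the bookkeeping of citing these two prior results, so the proof should occupy only a line or two.
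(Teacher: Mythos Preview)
Your proposal is correct and matches the paper's approach exactly: the paper states the theorem as an immediate consequence of Theorem~\ref{THmajorExistInBase} and Proposition~\ref{PRmajorsameasdec}, with no additional argument. Your extra remarks about the (A)$\leftrightarrow$(C) equivalence and the provenance of the ingredients are accurate but go beyond what the paper records here.
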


\begin{remark} \rm  \label{RMdecminmajor}
In Theorem \RefPartI{3.5} of Part~I \cite{FM18part1} 
we have shown that a dec-min element of $\odotZ{B}$ has the property
\eqref{minklargestsum},
which is referred to as ``min $k$-largest-sum'' in \cite{FM18part1}.
This implies that any dec-min element of $\odotZ{B}$ 
is a least majorized element of $\odotZ{B}$.
Since a dec-min element always exists,
this theorem also implies 
the existence of a least majorized element in  $\odotZ{B}$.
\finbox
\end{remark}

\begin{remark} \rm  \label{RMleastmajorbase}
A variant of majorization concept, ``weak submajorization'' 
(cf., Remark \ref{RMsubmajor}), is investigated 
for integral g-polymatroids by Tamir \cite{Tami95}
and for jump systems by Ando \cite{And96}.
These results are a direct extension of Theorem \ref{THmajorExistInBase}.
Therefore, we may safely say that Theorem \ref{THmajorExistInBase}
with the above proof was known to experts before 1995.
\finbox
\end{remark} 



\section{Convex minimization and decreasing minimality}
\label{SCseparconvmin}

In this section we shed the light of discrete convex analysis  on 
the following results obtained in Part~I \cite{FM18part1}. 
More specifically, we derive these results from 
the optimality criterion for  M-convex functions,
which is described in Section \ref{SCdcaMmin}.

\begin{theorem}[{\cite[Theorem \RefPartI{3.3}, (A) \& (C1)]{FM18part1}}]
 \label{THnoTighten2}
An element $m$ of $\odotZ{B}$ is a dec-min element of $\odotZ{B}$ 
if and only if there is no 1-tightening step for $m$.
\finbox
\end{theorem}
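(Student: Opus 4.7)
The plan is to identify a 1-tightening step as precisely the type of local move that strictly decreases the square-sum, and then to invoke the M-optimality (Groenevelt) criterion of Proposition~\ref{PRgroenevelt} for the square-sum function.

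First, I would unpack the terminology from Part~I. A 1-tightening step at $m \in \odotZ{B}$ is a swap $m \mapsto m + \chi_{s} - \chi_{t}$ with $m + \chi_{s} - \chi_{t} \in \odotZ{B}$ and $m(t) - m(s) \geq 2$. The inequality $m(t) - m(s) \geq 2$ is equivalent to
\[
(m(s)+1)^{2} + (m(t)-1)^{2} < m(s)^{2} + m(t)^{2},
\]
so ``no 1-tightening step exists at $m$'' means that for every $(s,t)$ with $m + \chi_{s} - \chi_{t} \in \odotZ{B}$ one has $(m(s)+1)^{2} + (m(t)-1)^{2} \geq m(s)^{2} + m(t)^{2}$. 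By Proposition~\ref{PRgroenevelt} applied to $\varphi_{s}(k) = k^{2}$, this is exactly the optimality condition for $m$ being a minimizer of the square-sum $\Phi_{2}(x) = \sum_{s \in S} x(s)^{2}$ over $\odotZ{B}$. Hence
\[
\textbf{no 1-tightening step at } m \iff m \in \argmin\{\Phi_{2}(x) : x \in \odotZ{B}\}.
\]

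Next I would combine this with the majorization material of Section~\ref{SCmajorconn}. The argument immediately following Proposition~\ref{PRgroenevelt} shows that any square-sum minimizer $m$ over $\odotZ{B}$ automatically satisfies the exchange inequality $\varphi(m(s)+1) + \varphi(m(t)-1) \geq \varphi(m(s)) + \varphi(m(t))$ for every discrete convex $\varphi$, since that inequality follows from $m(t) - m(s) \leq 1$ alone. Applying Proposition~\ref{PRgroenevelt} in the converse direction, $m$ minimizes every symmetric separable convex function $\sum_{s} \varphi(x(s))$ over $\odotZ{B}$, and then the equivalence (i)$\Leftrightarrow$(iv) of Proposition~\ref{PRmajorchar} identifies $m$ as a least majorized element of $\odotZ{B}$. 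Finally Theorem~\ref{decminmajorB} identifies least majorized elements with dec-min elements of $\odotZ{B}$.

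Chaining these equivalences gives both directions: ``no 1-tightening step at $m$'' $\Leftrightarrow$ ``$m$ minimizes $\Phi_{2}$ on $\odotZ{B}$'' $\Leftrightarrow$ ``$m$ is least majorized in $\odotZ{B}$'' $\Leftrightarrow$ ``$m$ is dec-min in $\odotZ{B}$.'' The main obstacle is essentially bookkeeping: one must make sure the elementary inequality $m(t) - m(s) \geq 2 \Leftrightarrow (m(s)+1)^{2} + (m(t)-1)^{2} < m(s)^{2} + m(t)^{2}$ is stated for every admissible pair $(s,t)$, and that the ``only if'' and ``if'' parts of Proposition~\ref{PRgroenevelt} are invoked in the correct directions; after that the logic is a direct chaining of previously established equivalences.
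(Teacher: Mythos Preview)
Your argument is correct, but it takes a different route from the paper's proof. The paper avoids majorization entirely: it introduces a \emph{rapidly increasing} ($|S|$-increasing) convex function $\varphi$ and appeals to Proposition~\ref{PRbaseegalconvmin}, which says that dec-minimality in any subset of $\ZZ^{S}$ is the same as minimizing $\sum_{s}\varphi(x(s))$ for such a $\varphi$; then Proposition~\ref{PRtighteningB} (the strictly-convex case of Groenevelt's criterion) links minimization of that function over $\odotZ{B}$ to the absence of a 1-tightening step. Your chain instead passes through the square-sum, the Hardy--Littlewood--P\'olya characterization (Proposition~\ref{PRmajorchar}), and Theorem~\ref{decminmajorB} (dec-min $=$ least majorized in $\odotZ{B}$). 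Both proofs rest on Proposition~\ref{PRgroenevelt} at the core; the difference is the bridge from ``function minimizer'' to ``dec-min.'' The paper's rapidly-increasing trick is more self-contained and works for arbitrary $D\subseteq\ZZ^{S}$ at the characterization step, while your route reuses the already-established majorization results of Section~\ref{SCmajorconn} and is arguably more transparent once those are in hand. There is no circularity in your argument, since Theorem~\ref{decminmajorB} and its prerequisites do not rely on Theorem~\ref{THnoTighten2}.
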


\begin{theorem}[{\cite[Corollary \RefPartI{6.3}]{FM18part1}}]
 \label{THdecminPhistrict2}
Let $\Phi(x) = \sum [\varphi(x(s)):  s\in S]$  
be a symmetric separable convex function 
with $\varphi: \ZZ \to \RR$.
An element $m$ of $\odotZ{B}$ is a minimizer of $\Phi$ if $m$ 
is a dec-min element of $\odotZ{B}$,
and the converse is also true if, in addition, $\Phi$ is strictly  convex.
\finbox
\end{theorem}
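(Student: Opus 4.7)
The plan is to derive both directions from the two results highlighted at the opening of this section: Theorem \ref{THnoTighten2}, which recasts dec-minimality of $m$ as the nonexistence of a 1-tightening step at $m$, and the M-optimality criterion (Proposition \ref{PRgroenevelt}), which recasts global minimality of $\Phi$ on $\odotZ{B}$ as a local inequality along every admissible exchange. Both conditions live on the same set of pairs $(s,t)$, namely those with $m+\chi_s-\chi_t\in\odotZ{B}$, so the whole theorem reduces to comparing two inequalities indexed by such pairs.

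Concretely, introduce the forward difference $\delta\varphi(k)=\varphi(k+1)-\varphi(k)$. The M-optimality inequality
\[
\varphi(m(s)+1)+\varphi(m(t)-1)\;\geq\;\varphi(m(s))+\varphi(m(t))
\]
rewrites as $\delta\varphi(m(s))\geq \delta\varphi(m(t)-1)$, while the 1-tightening obstruction (recalled from Part~I) is that there exists an admissible $(s,t)$ with $m(t)-m(s)\geq 2$, i.e.\ $m(t)-1>m(s)$. Because $\varphi$ is convex, $\delta\varphi$ is nondecreasing; because $\varphi$ is strictly convex, $\delta\varphi$ is strictly increasing. With this dictionary in place, the two implications fall out symmetrically.

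For the first implication, suppose $m$ is dec-min. By Theorem \ref{THnoTighten2} there is no 1-tightening step at $m$, so for every admissible $(s,t)$ we have $m(s)\geq m(t)-1$. Monotonicity of $\delta\varphi$ then yields $\delta\varphi(m(s))\geq \delta\varphi(m(t)-1)$ for every admissible pair, which is exactly the M-optimality criterion; Proposition \ref{PRgroenevelt} therefore gives that $m$ is a minimizer of $\Phi$. For the converse, assume $\Phi$ is strictly convex and $m$ is a minimizer but not dec-min. By Theorem \ref{THnoTighten2} there is an admissible $(s,t)$ with $m(s)<m(t)-1$, and strict monotonicity of $\delta\varphi$ produces $\delta\varphi(m(s))<\delta\varphi(m(t)-1)$, violating the only-if part of Proposition \ref{PRgroenevelt}, a contradiction.

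I do not anticipate a serious obstacle: the whole argument is bookkeeping between the exchange-based statements of Theorem \ref{THnoTighten2} and Proposition \ref{PRgroenevelt}. The one point that deserves care is the direction of the inequality when passing from $\varphi(m(s)+1)+\varphi(m(t)-1)\geq\varphi(m(s))+\varphi(m(t))$ to a statement about $\delta\varphi$, in particular making sure that the strict version of convexity is what is needed to force $m(s)\geq m(t)-1$ rather than only $m(s)\geq m(t)-2$; since $\delta\varphi$ strictly increasing means $k<\ell\Rightarrow\delta\varphi(k)<\delta\varphi(\ell)$, the contrapositive cleanly gives $m(s)\geq m(t)-1$, as required.
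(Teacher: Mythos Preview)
Your proof is correct and follows essentially the same route as the paper: the paper packages the convexity computation (that $m(t)\le m(s)+1$ implies the exchange inequality, with the converse under strict convexity) as Proposition~\ref{PRtighteningB} and then combines it with Theorem~\ref{THnoTighten2}, whereas you inline that computation and invoke Proposition~\ref{PRgroenevelt} directly. The only difference is organizational, not mathematical.
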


It should be clear in the above that $\odotZ{B}$ denotes an M-convex set
(the set of integral points of an integral base-polyhedron),
and a {\bf 1-tightening step} for $m\in \odotZ{B}$ 
means the operation of replacing $m$ to $m+\chi_{s}-\chi_{t}$
for some $s, t \in S$ such that
$m(t)\geq m(s)+2$ and $m+\chi_{s}-\chi_{t} \in \odotZ{B}$.

\subsection{Convex formulation of decreasing minimality}
\label{SCdecminconvform}

A dec-min element can be characterized as a minimizer of
`rapidly increasing' convex function.
This characterization enables us to make use of 
discrete convex analysis 
in investigating decreasing minimality.


We say that a positive-valued function $\varphi: \ZZ \to \RR$ is $N$-increasing,
where $N>0$,  if
\begin{equation} \label{basevarphirapinc}
 \varphi(k+1) \geq  N  \  \varphi(k)  > 0 
\qquad (k \in \ZZ) .
\end{equation}
With the choice of a sufficiently large $N$, this concept formulates the intuitive notion that
$\varphi$ is ``rapidly increasing.'' 
An $N$-increasing function $\varphi$ with $N \geq 2$ is strictly convex, since
$\varphi(k-1) +  \varphi(k+1) > \varphi(k+1) \geq N \varphi(k) \geq 2 \varphi(k)$.

As is easily expected, $x <_{\rm dec} y$ is equivalent to $\Phi(x) < \Phi (y)$
defined by such $\varphi$, as follows.

\begin{proposition} \label{PRdecsmallvalsmall}
Assume $|S| \geq 2$ and that $\varphi$ is $|S|$-increasing.
A vector $x \in \ZZ\sp{S}$ is decreasingly-smaller than
a vector $y \in \ZZ\sp{S}$ 
if and only if $\Phi(x) < \Phi(y)$.
\end{proposition}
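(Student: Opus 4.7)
The plan is to prove the forward implication by directly estimating $\Phi(y)-\Phi(x)$ from below using the rapid-growth condition on $\varphi$, and then to deduce the converse from the trichotomy satisfied by the relation $\leq_{\rm dec}$.

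For the forward direction I would set $n=|S|$, suppose $x<_{\rm dec} y$, and pick the smallest index $j$ at which $x{\downarrow}$ and $y{\downarrow}$ differ. Writing $K=y{\downarrow}(j)$, so that $x{\downarrow}(j)\leq K-1$, I split the telescoping sum $\Phi(y)-\Phi(x)=\sum_{i=1}^{n}[\varphi(y{\downarrow}(i))-\varphi(x{\downarrow}(i))]$ into three ranges: the terms with $i<j$ vanish; the $i=j$ term is at least $\varphi(K)-\varphi(K-1)$ by monotonicity of $\varphi$; and for $i>j$ the decreasing ordering of $x{\downarrow}$ forces $x{\downarrow}(i)\leq x{\downarrow}(j)\leq K-1$, hence $\varphi(x{\downarrow}(i))\leq \varphi(K-1)$, while $\varphi(y{\downarrow}(i))>0$. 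Combining these estimates yields
\begin{equation*}
 \Phi(y)-\Phi(x) \;\geq\; \varphi(K)-(n-j+1)\varphi(K-1) \;+\; \sum_{i=j+1}^{n}\varphi(y{\downarrow}(i)).
\end{equation*}
The $|S|$-increasing condition $\varphi(K)\geq n\varphi(K-1)$ together with $j\geq 1$ shows that the first piece is nonnegative, and strict positivity of the right-hand side follows either from the strictly positive tail $\sum_{i>j}\varphi(y{\downarrow}(i))$ when $j<n$, or from the strict monotonicity $\varphi(K)>\varphi(K-1)$ (implied by $n\geq 2$ combined with $\varphi>0$) in the boundary case $j=n$.

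For the converse I would invoke the fact that $\leq_{\rm dec}$ obeys trichotomy: for any pair $(x,y)$ exactly one of $x<_{\rm dec} y$, $y<_{\rm dec} x$, or value-equivalence holds. Value-equivalence means $x{\downarrow}=y{\downarrow}$ and so forces $\Phi(x)=\Phi(y)$; the relation $y<_{\rm dec} x$ gives $\Phi(y)<\Phi(x)$ by the forward direction already established. Hence $\Phi(x)<\Phi(y)$ leaves only the first alternative $x<_{\rm dec} y$.

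The main obstacle is securing the strict inequality in the forward direction, since the naive estimate $\varphi(K)-n\varphi(K-1)\geq 0$ is only weak. The strictness must be extracted from a secondary source: the positivity of the tail sum when $j<n$, and a direct appeal to strict monotonicity when $j=n$. A minor preparatory step is to verify that the $|S|$-increasing property of $\varphi$ indeed implies strict monotonicity and strict positivity everywhere on $\ZZ$, which relies on the standing hypothesis $|S|\geq 2$ and on the positivity clause embedded in the definition of $N$-increasing.
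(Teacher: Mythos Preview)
Your proof is correct. Both you and the paper identify the same pivot---the paper works with the highest level $\hat k$ at which the multiplicities $\Theta(x,\hat k)$ and $\Theta(y,\hat k)$ differ, you with the first index $j$ at which the sorted vectors differ, and these coincide via $\hat k=y{\downarrow}(j)$---and both rely on the rapid-growth bound $\varphi(K)\geq n\,\varphi(K-1)$ to make the top discrepancy dominate everything below it. The execution differs: the paper absorbs all of $x$'s sub-$\hat k$ mass into one extra copy of $\varphi(\hat k)$, derives the weak inequality $\Phi(y)-\Phi(x)\geq(\Theta(y,\hat k)-\Theta(x,\hat k)-1)\varphi(\hat k)\geq 0$, and then rules out equality by a short contradiction showing $|S|\geq 2$ would fail. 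You instead retain the strictly positive $y$-tail $\sum_{i>j}\varphi(y{\downarrow}(i))$ and split into the two cases $j<n$ and $j=n$; this sidesteps the equality analysis and is arguably a bit more direct. Your converse via trichotomy is exactly the paper's ``exchange the roles of $x$ and $y$'' remark.
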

\begin{proof}
For $x \in \ZZ\sp{S}$ and $k \in \ZZ$, 
let $\Theta(x,k)$ denote the number of elements $s$ of $S$ with $x(s)=k$, 
i.e.,
$\Theta(x,k) = |\{ s \in S  :  x(s)=k \} | $.
Then we have
\begin{equation} \label{baseconvcosttheta}
 \Phi(x) = \sum_{k}  \Theta(x,k) \varphi(k) .
\end{equation}
Obviously, $\Phi(x) = \Phi(y)$ if $x$ and $y$ are value-equivalent.
Suppose that  $x$ is not value-equivalent to $y$,
and let $\hat k$ be the largest $k$ with 
$\Theta(x,k) \not= \Theta(y,k)$.
By definition,  $x$ is decreasingly-smaller than $y$
if and only if
$\Theta(x,\hat k) < \Theta(y, \hat k)$.

We show that
$\Theta(x,\hat k) < \Theta(y,\hat k)$ implies $\Phi(x) < \Phi(y)$.
Then the converse also follows from this (by exchanging the roles of $x$ and $y$).
Let 
$T:= \sum_{k > \hat k}  \Theta(x,k)  \varphi(k)
 = \sum_{k > \hat k}  \Theta(y,k) \varphi(k) $.
It follows from
\begin{align} 
\Phi(x)
& = T +  \Theta(x,\hat k)  \varphi(\hat k) 
 + \sum_{k < \hat k} \Theta(x,k)  \varphi(k) 
\notag \\
& \leq T +  \Theta(x,\hat k) \varphi(\hat k) 
 + \varphi(\hat k - 1) \sum_{k < \hat k} \Theta(x,k)   
\notag  \\
& \leq T +  \Theta(x,\hat k) \varphi(\hat k) 
 + \varphi(\hat k) \ \frac{1}{|S|} \sum_{k < \hat k} \Theta(x,k)   
\notag  \\
& \leq T +  ( \Theta(x,\hat k) + 1) \varphi(\hat k) ,
\label{basePhix}
\\
\Phi(y)
& = T +  \Theta(y,\hat k)  \varphi(\hat k) 
 + \sum_{k < \hat k} \Theta(y,k)  \varphi(k) 
\notag \\
& \geq T +  \Theta(y,\hat k) \varphi(\hat k) 
\label{basePhiy}
\end{align}
that
\begin{align} 
\Phi(y) - \Phi(x)
& \geq 
 ( \Theta(y,\hat k) - \Theta(x, \hat k) -1 ) \varphi(\hat k)
\geq 0.
\label{basePhiyPhix}
\end{align}
Here we can exclude the possibility of equality.
Suppose we have equalities in \eqref{basePhiyPhix}.
This implies that
$\Theta(y,\hat k) = \Theta(x, \hat k) + 1$
and that we have equalities throughout 
\eqref{basePhix} and \eqref{basePhix}.
From \eqref{basePhix} we obtain
$\sum_{k < \hat k} \Theta(x,k) = |S|$, from which
$\Theta(x,k) = 0$ for all $k \geq \hat k$.
Therefore we have
$\Theta(y,k) = 0$ for all $k > \hat k$
and
$\Theta(y,\hat k) = 1$.
From \eqref{basePhiy}, on the other hand, 
we obtain $\Theta(y,k) = 0$ for all $k < \hat k$.
This contradicts the relation $\sum_{k} \Theta(y,k) = |S| \geq 2$. 
\end{proof}

By Proposition~\ref{PRdecsmallvalsmall} above,
the problem of finding a dec-min element
can be recast into a convex minimization problem.
It is emphasized that for this equivalence, the underlying set
may be any subset of $\ZZ\sp{S}$
(not necessarily an M-convex set).

\begin{proposition} \label{PRbaseegalconvmin}
Let $D$ be an arbitrary subset of $\ZZ\sp{S}$, where $|S| \geq 2$,
and assume that $\varphi$ is $|S|$-increasing.
An element $m$ of $D$ is decreasingly-minimal in $D$ 
if and only if it minimizes 
$\Phi(x) = \sum_{s \in S} \varphi ( x(s) )$ 
among all members of $D$.
\finbox
\end{proposition}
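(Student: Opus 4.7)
The plan is to derive this proposition as an essentially immediate consequence of Proposition~\ref{PRdecsmallvalsmall}, which has already established the tight correspondence between $<_{\rm dec}$ and strict inequality of $\Phi$-values under the $|S|$-increasing hypothesis on $\varphi$. Since Proposition~\ref{PRbaseegalconvmin} imposes no structural assumption on $D$, no further combinatorial machinery (M-convexity, base-polyhedra, etc.) should be needed; we only need to lift the pointwise comparison in Proposition~\ref{PRdecsmallvalsmall} to the ``minimum'' level.

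For the forward direction, I would assume $m$ is dec-min in $D$ and pick an arbitrary $y \in D$. By definition of dec-min we have $m \leq_{\rm dec} y$, i.e.\ either $m$ and $y$ are value-equivalent or $m <_{\rm dec} y$. In the value-equivalent case, since $\Phi$ is a symmetric separable function (it depends on $x$ only through the multiset $\{x(s) : s \in S\}$), we have $\Phi(m) = \Phi(y)$. In the $<_{\rm dec}$ case, Proposition~\ref{PRdecsmallvalsmall} gives $\Phi(m) < \Phi(y)$. Either way $\Phi(m) \leq \Phi(y)$, so $m$ minimizes $\Phi$ over $D$.

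For the converse, suppose $m \in D$ minimizes $\Phi$ over $D$ but, for contradiction, is not dec-min. Then there exists $y \in D$ with $y <_{\rm dec} m$. Applying Proposition~\ref{PRdecsmallvalsmall} to the ordered pair $(y,m)$ yields $\Phi(y) < \Phi(m)$, contradicting the choice of $m$ as a $\Phi$-minimizer. Hence $m$ is dec-min in $D$.

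There is really no substantial obstacle here: all the analytic work, including the rather delicate counting argument that exploits the $|S|$-increasing growth condition \eqref{basevarphirapinc} to prevent equality, has been absorbed into Proposition~\ref{PRdecsmallvalsmall}. The only things one must be mildly careful about are (i) separating the ``value-equivalent'' sub-case in the forward direction, where the conclusion $\Phi(m)=\Phi(y)$ comes from symmetry of $\Phi$ rather than from Proposition~\ref{PRdecsmallvalsmall}, and (ii) checking that the hypothesis $|S|\ge 2$ and the $|S|$-increasing property of $\varphi$ have been inherited, so that Proposition~\ref{PRdecsmallvalsmall} is applicable throughout. The proof should therefore fit in a few lines.
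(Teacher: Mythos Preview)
Your proposal is correct and matches the paper's approach: the paper treats Proposition~\ref{PRbaseegalconvmin} as an immediate corollary of Proposition~\ref{PRdecsmallvalsmall} and omits the proof entirely (indicated by the \finbox). Your write-up simply spells out the two directions that the paper leaves implicit, handling the value-equivalent sub-case via symmetry of $\Phi$ exactly as one would expect.
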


\begin{remark} \rm \label{RMdecminconvchar}
The characterization of a decreasingly-minimal elements
as a minimizer of a rapidly increasing convex function 
in Proposition~\ref{PRbaseegalconvmin}
is not particularly new.
Similar ideas are scattered in the literature
of related topics such as majorization
(Marshall--Olkin--Arnold \cite{MOA11})
and shifted optimization (Levin--Onn \cite{LO16shift}).
\finbox
\end{remark}

\begin{remark} \rm \label{RMmajdecminincmaxconv}
The relations of being majorized ($\prec$), weakly submajorized ($\prec_{\rm w}$), 
and decreasingly-smaller ($\leq_{\rm dec}$)
are characterized with reference to different classes 
of symmetric separable convex functions as follows
(Proposition~\ref{PRmajorchar}, \cite[4.B.2]{MOA11}, and
Proposition~\ref{PRdecsmallvalsmall}):
\begin{itemize}
\item
$x \prec y$ \ \ \ $\iff$ 
$\displaystyle  \sum_{i=1}\sp{n} \varphi ( x(i) ) \leq \sum_{i=1}\sp{n} \varphi ( y(i) )$ 
\ \ 
for all convex $\varphi$,

\item
$x \prec_{\rm w} y$ $\iff$ 
$\displaystyle  \sum_{i=1}\sp{n} \varphi ( x(i) ) \leq \sum_{i=1}\sp{n} \varphi ( y(i) )$ 
\ \ for all increasing (nondecreasing) convex $\varphi$,

\item
$x \leq_{\rm dec} y$ $\iff$ 
$\displaystyle  \sum_{i=1}\sp{n} \varphi ( x(i) ) \leq \sum_{i=1}\sp{n} \varphi ( y(i) )$ 
\ \ for all rapidly increasing convex $\varphi$.
\finbox
\end{itemize}
\end{remark}

\subsection{M-convex function minimization in discrete convex analysis}
\label{SCdcaMmin}

In this section we introduce
M-convex functions, a fundamental concept in 
discrete convex analysis 
\cite{Mdcasiam},
along with a local optimality condition
for a minimizer of an M-convex function.
Since a separable convex function on an M-convex set is an M-convex function
(cf.~Section \ref{SCseparDCA}),
this optimality criterion renders alternative proofs of Theorems
\ref{THnoTighten2} and \ref{THdecminPhistrict2}
about the dec-min elements of an M-convex set
(cf.~Section \ref{SCdcaproof}).

For a vector $z \in \RR\sp{S}$ in general, we define 
the positive and negative supports of $z$ as
\begin{equation} \label{vecsupportdef}
 \suppp(z) = \{ s  \in S  :  z(s) > 0 \},
\qquad 
 \suppm(z) = \{ t \in S  :  z(t) < 0 \}.
\end{equation}
For a function $f: \ZZ\sp{S} \to \RR \cup \{ -\infty, +\infty \}$,
the effective domain is defined as $\dom f = \{ x \in \ZZ\sp{S} : -\infty < f(x) < +\infty \}$.

A function
$f: \ZZ\sp{S} \to \RR \cup \{ +\infty \}$
with $\dom f \not= \emptyset$
is called {\bf M-convex} if,
for any $x, y \in \ZZ\sp{S}$ and $s \in \suppp(x-y)$, 
there exists some $t \in \suppm(x-y)$ such that
\begin{equation}  \label{mconvexZ}
f(x) + f(y)   \geq  f(x-\chi_{s}+\chi_{t}) + f(y+\chi_{s}-\chi_{t}) .
\end{equation}
In the above statement we may change
``for any $x, y \in \ZZ\sp{S}$'' to ``for any $x, y \in \dom f$''
 since if $x \not\in \dom f$ or $y \not\in \dom f$,
(\ref{mconvexZ}) trivially holds with $f(x) + f(y)  = +\infty$. 
We often refer to this defining property as the {\bf exchange property}
of an M-convex function.
It follows from this definition that $\dom f$ 
consists of the integer points of an integral base-polyhedron (an M-convex set).
A function $f$ is called {\bf M-concave} 
if $-f$ is {\rm M}-convex.
We remark that the exchange property \eqref{mconvexZ}
of an M-convex function is a quantitative extension of the
symmetric exchange property of matroid bases.

A function
$f: \ZZ\sp{S} \to \RR \cup \{ +\infty \}$
with $\dom f \not= \emptyset$
is called {\bf {\rm M}$\sp{\natural}$-convex} if,
for any $x, y \in \ZZ\sp{S}$ and $s \in \suppp(x-y)$, 
we have (i)
\begin{equation}  \label{mnatconvex1Z}
f(x) + f(y)  \geq  f(x - \chi_{s}) + f(y+\chi_{s})
\end{equation}
or (ii) there exists some $t \in \suppm(x-y)$ for which (\ref{mconvexZ}) holds.
It follows from this definition that the effective domain of an {\rm M}$\sp{\natural}$-convex 
function consists of 
the integer points of an integral g-polymatroid \cite{Fra11book};
such a set is called  {\bf {\rm M}$\sp{\natural}$-convex set} in DCA.
An {\rm M}-convex function is {\rm M}$\sp{\natural}$-convex.
A function $f$ is called {\bf M$\sp{\natural}$-concave} 
if $-f$ is {\rm M}$\sp{\natural}$-convex.

The following is a local characterization of global minimality
for M- or M$\sp{\natural}$-convex functions, called the M-optimality criterion.

\begin{theorem}[{\cite[Theorem 6.26]{Mdcasiam}}] \label{THmopt}
Let $f: \ZZ\sp{S} \to \RR \cup \{ +\infty \}$ be an M$\sp{\natural}$-convex function,
and $x\sp{*}  \in \dom f$.
Then $x\sp{*} $ is a minimizer of $f$ if and only if
it is locally minimal in the sense that
\begin{align} 
& f(x\sp{*}) \leq f(x\sp{*} + \chi_{s} - \chi_{t})  \quad  \mbox{\rm for all } \ s, t \in S ,
\label{mnatfnlocmin1}
\\
& f(x\sp{*}) \leq f(x\sp{*} + \chi_{s})  \quad\quad\quad  \mbox{\rm for all } \ s \in S ,
\label{mnatfnlocmin2}
\\
& f(x\sp{*}) \leq f(x\sp{*} - \chi_{t})  \quad\quad\quad  \mbox{\rm for all } \ t \in S .
\label{mnatfnlocmin3}
\end{align}
If $f$ is  M-convex,
$x\sp{*} $ is a minimizer of $f$ if and only if {\rm (\ref{mnatfnlocmin1})} holds.
\finbox
\end{theorem}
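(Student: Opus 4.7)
The plan is to handle the two directions asymmetrically. The only-if direction is immediate: a global minimizer trivially satisfies the three local inequalities. For the if direction, I would first dispose of the pure M-convex case, and then reduce the M$^\natural$-convex case to it via the standard lifting trick.

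For the M-convex case, I would argue by contradiction together with a descent on the $\ell_1$-distance. Assume $x^{*}$ satisfies (\ref{mnatfnlocmin1}) but some $y \in \dom f$ has $f(y) < f(x^{*})$, and choose among all such $y$ one minimizing $\|y - x^{*}\|_{1}$. Since $\dom f$ lies in an integral base-polyhedron we have $\widetilde{y}(S) = \widetilde{x^{*}}(S)$, so $y \neq x^{*}$ forces both $\suppp(y-x^{*})$ and $\suppm(y-x^{*})$ to be nonempty. Pick any $s \in \suppp(y-x^{*})$ and apply the M-convex exchange property (\ref{mconvexZ}) to the pair $(y, x^{*})$: there exists $t \in \suppm(y - x^{*})$ with
\begin{equation*}
f(y) + f(x^{*}) \geq f(y - \chi_{s} + \chi_{t}) + f(x^{*} + \chi_{s} - \chi_{t}).
\end{equation*}
By the local hypothesis $f(x^{*}) \leq f(x^{*} + \chi_{s} - \chi_{t})$, hence $f(y - \chi_{s} + \chi_{t}) \leq f(y) < f(x^{*})$. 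But $y(s) > x^{*}(s)$ and $y(t) < x^{*}(t)$ imply $\|y - \chi_{s} + \chi_{t} - x^{*}\|_{1} = \|y - x^{*}\|_{1} - 2$, contradicting the minimality of $\|y - x^{*}\|_{1}$.

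For the general M$^\natural$-convex case, I would introduce a new ground-set element $s_{0}$ and lift $f$ to an M-convex function $\tilde{f}$ on $\ZZ^{S \cup \{s_{0}\}}$, defined on the hyperplane $\widetilde{x}(S) + x(s_{0}) = C$ (for a constant $C$ large enough that $\dom f$ embeds). Under this correspondence, the three types of local moves on $f$ at $x^{*}$ collapse to the single type of swap move on $\tilde{f}$ at the lifted point $(x^{*}, C - \widetilde{x^{*}}(S))$: namely $x^{*} + \chi_{s} - \chi_{t}$ corresponds to a swap between $s$ and $t$ in $S$, while $x^{*} + \chi_{s}$ and $x^{*} - \chi_{t}$ correspond to swaps involving $s_{0}$. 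Thus the three local conditions (\ref{mnatfnlocmin1})--(\ref{mnatfnlocmin3}) together are equivalent to the single local condition for $\tilde{f}$, and the already-established M-convex case finishes the argument.

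The main obstacle, conceptually, is the descent step: one must verify that the element produced by the exchange property is both strictly closer to $x^{*}$ in $\ell_{1}$ and retains the strict inequality $f(\,\cdot\,) < f(x^{*})$. The $\ell_{1}$ drop is immediate from the sign conditions defining $\suppp$ and $\suppm$, and the function-value inequality comes directly from combining the exchange inequality with the local hypothesis, so the argument is tight. Setting up the lifting correctly (in particular, verifying that $\tilde{f}$ is M-convex on $\ZZ^{S \cup \{s_{0}\}}$) is routine but the only non-bookkeeping step in the reduction; for this I would cite or quickly recall the standard equivalence between M$^\natural$-convex functions and M-convex functions with one extra coordinate.
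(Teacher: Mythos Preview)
The paper does not prove this theorem at all: it is stated as a citation from \cite[Theorem 6.26]{Mdcasiam} and closed with \finbox, with no accompanying proof. So there is nothing in the paper to compare your argument against.

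That said, your proof is correct and is essentially the standard argument one finds in the cited source. The $\ell_{1}$-descent for the M-convex case is exactly right: the exchange inequality plus the local hypothesis force $f(y - \chi_{s} + \chi_{t}) \leq f(y) < f(x^{*})$, and the sign conditions on $s$ and $t$ guarantee the $\ell_{1}$-distance strictly drops, yielding the contradiction. The reduction of the M$^{\natural}$-convex case to the M-convex case by adjoining a slack coordinate $s_{0}$ is likewise the standard device (and is exactly how M$^{\natural}$-convexity is related to M-convexity in \cite{Mdcasiam}); the three local conditions (\ref{mnatfnlocmin1})--(\ref{mnatfnlocmin3}) do correspond bijectively to the swap moves of the lifted function. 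One small remark: the constant $C$ in the lifting need not be ``large enough'' in any quantitative sense; one simply sets $\tilde{f}(x, x_{0}) = f(x)$ whenever $x_{0} = -\widetilde{x}(S)$ (or $C - \widetilde{x}(S)$ for any fixed $C$) and $+\infty$ otherwise, with no constraint on $C$.
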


\subsection{Separable convex function minimization in discrete convex analysis}
\label{SCseparDCA}

Minimization of a separable convex function over
the set of integral points of an integral base-polyhedron 
can be treated successfully as a special case of M-convex function minimization
presented in Section \ref{SCdcaMmin}.

We consider a function 
$\Phi: \ZZ\sp{S} \to \RR \cup \{ +\infty \}$
of the form
\begin{equation} \label{gensepar}
  \Phi(x) = \sum [\varphi_{s}(x(s)):  s\in S] ,
\end{equation}
where, for each $s \in S$,  the function
$\varphi_{s}: \ZZ \to \RR \cup \{ +\infty \}$
is discrete convex
(i.e., $\varphi_{s}(k-1) + \varphi_{s}(k+1) \geq 2 \varphi_{s}(k)$ 
for all $k \in \dom \varphi_{s}$).
Such function $\Phi$ is called a separable (discrete) convex function. 
We call $\Phi$ symmetric if $\varphi_{s}=\varphi$ for all $s \in S$.

Let $\odotZ{B}$ be the set of integral points of an integral base-polyhedron $B$.
The problem we consider is:
\begin{equation} \label{sepminB1}
\mbox{Minimize } \  \Phi(x) = \sum [\varphi_{s}(x(s)):  s\in S] 
 \ \mbox{ subject to } \ 
x \in \odotZ{B}.
\end{equation}
Using the indicator function 
$\delta: \ZZ\sp{S} \to \RR \cup \{ +\infty \}$
of $\odotZ{B}$ defined as
\begin{equation} \label{indicBdef}
 \delta(x) = 
   \left\{  \begin{array}{ll}
    0   & (x \in \odotZ{B}),  \\
    +\infty  &  (\mbox{otherwise}),  \\
             \end{array}  \right.
\end{equation}
we can rewrite (\ref{sepminB1}) as 
\begin{equation} \label{sepminB2}
\mbox{Minimize } \  \Phi(x) + \delta(x).
\end{equation}

This problem is amenable to discrete convex analysis,
since the separable convex function  $\Phi$ is M$\sp{\natural}$-convex,
the indicator function $\delta$ of an M-convex set
is M-convex, and  moreover, the function $\Phi + \delta$ is M-convex.
Indeed it is easy to verify that these functions satisfy the defining exchange property.
In this connection it is noted that
the sum of an M-convex function and an M$\sp{\natural}$-convex function 
is not necessarily M$\sp{\natural}$-convex, but 
the sum of an M-convex function and a separable convex function is always M-convex
(cf.~Remark \ref{RMm2convex} in Section \ref{SCfencml}).

An application of the M-optimality criterion (Theorem~\ref{THmopt})
to our function $\Phi + \delta$ gives 
the important result due to Groenevelt \cite{Gro91}
shown in Proposition~\ref{PRgroenevelt}.
In the special case of symmetric separable convex functions,
with $\varphi_{s} = \varphi$ for all $s \in S$,
we can relate the condition
given in Proposition~\ref{PRgroenevelt}
to 1-tightening steps.
Recall that a 1-tightening step for $m\in \odotZ{B}$ 
means the operation of replacing $m$ to $m+\chi_{s}-\chi_{t}$
for some $s, t \in S$ such that
$m(t)\geq m(s)+2$ and $m+\chi_{s}-\chi_{t} \in \odotZ{B}$.

\begin{proposition}  \label{PRtighteningB}
For any symmetric separable discrete convex function 
$\Phi(x) = \sum [\varphi(x(s)):  s\in S]$
with $\varphi: \ZZ \to \RR \cup \{ +\infty \}$,
an element $m$ of $\odotZ{B}$ is a minimizer of $\Phi$ over $\odotZ{B}$ if 
there is no 1-tightening step for $m$.
The converse is also true if $\varphi$ is strictly convex.
\end{proposition}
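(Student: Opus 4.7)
The plan is to deduce this proposition directly from Groenevelt's criterion (Proposition~\ref{PRgroenevelt}), specialized to the symmetric case $\varphi_s = \varphi$, by rephrasing the relevant inequalities in terms of the first-difference function
\[
  \Delta\varphi(k) := \varphi(k+1) - \varphi(k) \qquad (k \in \ZZ),
\]
and then invoking (strict) discrete convexity of $\varphi$, which is equivalent to $\Delta\varphi$ being (strictly) nondecreasing. Under this rewriting, Groenevelt's condition at $m$ reads
\[
  \Delta\varphi(m(s)) \geq \Delta\varphi(m(t)-1) \quad \text{whenever } m + \chi_s - \chi_t \in \odotZ{B},
\]
since $\varphi(m(s)+1) + \varphi(m(t)-1) \geq \varphi(m(s)) + \varphi(m(t))$ is algebraically the same thing.

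For the first assertion, I would argue the contrapositive of sufficiency: assume some pair $s,t$ witnesses Groenevelt's inequality failing, i.e.\ $\Delta\varphi(m(s)) < \Delta\varphi(m(t)-1)$ with $m + \chi_s - \chi_t \in \odotZ{B}$. Since $\Delta\varphi$ is nondecreasing by convexity of $\varphi$, this forces $m(s) < m(t) - 1$, that is, $m(t) \geq m(s) + 2$, so the exchange $m \mapsto m + \chi_s - \chi_t$ is precisely a 1-tightening step. Contrapositively, if no 1-tightening step exists then every allowed exchange satisfies $m(t) - 1 \leq m(s)$, hence $\Delta\varphi(m(t)-1) \leq \Delta\varphi(m(s))$, and Groenevelt's criterion holds—so $m$ is a minimizer. (Domain issues dissolve: if $\varphi(m(s)+1) = +\infty$ or $\varphi(m(t)-1) = +\infty$, the inequality is automatic, so the nondecreasing-differences argument need only be carried out on $\dom\varphi$, which is an interval of $\ZZ$ by discrete convexity.)

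For the converse under strict convexity, I would again go through Groenevelt's criterion. Suppose a 1-tightening step exists: there are $s,t$ with $m(t) \geq m(s) + 2$ and $m' := m + \chi_s - \chi_t \in \odotZ{B}$. Strict convexity of $\varphi$ (with $\dom\varphi = \ZZ$) makes $\Delta\varphi$ strictly increasing on $\ZZ$; combined with $m(s) \leq m(t) - 2 < m(t) - 1$, this yields
\[
  \Delta\varphi(m(s)) < \Delta\varphi(m(t)-1),
\]
equivalently $\varphi(m(s)+1) + \varphi(m(t)-1) < \varphi(m(s)) + \varphi(m(t))$, whence $\Phi(m') < \Phi(m)$. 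Thus $m$ is not a minimizer.

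The only step that requires a little care is the translation between the Groenevelt-style inequality and monotonicity of first differences in the presence of the value $+\infty$; but because the inequality is trivial as soon as either of $\varphi(m(s)+1), \varphi(m(t)-1)$ is $+\infty$, the whole argument reduces to the elementary observation that (strict) discrete convexity of $\varphi$ is the same as (strict) monotonicity of $\Delta\varphi$ on $\dom\varphi$. No deeper combinatorial input beyond Proposition~\ref{PRgroenevelt} is needed.
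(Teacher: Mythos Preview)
Your proof is correct and follows essentially the same route as the paper: invoke Groenevelt's criterion (Proposition~\ref{PRgroenevelt}), rewrite the key inequality as a comparison of first differences $\Delta\varphi$, and use (strict) monotonicity of $\Delta\varphi$ to translate back and forth between Groenevelt's condition and the absence of a 1-tightening step. The paper's proof is more terse but proceeds identically; your explicit handling of the $+\infty$ cases and the observation that strict convexity entails $\dom\varphi=\ZZ$ are welcome clarifications but not substantive departures.
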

\begin{proof}
By Proposition \ref{PRgroenevelt},
$m$ is a minimizer of $\Phi$ 
if and only if
\begin{equation*}  
\varphi(m(s)+1) + \varphi(m(t)-1) \geq \varphi(m(s)) + \varphi(m(t))
\end{equation*}
for all $s, t \in S$ such that
$m+\chi_{s}-\chi_{t} \in \odotZ{B}$.
By the convexity of $\varphi$, we have this inequality
if $m(t)\leq m(s)+1$, and the converse is also true when $\varphi$ is strictly convex.
Finally we note that there is no 1-tightening step for $m$
if and only if
$m(t)\leq m(s)+1$ for all $s, t \in S$ such that
$m+\chi_{s}-\chi_{t} \in \odotZ{B}$.
\end{proof}

\subsection{DCA-based proofs of the theorems}
\label{SCdcaproof}

The combination of Proposition~\ref{PRtighteningB} 
with Proposition~\ref{PRbaseegalconvmin}
provides alternative proofs of Theorems \ref{THnoTighten2} and \ref{THdecminPhistrict2}.

\paragraph{Proof of Theorem~\ref{THnoTighten2}:}
Let $\Phi$ be a symmetric separable convex function with
rapidly increasing $\varphi$.
By Proposition~\ref{PRbaseegalconvmin}, 
$m$ is dec-min if and only if $m$ is a minimizer of $\Phi$.
On the other hand, since $\Phi$ is strictly convex, Proposition~\ref{PRtighteningB}
shows that
$m$ is a minimizer of $\Phi$
if and only if 
there is no 1-tightening step for $m$.
Therefore, $m$ is a dec-min element of $\odotZ{B}$ 
if and only if there is no 1-tightening step for $m$.

\paragraph{Proof of Theorem~\ref{THdecminPhistrict2}:}
Let $\Phi$ be a symmetric separable convex function.
By Proposition~\ref{PRtighteningB},
$m$ is a minimizer of $\Phi$
if there is no 1-tightening step for $m$;
and the converse is also true for strictly convex $\Phi$.
Theorem~\ref{THnoTighten2},
on the other hand, shows that 
there is no 1-tightening step for $m$
if and only if $m$ is a dec-min element.
Therefore,
$m$ is a minimizer of $\Phi$ if $m$ is a dec-min element of $\odotZ{B}$;
and the converse is also true for strictly convex $\Phi$.

\subsection{Extension to generalized polymatroids}
\label{SCgpolymKM}


In this section we shed a light of DCA on 
the majorization ordering and decreasing minimality
in generalized polymatroids (g-polymatroids).
Let $Q$ be an integral g-polymatroid on the ground set $S$ and
$\odotZ{Q}$ the set of its integral points;
see \cite{Fra11book} for the basic facts about g-polymatroids.
It is shown by Tamir \cite{Tami95} that
$\odotZ{Q}$ admits a least weakly submajorized element
(cf., Remark \ref{RMsubmajor} for this terminology).
By Remark \ref{RMmajdecminincmaxconv}
this is equivalent to saying that 
there exists an element of $\odotZ{Q}$ that simultaneously minimizes 
all symmetric separable functions 
$\sum_{s \in S} \varphi ( x(s) )$ 
defined by an
increasing discrete convex function $\varphi$.
A least weakly submajorized element of $\odotZ{Q}$ is
a decreasingly minimal element of $\odotZ{Q}$
(cf., Remark \ref{RMmajdecminincmaxconv}).


G-polymatroids fit in the framework of DCA, because
the set $\odotZ{Q}$ of integral points of 
an integral g-polymatroid $Q$
is nothing but an {\rm M}$\sp{\natural}$-convex set, 
and accordingly,
the indicator function of $\odotZ{Q}$ is an {\rm M}$\sp{\natural}$-convex function.
An {\rm M}$\sp{\natural}$-convex set
is exactly the projection of an {\rm M}-convex set,
which is a classic result \cite{Fra11book,Fuj05book}
expressed in the language of DCA. 
See \cite{Mdcasiam} for more about {\rm M}$\sp{\natural}$-convexity.

The M-optimality criterion (Theorem~\ref{THmopt}) immediately implies 
the following generalization of Proposition~\ref{PRgroenevelt}.

\begin{proposition}  \label{PRseparlocQ}
Let $Q$ be an integral g-polymatroid and
$\odotZ{Q}$ be the set of its integral elements.
An element $m$ of $\odotZ{Q}$ is a minimizer of  
a separable convex function
$\Phi(x) = \sum [\varphi_{s}(x(s)):  s\in S]$ over $\odotZ{Q}$
if and only if
each of the following three conditions holds:
\begin{itemize}
\item
$\varphi_{s}(m(s)+1) + \varphi_{t}(m(t)-1) \geq \varphi_{s}(m(s)) + \varphi_{t}(m(t))$
whenever 
$m+\chi_{s}-\chi_{t} \in \odotZ{Q}$,

\item
$\varphi_{s}(m(s)+1) \geq \varphi_{s}(m(s))$
whenever 
$m+\chi_{s} \in \odotZ{Q}$, and

\item
$\varphi_{t}(m(t)-1) \geq  \varphi_{t}(m(t))$
whenever 
$m-\chi_{t} \in \odotZ{Q}$.
\finbox
\end{itemize}
\end{proposition}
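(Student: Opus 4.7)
The plan is to mimic the derivation of Groenevelt's result (Proposition~\ref{PRgroenevelt}) given in Section~\ref{SCseparDCA}, but now starting from an M$^\natural$-convex set rather than an M-convex set, and therefore invoking the full M$^\natural$-version of the optimality criterion (Theorem~\ref{THmopt}) in place of its M-version. The problem of minimizing $\Phi$ over $\odotZ{Q}$ is first recast as the unconstrained minimization of the function $\Phi + \delta_{Q}$, where $\delta_{Q}: \ZZ\sp{S} \to \RR \cup \{+\infty\}$ is the indicator function of $\odotZ{Q}$ (defined analogously to \eqref{indicBdef}). Since $\odotZ{Q}$ is an M$^\natural$-convex set, its indicator $\delta_{Q}$ is an M$^\natural$-convex function by definition.

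Next I would invoke the DCA fact that the sum of an M$^\natural$-convex function and a separable discrete convex function is again M$^\natural$-convex; this is the M$^\natural$-analogue of the statement recalled in Section~\ref{SCseparDCA} (namely, that the sum of an M-convex function and a separable convex function is M-convex) and is straightforward to verify by checking the defining exchange property \eqref{mnatconvex1Z}/\eqref{mconvexZ} on the sum. Consequently $\Phi + \delta_{Q}$ is M$^\natural$-convex with effective domain $\dom(\Phi + \delta_{Q}) \subseteq \odotZ{Q}$.

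Now Theorem~\ref{THmopt} applied to $f := \Phi + \delta_{Q}$ at the point $m \in \odotZ{Q}$ tells us that $m$ is a global minimizer if and only if the three local inequalities \eqref{mnatfnlocmin1}, \eqref{mnatfnlocmin2}, \eqref{mnatfnlocmin3} are satisfied. Because $\delta_{Q}$ is $0$ on $\odotZ{Q}$ and $+\infty$ off of it, each of these inequalities is automatically true whenever the perturbed point leaves $\odotZ{Q}$, and is a genuine constraint only when the perturbation stays in $\odotZ{Q}$. Using the separable form $\Phi(x) = \sum_{s \in S} \varphi_{s}(x(s))$, the increment $\Phi(m + \chi_{s} - \chi_{t}) - \Phi(m)$ collapses to $[\varphi_{s}(m(s)+1) - \varphi_{s}(m(s))] + [\varphi_{t}(m(t)-1) - \varphi_{t}(m(t))]$, and similarly for the single-coordinate increments. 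Rearranging these three inequalities yields precisely the three bulleted conditions of the proposition.

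The only step that requires care is the M$^\natural$-convexity of $\Phi + \delta_{Q}$, and this is where the argument relies on a (standard, but easy to overlook) DCA lemma rather than on the weaker—and generally false—claim that the sum of two M$^\natural$-convex functions is M$^\natural$-convex. Once that fact is in place, the proof is essentially a direct unpacking of Theorem~\ref{THmopt} for the separable case, completely parallel to the base-polyhedron argument in Section~\ref{SCseparDCA} except that the two additional one-sided conditions \eqref{mnatfnlocmin2} and \eqref{mnatfnlocmin3} now come into play, accounting for the two extra bullets that distinguish Proposition~\ref{PRseparlocQ} from Proposition~\ref{PRgroenevelt}.
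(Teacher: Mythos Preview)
Your proposal is correct and follows exactly the route the paper takes: the paper states that Proposition~\ref{PRseparlocQ} is an immediate consequence of the M-optimality criterion (Theorem~\ref{THmopt}), and your argument is simply the detailed unpacking of that one-line justification, mirroring the M-convex case in Section~\ref{SCseparDCA}. The only addition you provide beyond the paper's terse statement is the explicit verification that $\Phi+\delta_{Q}$ is M$^\natural$-convex, which the paper leaves implicit.
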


Proposition~\ref{PRtighteningB}
for a symmetric separable convex function
$\Phi(x) = \sum [\varphi(x(s)):  s\in S]$
on base-polyhedra can be adapted to g-polymatroids
under the additional assumption of monotonicity of $\varphi$.

Let $B$ denote the set of minimal elements of an integral g-polymatroid $Q$,
and $\odotZ{B}$ the set of integral members of $B$.
When $Q$ is defined by a paramodular pair $(p,b)$
of an integer-valued supermodular function $p$ 
and an integer-valued submodular function $p$,
it has a minimal element precisely if $p(S)$ is finite
\cite[Chapter 14]{Fra11book}.
That is, $B$ is nonempty if and only if $p(S)$ is finite. 
If $B \not= \emptyset$, 
$B$ is an integral base-polyhedron and $\odotZ{B}$ is an M-convex set.
Note that
$\odotZ{B} \not= \emptyset$ if and only if $B \not= \emptyset$,
and $\odotZ{B}$ is the set of minimal elements of $\odotZ{Q}$.

\begin{proposition}  \label{PRtighteningQ}
Let $\Phi$ be a symmetric separable convex function represented as
$\Phi(x) = \sum [\varphi(x(s)):  s\in S]$
with monotone nondecreasing discrete convex $\varphi: \ZZ \to \RR \cup \{ +\infty \}$.
There exists a minimizer of $\Phi$ in $\odotZ{Q}$ if and only if $\odotZ{B}$ is nonempty.
An element $m$ of $\odotZ{Q}$ is a minimizer of $\Phi$ over $\odotZ{Q}$ if 
$m$ belongs to $\odotZ{B}$ and
$m(t)\leq m(s)+1$ 
whenever $m+\chi_{s}-\chi_{t}$ is in $\odotZ{B}$.
The converse is also true if $\varphi$ is strictly convex and strictly monotone increasing.
\finbox
\end{proposition}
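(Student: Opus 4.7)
The plan is to apply Proposition~\ref{PRseparlocQ}---the g-polymatroid analogue of Groenevelt's theorem just established---in the symmetric case $\varphi_{s} \equiv \varphi$ and to use the monotonicity of $\varphi$ to collapse two of its three local conditions. This should reduce the full characterization to a condition living entirely on the base polyhedron $\odotZ{B}$, which is exactly what the proposition asserts.

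Concretely, Proposition~\ref{PRseparlocQ} characterizes $m \in \odotZ{Q}$ as a minimizer of $\Phi$ by three inequalities, which I label (a), (b), (c) in the order they appear. Condition~(b), $\varphi(m(s)+1) \geq \varphi(m(s))$ whenever $m + \chi_{s} \in \odotZ{Q}$, is automatic from the nondecreasingness of $\varphi$. Condition~(c), $\varphi(m(t)-1) \geq \varphi(m(t))$ whenever $m - \chi_{t} \in \odotZ{Q}$, combined with the reverse inequality coming from nondecreasingness, reduces to the equality $\varphi(m(t)-1) = \varphi(m(t))$; this is vacuously satisfied when $m$ has no descending neighbour in $\odotZ{Q}$, i.e., precisely when $m \in \odotZ{B}$, and when $\varphi$ is strictly increasing it \emph{forces} $m \in \odotZ{B}$. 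Condition~(a) reduces by discrete convexity of $\varphi$ to the inequality $m(t) \leq m(s)+1$ for every exchange $m + \chi_{s} - \chi_{t} \in \odotZ{Q}$, with strict convexity giving the converse implication. The final structural ingredient is the fact that the base polyhedron is the constant-component-sum slice $B = \{x \in Q : \widetilde x(S) = p(S)\}$ of $Q$; because the exchange $m + \chi_{s} - \chi_{t}$ preserves component sums, starting from $m \in \odotZ{B}$ any such exchange lying in $\odotZ{Q}$ automatically lies in $\odotZ{B}$. This is what permits restating the exchange condition of the proposition over $\odotZ{B}$ rather than $\odotZ{Q}$. Assembling these observations delivers the ``if'' direction and, under the strict hypotheses on $\varphi$, its converse.

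For the existence claim, if $\odotZ{B} \neq \emptyset$ then Proposition~\ref{PRtighteningB} (possibly after a vanishing strictly convex perturbation to guarantee attainment on $B$) produces a minimizer $m\sp{\ast}$ of $\Phi$ on $\odotZ{B}$; for any $y \in \odotZ{Q}$ one iteratively descends $y \to y - \chi_{t}$ within $\odotZ{Q}$ to some $\bar y \in \odotZ{B}$, and monotonicity of $\varphi$ gives $\Phi(m\sp{\ast}) \leq \Phi(\bar y) \leq \Phi(y)$. The main obstacle is the reverse direction: if $\odotZ{B}$ is empty and $\varphi$ is only nondecreasing (say, constant on an infinite left tail), a minimizer in $\odotZ{Q}$ can still exist---take $Q = \{x \in \RR\sp{2} : x(1), x(2) \leq 5\}$ with $\varphi(k) = \max(0,k)$, where every $m$ with $m(1), m(2) \leq 0$ is a minimizer while $\odotZ{B} = \emptyset$. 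Resolving this apparent counterexample seemingly requires either a slight strengthening (e.g., strict monotonicity of $\varphi$, or $\varphi(k) \to -\infty$ as $k \to -\infty$) or a convention excluding such unbounded g-polymatroids, and I would make the needed hypothesis explicit before closing this half of the argument.
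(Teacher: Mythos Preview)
The paper gives no proof of Proposition~\ref{PRtighteningQ}; it is stated with a \finbox, the intention being that it is an immediate adaptation of Proposition~\ref{PRtighteningB} via Proposition~\ref{PRseparlocQ}. Your derivation is exactly that adaptation: conditions (b) and (c) of Proposition~\ref{PRseparlocQ} are handled by monotonicity, with (c) becoming vacuous precisely for minimal $m$, i.e., $m \in \odotZ{B}$; condition (a) reduces to the exchange inequality $m(t) \leq m(s)+1$ by discrete convexity of $\varphi$; and the observation that for $m \in \odotZ{B}$ the exchange $m + \chi_{s} - \chi_{t}$ lies in $\odotZ{Q}$ if and only if it lies in $\odotZ{B}$ (since $\odotZ{B}$ is the $\widetilde{x}(S) = p(S)$ slice of $\odotZ{Q}$) justifies phrasing the condition over $\odotZ{B}$. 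All of this is correct and matches what the paper leaves implicit.

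Your counterexample to the ``only if'' half of the existence claim is valid and worth flagging. With $Q = \{x \in \RR^{2} : x(1), x(2) \leq 5\}$ and $\varphi(k) = \max(0,k)$, $Q$ is a g-polymatroid with $p(S) = -\infty$ so $\odotZ{B} = \emptyset$, yet $\Phi$ attains its minimum at every $x \leq \bm{0}$. The paper's paragraph following the proposition immediately specializes to strictly increasing $\varphi$, where the equivalence does hold (no minimizer can exist if one can descend indefinitely), so the defect is in the generality of the existence statement rather than in the intended application. Your instinct to add strict monotonicity (or the equivalent hypothesis $\varphi(k) \to -\infty$ as $k \to -\infty$, or finiteness of $p(S)$) is the right repair.
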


Let $m$ be an element of $\odotZ{Q}$ that minimizes 
$\Phi(x) = \sum [\varphi(x(s)):  s\in S]$
for an arbitrarily chosen 
strictly convex and strictly monotone increasing $\varphi$.
Then Proposition~\ref{PRtighteningQ} implies
that $m$ is a universal minimizer of all such $\Phi(x)$,
since the condition 
\begin{equation}  \label{Mnat1tighten}
 m+\chi_{s}-\chi_{t} \in \odotZ{B} \  \Rightarrow \  m(t)\leq m(s)+1
\end{equation}
is independent of $\varphi$.
Therefore, $m$ is a least weakly submajorized element of $\odotZ{Q}$.

By adapting the above results to decreasing minimality,
we see that
$\odotZ{Q}$ has a dec-min element if and only if 
$\odotZ{B}$ is nonempty,
and that a member $m$ of
$\odotZ{Q}$ is decreasingly minimal in $\odotZ{Q}$ 
if and only if $m \in \odotZ{B}$ and \eqref{Mnat1tighten} holds,
which is equivalent, by Theorem \RefPartI{3.3} of Part~I, 
to $m$ being a dec-min element of $\odotZ{B}$.




\section{Min-max formulas}
\label{SCminmaxformula}

Key min-max formulas on discrete decreasing minimization,
established by constructive methods in Part~I \cite{FM18part1},
are derived here from the Fenchel-type discrete duality in discrete convex analysis.
These formulas can in fact be derived from a special case of
the Fenchel-type discrete duality 
where a separable convex function is minimized over an M-convex set.
This special case often provides interesting min-max relations in applications
and deserves particular attention.
The (general) Fenchel-type discrete duality is described in Section~\ref{SCfencml}
 and its special case for separable convex functions in Section~\ref{SCfencsepar}.

\subsection{Min-max formulas for decreasing minimization}
\label{SCminmaxformuladecmin}

In this section we treat the formulas
\eqref{minmaxSqSum-KM},
\eqref{(betamin-KM)},
\eqref{r1-KM}, and
\eqref{decmintruncsum0} below.
Recall that 
$p$ is an integer-valued (fully) supermodular function on the ground-set $S$
describing a base-polyhedron $B$
and $\hat p$ is the linear extension (Lov{\'a}sz extension) of $p$,
whose definition is given in \eqref{lovextdef} in Section \ref{SCfencsepar}.

\begin{itemize}
\item
\cite[Theorem \RefPartI{6.6}]{FM18part1} \ 
For the square-sum we have
\begin{equation}
\min \{ \sum_{s \in S} m(s)\sp{2} :  m\in \odotZ{B} \} 
= \max \{\hat p(\pi ) - \sum _{s\in S} 
 \left\lfloor {\pi (s) \over 2}\right\rfloor 
 \left\lceil {\pi (s) \over 2}\right\rceil : 
 \pi \in {\bf Z}\sp{S} \}.  
 \label{minmaxSqSum-KM} 
\end{equation}

\item
\cite[Theorem \RefPartI{4.1}]{FM18part1} \ 
For the largest component $\beta_{1}$ of a
max-minimizer of $\odotZ{B}$, we have
\begin{equation}
 \beta_{1}=\max \{ \left\lceil {p(X) \over \vert X\vert } \right\rceil : 
 \emptyset \not =X\subseteq S\}.
\label{(betamin-KM)} 
\end{equation}
Recall that $\beta_{1}$ is equal to the largest component
of any dec-min element of $\odotZ{B}$.

\item
\cite[Theorem \RefPartI{4.3}]{FM18part1} \ 
 For the minimum number $r_{1}$ of $\beta_{1}$-valued components of 
a $\beta_{1}$-covered member of $\odotZ{B}$, we have
\begin{equation}
 r_{1}= \max \{ p(X) - (\beta_{1}-1)\vert X\vert :  X\subseteq S\} .  
\label{r1-KM} 
\end{equation}
Recall that 
$r_{1} = | \{ s \in S : m(s) = \beta_{1} \} |$
for any dec-min element $m$ of $\odotZ{B}$.
\end{itemize}

Moreover, the following min-max formula will be established in Section \ref{SCtotalexcess}
as a generalization of (\ref{r1-KM}).
We refer to 
$\sum_{s \in S} (m(s) - a)\sp{+}$
in the  minimization  below as the {\bf total $a$-excess} of $m$.

\begin{itemize}
\item
For each integer $a$, we have 
\begin{equation} \label{decmintruncsum0}
 \min \{ \sum_{s \in S} (m(s) - a)\sp{+}  : m \in \odotZ{B} \}  
=  \max \{p(X) - a \vert X\vert :  X\subseteq S\}.
\end{equation}
\end{itemize}
Note that this formula (\ref{decmintruncsum0}) for $a=\beta_{1} - 1$
reduces to the formula (\ref{r1-KM}) for $r_{1}$.
It will be shown in Theorem \ref{THtotalexcess} that
an element of $\odotZ{B}$
is decreasingly minimal if and only if it is a minimizer of 
the left-hand side of (\ref{decmintruncsum0}) universally for all $a \in \ZZ$.
We remark that the minimization problem above is known to be most fundamental 
in the literature of majorization,
whereas the function 
$p(X) - a \vert X\vert$
to be maximized plays the pivotal role in characterizing
the canonical partition and the essential value-sequence
(cf., Section \ref{SCcanopat}).
Thus the min-max formula (\ref{decmintruncsum0}) 
reinforces the link between the present study and the theory of majorization.

\subsection{Fenchel-type discrete duality in discrete convex analysis}
\label{SCfencml}

In this section we describe an important result in DCA,
the Fenchel-type discrete duality theorem, which we use to derive the min-max formulas
related to dec-min elements.
The Fenchel-type discrete duality theorem in DCA originates in Murota \cite{Mstein96}
and is formulated for integer-valued functions in \cite{Mdca98,Mdcasiam}.

For any integer-valued functions
$f: \ZZ\sp{S} \to \ZZ \cup \{ +\infty \}$
and  
$h: \ZZ\sp{S} \to \ZZ \cup \{ -\infty \}$,
we define their (convex and concave) {\bf conjugate functions} by 
\begin{eqnarray}
 f\sp{\bullet}(\pi) 
 &=& \sup\{  \langle \pi, x \rangle - f(x)  :  x \in \ZZ\sp{S} \}
\qquad ( \pi \in \ZZ\sp{S}), 
\label{conjvexZZ} \\
 h\sp{\circ}(\pi) 
 &=& \inf\{  \langle \pi, x \rangle - h(x)  :  x \in \ZZ\sp{S} \}
\qquad ( \pi \in \ZZ\sp{S}),
 \label{conjcavZZ}
\end{eqnarray}
where $\langle \pi, x \rangle$ means the (standard) inner product of vectors $\pi$ and $x$.
Note that both $x$ and $\pi$ are integer vectors.
Since the functions are integer-valued,
the supremum in (\ref{conjvexZZ}) is attained if it is finite-valued.
Similarly for the infimum in (\ref{conjcavZZ}).
Accordingly, we henceforth write ``$\max$'' and ``$\min$'' 
in place of ``$\sup$'' in (\ref{conjvexZZ}) and ``$\inf$'' in (\ref{conjcavZZ}),
respectively.

The Fenchel-type discrete duality is concerned with the relationship between
the minimum of 
$f(x) - h(x)$ over $x \in \ZZ\sp{S}$
and the maximum of 
$h\sp{\circ}(\pi)  - f\sp{\bullet}(\pi)$ over $\pi \in \ZZ\sp{S}$.
By the definition of the conjugate functions
in (\ref{conjvexZZ}) and (\ref{conjcavZZ}) we have 
inequalities (called the Fenchel--Young inequalities)
\begin{eqnarray}
 f(x) + f\sp{\bullet}(\pi) &\geq&  \langle \pi, x \rangle , 
 \label{youngineqf4}
\\ 
h(x) + h\sp{\circ}(\pi) &\leq&  \langle \pi, x \rangle 
 \label{youngineqh4}
\end{eqnarray}
for any $x$ and $\pi$, and hence
\begin{equation} \label{fencweakineq4}
 f(x) - h(x) \geq h\sp{\circ}(\pi) - f\sp{\bullet}(\pi) 
\end{equation}
for any $x$ and $\pi$.
Therefore we have {\bf weak duality}: 
\begin{equation} \label{fencminmaxMweak4}
  \min\{ f(x) - h(x)  :  x \in \ZZ\sp{S}  \}
 \geq   \max\{ h\sp{\circ}(\pi) - f\sp{\bullet}(\pi)  :  \pi \in \ZZ\sp{S} \} .
\end{equation}
It is noted, however, that in this expression using ``$\min$'' and ``$\max$'' 
we do not exclude the possibility of 
the unbounded case where $\min\{ \cdots \}$ and/or $\max\{ \cdots \}$ 
are equal to $-\infty$ or $+\infty$
(we avoid using ``$\inf$'' and ``$\sup$'' for wider audience). 
Here we note the following.
\begin{enumerate}
\item
If 
$\dom f \cap \dom h \not= \emptyset$
and 
$\dom f\sp{\bullet} \cap \dom h\sp{\circ} \not= \emptyset$,
both $\min\{ f(x) - h(x)  :  x \in \ZZ\sp{S}  \}$
and $\max\{ h\sp{\circ}(\pi) - f\sp{\bullet}(\pi)  :  \pi \in \ZZ\sp{S} \}$
are finite integers and the minimum and the maximum are attained 
by some $x$ and $\pi$ since the functions are integer-valued.

\item
If $\dom f \cap \dom h = \emptyset$,
we understand (by convention) that the minimum of $f - h$ is equal to $+\infty$,
that is, $\min\{ f(x) - h(x)  :  x \in \ZZ\sp{S}  \}=+\infty$.

\item
If 
$\dom f\sp{\bullet} \cap \dom h\sp{\circ} = \emptyset$,
we understand (by convention) that the maximum of $h\sp{\circ} - f\sp{\bullet}$ 
is equal to $-\infty$,
that is, $\max\{ h\sp{\circ}(\pi) - f\sp{\bullet}(\pi)  :  \pi \in \ZZ\sp{S} \} =-\infty$.
\end{enumerate}
We say that {\bf strong duality} holds if equality holds in \eqref{fencminmaxMweak4}.

The strong duality does hold for a pair of an M$\sp{\natural}$-convex function $f$
and an M$\sp{\natural}$-concave function $h$,
as the following theorem shows.
This is called the 
{\bf Fenchel-type discrete duality theorem} 
\cite{Mdca98,Mdcasiam}.
To be more precise, we need to assume that
at lease one of the following two conditions is satisfied:
\\ \quad
(i) 
there exists $x$ for which both $f(x)$ and $h(x)$ are finite 
({\bf primal feasibility}, $\dom f \cap \dom h \not= \emptyset$),
\\ \quad 
 (ii) 
there exists $\pi$ for which both
$f\sp{\bullet}(\pi)$ and $h\sp{\circ}(\pi)$ are finite
({\bf dual feasibility}, $\dom f\sp{\bullet} \cap \dom h\sp{\circ} \not= \emptyset$).
\\
Note that these two feasibility conditions, (i) and (ii),
are mutually independent, and there is an example
for which both conditions fail simultaneously
\cite[p.220, Note 8.18]{Mdcasiam}.

\begin{theorem}[Fenchel-type discrete duality theorem \cite{Mdca98,Mdcasiam}]     
\label{THmlfencdual}
Let $f: \ZZ\sp{S} \to \ZZ \cup \{ +\infty \}$
be an integer-valued M$\sp{\natural}$-convex function and 
$h: \ZZ\sp{S} \to \ZZ \cup \{ -\infty \}$
be an integer-valued M$\sp{\natural}$-concave function
such that 
$\dom f \cap \dom h \not= \emptyset$
or
$\dom f\sp{\bullet} \cap \dom h\sp{\circ} \not= \emptyset$.
Then we have
\begin{equation} \label{fencminmaxM}
  \min\{ f(x) - h(x)  :  x \in \ZZ\sp{S}  \}
 =   \max\{ h\sp{\circ}(\pi) - f\sp{\bullet}(\pi)  :  \pi \in \ZZ\sp{S} \} .
\end{equation}
This common value is finite
if and only if
$\dom f \cap \dom h \not= \emptyset$
and
$\dom f\sp{\bullet} \cap \dom h\sp{\circ} \not= \emptyset$,
and then the minimum and the maximum are attained.
\finbox
\end{theorem}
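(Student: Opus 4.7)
The plan is to reduce the Fenchel-type identity to the M-convex intersection theorem, which is the combinatorial core of DCA duality and is proved by a cycle-cancelling algorithm on an auxiliary exchange graph. Weak duality \eqref{fencminmaxMweak4} is already in hand from the Fenchel--Young inequalities \eqref{youngineqf4}, \eqref{youngineqh4}, so the task is to produce, under at least one feasibility hypothesis, an optimal primal-dual pair with matching value, and to analyze the finiteness dichotomy asserted at the end of the statement.

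First, I recast the assertion as an intersection-type identity. Setting $g := -h$ makes $g$ an M$^\natural$-convex function with $g\sp{\bullet}(\pi) = -h\sp{\circ}(-\pi)$, and the substitution $\pi \mapsto -\pi$ in the dual turns \eqref{fencminmaxM} into
\[
   \min_{x \in \ZZ\sp{S}}\, [f(x) + g(x)] \;=\; \max_{\pi \in \ZZ\sp{S}}\, [\, -f\sp{\bullet}(-\pi) - g\sp{\bullet}(\pi)\, ].
\]
A standard M$^\natural$-to-M lift then adjoins a coordinate $s_{0}$: define $\tilde f$ on $\ZZ\sp{S \cup \{s_{0}\}}$ by $\tilde f(x,\xi) = f(x)$ when $\xi = -\widetilde x(S)$ and $+\infty$ otherwise, and analogously for $\tilde g$. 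This lift preserves both primal and dual values while promoting each M$^\natural$-convex function to an M-convex one, so it suffices to prove strong duality for a pair of M-convex functions on a common ground set.

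Second, at any $x \in \dom \tilde f \cap \dom \tilde g$, I would build the auxiliary exchange digraph on $S \cup \{s_{0}\}$ with two arc-weight families recording the one-step exchange costs $\tilde f(x - \chi_{s} + \chi_{t}) - \tilde f(x)$ and $\tilde g(x + \chi_{s} - \chi_{t}) - \tilde g(x)$. The M-exchange axiom \eqref{mconvexZ} yields the optimality characterization: $x$ minimizes $\tilde f + \tilde g$ if and only if this digraph has no negative cycle. Cycle-cancelling when one exists strictly decreases $\tilde f(x) + \tilde g(x)$, and integrality forces finite termination at some $x\sp{*}$; at termination, shortest-path potentials from $s_{0}$ yield a vector $\pi\sp{*}$ that is simultaneously Fenchel--Young-tight for $\tilde f$ and $\tilde g$ at $x\sp{*}$, establishing $\tilde f(x\sp{*}) + \tilde g(x\sp{*}) = -\tilde f\sp{\bullet}(-\pi\sp{*}) - \tilde g\sp{\bullet}(\pi\sp{*})$. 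The hard part will be this combinatorial core --- verifying both the optimality/no-negative-cycle equivalence and the simultaneous tightness of the shortest-path potential for two distinct M-convex functions --- which is the substantive DCA content on which the whole duality hinges. Finally, for the finiteness dichotomy under only one-sided feasibility, primal-only feasibility leads (via the failure of bounded shortest-path potentials in the exchange graph) to an infinite sequence of strict local descents driving $f(x) - h(x) \to -\infty$, so both sides equal $-\infty$; the dual-only-feasible case is handled symmetrically by exchanging the roles of $f$ and $h$ with those of $f\sp{\bullet}$ and $h\sp{\circ}$.
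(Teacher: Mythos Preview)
The paper does not actually prove this theorem: the \finbox\ at the end of the statement marks it as a result quoted without proof from the cited references \cite{Mdca98,Mdcasiam}. Remark~\ref{RMdualityvar} in the paper explicitly notes that the Fenchel-type duality is equivalent to the M-convex intersection theorem, the M- and L-separation theorems, etc., all of which are proved in \cite{Mdcasiam} and used here as black boxes.

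Your outline is essentially the route taken in those references: reduce $\mathrm{M}^\natural$ to M by adjoining a slack coordinate, rewrite the Fenchel identity as an M-convex intersection problem, and appeal to the no-negative-cycle optimality criterion together with shortest-path potentials on the exchange graph. So in spirit you are reproducing the standard proof rather than offering an alternative. A couple of points in your sketch would need tightening if this were to stand as a full proof. First, cycle-cancelling by itself does not guarantee termination merely from integrality; one needs either a bound on the primal objective (which is exactly what fails in the degenerate case) or a more careful descent argument. Second, your treatment of the one-sided-feasible degenerate cases is too casual: ``failure of bounded shortest-path potentials'' is not a precise mechanism, and the actual argument in \cite{Mdcasiam} handles the $+\infty$ and $-\infty$ cases via separate structural lemmas rather than by letting an algorithm run forever. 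These are the genuinely delicate parts of the proof, and your sketch correctly flags the combinatorial core as ``the hard part'' but does not supply it.
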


The essential content of the above theorem
may be expressed as follows:
If $f(x) - h(x)$ is bounded from below, then 
$h\sp{\circ}(\pi) - f\sp{\bullet}(\pi)$
is bounded from above, and 
the minimum of $f(x) - h(x)$ and
the maximum of $h\sp{\circ}(\pi) - f\sp{\bullet}(\pi)$ coincide.

\begin{remark} \rm  \label{RMdualityvar}
The Fenchel-type duality theorem is the central 
duality theorem in discrete convex analysis.
The duality phenomenon captured by this theorem
can be formulated in several different, mutually equivalent, forms
including 
the M-separation theorem \cite[Theorem 8.15]{Mdcasiam},
the L-separation theorem \cite[Theorem 8.16]{Mdcasiam}, 
and the M-convex intersection theorem \cite[Theorem 8.17]{Mdcasiam}. 
These duality theorems include a number of 
important results as special cases such as
Edmonds' intersection theorem,
Fujishige's Fenchel-type duality theorem 
\cite[Theorem 6.3]{Fuj05book}
for submodular set functions, 
the discrete separation theorem
\cite[Theorem 12.2.1]{Fra11book}
for submodular/supermodular functions, and 
the weight splitting theorem 
\cite[Theorem 13.2.4]{Fra11book}
for the weighted matroid intersection problem.
See \cite[Section 8.2, Fig.8.2]{Mdcasiam} for this relationship.
\finbox
\end{remark}

\begin{remark} \rm  \label{RMfenccert1}
The Fenchel-type discrete duality theorem offers
an optimality certificate for the minimization problem of $f(x) - h(x)$.
Two cases are to be distinguished.

\begin{enumerate}
\item
If the explicit forms of the conjugate functions 
$f\sp{\bullet}(\pi)$ and $h\sp{\circ}(\pi)$ are known, 
we can easily evaluate the value of 
$h\sp{\circ}(\pi) - f\sp{\bullet}(\pi)$ for any integer vector $\pi$.
Given an integral vector 
$\pi$ as a certificate of optimality
for an allegedly optimal $x$,
we only have to compute the values of 
$f(x) - h(x)$ and $h\sp{\circ}(\pi) - f\sp{\bullet}(\pi)$
and compare the two values (integers) for their equality.
Thus the availability of explicit forms of the conjugate functions
is computationally convenient as well as intuitively appealing.
The min-max formula \eqref{minmaxSqSum-KM} for 
the square-sum minimization over an M-convex set
falls into this case.

\item
Even if explicit forms of the conjugate functions are not available, 
the Fenchel-type discrete duality theorem
offers a computationally efficient (polynomial-time) method
for verifying the optimality
if it is combined with the M-optimality criterion (Theorem \ref{THmopt}). 
We shall discuss this method in Section \ref{SCfencoptsetgen};
see Remark \ref{RMfenccert2}.
\finbox
\end{enumerate}
\end{remark}

The conjugate of an M$\sp{\natural}$-convex function is 
endowed with another kind of discrete convexity, called {\rm L}$\sp{\natural}$-convexity.
A function
$g: \ZZ\sp{S} \to \RR \cup \{ +\infty \}$
with $\dom g \not= \emptyset$
is called {\bf L$\sp{\natural}$-convex} if
it satisfies the inequality
\begin{equation} \label{disfnmidconvex}
 g(\pi) + g(\tau) \geq
   g \left(\left\lceil \frac{\pi+\tau}{2} \right\rceil\right) 
  + g \left(\left\lfloor \frac{\pi+\tau}{2} \right\rfloor\right) 
\qquad (\pi, \tau \in \ZZ\sp{S})   ,
\end{equation}
where, for $z \in \RR$ in general, 
$\left\lceil  z   \right\rceil$ 
denotes the smallest integer not smaller than $z$
(rounding-up to the nearest integer)
and $\left\lfloor  z  \right\rfloor$
the largest integer not larger than $z$
(rounding-down to the nearest integer),
and this operation is extended to a vector
by componentwise applications.
The property (\ref{disfnmidconvex}) is referred to 
as {\bf discrete midpoint convexity}.
A function $g$ is called {\bf L$\sp{\natural}$-concave} 
if $-g$ is {\rm L}$\sp{\natural}$-convex.

The following is a local characterization of global maximality
for L$\sp{\natural}$-concave functions, called the L-optimality criterion (concave version).

\begin{theorem}[{\cite[Theorem 7.14]{Mdcasiam}}] \label{THlopt}
Let $g: \ZZ\sp{S} \to \RR \cup \{ -\infty \}$ be an L$\sp{\natural}$-concave function,
and $\pi\sp{*}  \in \dom g$.
Then $\pi\sp{*} $ is a maximizer of $g$ if and only if
it is locally maximal in the sense that
\begin{align} 
& g(\pi\sp{*}) \geq g(\pi\sp{*} - \chi_{Y})  \quad  \mbox{\rm for all } \ Y \subseteq S ,
\label{lnatfnlocmin1}
\\
& g(\pi\sp{*}) \geq g(\pi\sp{*} + \chi_{Y})  \quad  \mbox{\rm for all } \ Y \subseteq S .
\label{lnatfnlocmin2}
\end{align}
\finbox
\end{theorem}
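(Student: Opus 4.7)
The forward direction is immediate, since global maximality implies maximality against any particular comparison point $\pi\sp{*} \pm \chi_{Y}$. The content lies in the converse.

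The plan is to argue by contradiction, using the concave discrete midpoint inequality
\[
 g(\pi) + g(\tau) \leq g\!\left(\left\lceil (\pi+\tau)/2 \right\rceil\right) + g\!\left(\left\lfloor (\pi+\tau)/2 \right\rfloor\right)
\]
(the concave version of \eqref{disfnmidconvex}) as the descent tool, and induction on $\|\tau-\pi\sp{*}\|_{1}$. Suppose, toward a contradiction, that $\pi\sp{*}$ satisfies both \eqref{lnatfnlocmin1} and \eqref{lnatfnlocmin2} but is not a global maximizer. Among all $\tau \in \dom g$ with $g(\tau) > g(\pi\sp{*})$, pick one minimizing $\|\tau-\pi\sp{*}\|_{1}$, and set $d := \tau - \pi\sp{*} \in \ZZ\sp{S}\setminus\{0\}$.

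The heart of the argument is the following trichotomy on $d$. If $d \in \{0,1\}\sp{S}$, then $d = \chi_{Y}$ for some nonempty $Y \subseteq S$, so $g(\tau) \leq g(\pi\sp{*})$ by \eqref{lnatfnlocmin2}, contradiction. Symmetrically, if $d \in \{-1,0\}\sp{S}$, then $d = -\chi_{Y}$ and \eqref{lnatfnlocmin1} yields a contradiction. Otherwise, $d$ has some component of absolute value $\geq 2$, or $d$ has both a positive and a negative entry; in either situation both $\lceil d/2 \rceil$ and $\lfloor d/2 \rfloor$ are nonzero, and from the componentwise identity $d(s) = \lceil d(s)/2\rceil + \lfloor d(s)/2 \rfloor$ with both terms of the same sign as $d(s)$, we get
\[
 \|\lceil d/2 \rceil\|_{1} < \|d\|_{1}, \qquad \|\lfloor d/2 \rfloor\|_{1} < \|d\|_{1}.
\]
Applying discrete midpoint concavity to $\pi\sp{*}$ and $\tau$, writing the midpoints as $\pi\sp{*}+\lceil d/2 \rceil$ and $\pi\sp{*}+\lfloor d/2 \rfloor$, we obtain
\[
 g(\pi\sp{*}+\lceil d/2 \rceil) + g(\pi\sp{*}+\lfloor d/2 \rfloor) \geq g(\pi\sp{*}) + g(\tau) > 2 g(\pi\sp{*}),
\]
so at least one midpoint $\tau'$ is in $\dom g$ and satisfies $g(\tau') > g(\pi\sp{*})$ with $\|\tau' - \pi\sp{*}\|_{1} < \|\tau - \pi\sp{*}\|_{1}$, contradicting the choice of $\tau$.

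The main technical point requiring care is ensuring that the inductive descent step actually produces a strictly smaller counterexample; this is exactly why the trichotomy is organized so that the two base cases ($d \in \{0,1\}\sp{S}$ and $d \in \{-1,0\}\sp{S}$) line up precisely with the two families of inequalities \eqref{lnatfnlocmin1} and \eqref{lnatfnlocmin2} that are hypothesized, while every other shape of $d$ admits a genuine midpoint split. I anticipate no difficulty beyond this case bookkeeping; the argument is otherwise mechanical once \eqref{disfnmidconvex} is in hand.
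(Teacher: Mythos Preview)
The paper does not supply its own proof of this theorem: it is quoted from \cite[Theorem 7.14]{Mdcasiam} and closed with \finbox, so there is nothing to compare against. Your argument is correct and is essentially the standard proof: the discrete midpoint concavity inequality plus $\ell_{1}$-descent, with the trichotomy on $d=\tau-\pi\sp{*}$ cleanly separating the two base cases (matching \eqref{lnatfnlocmin1} and \eqref{lnatfnlocmin2}) from the inductive step where both rounded halves of $d$ are nonzero and strictly shorter.
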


The reader is referred to 
\cite[Chapter 7]{Mdcasiam} for more properties of L$\sp{\natural}$-convex functions
and \cite[Chapter 8]{Mdcasiam} for the conjugacy between 
M$\sp{\natural}$-convexity and L$\sp{\natural}$-convexity. 
In particular, \cite[Figure 8.1]{Mdcasiam}
offers the whole picture of conjugacy relationship.

\begin{remark} \rm  \label{RMm2convex}
In Theorem~\ref{THmlfencdual}
the functions $f(x)$ and $-h(x)$ are both M$\sp{\natural}$-convex,
but the function $f(x) - h(x)$
to be minimized on the left-hand side of (\ref{fencminmaxM})
is not necessarily M$\sp{\natural}$-convex,
since the sum of M$\sp{\natural}$-convex functions may not be M$\sp{\natural}$-convex.
To see this, consider two M-convex sets $\odotZ{B}_{1}$ and $\odotZ{B}_{2}$ 
associated with integral base-polyhedra
$B_{1}$ and $B_{2}$, respectively, and for $i=1,2$,
let $f_{i}$ be the indicator function of $\odotZ{B}_{i}$
(i.e., $f_{i}(x) = 0$ if $x \in \odotZ{B}_{i}$, and 
$f_{i}(x) = +\infty$ if $x \in \ZZ\sp{S} \setminus \odotZ{B}_{i}$).
The function  $f_{1}+f_{2}$ is the indicator function of 
the set of integer points in the intersection 
$B_{1} \cap B_{2}$, which is not a base-polyhedron in general.
This argument also shows that the left-hand side of (\ref{fencminmaxM})
is a nonlinear generalization of the weighted polymatroid intersection problem;
see \cite[Section 8.2.3]{Mdcasiam} for details.
\finbox
\end{remark} 

\begin{remark} \rm  \label{RMlconvex}
Functions $h\sp{\circ}(\pi)$ and $f\sp{\bullet}(\pi)$ in Theorem~\ref{THmlfencdual}
are L$\sp{\natural}$-concave and L$\sp{\natural}$-convex, respectively.
Since the sum of L$\sp{\natural}$-concave functions is L$\sp{\natural}$-concave,
the function $h\sp{\circ}(\pi) - f\sp{\bullet}(\pi)$
to be maximized on the right-hand side of (\ref{fencminmaxM})
is an L$\sp{\natural}$-concave function.
In contrast, the function $f(x) - h(x)$
to be minimized on the left-hand side of (\ref{fencminmaxM})
is not an M$\sp{\natural}$-convex function, as explained in Remark \ref{RMm2convex} above.
In this sense, the left-hand side (minimization) and the right-hand side
(maximization) are not symmetric.
\finbox
\end{remark}

\subsection{Min-max formula for separable convex functions on an M-convex set}
\label{SCfencsepar}

In this section 
the Fenchel-type discrete duality theorem is tailored to 
the problem of minimizing a separable convex function over an M-convex set.
This special case deserves particular attention
as it is suitable and sufficient for our use in decreasing minimization.

Consider the problem of minimizing an integer-valued separable convex function
\begin{equation} \label{gensepar2}
  \Phi(x) = \sum [\varphi_{s}(x(s)):  s\in S] 
\end{equation}
over an M-convex set $\odotZ{B}$, where each
$\varphi_{s}: \ZZ \to \ZZ \cup \{ +\infty \}$
is an integer-valued discrete convex function in a single integer variable.
This problem is equivalent to minimizing 
$\Phi(x) + \delta(x)$,
where $\delta$ denotes the indicator function of 
$\odotZ{B}$ defined in (\ref{indicBdef}).

In Section \ref{SCseparDCA} we have regarded the function
$\Phi + \delta$
as an M-convex function
and applied the M-optimality criterion 
to derive some results 
obtained in Part~I \cite{FM18part1}. 
In contrast, we are now going to apply the Fenchel-type discrete duality theorem
to the minimization of the function 
$\Phi + \delta = \Phi - (-\delta)$.
In so doing we can 
separate the roles of the constraining M-convex set 
and the objective function $\Phi(x)$ itself.

With the choice
of $f = \Phi$ and $h= -\delta$ in the min-max relation 
$\min\{ f(x) - h(x)  \} =  \max\{ h\sp{\circ}(\pi) - f\sp{\bullet}(\pi)  \}$
in (\ref{fencminmaxM}),
the left-hand side
represents minimization of $\Phi$ over the M-convex set $\odotZ{B}$.
We denote the conjugate function of $\varphi_{s}$ by $\psi_{s}$,
which is a function $\psi_{s}: \ZZ \to \ZZ \cup \{  +\infty \}$ defined by
\begin{equation} \label{phiconjdef}
 \psi_{s}(\ell)  = \max\{ k \ell  -  \varphi_{s}(k)  :  k \in \ZZ \}
\qquad
(\ell \in \ZZ).
\end{equation}
Then the conjugate function of $f$ is given by
\begin{equation}
 f\sp{\bullet}(\pi)  = \sum [\psi_{s}(\pi(s)):  s\in S]
\qquad ( \pi \in \ZZ\sp{S}) .
\label{fconj}
\end{equation}
On the other hand, the conjugate function $h\sp{\circ}$ of $h$ is given by
\begin{equation}
 h\sp{\circ}(\pi) 
= \min\{  \langle \pi, x \rangle +  \delta(x)   : x \in \ZZ\sp{S} \}
=\min\{  \langle \pi, x \rangle  :  x \in \odotZ{B} \}
=\hat p(\pi)
\quad ( \pi \in \ZZ\sp{S})
\label{deltaBconj}
\end{equation}
in terms of 
the linear extension (Lov{\'a}sz extension) $\hat p$ of $p$.
Recall that, for any set function $p$, 
$\hat p$ is defined \cite[Part I, Section~\RefPartI{6.2}]{FM18part1} as
\begin{equation} \label{lovextdef}
\hat p(\pi ) = p(I_n)\pi (s_n) 
 + \sum _{j=1}\sp {n-1} p(I_j)[\pi (s_j)-\pi (s_{j+1})] ,
\end{equation}
where $n = |S|$, the elements of $S$ are indexed
in such a way that
$\pi (s_1)\geq \cdots \geq \pi (s_n)$, 
and $I_j=\{s_1,\dots ,s_j\}$ for $j=1,\dots ,n$.
If $p$ is supermodular, we have
\begin{equation}  \label{plovextmin}
\hat p(\pi) = \min \{ \pi x : x \in \odotZ{B} \} .
\end{equation}

Substituting (\ref{fconj}) and (\ref{deltaBconj}) into (\ref{fencminmaxM})
we obtain (\ref{minmaxgensep}) below.

\begin{theorem} \label{THminmaxgensep}
Assume that 
{\rm (i)} there exists $x \in \odotZ{B}$ such that
$\varphi_{s}(x(s)) < +\infty$ for all $s \in S$
(primal feasibility) or 
{\rm (ii)}  there exists $\pi \in \ZZ\sp{S}$ such that
$\hat p(\pi) > -\infty$ and $\psi_{s}(\pi(s)) < +\infty$ for all $s \in S$
(dual feasibility).
Then we have the min-max relation:
\begin{equation} \label{minmaxgensep}
 \min \{  \sum_{s \in S} \varphi_{s} ( x(s) ) :  x \in \odotZ{B}  \}
 = \max\{ \hat p(\pi) - \sum_{s \in S} \psi_{s}(\pi(s)) :  \pi \in \ZZ\sp{S} \} .
\end{equation}
The unbounded case with both sides 
being equal to $-\infty$ or $+\infty$ is also a possibility.
\finbox
\end{theorem}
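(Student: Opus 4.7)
The plan is to specialize the Fenchel-type discrete duality theorem (Theorem~\ref{THmlfencdual}) to the pair $f=\Phi$ and $h=-\delta$, where $\delta$ is the indicator function of the M-convex set $\odotZ{B}$ introduced in \eqref{indicBdef}. First, I would verify the discrete-convexity hypotheses: a separable discrete convex function is M$^{\natural}$-convex (its effective domain is a box, and the exchange axiom is easily checked from one-variable convexity of each $\varphi_{s}$), while the indicator function $\delta$ of an M-convex set is M-convex---hence M$^{\natural}$-convex---so that $h=-\delta$ is M$^{\natural}$-concave. Thus Theorem~\ref{THmlfencdual} applies to this pair.

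Next I would match the two sides of \eqref{fencminmaxM} with those of \eqref{minmaxgensep}. The left-hand side is immediate: $\Phi(x)-h(x)=\Phi(x)+\delta(x)$ takes finite values exactly on $\odotZ{B}$, so $\min\{\Phi(x)-h(x):x\in\ZZ^{S}\}$ equals $\min\{\sum_{s\in S}\varphi_{s}(x(s)):x\in\odotZ{B}\}$. For the right-hand side I compute the two conjugates separately. Since $\Phi$ is separable, the supremum in \eqref{conjvexZZ} decouples coordinate-wise and gives $f^{\bullet}(\pi)=\sum_{s\in S}\psi_{s}(\pi(s))$, which is formula \eqref{fconj}. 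For $h^{\circ}$, the definition \eqref{conjcavZZ} yields
\[
h^{\circ}(\pi)=\min\{\langle\pi,x\rangle+\delta(x):x\in\ZZ^{S}\}=\min\{\langle\pi,x\rangle:x\in\odotZ{B}\},
\]
and this minimum coincides with the value $\hat p(\pi)$ of the Lov\'asz extension of the defining supermodular function $p$, by the identity \eqref{plovextmin}. Substituting these two expressions into the right-hand side of \eqref{fencminmaxM} produces exactly the right-hand side of \eqref{minmaxgensep}.

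Finally, I would translate the two alternative hypotheses of Theorem~\ref{THmlfencdual} into the conditions stated in the theorem. Primal feasibility $\dom f\cap\dom h\neq\emptyset$ is precisely (i), the existence of $x\in\odotZ{B}$ with $\varphi_{s}(x(s))<+\infty$ for every $s$. Dual feasibility $\dom f^{\bullet}\cap\dom h^{\circ}\neq\emptyset$ becomes (ii), the existence of $\pi\in\ZZ^{S}$ with $\psi_{s}(\pi(s))<+\infty$ for every $s$ and $\hat p(\pi)>-\infty$. Under either hypothesis, Theorem~\ref{THmlfencdual} delivers the equality \eqref{minmaxgensep}, and the unbounded case with both sides equal to $-\infty$ or $+\infty$ is handled by the conventions recorded below \eqref{fencminmaxMweak4}.

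The step that I expect to require the most care is the identification $h^{\circ}=\hat p$: although this is the standard support-function representation for base-polyhedra, one must confirm that the minimum $\min\{\langle\pi,x\rangle:x\in\odotZ{B}\}$ is attained at an integer point so that the integer-vector conjugate \eqref{conjcavZZ} agrees with the continuous minimum defining $\hat p(\pi)$. This follows from the integrality of the greedy vertex of an integral base-polyhedron and should be invoked explicitly. Everything else is routine substitution into Theorem~\ref{THmlfencdual}.
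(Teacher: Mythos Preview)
Your proposal is correct and follows essentially the same route as the paper: the paper also specializes Theorem~\ref{THmlfencdual} with $f=\Phi$ and $h=-\delta$, computes $f^{\bullet}(\pi)=\sum_{s}\psi_{s}(\pi(s))$ and $h^{\circ}(\pi)=\hat p(\pi)$ via \eqref{fconj} and \eqref{deltaBconj}, and substitutes into \eqref{fencminmaxM}. Your explicit verification of the M$^{\natural}$-convexity hypotheses and of the feasibility translations is slightly more detailed than what the paper spells out, but the argument is the same.
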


Since $\hat p(\pi)$ is an L$\sp{\natural}$-concave function and 
$\sum [\psi_{s}(\pi(s)): s\in S]$ is an L$\sp{\natural}$-convex function,
the function 
$g(\pi) := \hat p(\pi) - \sum [\psi_{s}(\pi(s)): s\in S]$
to be maximized on the right-hand side of (\ref{minmaxgensep})
is an L$\sp{\natural}$-concave function (cf.~Remark \ref{RMlconvex}).
We state this as a proposition for later reference.

\begin{proposition} \label{PRdualobjlnat}
The function $g(\pi) = \hat p(\pi) - \sum [\psi_{s}(\pi(s)): s\in S]$
is L$\sp{\natural}$-concave.
\finbox
\end{proposition}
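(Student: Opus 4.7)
My plan is to decompose $g$ into two L$^{\natural}$-concave summands and then invoke closure of L$^{\natural}$-concavity under addition. Concretely, I would show that

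\smallskip
\noindent\textbf{(a)} $\hat{p}(\pi)$ is L$^{\natural}$-concave, and
\noindent\textbf{(b)} $\sum_{s\in S}\psi_{s}(\pi(s))$ is L$^{\natural}$-convex, so that $-\sum_{s\in S}\psi_{s}(\pi(s))$ is L$^{\natural}$-concave.

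For (a), I would recall from \eqref{deltaBconj} that $\hat{p}$ is exactly the concave conjugate $h^{\circ}$ of the function $h=-\delta$, where $\delta$ is the indicator function of the M-convex set $\odotZ{B}$. Since the indicator function of an M-convex set is M-convex (noted in Section~\ref{SCseparDCA}), $h=-\delta$ is M-concave, and in particular M$^{\natural}$-concave. The conjugacy theory of discrete convex analysis (see \cite[Chapter 8, Figure 8.1]{Mdcasiam}) then yields that $h^{\circ}=\hat{p}$ is L$^{\natural}$-concave. Alternatively, one may verify the discrete midpoint concavity of $\hat{p}$ directly using the supermodularity of $p$ together with the explicit formula \eqref{lovextdef}, but the conjugacy route is cleaner.

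For (b), each $\psi_{s}$ is the univariate conjugate \eqref{phiconjdef} of the discrete convex function $\varphi_{s}$, so it is again discrete convex. For any discrete convex $\psi:\ZZ\to\ZZ\cup\{+\infty\}$ and integers $a,b$, convexity implies
\[
 \psi(a)+\psi(b)\;\geq\;\psi\!\left(\left\lceil\tfrac{a+b}{2}\right\rceil\right)+\psi\!\left(\left\lfloor\tfrac{a+b}{2}\right\rfloor\right),
\]
since the right-hand pair of arguments is obtained from $\{a,b\}$ by a rounding towards the midpoint that preserves the sum (in the odd case) or collapses to the midpoint (in the even case). Summing this componentwise over $s\in S$ gives the discrete midpoint convexity \eqref{disfnmidconvex} for $\sum_{s\in S}\psi_{s}(\pi(s))$, establishing its L$^{\natural}$-convexity.

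Finally, I would combine (a) and (b): the defining inequality \eqref{disfnmidconvex} (in its concave form) is preserved under addition, hence the sum of two L$^{\natural}$-concave functions is L$^{\natural}$-concave. Applied to $\hat{p}(\pi)$ and $-\sum_{s}\psi_{s}(\pi(s))$, this yields L$^{\natural}$-concavity of $g$. I do not anticipate a real obstacle here, since each step invokes only standard facts of DCA (conjugacy between M$^{\natural}$- and L$^{\natural}$-convexity, the easy observation that separable convex functions are L$^{\natural}$-convex, and the additivity of midpoint (co)convexity); the only delicate point is being careful about the concave-vs-convex conjugation conventions, which I would address by using $h^{\circ}$ rather than $f^{\bullet}$ so that signs match.
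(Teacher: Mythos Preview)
Your proposal is correct and follows essentially the same approach as the paper: the paper justifies the proposition (immediately before stating it, with reference to Remark~\ref{RMlconvex}) by noting that $\hat p$ is L$^{\natural}$-concave, that $\sum_{s}\psi_{s}(\pi(s))$ is L$^{\natural}$-convex, and that the sum of L$^{\natural}$-concave functions is L$^{\natural}$-concave. You have simply filled in more detail for each of these three facts.
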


When specialized to a symmetric function $\Phi$, 
the min-max formula (\ref{minmaxgensep}) is simplified to 
\begin{equation} \label{minmaxsymsep}
  \min\{ \sum_{s\in S} \varphi(x(s))  :  x \in \odotZ{B}  \}
 = \max\{ \hat p(\pi) - \sum_{s\in S} \psi(\pi(s)) :  \pi \in \ZZ\sp{S} \} ,
\end{equation}
where $\varphi: \ZZ \to \ZZ \cup \{ +\infty \}$
is any integer-valued discrete convex function
and $\psi: \ZZ \to \ZZ \cup \{ +\infty \}$
is the conjugate of $\varphi$ defined as
$\psi(\ell)  = \max\{ k \ell  -  \varphi(k)  :  k \in \ZZ \}$
for $\ell \in \ZZ$.
With appropriate choices of $\varphi$ in (\ref{minmaxsymsep})
we shall derive the formulas
(\ref{minmaxSqSum-KM}), (\ref{(betamin-KM)}), and (\ref{r1-KM}).

In applications of 
(\ref{minmaxgensep}) 
(resp., (\ref{minmaxsymsep})) 
with concrete functions 
$\varphi_{s}$ (resp., $\varphi$), it is often the case 
that the conjugate functions 
$\psi_{s}$ (resp., $\psi$)  
can be given explicitly.
This is illustrated in Section \ref{SCexplicitconj}.

\subsection{DCA-based proof of the min-max formula for the square-sum}
\label{SCderminmaxsquaresum}

The min-max formula (\ref{minmaxSqSum-KM}) for the square-sum
can be derived immediately from our duality formula (\ref{minmaxsymsep}).
For $\varphi(k)=k\sp{2}$, the conjugate function $\psi(\ell)$
for $\ell \in \ZZ$ is given explicitly as
\begin{equation}
 \psi(\ell)  
= \max\{  k \ell  - k\sp{2}    :  k \in \ZZ \}
= \max \{  k \ell  - k\sp{2} 
     :  k \in \{   \left\lfloor {\ell}/{2} \right\rfloor ,
                    \left\lceil {\ell}/{2} \right\rceil   \}       \}
= \left\lfloor \frac{\ell}{2} \right\rfloor 
               \cdot \left\lceil \frac{\ell}{2} \right\rceil .
\label{squareconj}
\end{equation}
The substitution of  
(\ref{squareconj}) into 
(\ref{minmaxsymsep}) yields (\ref{minmaxSqSum-KM}).
Note that the primal feasibility is satisfied since
$\varphi(k)$ is finite for all $k$.

\begin{remark} \rm  \label{RMsquaresumdiscover}
In Part~I \cite[Section \RefPartI{6.2}]{FM18part1}
we provided a relatively simple algorithmic proof 
for the min-max formula (\ref{minmaxSqSum-KM}),
which did not use any tool from DCA.  
However, to figure out the min-max formula itself 
without the DCA background seems rather difficult.  
Indeed, the present authors first identified the formula (\ref{minmaxSqSum-KM})
via DCA as above, and then came up with the algorithmic proof.
This example demonstrates the role and effectiveness of DCA. 
\finbox
\end{remark}

\begin{remark} \rm  \label{RMnominmaxdiffsum}
In Part~I \cite[Section \RefPartI{6.1}]{FM18part1}
we have characterized a dec-min element
as a square-sum minimizer and also as a difference-sum minimizer.
Whereas a min-max formula can be obtained by DCA for the square-sum,
this is not the case with the difference-sum.
This is because the difference-sum is not M$\sp{\natural}$-convex 
(though it is  L-convex),
and therefore difference-sum minimization over an M-convex set
does not fit into the framework of the Fenchel-type discrete duality in DCA.
\finbox
\end{remark}

\subsection{DCA-based proof of the formula for $\beta_{1}$}
\label{SCderminmaxbeta1}

The formula (\ref{(betamin-KM)}) for the largest component $\beta_{1}$ of a
max-minimizer of $\odotZ{B}$
can also be derived from our duality formula (\ref{minmaxsymsep}).
With an integer parameter $\alpha$ we choose
\[
\varphi(k) = 
   \left\{  \begin{array}{ll}
    0         &   (k \leq \alpha) ,     \\
   +\infty    &   (k \geq \alpha + 1)   \\
                     \end{array}  \right.
\]
in (\ref{minmaxsymsep}). 
By the definition of $\beta_{1}$, the left-hand side of (\ref{minmaxsymsep}) is 
equal to zero if $\alpha \geq \beta_{1}$,
and equal to $+\infty$ if $\alpha \leq \beta_{1} -1$.
Hence $\beta_{1}$ is equal to the minimum of $\alpha$ for which the left-hand side is equal to zero.

The conjugate function $\psi$ of $\varphi$ is given by
\begin{equation}
\psi(\ell) = 
\max\{  k \ell  :  k \leq \alpha \} =
   \left\{  \begin{array}{ll}
   +\infty     &   (\ell \leq - 1) ,     \\
    0         &   (\ell = 0) ,     \\
    \alpha \ell          &   (\ell \geq 1) .     \\
                     \end{array}  \right.
\label{beta1conj}
\end{equation}
Both $\hat p(\pi)$ and $\psi(\ell)$ are positively homogeneous
(i.e., $\hat p(\lambda \pi) = \lambda \hat p(\pi)$
and
$\psi(\lambda \ell)= \lambda \psi(\ell)$
for nonnegative integers $\lambda$).
This implies, in particular, that the maximization problem on 
the right-hand side of (\ref{minmaxsymsep})
is feasible for all $\alpha$ and hence  
the identity (\ref{minmaxsymsep}) holds,
which reads either  $0 = 0$ or $+\infty= +\infty$. 
Since $\beta_{1}$ is the minimum of $\alpha$ for which the left-hand side is equal to $0$,
we can say that
$\beta_{1}$ is the minimum of $\alpha$ for which the right-hand side is equal to $0$.

Finally, we consider the condition that ensures $\pi\sp{*} = {\bf 0}$ to be a maximizer of 
the function $g(\pi):= \hat p (\pi) -   \sum_{s \in S}  \psi(\pi(s))$.
By the L$\sp{\natural}$-concavity of this function
we can make use of 
Theorem~\ref{THlopt} (L-optimality criterion).
The first condition (\ref{lnatfnlocmin1})
in Theorem~\ref{THlopt} 
is satisfied trivially by (\ref{beta1conj}), 
whereas the second condition (\ref{lnatfnlocmin2}) reads
$g(\pi\sp{*} + \chi_{Y}) = p(Y) -\alpha  |Y| \leq 0$.
Therefore, the right-hand side of (\ref{minmaxsymsep}) is equal to zero
if and only if $\max \{p(Y) - \alpha | Y | :  Y \subseteq S\} = 0$,
from which follows the formula
(\ref{(betamin-KM)}).

\subsection{DCA-based proof of the formula for $r_{1}$}
\label{SCderminmaxr1}

The formula (\ref{r1-KM}) for 
the minimum number $r_{1}$ of $\beta_{1}$-valued components of 
a $\beta_{1}$-covered member of $\odotZ{B}$
can also be derived from our duality formula (\ref{minmaxsymsep}).
We choose  
\begin{equation} \label{phir1}
\varphi(k) = 
   \left\{  \begin{array}{ll}
    0         &   (k \leq \beta_{1} - 1) ,     \\
    1         &   (k = \beta_{1}) ,     \\
   +\infty    &   (k \geq \beta_{1} + 1) ,  \\
                     \end{array}  \right.
\end{equation}
whose graph is given by the left of Fig.~\ref{FGphipsi}.
By the definitions of $\beta_{1}$ and $r_{1}$, 
the minimum in (\ref{minmaxsymsep}) is equal to $r_{1}$.
In particular, the primal problem is feasible, and hence the identity 
(\ref{minmaxsymsep}) holds.

\begin{figure}\begin{center}
\includegraphics[height=40mm]{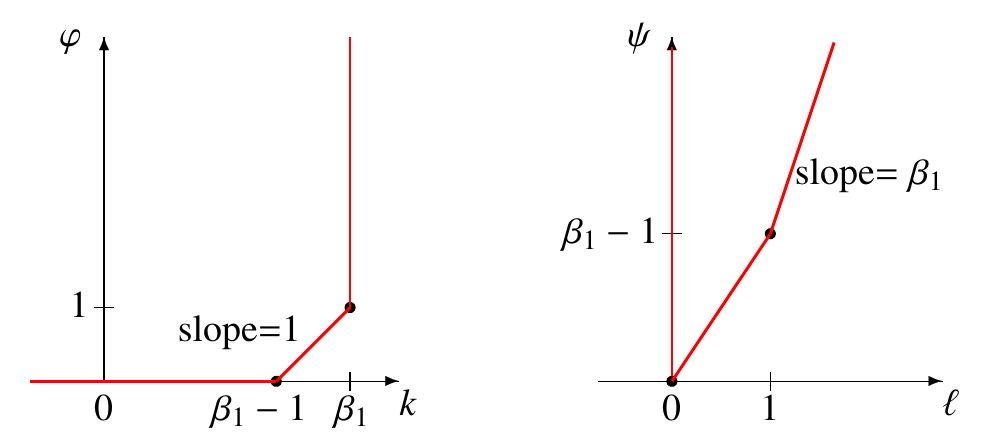}

\caption{Mutually conjugate discrete convex functions $\varphi$ and $\psi$ 
 in (\ref{phir1}) and (\ref{psir1})}
\label{FGphipsi}
\end{center}\end{figure}

The conjugate function $\psi$ of $\varphi$ is given by
\begin{eqnarray}
\psi(\ell) &=&
\max \big\{ \  
 \max\{  k \ell  :  k \leq \beta_{1} -1  \},  \ \  \beta_{1} \ell -1 
    \  \big\} 
\nonumber \\
&=&
   \left\{  \begin{array}{ll}
   +\infty     &   (\ell \leq - 1) ,     \\
    0         &    (\ell = 0) ,     \\
    \beta_{1} \ell  -1         &   (\ell \geq 1) ,     \\
                     \end{array}  \right.
\label{psir1}
\end{eqnarray}
whose graph is given by the right of Fig.~\ref{FGphipsi}.
In considering the maximum of
$g(\pi):=\hat p (\pi) -   \sum_{s \in S}  \psi(\pi(s))$
over $\pi \in \ZZ\sp{S}$,
we may restrict $\pi$ to  $\{ 0,1 \}$-vectors,
as shown in Lemma~\ref{LM01dual} below.
For $\pi = \chi_{X} \in \{ 0, 1 \}\sp{S}$ with $X \subseteq S$, we have
 $\hat p(\pi) = \hat p(\chi_{X}) = p(X)$ and 
$\sum_{s \in S}  \psi(\pi(s))  = \sum_{s \in S}  \psi(\chi_{X}(s)) 
= \sum_{s \in X} \psi(1) = (\beta_{1} -1) |X|$,
and therefore,
the right-hand side of (\ref{minmaxsymsep}) is equal to
$\max \{p(X) - (\beta_{1} -1)\vert X\vert :  X\subseteq S\}$.
Thus the formula (\ref{r1-KM}) is derived.

\begin{lemma} \label{LM01dual}
There exists a $\{0,1 \}$-vector $\pi$ that attains the maximum of $g(\pi)$ over $\pi \in \ZZ\sp{S}$.
\end{lemma}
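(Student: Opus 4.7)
The plan is to exploit the level set decomposition of a nonnegative integer vector together with the fact that $\hat p$ of a supermodular function sums nicely over level sets, and then use the defining property of $\beta_{1}$ from \eqref{(betamin-KM)} to argue that only the top level set contributes positively.

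First, observe that because $\psi(\ell)=+\infty$ for $\ell\le -1$ by \eqref{psir1}, any maximizer of $g$ must satisfy $\pi\ge \veczero$, so it suffices to show that the maximum over $\pi\in\ZZ_{\ge 0}\sp{S}$ is attained at a $\{0,1\}$-vector. For such a $\pi$, let $L_{k}=L_{k}(\pi):=\{s\in S:\pi(s)\ge k\}$ for $k\ge 1$; then $L_{1}\supseteq L_{2}\supseteq\cdots$ is a chain, with $L_{k}=\emptyset$ for $k$ large enough, and $\pi=\sum_{k\ge 1}\chi_{L_{k}}$.

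Next I would compute $g(\pi)$ in closed form using this decomposition. Writing the Lov\'asz extension \eqref{lovextdef} in the chain form shows that for any nonnegative integer vector $\pi$,
\[
\hat p(\pi)=\sum_{k\ge 1}p(L_{k}),
\]
while \eqref{psir1} gives
\[
\sum_{s\in S}\psi(\pi(s))=\sum_{s\in L_{1}}\bigl(\beta_{1}\pi(s)-1\bigr)=\beta_{1}\sum_{k\ge 1}|L_{k}|-|L_{1}|.
\]
Subtracting and isolating the $k=1$ term yields
\[
g(\pi)=\bigl[p(L_{1})-(\beta_{1}-1)|L_{1}|\bigr]+\sum_{k\ge 2}\bigl[p(L_{k})-\beta_{1}|L_{k}|\bigr]=g(\chi_{L_{1}})+\sum_{k\ge 2}\bigl[p(L_{k})-\beta_{1}|L_{k}|\bigr].
\]

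Finally I invoke \eqref{(betamin-KM)}: since $\beta_{1}=\max_{\emptyset\ne X\subseteq S}\lceil p(X)/|X|\rceil$, we have $p(X)\le\beta_{1}|X|$ for every $X\subseteq S$ (the case $X=\emptyset$ being trivial). Thus each term in the sum indexed by $k\ge 2$ is nonpositive, hence $g(\pi)\le g(\chi_{L_{1}})$; replacing $\pi$ by the $\{0,1\}$-vector $\chi_{L_{1}}$ does not decrease $g$. The main technical point — not really an obstacle, but the one to get right — is the level-set representation $\hat p(\pi)=\sum_{k\ge 1}p(L_{k})$, which follows directly from \eqref{lovextdef} by grouping terms along the chain $L_{1}\supseteq L_{2}\supseteq\cdots$; everything else is bookkeeping combined with the definition of $\beta_{1}$.
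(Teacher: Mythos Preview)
Your proof is correct and takes a genuinely different route from the paper's. The paper proceeds via the L-optimality criterion (Theorem~\ref{THlopt}): it fixes $\pi^{*}=\chi_{A}$ where $A$ maximizes $p(X)-(\beta_{1}-1)|X|$ and verifies the two local conditions $g(\pi^{*})\ge g(\pi^{*}\pm\chi_{Y})$ for all $Y\subseteq S$, using supermodularity and the inequality $p(X)\le\beta_{1}|X|$. Your argument bypasses the DCA machinery entirely: the level-set identity $\hat p(\pi)=\sum_{k\ge 1}p(L_{k})$ together with the exact form of $\psi$ yields the clean decomposition $g(\pi)=g(\chi_{L_{1}})+\sum_{k\ge 2}[p(L_{k})-\beta_{1}|L_{k}|]$, after which the same inequality $p(X)\le\beta_{1}|X|$ finishes immediately. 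Your approach is more elementary and self-contained, and it gives the extra information that passing from any $\pi\ge\veczero$ to its top level set $\chi_{L_{1}(\pi)}$ never decreases $g$; the paper's approach, on the other hand, is written to illustrate how the L-optimality criterion operates in this setting, which is one of the goals of Section~\ref{SCminmaxformula}.
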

\begin{proof}
Note first that $g$ is an L$\sp{\natural}$-concave function,
and define $a = \beta_{1} - 1$.
Let $A \subseteq S$ be a maximizer of 
$p(X) - a \vert X\vert$ over all subsets of $S$,
and $\pi\sp{*} = \chi_{A}$.
Then $g(\pi\sp{*}) =  p(A) - a \vert A \vert $.
We will show that the conditions 
(\ref{lnatfnlocmin1}) and (\ref{lnatfnlocmin2}) in the L-optimality criterion (Theorem~\ref{THlopt})
are satisfied.

Proof of 
$g(\pi\sp{*}) \geq g(\pi\sp{*} - \chi_{Y})$
in (\ref{lnatfnlocmin1}):
We may assume $Y \subseteq A$, since, otherwise,
$\pi\sp{*} - \chi_{Y} \not\in \dom g$
by (\ref{psir1}).
If $Y \subseteq A$, we have
$\pi\sp{*} - \chi_{Y} = \chi_{A \setminus Y} = \chi_{Z}$,
where $Z = A \setminus Y$.
Hence,
\[
g(\pi\sp{*} - \chi_{Y})  = g(\chi_{Z}) = p(Z) - a \vert Z \vert
\leq  p(A) - a \vert A \vert = g(\pi\sp{*}).
\]

Proof of 
$g(\pi\sp{*}) \geq g(\pi\sp{*} + \chi_{Y})$
in (\ref{lnatfnlocmin2}):
Since 
\[
(\pi\sp{*} + \chi_{Y})(s) = (\chi_{A} + \chi_{Y})(s) =
   \left\{  \begin{array}{ll}
   2      &   (s \in A \cap Y) ,     \\
   1         &  (s \in (A \cup Y) \setminus (A \cap Y)) ,     \\
   0        &   (s \in S \setminus (A \cup Y)) ,     \\
                     \end{array}  \right.
\]
we have
\begin{align*}  
 \hat p(\pi\sp{*} + \chi_{Y})
&=  p( A \cap Y ) +  p( A \cup Y ),
\\ 
 \sum_{s\in S} \psi((\pi\sp{*} + \chi_{Y})(s))
& =
(2 \beta_{1} -1) |A \cap Y| + (\beta_{1} -1) |(A \cup Y) \setminus (A \cap Y)|
=
 \beta_{1} |A \cap Y| + a |A \cup Y| 
\end{align*}
by the definition \eqref{lovextdef} of $\hat p$ and 
the expression \eqref{psir1} of the conjugate function $\psi$.
Hence
\begin{align*} 
g(\pi\sp{*} + \chi_{Y}) 
 &= 
 \big( p( A \cap Y )  + p( A \cup Y ) \big)
   - \big( \beta_{1} |A \cap Y| + a |A \cup Y| \big)
\\
 &= 
 \big( p( A \cap Y )  - \beta_{1} |A \cap Y| \big)
  + \big( p( A \cup Y ) - a |A \cup Y| \big).
\end{align*}
Here we have
\begin{align*}
& p( A \cap Y )-  \beta_{1} |A \cap Y| \leq  0,  
\\ &
p( A \cup Y ) -  a |A \cup Y|  \leq  p( A ) - a |A| = g(\pi\sp{*}) ,
\end{align*}
since $(\beta_{1}, \beta_{1}, \ldots, \beta_{1})$ belongs 
to the supermodular polyhedra defined by $p$,
 $A$ is a maximizer of $p(X) - a \vert X\vert$, and 
$p( A ) - a |A| = g(\pi\sp{*})$.
Therefore,
$g(\pi\sp{*} + \chi_{Y})  \leq  g(\pi\sp{*})$.
\end{proof}

\subsection{Total $a$-excess and decreasing minimality}
\label{SCtotalexcess}

In this section, we explore a link between decreasing minimality and
the total $a$-excess announced 
at the beginning of Section \ref{SCminmaxformula}.
The minimization problem in 
\eqref{decmintruncsum0} (or \eqref{decmintruncsum} below) 
is most fundamental in the literature of majorization.  
Indeed, a least majorized element is characterized as a universal minimizer 
for all $a \in \ZZ$ (Proposition~\ref{PRmajorchar}).
On the other hand, the function 
$p(X) - a \vert X\vert$
to be maximized plays the pivotal role in characterizing
the canonical partition and the essential value-sequence
(cf., Section \ref{SCcanopat}).

As a preparation, we recall (\cite{Fra11book}, \cite{Sch03})
that, for a nonnegative and (fully) supermodular function $p_{0}$, 
the polyhedron 
$C= \{x:  \widetilde x\geq p_{0}\}$ 
is called a contra-polymatroid.  
Note that the nonnegativity and supermodularity
of $p_{0}$ imply that $p_{0}$ is monotone non-decreasing and 
that $C\subseteq \RR_{+}\sp{S}$,
that is, every member of $C$ is a nonnegative vector.
When $p_{0}$ is integer-valued, $C$ is an integer polyhedron.
The corresponding version of
Edmonds' greedy algorithm for polymatroids implies that $C$ 
uniquely determines $p_{0}$, namely,
\begin{equation} 
\label{(pmin)} 
 p_{0}(X)= \min \{ \ \widetilde z(X):  z\in C\}.  
\end{equation}
It is known that,
for a supermodular function $p_{1}$ with possibly negative values, 
the polyhedron
\begin{equation} 
C(p_{1}) := \{x:  x\geq \bm{0}, \  \widetilde x\geq p_{1}\} 
\label{(cpdef)}
\end{equation}
is a contra-polymatroid%
\footnote{
In the literature,  \eqref{(cpdef)} is used sometimes as the definition of a contra-polymatroid.
}, 
 for which the unique
nonnegative supermodular bounding function $p_{0}$ is given by
\begin{equation} 
p_{0}(X)= \max \{p_{1}(Y) :  Y\subseteq X\}.  
\label{(pp0)} 
\end{equation}
It follows from \eqref{(pmin)}, \eqref{(pp0)}, 
and the integrality of the polyhedron $C(p_{1})$ that
\begin{equation} 
 \min \{ \ \widetilde z(S) :  z \in \odotZ{C}(p_{1})\} 
= \max \{p_{1}(X) : X\subseteq S \},
\label{(minmaxp1)} 
\end{equation}
where $\odotZ{C}(p_{1})$ denotes the set of the integral members of $C(p_{1})$.

\begin{lemma}  \label{LMoldresult.2} 
Let $B=B'(p)$ be an (integral) base-polyhedron defined by 
an integer-valued supermodular function $p$.  
For a vector $g:S\rightarrow \ZZ$,
\begin{equation} 
\min \{ \sum [ (m(s) - g(s))\sp + :  s\in S] :  m\in \odotZ{B} \}
 = \max \{p(X) - \widetilde g(X):  X\subseteq S\}.  
\label{(gminmax)}
\end{equation}
\end{lemma}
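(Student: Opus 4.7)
The plan is to use the contra-polymatroid machinery just set up, together with the classical box-truncation theorem for integral base polyhedra, to establish the identity in two directions.

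First I would handle weak duality. For any $m\in \odotZ{B}$ and any $X\subseteq S$, setting $z(s):=(m(s)-g(s))\sp{+}$ one has $z(s)\ge m(s)-g(s)$ for every $s$, so that
\[
\sum_{s\in S}(m(s)-g(s))\sp{+}=\widetilde z(S)\ \ge\ \widetilde z(X)\ \ge\ \widetilde m(X)-\widetilde g(X)\ \ge\ p(X)-\widetilde g(X),
\]
using $\widetilde m(X)\ge p(X)$ for $m\in B'(p)$. Taking the min on the left and the max on the right gives the $\ge$ inequality.

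Next I would realize the right-hand side through a contra-polymatroid. Define $p_{1}(X):=p(X)-\widetilde g(X)$; this is supermodular, as $p$ is supermodular and $\widetilde g$ is modular. Form $C(p_{1})=\{z\ge \veczero:\widetilde z\ge p_{1}\}$ as in \eqref{(cpdef)}. Then \eqref{(minmaxp1)} reads
\[
\min\{\widetilde z(S):z\in \odotZ{C}(p_{1})\}=\max\{p(X)-\widetilde g(X):X\subseteq S\},
\]
so it suffices to show that the left-hand side of \eqref{(gminmax)} is bounded above by $\min\{\widetilde z(S):z\in \odotZ{C}(p_{1})\}$.

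For this, pick any $z\in \odotZ{C}(p_{1})$ attaining the minimum and put $u:=g+z$. I need an $m\in \odotZ{B}$ with $m\le u$, for then $(m(s)-g(s))\sp{+}\le z(s)$ componentwise and summing gives $\sum_{s}(m(s)-g(s))\sp{+}\le \widetilde z(S)$, finishing the proof when combined with weak duality. By the classical box-truncation theorem for an integral base polyhedron, $\odotZ{B}\cap \{m:m\le u\}$ is nonempty exactly when $\widetilde u(X)\ge p(X)$ for every $X\subseteq S$. This condition becomes $\widetilde g(X)+\widetilde z(X)\ge p(X)$, i.e.\ $\widetilde z(X)\ge p_{1}(X)$, which is precisely membership $z\in C(p_{1})$. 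Hence such an $m$ exists.

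The main obstacle is the box-truncation step, which depends on the standard intersection result for base polyhedra with boxes (equivalently, on Edmonds' polymatroid intersection theorem applied to $B$ and the box $\{x\le u\}$); once this is invoked, all three steps glue together cleanly. As an alternative route, one could instead derive \eqref{(gminmax)} from the separable Fenchel-type formula \eqref{minmaxgensep} by taking $\varphi_{s}(k)=(k-g(s))\sp{+}$, whose conjugate satisfies $\dom\psi_{s}=\{0,1\}$ with $\psi_{s}(0)=0$ and $\psi_{s}(1)=g(s)$; the maximum on the right of \eqref{minmaxgensep} then restricts to $\pi=\chi_{X}$ and equals $\hat p(\chi_{X})-\widetilde g(X)=p(X)-\widetilde g(X)$, producing the same identity.
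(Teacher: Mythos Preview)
Your proof is correct and follows essentially the same approach as the paper's. Both reduce the left-hand side to $\min\{\widetilde z(S):z\in \odotZ{C}(p_1)\}$ with $p_1=p-\widetilde g$ via the box-truncation/discrete-separation fact that $\odotZ{B}\cap\{m\le g+z\}\ne\emptyset$ iff $p\le \widetilde g+\widetilde z$, and then invoke \eqref{(minmaxp1)}; the only cosmetic difference is that the paper asserts the equality of the two minima directly, whereas you split it into a weak-duality inequality and the reverse inequality.
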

\begin{proof}
It is known (for example, from the discrete separation theorem 
for submodular set functions
or from a version of Edmonds' polymatroid intersection
theorem) that, for a function $g':S\rightarrow \ZZ$, there is an
element $m\in \odotZ{B}$ for which $m\leq g'$ 
if and only if 
$p\leq \widetilde g'$.  
Therefore the minimization problem on the left-hand
side of \eqref{(gminmax)} is equivalent to finding a lowest lifting
$g':=g+ z$ 
of $g$ with $z\geq \bm{0}$ 
such that $p \leq \widetilde g'$.  That
is, the minimum on the left-hand side of \eqref{(gminmax)} is equal to
$\min \{ \ \widetilde z(S) : 
 z\geq \bm{0}, \  \widetilde z \geq p - \widetilde g \ \}$.  
By applying \eqref{(minmaxp1)} to $p_{1}:= p - \widetilde g$, 
we obtain that this latter minimum is indeed equal to the right-hand side
of \eqref{(gminmax)}. 
\end{proof}

\medskip

The following theorem reinforces the link 
between the present study and the theory of majorization.

\begin{theorem} \label{THtotalexcess}
Let $B$ be a base-polyhedron described 
by an integer-valued supermodular function $p$ 
and $\odotZ{B}$ the set of integral elements of $B$.  
For each integer $a$, we have the following min-max relation 
for the minimum of the total $a$-excess of the members of $\odotZ{B}$:
\begin{equation} \label{decmintruncsum}
 \min \{ \sum_{s \in S} (m(s) - a)\sp{+}  : m \in \odotZ{B} \}  
=  \max \{p(X) - a \vert X\vert :  X\subseteq S\}.
\end{equation}
Moreover, an element of $\odotZ{B}$ is a dec-min element of $\odotZ{B}$ 
if and only if it is a minimizer on the left-hand side 
for every $a \in \ZZ$.
\end{theorem}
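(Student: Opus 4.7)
The plan is to obtain the min-max formula \eqref{decmintruncsum} as an immediate consequence of Lemma~\ref{LMoldresult.2}, and then to derive the universal-minimizer characterization of dec-min elements by combining this with the majorization material from Section~\ref{SCmajorconn}.

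For the min-max identity \eqref{decmintruncsum}, I would apply Lemma~\ref{LMoldresult.2} to the constant vector $g \equiv a$ on $S$. Then $(m(s)-g(s))^{+} = (m(s)-a)^{+}$ and $\widetilde g(X) = a|X|$, so \eqref{(gminmax)} becomes precisely \eqref{decmintruncsum}. This handles the first assertion with essentially no work beyond the observation that a constant function is an admissible choice of $g$ in Lemma~\ref{LMoldresult.2}.

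For the ``moreover'' part, I would use the equivalence of conditions (i) and (v) in Proposition~\ref{PRmajorchar}, which characterizes majorization by precisely the total $a$-excess inequalities for all $a \in \ZZ$ (under a fixed component-sum). Since every $m \in \odotZ{B}$ satisfies $\widetilde m(S) = p(S)$, the component-sum condition in (v) is automatic on $\odotZ{B}$. Thus, for $m \in \odotZ{B}$, the statement ``$m \prec y$ for every $y \in \odotZ{B}$'', i.e.\ $m$ is least majorized in $\odotZ{B}$, is equivalent to saying that $m$ minimizes $\sum_{s} (x(s)-a)^{+}$ over $\odotZ{B}$ simultaneously for every $a \in \ZZ$.

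Finally, I would invoke Theorem~\ref{decminmajorB}, which asserts that an element of $\odotZ{B}$ is dec-min if and only if it is least majorized in $\odotZ{B}$. Chaining this with the equivalence in the previous paragraph yields the desired characterization: $m$ is dec-min in $\odotZ{B}$ iff $m$ is a minimizer of the left-hand side of \eqref{decmintruncsum} for every $a \in \ZZ$. I expect no genuine obstacle here, as all heavy lifting has been done in Lemma~\ref{LMoldresult.2} (via the discrete separation theorem) and in the majorization equivalences of Section~\ref{SCmajorconn}; the only mild subtlety is noting that the constant component-sum on $\odotZ{B}$ makes condition (v) of Proposition~\ref{PRmajorchar} applicable without additional hypotheses.
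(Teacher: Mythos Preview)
Your proposal is correct. The first part (deriving \eqref{decmintruncsum} from Lemma~\ref{LMoldresult.2} with the constant vector $g\equiv a$) is exactly what the paper does.

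For the ``moreover'' part, the paper takes a slightly different route: it invokes Theorem~\ref{THdecminPhistrict2} to get the forward direction (dec-min $\Rightarrow$ universal minimizer, since each $(k-a)^{+}$ is symmetric convex), and handles the converse by the elementary observation that $\sum_{s}(x(s)-a)^{+}=\sum_{s}(y(s)-a)^{+}$ for all $a\in\ZZ$ forces $x{\downarrow}=y{\downarrow}$, so any two universal minimizers are value-equivalent and hence both dec-min. Your route instead packages both directions into the majorization equivalences of Section~\ref{SCmajorconn}: Proposition~\ref{PRmajorchar} (i)$\Leftrightarrow$(v) identifies universal $a$-excess minimization with least-majorization, and Theorem~\ref{decminmajorB} identifies least-majorization with dec-minimality. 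Both arguments are short and rest on already-established results; yours makes the majorization link explicit, while the paper's keeps the converse self-contained without invoking Theorem~\ref{decminmajorB}.
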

\begin{proof}
The min-max formula (\ref{decmintruncsum}) 
follows from Lemma \ref{LMoldresult.2} 
as it is a special case of \eqref{(gminmax)} when $g=(a,a,\dots ,a)$.
Theorem~\ref{THdecminPhistrict2} shows that any dec-min element of $\odotZ{B}$ 
is a minimizer in (\ref{decmintruncsum}) for every $a \in \ZZ$.
The converse is also true, since
$\sum [(x(s) - a)\sp{+}: s \in S] = \sum [(y(s) - a)\sp{+}: s \in S]$
for every $a \in \ZZ$ implies $x{\downarrow}= y{\downarrow}$.
Therefore, an element of $\odotZ{B}$ is dec-min 
if and only if  
it is a universal minimizer for every $a \in \ZZ$.
\end{proof}

The established  formula (\ref{decmintruncsum})  generalizes 
the formula (\ref{r1-KM}) for $r_{1}$.
Indeed, the total $a$-excess 
for $a=\beta_{1} - 1$ is given as 
\[
\sum_{s \in S} (m(s) - a)\sp{+} = \sum_{s \in S} (m(s) - (\beta_{1} - 1))\sp{+} =
  | \{ s \in S  : m(s) = \beta_{1} \} | = r_{1} 
\]
for any dec-min element $m$ of $\odotZ{B}$.

For any dec-min element $m$ of $\odotZ{B}$
and for  $k=\beta_{1}, \beta_{1}-1, \beta_{1}-2, \ldots$,
let $\Theta(m,k)$ denote 
the number of components of $m$ whose value are equal to $k$,
that is,
\[
  \Theta(m,k)= | \{ s \in S : m(s) = k \} | .
\]
Note that $\Theta(m,\beta_{1}) = r_{1}$ and
$\Theta(m,k)$ does not depend on the choice of $m$. 
Since
\[
\sum_{s \in S} (m(s) - (\beta_{1} -i-1) )\sp{+} 
=\sum_{j=0}\sp{i} (i+1-j) \, \Theta(m,\beta_{1}-j)
\qquad (i=0,1,2,\ldots),
\]
the formula (\ref{decmintruncsum}) implies
\begin{equation}
 \sum_{j=0}\sp{i} (i-j+1) \, \Theta(m,\beta_{1}-j) 
=  \max \{p(X) - (\beta_{1} -i-1)\vert X\vert :  X\subseteq S\} 
 \label{thetabetaieqn2}
\qquad (i=0,1,2,\ldots).
\end{equation}
This formula gives a recurrence formula for
$\Theta(m,\beta_{1}), \Theta(m,\beta_{1}-1), \Theta(m,\beta_{1}-2), \ldots$ as
\begin{align}
\Theta(m,\beta_{1}) &=  \max \{p(X) - (\beta_{1} -1)\vert X\vert :  X\subseteq S\},
\notag \\
\Theta(m,\beta_{1} -1)  &=  \max \{p(X) - (\beta_{1} -2)\vert X\vert :  X\subseteq S\} 
   - 2 \, \Theta(m,\beta_{1}) ,
\label{thetamkrec}
\\
\Theta(m,\beta_{1} -2) &=  \max \{p(X) - (\beta_{1} -3)\vert X\vert :  X\subseteq S\} 
  - 3 \, \Theta(m,\beta_{1})  - 2 \, \Theta(m,\beta_{1} -1),
\notag \\
& \quad \cdots\cdots\cdots\cdots\cdots\cdots
\notag
\end{align}

\begin{remark} \rm  \label{RMdcaprftotalex}
A DCA-based proof of the formula (\ref{decmintruncsum}) is as follows.
In (\ref{minmaxsymsep}) we choose 
\begin{equation*}  
\varphi(k) = (k-a)\sp{+} =
   \left\{  \begin{array}{ll}
    0         &   (k \leq a) ,     \\
    k - a        &   (k \geq a + 1) .     \\
                     \end{array}  \right.
\end{equation*}
The left-hand side of (\ref{minmaxsymsep})
coincides with that of (\ref{decmintruncsum}).
The conjugate function $\psi$ is given by
\begin{equation*}   
\psi(\ell) =
   \left\{  \begin{array}{ll}
    0         &   (\ell = 0) ,     \\
    a        &   (\ell = 1) ,     \\
   +\infty     &   (\ell \not\in  \{ 0,1  \}) .     \\
                     \end{array}  \right.
\end{equation*}
Therefore,
we may restrict $\pi$ to  $\{ 0,1 \}$-vectors
in considering the maximum of
$g(\pi):=\hat p (\pi) -   \sum_{s \in S}  \psi(\pi(s))$
over $\pi \in \ZZ\sp{S}$.
For $\pi = \chi_{X} \in \{ 0, 1 \}\sp{S}$ with $X \subseteq S$, we have
 $\hat p(\pi) = \hat p(\chi_{X}) =  p(X)$ and 
$\sum_{s \in S}  \psi(\pi(s)) = \sum_{s \in S}  \psi(\chi_{X}(s)) =  \sum_{s \in X} \psi(1) = a |X|$,
and therefore,
the right-hand side of (\ref{minmaxsymsep}) is equal to
$\max \{p(X) - a \vert X\vert :  X\subseteq S\}$.
Thus the formula (\ref{decmintruncsum}) is derived. 
\finbox
\end{remark} 



\section{Structure of optimal solutions to square-sum minimization}
\label{SCsetoptsols}

In this section we offer the DCA view
on the structure of optimal solutions of the min-max formula:
\begin{equation}  
\min \{ \sum [m(s)\sp{2}:  s\in S]:  m\in \odotZ{B} \} 
= \max \{\hat p(\pi ) - \sum _{s\in S} 
 \left\lfloor {\pi (s) \over 2}\right\rfloor 
 \left\lceil {\pi (s) \over 2}\right\rceil : 
 \pi \in {\bf Z}\sp{S} \} ,  
\label{minmaxSqSum-KM2} 
\end{equation}
to which a DCA-based proof has been given in Section \ref{SCderminmaxsquaresum}.

Concerning the optimal solutions to (\ref{minmaxSqSum-KM2}) 
the following results were obtained in  Part~I \cite{FM18part1}. 
Recall that
$\beta_{1}>\beta_{2}>\cdots >\beta_{q}$ denotes the essential value-sequence,
$C_{1} \subset  C_{2} \subset  \cdots \subset  C_{q}$
is the canonical chain, 
$\{S_{1},S_{2},\dots ,S_{q}\}$
is the canonical partition
($S_{i}= C_{i} - C_{i-1}$ and $C_{0}=\emptyset$), 
$\pi\sp{*}$ and $\Delta\sp{*}$ are integral vectors defined by
\[
\pi \sp{*}(s) =2\beta_{i}-1, \quad
\Delta \sp{*}(s) =\beta_{i}-1
\qquad (s\in S_i; \ i=1,2, \dots ,q) ,
\]
and $M\sp{*}$ denotes the direct sum of matroids
$M_{1}, M_{2}, \ldots, M_{q}$ constructed 
in Section \RefPartI{5.3} of Part~I \cite{FM18part1}.

\begin{proposition}[{\cite[Corollary \RefPartI{6.15}]{FM18part1}}] \label{PRoptdualsetlnatKM}
The set $\Pi$ of dual optimal integral vectors
$\pi$ in {\rm (\ref{minmaxSqSum-KM2})} 
is an {\rm L}$\sp {\natural}$-convex set.
The unique smallest element of $\Pi$ is $\pi\sp{*}$.
\finbox
\end{proposition}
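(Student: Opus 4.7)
My plan is to derive both assertions from Proposition~\ref{PRdualobjlnat}, which tells us that the dual objective
\[
g(\pi) \;:=\; \hat p(\pi) - \sum_{s\in S}\bigl\lfloor \pi(s)/2\bigr\rfloor \bigl\lceil \pi(s)/2\bigr\rceil
\]
is L$^\natural$-concave, together with standard DCA facts about its maximizer set. Since $\Pi$ is by definition the set of maximizers of $g$, its L$^\natural$-convexity is a general consequence of discrete midpoint concavity: for any $\pi,\tau\in\Pi$ with common optimal value $M$, the inequality
\[
2M \;=\; g(\pi)+g(\tau) \;\le\; g\bigl(\lceil (\pi+\tau)/2\rceil\bigr) + g\bigl(\lfloor (\pi+\tau)/2\rfloor\bigr) \;\le\; 2M
\]
forces equality throughout and hence $\lceil (\pi+\tau)/2\rceil,\; \lfloor (\pi+\tau)/2\rfloor \in \Pi$.

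Next I would establish the existence of a unique smallest element of $\Pi$. The set $\Pi$ is bounded: the conjugate term $\psi(\ell)=\lfloor \ell/2\rfloor\lceil \ell/2\rceil$ grows quadratically in $|\ell|$ and dominates the positively homogeneous term $\hat p(\pi)$, so $g(\pi)\to -\infty$ whenever $\|\pi\|_\infty\to\infty$. An L$^\natural$-convex set is known to be a sublattice of $\ZZ^S$ under the componentwise operations $\wedge$ and $\vee$, a property that follows from discrete midpoint convexity by a standard reduction to L-convex sets (cf.\ \cite[Chapter 5]{Mdcasiam}). Being a bounded (hence finite) sublattice, $\Pi$ therefore admits a unique componentwise minimum $\pi^{\wedge}$.

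The final step is to identify $\pi^{\wedge}$ with $\pi^{*}$. Membership $\pi^{*}\in\Pi$ is already established in Part~I \cite{FM18part1}, where $\pi^{*}$ is built precisely to certify (\ref{minmaxSqSum-KM2}); hence $\pi^{\wedge}\le\pi^{*}$ holds trivially. For the reverse inequality I would apply the L-optimality criterion (Theorem~\ref{THlopt}) to $\pi^{\wedge}$: the conditions $g(\pi^{\wedge})\ge g(\pi^{\wedge}-\chi_{Y})$ for $Y\subseteq S$, together with the smallest-maximizer property of $\pi^{\wedge}$ (which upgrades these to strict inequality whenever $\pi^{\wedge}-\chi_{Y}\neq\pi^{\wedge}$ could a priori lie in $\Pi$), must be combined with the min-max characterization (\ref{(betamin-KM)}) of $\beta_{i}$ as the rounded-up density of $p$ over $C_{i}$. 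Using the explicit form of $\hat p$ via the greedy formula (\ref{lovextdef}) and of $\psi$, choosing $Y$ to be the canonical-chain sets $C_{i}$ should pin down $\pi^{\wedge}(s)\ge 2\beta_{i}-1$ on each $S_{i}$.

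The main obstacle is precisely this last identification: the DCA machinery (L$^\natural$-convexity, boundedness, sublattice property, local optimality) gives \emph{some} unique smallest element in a clean fashion, but recognizing it as the specific vector $\pi^{*}$ requires interfacing the purely local L-optimality inequalities with the iteratively constructed canonical chain and essential value-sequence of Part~I. Once this componentwise lower bound $\pi^{\wedge}\ge\pi^{*}$ is extracted on each $S_{i}$, the two inequalities $\pi^{\wedge}\le\pi^{*}$ and $\pi^{\wedge}\ge\pi^{*}$ combine to complete the proof.
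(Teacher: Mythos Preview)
Your derivation of the L$^\natural$-convexity of $\Pi$ and the existence of a unique componentwise minimum is correct and matches the paper's approach exactly: the paper also invokes Proposition~\ref{PRdualobjlnat} (L$^\natural$-concavity of $g$) together with the general fact that maximizers of an L$^\natural$-concave function form an L$^\natural$-convex set, which in turn has a smallest element.

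For the identification $\pi^{\wedge}=\pi^{*}$, however, you should be aware that the paper does not attempt this via DCA at all. In Section~\ref{SCsqsumoptdual} it states explicitly that ``this fact is not easily shown by general arguments from discrete convex analysis,'' and simply imports the result from Part~I (the proposition is cited as \cite[Corollary~\RefPartI{6.15}]{FM18part1}). Your proposed route --- applying the L-optimality inequality $g(\pi^{\wedge})\ge g(\pi^{\wedge}-\chi_{C_i})$ and trying to extract $\pi^{\wedge}(s)\ge 2\beta_i-1$ on $S_i$ --- is a reasonable idea in spirit, but it is not a complete argument: evaluating $\hat p(\pi^{\wedge})-\hat p(\pi^{\wedge}-\chi_{C_i})$ requires knowing how the level sets of $\pi^{\wedge}$ interact with the canonical chain, which is precisely the information you are trying to derive. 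The clean way to get the lower bound is via the explicit description of $\Pi$ in Theorem~\ref{THoptdualsetKM} (conditions \eqref{Si-Fi-KM}--\eqref{Fi-KM} give $\pi(s)\ge 2\beta_i-1$ on $S_i$ directly), but that theorem is itself proved constructively in Part~I, not by DCA.

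So your honest assessment of the ``main obstacle'' is accurate, and the paper's own position confirms it: the first half is pure DCA, the second half is not.
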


\begin{theorem}[{\cite[Theorem \RefPartI{6.13}]{FM18part1}}] \label{THoptdualsetKM}
An integral vector $\pi$ 
is a dual optimal solution in 
{\rm (\ref{minmaxSqSum-KM2})} 
if and only if the following three conditions hold for each $i=1,2,\dots ,q:$
\begin{eqnarray} 
&&\hbox{\rm $\pi (s)=2\beta_{i}-1$ \ for every $s\in S_i-F_i,$}\ 
\label{Si-Fi-KM} 
\\&& 
\hbox{\rm $2\beta_{i}-1\leq \pi (s) \leq 2\beta_{i}+1$ \ for every $s\in F_i$,}\ 
\label{Fi-KM}
\\&& 
\hbox{\rm $\pi(s)-\pi (t) \geq 0$ \ whenever $s,t\in F_i$ and $(s,t) \in A_i$,  }\
\label{pispit-KM} 
\end{eqnarray} 
where 
$F_{i}$ is the largest member of
${\cal F}_{i} = \{ X \subseteq S_{i}: \beta_{i} |X| = p(C_{i-1} \cup X) - p(C_{i-1}) \}$
and $A_{i}$ is the set of pairs $(s,t)$ 
such that $s, t \in F_{i}$ and there is 
no set in ${\cal F}_{i}$ which contains $t$ and not $s$.
\finbox
\end{theorem}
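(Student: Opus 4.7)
The plan is to apply the L-optimality criterion (Theorem~\ref{THlopt}) to the L$\sp{\natural}$-concave dual objective
\[
g(\pi) \;=\; \hat p(\pi) - \sum_{s \in S} \left\lfloor \frac{\pi(s)}{2} \right\rfloor \left\lceil \frac{\pi(s)}{2} \right\rceil
\]
(whose L$\sp{\natural}$-concavity is Proposition~\ref{PRdualobjlnat}), together with Proposition~\ref{PRoptdualsetlnatKM} which identifies $\pi\sp{*}$ as the unique smallest element of the set $\Pi$ of dual optimal integer vectors. Throughout, the key analytic ingredient is that the marginal increment of the conjugate term $\lfloor k/2 \rfloor \lceil k/2 \rceil$ equals $\lceil k/2 \rceil$; in particular, at $k=2\beta_i$ and $k=2\beta_i+1$ it equals $\beta_i$ and $\beta_i+1$, respectively.

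For the necessity direction, suppose $\pi \in \Pi$. Since $\pi \geq \pi\sp{*}$ we immediately get $\pi(s) \geq 2\beta_i-1$ on each $S_i$, establishing the lower parts of (\ref{Si-Fi-KM}) and (\ref{Fi-KM}). To obtain the upper bound $\pi(s) \leq 2\beta_i+1$ and the equality $\pi(s) = 2\beta_i-1$ on $S_i-F_i$, I would apply L-optimality in the direction $-\chi_Y$ with $Y$ a single element or a well-chosen block. The inequality $g(\pi - \chi_s) \leq g(\pi)$ forces the marginal decrease of $\hat p$ to be at least $\lceil \pi(s)/2 \rceil$, which via the greedy evaluation of the Lov\'asz extension translates to a set-function inequality of the form $p(C_{i-1} \cup X) - p(C_{i-1}) \geq \beta_i |X|$ for an appropriate $X \subseteq S_i$. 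Matching this to the defining equation of $\mathcal{F}_i$ forces $X \in \mathcal{F}_i$, hence $X \subseteq F_i$, yielding both the support restriction and the upper bound. The ordering condition (\ref{pispit-KM}) is then derived by localizing L-optimality to $Y = \{t\}$ with $t$ an $A_i$-successor: if $\pi(s) < \pi(t)$ then the greedy rearrangement in $\hat p(\pi - \chi_t)$ would extract a set violating the maximality of $F_i$ in $\mathcal{F}_i$.

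For the sufficiency direction, given a $\pi$ satisfying (\ref{Si-Fi-KM})--(\ref{pispit-KM}), I would verify the local maximality conditions (\ref{lnatfnlocmin1}) and (\ref{lnatfnlocmin2}). For any $Y \subseteq S$, write $Y = \bigcup_i Y_i$ with $Y_i = Y \cap S_i$. The quadratic term changes in a stratum-by-stratum way whose per-coordinate increment is $\beta_i$ or $\beta_i+1$ depending on the parity of $\pi(s)$. The Lov\'asz extension $\hat p(\pi \pm \chi_Y)$ admits a greedy evaluation whose ordering is compatible with the canonical chain $C_1 \subset \cdots \subset C_q$ thanks to the conditions on $F_i$ and $A_i$; under this compatible ordering the increments of $\hat p$ telescope into sums of terms $p(C_{i-1} \cup (Y_i \cap F_i)) - p(C_{i-1})$ (for $+\chi_Y$) or $p(C_i) - p(C_{i-1} \cup (F_i \setminus Y_i))$ (for $-\chi_Y$), each bounded via the defining equality of $\mathcal{F}_i$ by the corresponding change in the quadratic term. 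Summing over $i$ gives $g(\pi \pm \chi_Y) \leq g(\pi)$, and Theorem~\ref{THlopt} concludes global optimality.

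The hard part will be the stratum-by-stratum bookkeeping of $\hat p(\pi \pm \chi_Y) - \hat p(\pi)$ via the greedy formula \eqref{lovextdef}, and in particular the translation of the combinatorially defined relation $A_i$ (defined through membership in $\mathcal{F}_i$) into the rearrangement order required to evaluate the Lov\'asz extension. What makes this delicate is that when $\pi(s)=\pi(t)$ the greedy order is ambiguous, so one has to choose it in accordance with the $A_i$-relation; condition (\ref{pispit-KM}) is precisely the compatibility condition ensuring that such a choice exists and keeps the evaluation linear on the $F_i$-blocks. Once this bookkeeping is set up carefully, both directions follow from straightforward arithmetic comparisons between the marginal quadratic increments and the marginal $\hat p$-increments.
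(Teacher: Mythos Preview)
This theorem is not proved in the present paper; it is imported from Part~I and marked with \finbox. In fact the paper explicitly says (Section~\ref{SCsqsumoptdual}) that the DCA machinery yields only the weaker, non-canonical description of $\Pi$ in Proposition~\ref{PRstrD2}, and that Theorem~\ref{THoptdualsetKM} ``is a much stronger statement, giving a canonical description of $\Pi$ without reference to a particular primal optimal solution.'' So you are attempting something the paper deliberately does not claim to achieve by DCA methods.

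Your proposal has a structural circularity and a genuine gap. For necessity you invoke Proposition~\ref{PRoptdualsetlnatKM} to get $\pi \geq \pi\sp{*}$, but the paper notes right after that proposition that the identification of $\pi\sp{*}$ as the smallest element of $\Pi$ ``is not easily shown by general arguments from discrete convex analysis''; it too is imported from Part~I. So you are leaning on exactly the kind of Part~I input that the DCA route was supposed to replace. More seriously, the sufficiency direction is only a sketch: the step where you claim that $\hat p(\pi \pm \chi_{Y}) - \hat p(\pi)$ ``telescopes'' into sums like $p(C_{i-1}\cup(Y_i\cap F_i)) - p(C_{i-1})$ requires that the greedy ordering in \eqref{lovextdef} respect both the canonical chain and, within each $F_i$, the lattice $\mathcal{F}_i$. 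Condition~(\ref{pispit-KM}) gives you compatibility with the preorder $A_i$, but $A_i$ is defined via non-separation in $\mathcal{F}_i$, not via the level sets of $\pi$; you have not shown that any $\pi$-compatible linear extension actually makes the top sets of the greedy evaluation land in $\mathcal{F}_i$ (which is what you need to invoke $\beta_i|X| = p(C_{i-1}\cup X)-p(C_{i-1})$). Ties across strata when $\beta_i = \beta_{i+1}+1$ (so $2\beta_i-1 = 2\beta_{i+1}+1$) add a further wrinkle you have not addressed. Until that bookkeeping is carried out in full, the argument is a plausible plan rather than a proof.
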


\begin{theorem}[{\cite[Theorem \RefPartI{5.7}]{FM18part1}}] \label{THmatroideltoltKM}
The set of dec-min elements of $\odotZ{B}$ is a matroidal M-convex set.%
\footnote{
In Part I, we have defined a {\bf matroidal M-convex set} as 
the set of integral elements of a translated matroid base-polyhedron.
In other words, a matroidal M-convex set is an M-convex set
in which the $\ell_{\infty}$-distance of any two distinct members is equal to one.
} 
More precisely, an element $m$ of $\odotZ{B}$ is decreasingly minimal
if and only if $m$ can be obtained in the form $m=\chi_L+ \Delta \sp{*}$, 
where $L$ is a basis of the matroid $M\sp{*}$.  
\finbox
\end{theorem}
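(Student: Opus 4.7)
The plan is to apply \emph{complementary slackness} in the Fenchel-type discrete duality (Theorem~\ref{THmlfencdual}) to the square-sum formula \eqref{minmaxSqSum-KM2}. By Theorem~\ref{THdecminPhistrict2}, an element $m \in \odotZ{B}$ is dec-min if and only if it minimizes the symmetric separable strictly convex function $\sum_{s} m(s)^{2}$, so it suffices to describe the set of primal optima in \eqref{minmaxSqSum-KM2}. With $f(x) = \sum_{s} x(s)^{2}$ and $h(x) = -\delta(x)$ as in Section~\ref{SCseparDCA}, a primal-dual optimal pair $(m,\pi)$ must turn both Fenchel--Young inequalities \eqref{youngineqf4} and \eqref{youngineqh4} into equalities. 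The first splits coordinatewise and gives $m(s) \in \argmin_{k \in \ZZ}(k^{2} - k\pi(s)) = \{\lfloor \pi(s)/2 \rfloor,\lceil \pi(s)/2 \rceil\}$; the second reads $\pi \cdot m = \hat p(\pi)$, i.e.\ $m$ minimizes $\pi \cdot x$ on $\odotZ{B}$.

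Taking $\pi = \pi^{*}$, the smallest dual optimum of Proposition~\ref{PRoptdualsetlnatKM}, each coordinate $\pi^{*}(s) = 2\beta_{i} - 1$ is odd, so the first condition forces $m(s) \in \{\beta_{i} - 1, \beta_{i}\}$ on every block $S_{i}$. Hence $m - \Delta^{*} \in \{0,1\}^{S}$, and setting $L = \{s : m(s) > \Delta^{*}(s)\}$ gives $m = \chi_{L} + \Delta^{*}$. This already yields the matroidal property, because any two dec-min elements then lie in the common unit box $\Delta^{*} + [0,1]^{S}$ and so have $\ell_{\infty}$-distance at most one.

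It remains to identify the admissible subsets $L$ with the bases of $M^{*}$. The second Fenchel--Young condition $\pi^{*} \cdot m = \hat p(\pi^{*})$ places $m$ on the face of $\odotZ{B}$ exposed by $\pi^{*}$. Since $\pi^{*}$ is constant on each block $S_{i}$ of the canonical partition with values strictly decreasing in $i$ (as $\beta_{1} > \cdots > \beta_{q}$), Edmonds' greedy algorithm shows that every such $m$ satisfies $\widetilde m(C_{i}) = p(C_{i})$ for $i = 1,\dots,q$, and the face decomposes as the direct sum of the integral base-polyhedra $B_{i}$ obtained by restricting $p$ to $C_{i}$ and contracting by $C_{i-1}$. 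After translating by $-\Delta^{*}$, each $B_{i}$ becomes a $\{0,1\}$-valued base-polyhedron on $S_{i}$, i.e.\ a matroid base-polyhedron on $S_{i}$. The main obstacle, and the only step demanding genuine care, is to match this face-decomposition matroid with the specific matroid $M^{*}$ constructed in Section~\RefPartI{5.3} of Part~I; since both are built inductively from $p$ along the canonical chain, comparing ranks gives $p(C_{i}) - p(C_{i-1}) - (\beta_{i} - 1)|S_{i}|$ on each $S_{i}$ for both, and submodularity of the rank functions is inherited from $p$. Once this identification is in place, the ``if'' direction is obtained by reversing complementary slackness: for any basis $L$ of $M^{*}$, the element $\chi_{L} + \Delta^{*}$ together with $\pi^{*}$ satisfies both Fenchel--Young equalities, hence is primal optimal and therefore dec-min.
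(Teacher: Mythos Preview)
Your approach is essentially the paper's own DCA argument in Section~\ref{SCsqsumoptprimal} (Proposition~\ref{PRstrPdecmin}): apply complementary slackness in \eqref{minmaxSqSum-KM2} with a dual optimum to obtain ${\rm dm}(\odotZ{B}) = T(\hat\pi) \cap \odotZ{B^{\circ}}(\hat\pi)$, the intersection of a unit box with a face of~$B$. Choosing $\hat\pi = \pi^{*}$ makes the box $\Delta^{*} + [0,1]^{S}$, exactly as you write. So for the first sentence of the theorem (matroidal M-convexity) your argument and the paper's coincide.

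There is one slip that you should fix. You write that ``after translating by $-\Delta^{*}$, each $B_{i}$ becomes a $\{0,1\}$-valued base-polyhedron.'' This is false for the face pieces $B_{i}$ themselves: the face of $B$ exposed by $\pi^{*}$ may contain integral points outside the unit box (e.g.\ in Example~\ref{EX2dimB1ppRZ}, $\pi^{*}$ is constant, the face is all of $B$, and $(0,3)-\Delta^{*}=(-1,2)$ is not in $\{0,1\}^{2}$). What is true is that the \emph{intersection} $B_{i}\cap[\beta_{i}-1,\beta_{i}]^{S_{i}}$ is, after translation, a matroid base-polytope on $S_{i}$, because a box-truncation of a base-polyhedron remains a base-polyhedron. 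You implicitly use both Fenchel--Young equalities, so the fix is purely expository, but as written the sentence is wrong.

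For the ``more precisely'' clause---the identification with the specific matroid $M^{*}$ from Part~I---the paper itself does not reprove this via DCA; immediately after Proposition~\ref{PRstrPdecmin} it explicitly says that Theorem~\ref{THmatroideltoltKM} ``is significantly stronger'' and requires the canonical-chain construction from Part~I. Your attempt to close this gap by ``comparing ranks'' is not sufficient: you verify only that the total rank $p(C_{i})-p(C_{i-1})-(\beta_{i}-1)|S_{i}|$ agrees on each block $S_{i}$, but two matroids on $S_{i}$ with the same rank need not coincide. To actually identify the two matroids you would need to check that the full rank functions agree on every subset $X\subseteq S_{i}$, which amounts to verifying that the box-truncated contraction $(p(\,\cdot\cup C_{i-1})-p(C_{i-1}))$ restricted to $[\beta_{i}-1,\beta_{i}]^{S_{i}}$ yields precisely the rank function of $M_{i}$ as defined in Part~I. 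This is doable once one has Part~I's definition in hand, but the single sentence you give (``submodularity of the rank functions is inherited from $p$'') does not establish it.
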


The objective of this section is to shed the light of DCA 
on these results.
It will turn out that the general results in DCA
capture the structural essence of the above statements,
but do not provide the full statements with specific details.
We first present a summary of the relevant results from DCA 
in Sections \ref{SCfencoptsetgen} and \ref{SCfencoptsetsepar}.

\subsection{General results on the optimal solutions in the Fenchel-type discrete duality}
\label{SCfencoptsetgen}

We summarize the fundamental facts 
about the optimal solutions in the Fenchel-type min-max relation
\begin{equation} \label{fencminmaxM2}
  \min\{ f(x) - h(x)  :  x \in \ZZ\sp{S}  \}
 =   \max\{ h\sp{\circ}(\pi) - f\sp{\bullet}(\pi)  :  \pi \in \ZZ\sp{S} \} ,
\end{equation}
where $f$ is  an integer-valued M$\sp{\natural}$-convex function and
$h$ is an integer-valued M$\sp{\natural}$-concave function.
We assume that both
\ $\dom f \cap \dom h$ \ 
and 
\ $\dom f\sp{\bullet} \cap \dom h\sp{\circ}$ \ 
are nonempty, 
in which case the common value in \eqref{fencminmaxM2} is finite.
We denote the set of the minimizers by $\mathcal{P}$ and the set of the maximizers by $\mathcal{D}$.

To derive the optimality criteria we recall 
the Fenchel--Young inequalities
\begin{align}
 f(x) + f\sp{\bullet}(\pi) &\geq  \langle \pi, x \rangle , 
 \label{youngineqf}
\\ 
h(x) + h\sp{\circ}(\pi) &\leq  \langle \pi, x \rangle ,
 \label{youngineqh}
\end{align}
which hold for any $x \in \ZZ\sp{S}$ and $\pi \in \ZZ\sp{S}$.
These inequalities immediately imply the weak duality
\begin{equation} \label{fencminmaxMweak}
 f(x) - h(x) \geq h\sp{\circ}(\pi) - f\sp{\bullet}(\pi) .
\end{equation}
The inequality in  (\ref{fencminmaxMweak}) turns into an equality 
 if and only if
the inequalities in (\ref{youngineqf}) and (\ref{youngineqh}) are satisfied in equalities.
The former condition is equivalent to saying that 
$x \in \mathcal{P}$ and $\pi \in \mathcal{D}$.
The equality condition for (\ref{youngineqf}) can be rewritten as
\begin{align} 
 f(x) - \langle \pi, x \rangle  =  - f\sp{\bullet}(\pi) 
= - \max\{  \langle \pi, y \rangle - f(y)  :  y \in \ZZ\sp{S} \}
=  \min\{  f(y) - \langle \pi, y \rangle :  y \in \ZZ\sp{S} \} .
\label{argminfp}
\end{align}
Similarly,
the equality condition for (\ref{youngineqh}) can be rewritten as
\begin{align}
 h(x) - \langle \pi, x \rangle  =  - h\sp{\circ}(\pi) 
= - \min\{  \langle \pi, y \rangle - h(y)  :  y \in \ZZ\sp{S} \}
=  \max\{  h(y) - \langle \pi, y \rangle :  y \in \ZZ\sp{S} \} .
\label{argmaxhp}
\end{align}
Therefore we have
\begin{equation} \label{fencoptset1arg}
x \in \mathcal{P} \ \mbox{\rm and} \ \pi \in \mathcal{D}
\iff
x \in \argmin_{y} \{ f(y) - \langle \pi, y \rangle \} \cap
      \argmax_{y} \{ h(y) - \langle \pi, y \rangle \}.
\end{equation}
Furthermore, 
by the M-optimality criterion (Theorem~\ref{THmopt})
applied to $f(y) - \langle \pi, y \rangle$, we have
$x \in \argmin \{ f(y) - \langle \pi, y \rangle \}$
if and only if
\begin{align*}
    f(x) - \langle \pi, x \rangle & \leq
    f(x + \chi_{s} - \chi_{t}) - \langle \pi, x  + \chi_{s} - \chi_{t} \rangle  
 \qquad (\forall s, t \in S)  ,
\nonumber \\ 
    f(x) - \langle \pi, x \rangle & \leq
    f(x + \chi_{s}) - \langle \pi, x  + \chi_{s} \rangle  
 \qquad (\forall s \in S)  ,
\nonumber \\ 
    f(x) - \langle \pi, x \rangle & \leq
    f(x  - \chi_{t}) - \langle \pi, x  - \chi_{t} \rangle  
 \qquad  (\forall t \in S) , 
\end{align*}
that is, if and only if
\begin{align}
 \pi(s) -  \pi(t) & \leq  f(x + \chi_{s} - \chi_{t}) - f(x)
\qquad (\forall s, t \in S)  ,
\label{locoptfpi1}
 \\ 
   f(x) -  f(x - \chi_{s}) & \leq \pi(s) \leq f(x + \chi_{s}) - f(x)
 \qquad  (\forall s \in S)    .
\label{locoptfpi2}
\end{align}
Similarly, we have
$x \in \argmax \{ h(y) - \langle \pi, y \rangle \}$
if and only if
\begin{align}
 \pi(s) -  \pi(t) & \geq  h(x + \chi_{s} - \chi_{t}) - h(x)
\qquad (\forall s, t \in S)  ,
\label{locopthpi1}
 \\ 
   h(x) -  h(x - \chi_{s}) & \geq \pi(s) \geq h(x + \chi_{s}) - h(x)
 \qquad  (\forall s \in S)    .
\label{locopthpi2}
\end{align}
Therefore,
\begin{equation} \label{fencoptset1ineq}
x \in \mathcal{P} \ \mbox{\rm and} \ \pi \in \mathcal{D}
\iff
\mbox{\rm 
\eqref{locoptfpi1},
\eqref{locoptfpi2},
\eqref{locopthpi1}, 
\eqref{locopthpi2}
hold}.
\end{equation}

Using the integer biconjugacy $f\sp{\bullet\bullet}=f$ and  $h\sp{\circ\circ}=h$
for  M$\sp{\natural}$-convex/concave functions
with respect to the discrete conjugates in \eqref{conjvexZZ} and \eqref{conjcavZZ}
 (cf.~\cite[Theorem 8.12]{Mdcasiam}),
we can rewrite \eqref{argminfp} and \eqref{argmaxhp}, respectively, as
\begin{align*} 
 f\sp{\bullet}(\pi)  - \langle \pi, x \rangle  
 & =  -f(x) =  - f\sp{\bullet\bullet}(x)
=  \min\{  f\sp{\bullet}(\tau) - \langle \tau, x \rangle :  \tau \in \ZZ\sp{S} \} ,
\\
 h\sp{\circ}(\pi)  - \langle \pi, x \rangle  
 & =  -h(x) =  - h\sp{\circ\circ}(x)
=  \max\{  h\sp{\circ}(\tau) - \langle \tau, x \rangle :  \tau \in \ZZ\sp{S} \} .
\end{align*}
Hence the equivalence in \eqref{fencoptset1arg} can be rephrased in terms of the 
conjugate functions as
\begin{equation} \label{fencoptset2arg}
x \in \mathcal{P} \ \mbox{\rm and} \ \pi \in \mathcal{D}
\iff
\pi \in \argmin_{\tau} \{ f\sp{\bullet}(\tau) - \langle \tau,  x \rangle \} \cap
        \argmax_{\tau} \{ h\sp{\circ}(\tau) - \langle \tau, x \rangle \} .
\end{equation}
Furthermore, 
by the L-optimality criterion (Theorem~\ref{THlopt})
applied to the {\rm L}$\sp{\natural}$-convex function 
$f\sp{\bullet}(\tau) - \langle \tau,  x \rangle$, 
we have
$\pi \in \argmin \{ f\sp{\bullet}(\tau) - \langle \tau,  x \rangle \}$
if and only if
\begin{align*}
       f\sp{\bullet}(\pi) - \langle \pi, x \rangle  & \leq
       f\sp{\bullet}(\pi + \chi_{Y}) - \langle \pi + \chi_{Y}, x \rangle 
 \qquad  (\forall Y \subseteq S) ,
\nonumber \\ 
       f\sp{\bullet}(\pi) - \langle \pi, x \rangle & \leq
       f\sp{\bullet}(\pi - \chi_{Y}) - \langle \pi - \chi_{Y}, x \rangle 
 \qquad  (\forall Y \subseteq S) ,
\end{align*}
that is, if and only if
\begin{align}
 f\sp{\bullet}(\pi) - f\sp{\bullet}(\pi- \chi_{Y}) 
 \leq  \sum_{s \in Y}x(s) 
\leq    f\sp{\bullet}(\pi+ \chi_{Y}) - f\sp{\bullet}(\pi) 
 \qquad  (\forall Y \subseteq S) .
\label{locoptfconjx}
\end{align}
Similarly, we have
$\pi \in \argmax \{ h\sp{\circ}(\tau) - \langle \tau, x \rangle \}$
if and only if
\begin{align}
 h\sp{\circ}(\pi) - h\sp{\circ}(\pi- \chi_{Y}) 
 \geq  \sum_{s \in Y}x(s)
 \geq   h\sp{\circ}(\pi+ \chi_{Y}) - h\sp{\circ}(\pi) 
 \qquad  (\forall Y \subseteq S) .
\label{locopthconjx}
\end{align}
Therefore,
\begin{equation} \label{fencoptset2ineq}
x \in \mathcal{P} \ \mbox{\rm and} \ \pi \in \mathcal{D}
\iff
\mbox{\rm 
\eqref{locoptfconjx}, 
\eqref{locopthconjx}
hold}.
\end{equation}

From the above argument we can obtain the following optimality criteria.

\begin{theorem} \label{THfencoptprimaldual}
Let $f$ be  an integer-valued M$\sp{\natural}$-convex function and
$h$ be an integer-valued M$\sp{\natural}$-concave function
such that both
$\mathcal{P}_{0}:=\dom f \cap \dom h$ \ 
and 
$\mathcal{D}_{0} := \dom f\sp{\bullet} \cap \dom h\sp{\circ}$ \ 
are nonempty.

{\rm (1)}
Let $x \in \mathcal{P}_{0}$ and $\pi \in \mathcal{D}_{0}$.
Then the following three conditions are pairwise equivalent.

\quad
{\rm (a)} \ 
$x$ and $\pi$ are both optimal,
that is, 
$x \in \mathcal{P}$ and $\pi \in \mathcal{D}$.

\quad
{\rm (b)} \ 
The inequalities
\eqref{locoptfpi1},
\eqref{locoptfpi2},
\eqref{locopthpi1}, and 
\eqref{locopthpi2}
are satisfied by 
$x$ and $\pi$.

\quad
{\rm (c)} \ 
The inequalities
\eqref{locoptfconjx}
and 
\eqref{locopthconjx}
are satisfied by 
$x$ and $\pi$.

{\rm (2)}
Let $\hat \pi \in \mathcal{D}$ be an arbitrary dual optimal solution.
Then $x\sp{*} \in \mathcal{P}_{0}$ is a minimizer of $f(x) - h(x)$
if and only if it 
is a minimizer of
$f(x) - \langle \hat \pi, x \rangle$
and simultaneously a maximizer of
$h(x) - \langle \hat \pi, x \rangle$,
or equivalently, $x\sp{*}$ satisfies
\eqref{locoptfconjx}
and 
\eqref{locopthconjx}
for $\pi = \hat \pi$.
Namely,
\begin{align} 
\mathcal{P} 
& = \argmin \{ f(x) - \langle \hat \pi, x \rangle \} \cap
  \argmax \{ h(x) - \langle \hat \pi, x \rangle \}
\label{fencprimaloptrep1}
 \\ &
= \{ x \in \ZZ\sp{S} : 
  \mbox{\rm 
  \eqref{locoptfpi1},
  \eqref{locoptfpi2},
  \eqref{locopthpi1}, 
  \eqref{locopthpi2}
  hold with $\pi = \hat \pi$} \}
\label{fencprimaloptrep2}
 \\ &
= \{ x \in \ZZ\sp{S} : 
\mbox{\rm \eqref{locoptfconjx} and \eqref{locopthconjx}
hold with $\pi = \hat \pi$} \}.
\label{fencprimaloptrep3}
\end{align}

{\rm (3)}
Let $\hat x \in \mathcal{P}$ be an arbitrary primal optimal solution.
Then $\pi\sp{*} \in \mathcal{D}_{0}$ is a maximizer of 
$h\sp{\circ}(\pi) - f\sp{\bullet}(\pi)$
if and only if it 
is a minimizer of
$f\sp{\bullet}(\pi) - \langle \pi, \hat x \rangle$
and simultaneously a maximizer of 
$h\sp{\circ}(\pi) - \langle \pi, \hat x \rangle$,
or equivalently, $\pi\sp{*}$ satisfies the inequalities
\eqref{locoptfpi1},
\eqref{locoptfpi2},
\eqref{locopthpi1}, and 
\eqref{locopthpi2}
for $x = \hat x$.
Namely,
\begin{align} 
\mathcal{D} 
&= \argmin \{ f\sp{\bullet}(\pi) - \langle \pi, \hat x \rangle \} \cap
\argmax \{ h\sp{\circ}(\pi) - \langle \pi, \hat x \rangle \}
\label{fencdualoptrep1}
 \\ &
= \{ \pi \in \ZZ\sp{S} : 
\mbox{\rm 
\eqref{locoptfpi1},
\eqref{locoptfpi2},
\eqref{locopthpi1}, 
\eqref{locopthpi2}
hold with $x = \hat x$} \}
\label{fencdualoptrep2}
\\ &
= \{ \pi \in \ZZ\sp{S} : 
\mbox{\rm \eqref{locoptfconjx} and \eqref{locopthconjx}
hold with $x = \hat x$} \}.
\label{fencdualoptrep3}
\end{align}
\finbox
\end{theorem}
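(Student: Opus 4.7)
The plan is to reduce everything to the fact that the weak-duality inequality (\ref{fencminmaxMweak}) becomes an equality precisely when \emph{both} Fenchel--Young inequalities (\ref{youngineqf}) and (\ref{youngineqh}) hold with equality. Since $f$ and $h$ are integer-valued and $\mathcal{P}_{0}$, $\mathcal{D}_{0}$ are both nonempty, Theorem~\ref{THmlfencdual} guarantees that the common optimal value is a finite integer attained on both sides, so the statement ``$x \in \mathcal{P}$ and $\pi \in \mathcal{D}$'' is equivalent to ``$x$ and $\pi$ achieve $f(x)-h(x)=h\sp{\circ}(\pi)-f\sp{\bullet}(\pi)$,'' which in turn is equivalent to the pair of equalities $f(x) + f\sp{\bullet}(\pi) = \langle \pi, x\rangle$ and $h(x) + h\sp{\circ}(\pi) = \langle \pi, x\rangle$. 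This common reduction drives all three parts.

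For part~(1), I would translate each Fenchel--Young equality into an unconstrained optimization statement, as already sketched in (\ref{argminfp}) and (\ref{argmaxhp}): the first equality says $x$ minimizes the M$\sp{\natural}$-convex function $f(y) - \langle \pi, y\rangle$, and the second that $x$ maximizes the M$\sp{\natural}$-concave function $h(y) - \langle \pi, y\rangle$ (both M$\sp{\natural}$-properties being preserved under subtraction of a linear function). Applying the M-optimality criterion (Theorem~\ref{THmopt}) to each of these shifted functions yields precisely the local inequalities (\ref{locoptfpi1})--(\ref{locoptfpi2}) and (\ref{locopthpi1})--(\ref{locopthpi2}), giving (a)~$\Leftrightarrow$~(b). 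For (a)~$\Leftrightarrow$~(c) I would use integer biconjugacy ($f\sp{\bullet\bullet}=f$, $h\sp{\circ\circ}=h$, \cite[Theorem 8.12]{Mdcasiam}) to rewrite the same two Fenchel--Young equalities as saying that $\pi$ minimizes the L$\sp{\natural}$-convex function $f\sp{\bullet}(\tau) - \langle \tau, x\rangle$ and maximizes the L$\sp{\natural}$-concave function $h\sp{\circ}(\tau) - \langle \tau, x\rangle$; then the L-optimality criterion (Theorem~\ref{THlopt}) delivers (\ref{locoptfconjx}) and (\ref{locopthconjx}).

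Parts~(2) and~(3) now follow immediately by specialization of~(1). For~(2) I fix $\hat\pi \in \mathcal{D}$ and observe that an element $x\sp{*} \in \mathcal{P}_{0}$ lies in $\mathcal{P}$ iff the pair $(x\sp{*},\hat\pi)$ satisfies the equivalent conditions of~(1); the three representations (\ref{fencprimaloptrep1})--(\ref{fencprimaloptrep3}) are just (a), (b), (c) read with $\pi=\hat\pi$ fixed, noting that the characterization via $\argmin$/$\argmax$ is exactly the content of (\ref{fencoptset1arg}) derived before the theorem. Part~(3) is entirely symmetric: fixing $\hat x \in \mathcal{P}$ and using (\ref{fencoptset2arg}) together with the local inequalities gives (\ref{fencdualoptrep1})--(\ref{fencdualoptrep3}).

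The only point requiring any genuine care—and the closest thing to an obstacle—is confirming that the shifted functions $f - \langle \pi,\cdot\rangle$, $-h + \langle \pi,\cdot\rangle$, $f\sp{\bullet} - \langle \cdot, x\rangle$, and $-h\sp{\circ} + \langle \cdot, x\rangle$ remain in the correct function class so that Theorems~\ref{THmopt} and~\ref{THlopt} are applicable; this is standard invariance of M$\sp{\natural}$- and L$\sp{\natural}$-convexity under addition of linear functions. Beyond that the proof is assembly of the pieces already prepared in the text preceding the theorem statement.
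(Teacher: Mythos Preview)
Your proposal is correct and follows essentially the same route as the paper: the paper's proof consists entirely of the argument laid out in the text immediately preceding the theorem (equations (\ref{youngineqf})--(\ref{fencoptset2ineq})), which derives the two chains of equivalences via the Fenchel--Young equalities, the M-optimality criterion applied to the linearly shifted $f$ and $h$, biconjugacy, and the L-optimality criterion applied to the shifted conjugates---exactly as you describe. The theorem is then stated with a $\blacksquare$ and no separate proof block, since the preceding discussion already establishes (\ref{fencoptset1arg}), (\ref{fencoptset1ineq}), (\ref{fencoptset2arg}), and (\ref{fencoptset2ineq}), from which parts (1)--(3) are read off by fixing one of $x$ or $\pi$ to be optimal.
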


It is emphasized that in the representation of $\mathcal{P}$,
each of 
$\argmin \{ f(x) - \langle \hat \pi, x \rangle \}$
and 
$\argmax \{ h(x) - \langle \hat \pi, x \rangle \}$
depends on the choice of $\hat \pi$, but their intersection 
is uniquely determined and equal to $\mathcal{P}$.
Similarly,  in the representation of $\mathcal{D}$,
each of 
$\argmin \{ f\sp{\bullet}(\pi) - \langle \pi, \hat x \rangle \}$
and $\argmax \{ h\sp{\circ}(\pi) - \langle \pi, \hat x \rangle \}$
depends on the choice of $\hat x$, but their intersection 
is uniquely determined and equal to $\mathcal{D}$.

The representation of $\mathcal{P}$ in 
\eqref{fencprimaloptrep1}  
(or \eqref{fencprimaloptrep3})  
shows that $\mathcal{P}$ is 
 the intersection of two {\rm M}$\sp{\natural}$-convex sets.
Such a set is called an 
{\bf {\rm M}$_{2}\sp{\natural}$-convex set}
\cite[Section 4.7]{Mdcasiam}.
Note that the intersection of {\rm M}$\sp{\natural}$-convex sets
is not always {\rm M}$\sp{\natural}$-convex.
The representation of $\mathcal{D}$ in 
\eqref{fencdualoptrep1} 
(or \eqref{fencdualoptrep2}) 
shows that $\mathcal{D}$ is 
the intersection of two {\rm L}$\sp{\natural}$-convex sets.
Since the intersection of two (or more) {\rm L}$\sp{\natural}$-convex sets is again
{\rm L}$\sp{\natural}$-convex, 
$\mathcal{D}$ is an {\rm L}$\sp{\natural}$-convex set.

\begin{proposition} \label{PRfencminmaxoptsetLM}
In the Fenchel-type min-max relation \eqref{fencminmaxM2}
for  M$\sp{\natural}$-convex/concave functions,
the set $\mathcal{P}$ of the minimizers
is an {M}$_{2}\sp{\natural}$-convex set
and the set $\mathcal{D}$ of the maximizers
is an {L}$\sp{\natural}$-convex set. 
\finbox
\end{proposition}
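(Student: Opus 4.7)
The plan is to read both claims straight off the representations already established in Theorem~\ref{THfencoptprimaldual}, so essentially no new calculation is needed; the work is in citing the correct DCA closure properties. Fix any dual optimal $\hat\pi \in \mathcal{D}$. By \eqref{fencprimaloptrep1},
\[
\mathcal{P} = \argmin_{x}\{ f(x) - \langle \hat\pi, x\rangle \} \,\cap\, \argmax_{x}\{ h(x) - \langle \hat\pi, x\rangle \}.
\]
First I would observe that each factor on the right is an M$\sp{\natural}$-convex set: the function $f(x) - \langle \hat\pi, x\rangle$ is M$\sp{\natural}$-convex (addition of a linear term preserves M$\sp{\natural}$-convexity), and a standard fact in DCA (\cite[Theorem 6.30]{Mdcasiam}-type result) says that the minimizer set of an M$\sp{\natural}$-convex function is itself an M$\sp{\natural}$-convex set. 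The second factor is treated symmetrically: $-h(x) + \langle \hat\pi, x\rangle$ is M$\sp{\natural}$-convex and its minimizer set coincides with $\argmax\{h(x) - \langle \hat\pi, x\rangle\}$. Since an M$_{2}\sp{\natural}$-convex set is by definition the intersection of two M$\sp{\natural}$-convex sets, we conclude that $\mathcal{P}$ is M$_{2}\sp{\natural}$-convex.

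For $\mathcal{D}$ I would fix any primal optimal $\hat x \in \mathcal{P}$ and use \eqref{fencdualoptrep1}:
\[
\mathcal{D} = \argmin_{\pi}\{ f\sp{\bullet}(\pi) - \langle \pi, \hat x\rangle \} \,\cap\, \argmax_{\pi}\{ h\sp{\circ}(\pi) - \langle \pi, \hat x\rangle \}.
\]
The conjugacy between M$\sp{\natural}$-convexity and L$\sp{\natural}$-convexity (cf.\ the conjugacy picture in \cite[Chapter 8]{Mdcasiam}) tells us that $f\sp{\bullet}$ is L$\sp{\natural}$-convex and $h\sp{\circ}$ is L$\sp{\natural}$-concave, and both factors on the right are therefore L$\sp{\natural}$-convex sets, again by the standard fact that the set of optimizers of an L$\sp{\natural}$-convex (resp.\ concave) function is L$\sp{\natural}$-convex. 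Unlike the M$\sp{\natural}$ case, the class of L$\sp{\natural}$-convex sets \emph{is} closed under intersection; this follows immediately from the discrete midpoint inequality \eqref{disfnmidconvex} applied to the sum of the two indicator functions (or equivalently, from the fact that an L$\sp{\natural}$-convex set is described by box constraints plus difference constraints of the form $\pi(s)-\pi(t)\le c_{st}$, which are preserved under intersection). Hence $\mathcal{D}$ is L$\sp{\natural}$-convex.

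There is no real obstacle once one is willing to invoke these two closure statements from DCA; the only delicate point is to choose the right references and to note that $\hat\pi$ (resp.\ $\hat x$) exists because we are in the finite-minimum case, guaranteed by the assumption $\dom f\cap\dom h\ne\emptyset$ and $\dom f\sp{\bullet}\cap\dom h\sp{\circ}\ne\emptyset$ together with the Fenchel-type duality theorem~\ref{THmlfencdual}. Finally, I would emphasise the asymmetry already noted after \eqref{fencdualoptrep3}: each individual argmin/argmax factor in the representations of $\mathcal{P}$ and $\mathcal{D}$ depends on the choice of $\hat\pi$ or $\hat x$, but the intersections themselves do not, so the M$_{2}\sp{\natural}$- and L$\sp{\natural}$-structures produced are intrinsic to the problem rather than artefacts of the chosen optimal dual/primal element.
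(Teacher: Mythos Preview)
Your argument is correct and matches the paper's own reasoning essentially line for line: the paper also reads $\mathcal{P}$ off \eqref{fencprimaloptrep1} as an intersection of two M$\sp{\natural}$-convex sets (hence M$_{2}\sp{\natural}$-convex by definition) and reads $\mathcal{D}$ off \eqref{fencdualoptrep1} as an intersection of two L$\sp{\natural}$-convex sets, invoking closure of L$\sp{\natural}$-convexity under intersection. You supply slightly more justification for the individual closure facts than the paper does, but the route is the same.
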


\begin{remark} \rm  \label{RMfenccert2}
In Remark \ref{RMfenccert1} we have discussed the role of
the Fenchel-type discrete duality theorem
for the certificate of optimality in minimizing $f(x) - h(x)$. 
We have distinguished two cases according to wheter 
the explicit forms of the conjugate functions 
$f\sp{\bullet}(\pi)$ and $h\sp{\circ}(\pi)$ are available or not.
If their explicit forms are known, 
we can verify the optimality of $x$ by simply computing 
the values of $f(x) - h(x)$ for $x$  and  
$h\sp{\circ}(\pi) - f\sp{\bullet}(\pi)$
for a given dual optimal $\pi$.
Even if the explicit forms of the conjugate functions are not known,
Theorem \ref{THfencoptprimaldual} (2) above enables us to verify the optimality of $x$
by checking the inequalities
\eqref{locoptfpi1},
\eqref{locoptfpi2},
\eqref{locopthpi1}, and
\eqref{locopthpi2}
for a given dual optimal $\pi$.
Note that we have $O(|S|\sp{2})$ inequalities in total.
We emphasize that 
Theorem \ref{THfencoptprimaldual} (2) is derived from a combination of 
the Fenchel-type discrete duality theorem (Theorem \ref{THmlfencdual})
with the M-optimality criterion (Theorem \ref{THmopt}). 
\finbox
\end{remark}

\begin{remark} \rm \label{RMsubdiff}
In convex analysis, as well as in discrete convex analysis, the optimality conditions 
such as those in Theorem \ref{THfencoptprimaldual}
are expressed usually in terms of 
subgradients and subdifferentials.
In this paper, however, we have intentionally avoided 
using these concepts  
for the sake of the audience from combinatorial optimization.
In this remark we will briefly indicate how the results in 
Theorem \ref{THfencoptprimaldual}
can be described and interpreted in terms of 
subgradients and subdifferentials.

Let 
$f: \ZZ\sp{S} \to \ZZ \cup \{ +\infty \}$
and
$h: \ZZ\sp{S} \to \ZZ \cup \{ -\infty \}$
be integer-valued functions defined on $\ZZ\sp{S}$.
The integral subdifferential of $f$ at $x \in \dom f$
and its concave version for $h$ at $x \in \dom h$ 
are the sets of integer vectors defined as
\begin{eqnarray}
 \partial f(x) 
& := & 
  \{ \pi \in  \ZZ\sp{S} : 
  f(y) - f(x)  \geq  \langle \pi, y - x \rangle  \ \ (\forall y \in \ZZ\sp{S}) \} ,
\label{subgrfdef}
\\
 \partial h(x) 
& := &   \{ \pi \in  \ZZ\sp{S} : 
  h(y) - h(x)  \leq  \langle \pi, y - x \rangle  \ \ (\forall y \in \ZZ\sp{S}) \} .
\label{subgrhdef}
\end{eqnarray}
A member of $\partial f(x)$ is called a subgradient of $f$ at $x$.
Accordingly, the integral subdifferentials 
of $f\sp{\bullet}$ and $h\sp{\circ}$ at $\pi$ are defined as
\begin{eqnarray}
 \partial f\sp{\bullet}(\pi) 
&:= &   \{ x \in  \ZZ\sp{S} : 
  f\sp{\bullet}(\tau) - f\sp{\bullet}(\pi)  \geq  \langle \tau - \pi, x \rangle 
  \ (\forall \tau \in \ZZ\sp{S}) \} ,
\label{subgrfconjdef}
\\
 \partial h\sp{\circ}(\pi) 
&:= &   \{ x \in  \ZZ\sp{S} : 
  h\sp{\circ}(\tau) - h\sp{\circ}(\pi)  \leq  \langle  \tau - \pi, x \rangle 
  \ (\forall \tau \in \ZZ\sp{S}) \} ,
\label{subgrhconjdef}
\end{eqnarray}
where $\partial f\sp{\bullet}(\pi)$ 
is defined for $\pi \in \dom f\sp{\bullet}$ 
and $\partial h\sp{\circ}(\pi)$ for $\pi \in \dom h\sp{\circ}$.
The following relations are straightforward 
translations of the corresponding results in
(ordinary) convex analysis to the discrete setting
 (cf., \cite{Mdca98}, \cite{Mdcasiam}):
\begin{align}
\pi \in \partial f(x) & 
\iff \mbox{equality holds in \eqref{youngineqf}} 
\iff x \in \partial f\sp{\bullet}(\pi),
 \label{youngsubgf}
\\ 
\pi \in \partial h(x) & 
\iff \mbox{equality holds in \eqref{youngineqh}} 
\iff x \in \partial h\sp{\circ}(\pi),
 \label{youngsubgh}
\\
\partial f(x) &= \argmin_{\pi} \{ f\sp{\bullet}(\pi) - \langle \pi, x \rangle \},
\label{subgfargfc}
\\
\partial h(x) &= \argmax_{\pi} \{ h\sp{\circ}(\pi) - \langle \pi, x \rangle \},
\label{subgharghc}
\\
 \partial f\sp{\bullet}(\pi) 
&=  \argmin_{x} \{ f(x) - \langle \pi, x \rangle \},
\label{subgfcargf}
\\
 \partial h\sp{\circ}(\pi)
&=  \argmax_{x} \{ h(x) - \langle \pi, x \rangle \} ,
\label{subghcargh}
\end{align}
where the integer biconjugacy ($f\sp{\bullet\bullet}=f$,  $h\sp{\circ\circ}=h$)
is assumed, which is true for M$\sp{\natural}$-convex/concave functions.
By using \eqref{subgfcargf}--\eqref{subghcargh}
in \eqref{fencprimaloptrep1}, 
and 
\eqref{subgfargfc}--\eqref{subgharghc}
in \eqref{fencdualoptrep1}, respectively, 
we obtain the following representations of optimal solutions
\begin{align} 
\mathcal{P} 
 & =  \partial f\sp{\bullet}(\hat \pi)  \cap \partial h\sp{\circ}(\hat \pi) ,
\label{fencprimaloptrep4}
\\
\mathcal{D} 
 & =  \partial f(\hat x) \cap \partial h(\hat x) 
\label{fencdualoptrep4}
\end{align} 
for any 
$\hat \pi \in \mathcal{D}$ and $\hat x \in \mathcal{P}$.
We also have optimality criteria
\begin{align} 
x \in \mathcal{P} 
 & \iff 
 \partial f(x) \cap \partial h(x) \not= \emptyset,
\label{fencprimalopt5}
\\
\pi \in \mathcal{D} 
 & \iff
 \partial f\sp{\bullet}(\pi)  \cap \partial h\sp{\circ}(\pi) \not= \emptyset.
\label{fencdualopt5}
\end{align} 
Finally it is worth mentioning that, by the M-L conjugacy 
\cite[Chapter 8]{Mdcasiam},
the subdifferential of 
an M$\sp{\natural}$-convex function $f$ 
(resp., an M$\sp{\natural}$-concave function $h$)  
is an L$\sp{\natural}$-convex set
and the subdifferential of 
an L$\sp{\natural}$-convex function $f\sp{\bullet}$ 
(resp., an L$\sp{\natural}$-concave function $h\sp{\circ}$)
is an M$\sp{\natural}$-convex set.
\finbox
\end{remark}

\subsection{Separable convex functions on an M-convex set}
\label{SCfencoptsetsepar}

In Theorem \ref{THminmaxgensep}
we have shown a min-max formula
\begin{align} 
 & \min \{  \sum_{s \in S} \varphi_{s} ( x(s) ) : 
   x \in \odotZ{B} \} 
= \max\{ \hat p(\pi)
 - \sum_{s  \in S}  \psi_{s} (\pi(s) ) :   \pi \in \ZZ\sp{S} \} 
\label{minmaxgensepBase}
\end{align}
for an integer-valued separable convex function
\begin{equation} \label{gensepar5}
  \Phi(x) = \sum [\varphi_{s}(x(s)):  s\in S] 
\end{equation}
on an M-convex set $\odotZ{B}$.
Here we introduce notations for the set of feasible points:
\begin{align} 
 \dom \Phi &=  \{ x \in \ZZ\sp{S} :  x(s) \in \dom \varphi_{s} \mbox{ for each } s \in S \},
\label{primaldomBase}
\\
 \mathcal{P}_{0} &= \odotZ{B} \cap \dom {\Phi} =
\{  x \in \odotZ{B}:
 x(s) \in \dom \varphi_{s} \mbox{ for each } s \in S \} ,
\label{primalfeasBase}
\\
 \mathcal{D}_{0} &= \{  \pi \in \ZZ\sp{S}:
\pi \in \dom \hat p, \  \pi(s) \in \dom \psi_{s}  \mbox{ for each } s \in S \} .
\label{dualfeasBase}
\end{align}
The min-max formula \eqref{minmaxgensepBase} holds under the assumption 
of primal feasibility ($\mathcal{P}_{0} \not= \emptyset$) 
or dual feasibility ($\mathcal{D}_{0} \not= \emptyset$).
The unbounded case with both sides of (\ref{minmaxgensepBase})
being equal to $-\infty$ or $+\infty$ is also a possibility in general,
but in this section we assume that the both sides are finite-valued
and denote the set of the minimizers $x$ by $\mathcal{P}$ 
and the set of the maximizers $\pi$ by $\mathcal{D}$.

We can obtain the optimality conditions for \eqref{minmaxgensepBase}  
by applying Theorem~\ref{THfencoptprimaldual} with 
\begin{align*}
& f(x) =\sum [\varphi_{s}(x(s)):  s\in S],
\qquad \ 
 h(x) = -\delta(x),
\\ &
 f\sp{\bullet}(\pi)  = \sum [ \psi_{s}(\pi(s)):  s\in S],
\qquad
 h\sp{\circ}(\pi) =\hat p(\pi) ,
\end{align*}
where $\delta$ is the indicator function 
of $\odotZ{B}$ defined in \eqref{indicBdef}.
However, 
we present a direct derivation from \eqref{minmaxgensepBase}
via weak duality ($\min \geq \max$), 
as it should be more informative and convenient for readers.

For each conjugate pair $(\varphi_{s}, \psi_{s})$,
it follows from the definition \eqref{phiconjdef} that
\begin{equation}  \label{youngineq5}
\varphi_{s}(k) + \psi_{s} ( \ell ) \geq k \ell 
\qquad (k,\ell \in \ZZ),
\end{equation}
which is known as the Fenchel--Young inequality,
where the equality holds 
if and only if
\begin{equation}  \label{youngsubgrad5}
\varphi_{s}(k) - \varphi_{s}(k-1) \leq \ell \leq \varphi_{s}(k+1) - \varphi_{s}(k).
\end{equation}
Let $x \in \mathcal{P}_{0}$ and $\pi \in \mathcal{D}_{0}$.
Then, using 
the Fenchel--Young inequality \eqref{youngineq5}
as well as \eqref{lovextdef} for $p$, we obtain the weak duality:
\begin{align}
 \sum_{s \in S} \varphi_{s} ( x(s) ) 
 -
  \left( \hat p(\pi)  
 - \sum_{s  \in S}  \psi_{s} (\pi(s) ) 
  \right)
  & =  \sum_{s \in S}   \big[ \varphi_{s} ( x(s) ) +  \psi_{s}(\pi(s)) \big] 
  \ - \hat p(\pi) 
\notag  \\ &  
\geq
  \sum_{s \in S}    x(s) \pi(s)  
  \ - \hat p(\pi) 
\label{weakd1Base} 
\\ &  
\geq
 \min \{ \pi z : z \in \odotZ{B} \} - \hat p(\pi) 
\ =  0.
\label{weakd2Base}
\end{align}

The optimality conditions can be obtained as the conditions
for the inequalities in \eqref{weakd1Base} and \eqref{weakd2Base}
to be equalities, as follows.

\begin{proposition} \label{PRminmaxgensepBaseOPT}
Assume that both $\mathcal{P}_{0}$ and $\mathcal{D}_{0}$ 
in \eqref{primalfeasBase}--\eqref{dualfeasBase} are nonempty.

{\rm (1)}
Let $x \in \mathcal{P}_{0}$ and  
$\pi \in \mathcal{D}_{0}$.
Then
$x \in \mathcal{P}$ and $\pi \in \mathcal{D}$
(that is, $x$ and $\pi$ are both optimal)
if and only if
the following two conditions are satisfied:
\begin{align}
 & 
\varphi_{s}(x(s)) - \varphi_{s}(x(s)-1) 
\leq \pi(s)  \leq 
\varphi_{s}(x(s)+1) - \varphi_{s}(x(s))
\qquad (s \in S),  
\label{pisubgradBase}
\\ 
 & \mbox{  \rm $\pi(s) \geq \pi(t)$ 
  \quad  for every $(s,t)$ \ with \ $x + \chi_{s} - \chi_{t} \in \odotZ{B}$}.
\label{piminzerBase}
\end{align}

{\rm (2)}
Let $\hat \pi \in \mathcal{D}$ be an arbitrary dual optimal solution.
Then $x\sp{*} \in \mathcal{P}_{0}$ is a minimizer of $\Phi(x)$ over $\odotZ{B}$
if and only if it satisfies
\eqref{pisubgradBase} and \eqref{piminzerBase}
for $\pi = \hat \pi$,
or equivalently, 
it is a minimizer of
$\sum [\varphi_{s}(x(s)) -  \hat \pi(s) x(s) :  s\in S]$
and simultaneously a $\hat\pi$-minimizer in $\odotZ{B}$.
Namely,
\begin{align}  
\mathcal{P} 
& =  \{ x \in \mathcal{P}_{0}: 
\mbox{\rm \eqref{pisubgradBase}, \eqref{piminzerBase} hold with $\pi = \hat\pi$}  \}
\label{primaloptsetBase1}
\\
& =\{ x \in \dom \Phi:  
\mbox{\rm \eqref{pisubgradBase} holds with $\pi = \hat\pi$}  \} 
\cap 
\{ x \in \odotZ{B}:
 \mbox{\rm $x$ is a $\hat\pi$-minimizer in $\odotZ{B}$ } \} .
 \label{primaloptsetBase2}
\end{align}

{\rm (3)}
Let $\hat x \in \mathcal{P}$ be an arbitrary primal optimal solution.
Then $\pi\sp{*} \in \mathcal{D}_{0}$ is a maximizer of 
$\hat p(\pi) - \sum_{s  \in S}  \psi_{s} (\pi(s) )$
if and only if 
it satisfies the inequalities
\eqref{pisubgradBase} and \eqref{piminzerBase}
for $x = \hat x$.
Namely,
\begin{equation}  \label{dualoptsetBase}
\mathcal{D} =  \{ \pi \in \mathcal{D}_{0} : 
\mbox{\rm \eqref{pisubgradBase}, \eqref{piminzerBase}
 hold with $x = \hat x$}  \} .
\end{equation}
\end{proposition}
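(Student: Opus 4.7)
The plan is to unpack the equality conditions along the weak duality chain \eqref{weakd1Base}--\eqref{weakd2Base} already displayed in the excerpt. Since the min-max formula \eqref{minmaxgensepBase} holds with equality, a feasible pair $(x,\pi) \in \mathcal{P}_{0} \times \mathcal{D}_{0}$ is simultaneously primal and dual optimal if and only if both the Fenchel--Young step in \eqref{weakd1Base} and the linear-minimization step in \eqref{weakd2Base} are tight. The whole proof will therefore amount to characterizing separately when each of these two inequalities becomes an equality.

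For part~(1), I would argue as follows. The inequality in \eqref{weakd1Base} is simply the sum over $s \in S$ of the Fenchel--Young inequalities \eqref{youngineq5} applied to $(k,\ell) = (x(s), \pi(s))$. Since each summand is nonnegative, the sum vanishes if and only if every summand vanishes, which by the characterization \eqref{youngsubgrad5} is exactly \eqref{pisubgradBase}. The inequality in \eqref{weakd2Base}, on the other hand, comes from $\sum_{s \in S} x(s)\pi(s) \geq \min\{ \langle \pi, z\rangle : z \in \odotZ{B}\} = \hat p(\pi)$ and is tight precisely when $x$ is a $\pi$-minimizer in $\odotZ{B}$. To translate ``being a $\pi$-minimizer'' into the local exchange condition \eqref{piminzerBase}, I would invoke the M-optimality criterion (Theorem~\ref{THmopt}) applied to the M-convex function $z \mapsto \langle \pi, z\rangle + \delta(z)$: this function attains its minimum at $x$ if and only if $\langle \pi, x\rangle \leq \langle \pi, x + \chi_{s} - \chi_{t}\rangle$ for all $s,t \in S$ with $x+\chi_{s}-\chi_{t} \in \odotZ{B}$, which rearranges to exactly \eqref{piminzerBase}.

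Parts~(2) and~(3) then follow immediately from~(1) by fixing one side of the pair. For~(2), with a dual optimum $\hat \pi \in \mathcal{D}$ fixed, an element $x\sp{*} \in \mathcal{P}_{0}$ lies in $\mathcal{P}$ iff $(x\sp{*}, \hat \pi)$ is jointly optimal, which by~(1) is equivalent to \eqref{pisubgradBase} and \eqref{piminzerBase} with $\pi = \hat \pi$; this yields the representation \eqref{primaloptsetBase1}. The alternative factorization \eqref{primaloptsetBase2} is obtained by observing that condition \eqref{pisubgradBase} (with $\pi = \hat \pi$) is precisely the coordinatewise optimality condition for $x$ to minimize the separable function $\sum_{s \in S} [\varphi_{s}(x(s)) - \hat \pi(s)\, x(s)]$, while \eqref{piminzerBase} expresses that $x$ is a $\hat\pi$-minimizer in $\odotZ{B}$, as already established. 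Part~(3) is completely symmetric: fix a primal optimum $\hat x \in \mathcal{P}$ and apply~(1) with $x = \hat x$ to obtain \eqref{dualoptsetBase}.

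The main obstacle is the characterization of $\pi$-minimizers in $\odotZ{B}$ via the purely local exchange inequalities \eqref{piminzerBase}. The necessity direction is a one-line linear observation (substitute $z = x + \chi_{s} - \chi_{t}$ into $\langle \pi, x\rangle \leq \langle \pi, z\rangle$), but the sufficiency direction genuinely requires a local-to-global principle for discrete optimization on $\odotZ{B}$; this is supplied in our setting by the M-optimality criterion (Theorem~\ref{THmopt}) applied to the M-convex function $\langle \pi, \cdot\rangle + \delta$. Everything else in the proof is a routine unpacking of when the Fenchel--Young inequality is tight.
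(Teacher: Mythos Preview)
Your proposal is correct and follows essentially the same route as the paper: analyze when each of the two inequalities \eqref{weakd1Base} and \eqref{weakd2Base} in the weak-duality chain becomes tight, then read off parts~(2) and~(3) by fixing one side of an optimal pair. The only minor difference is that you explicitly justify the equivalence of ``$x$ is a $\pi$-minimizer in $\odotZ{B}$'' with the local exchange condition \eqref{piminzerBase} via Theorem~\ref{THmopt}, whereas the paper simply asserts this equivalence as a standard fact about minimum-weight bases.
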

\begin{proof}
The inequality \eqref{weakd1Base} turns into an equality 
if and only if,
for each $s \in S$, we have
$\varphi_{s} (k) + \psi_{s} ( \ell ) = k \ell$ 
for $k= x(s)$ and $\ell = \pi(s)$.
The latter condition is equivalent to
\eqref{pisubgradBase} by \eqref{youngsubgrad5}.
The other inequality \eqref{weakd2Base} turns into an equality 
if and only if
$x$ is a $\pi$-minimizer in $\odotZ{B}$,
which is equivalent to \eqref{piminzerBase}.
Finally, we see from \eqref{minmaxgensepBase} that
the two inequalities in \eqref{weakd1Base} and  \eqref{weakd2Base}
simultaneously turn into equality if $x \in \mathcal{P}$ and $\pi \in \mathcal{D}$.
\end{proof}

\begin{proposition} \label{PRminmaxgensepoptsetLM}
In the min-max relation \eqref{minmaxgensepBase}
for a separable convex function on an M-convex set,
the set $\mathcal{D}$ of the maximizers
is an L$\sp{\natural}$-convex set and 
the set $\mathcal{P}$ of the minimizers
is an M-convex set.
\end{proposition}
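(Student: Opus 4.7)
The plan is to establish the two assertions separately, in each case identifying the optimizer set as the argmax/argmin of a function with an appropriate type of discrete convexity and then invoking closure of the optimizer set under the defining exchange or midpoint operation.

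For $\mathcal{D}$, I would start from Proposition~\ref{PRdualobjlnat}, which guarantees that the dual objective $g(\pi) := \hat p(\pi) - \sum_{s\in S}\psi_{s}(\pi(s))$ is L$^\natural$-concave. Discrete midpoint convexity \eqref{disfnmidconvex} applied to $-g$ gives
\begin{equation*}
 g(\pi) + g(\tau) \leq g\bigl(\lceil (\pi+\tau)/2 \rceil\bigr) + g\bigl(\lfloor (\pi+\tau)/2 \rfloor\bigr)
\qquad (\pi,\tau\in\ZZ^{S}).
\end{equation*}
If $\pi,\tau\in\mathcal{D}$ then the left-hand side equals $2\max g$, while each summand on the right is at most $\max g$; hence both must attain $\max g$, so both the componentwise ceiling and floor of $(\pi+\tau)/2$ lie in $\mathcal{D}$. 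This is precisely the defining closure property of an L$^\natural$-convex set.

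For $\mathcal{P}$, I would absorb the constraint $x\in\odotZ{B}$ into the objective by setting $F := \Phi + \delta$, where $\delta$ is the indicator function of $\odotZ{B}$ defined in \eqref{indicBdef}. As already noted in Section~\ref{SCseparDCA}, the sum of a separable convex function and the M-convex indicator function of an M-convex set is M-convex; thus $F$ is M-convex, and $\mathcal{P}$ is exactly the minimizer set of $F$. Given $x,y\in\mathcal{P}$ with $x\neq y$ and $s\in\suppp(x-y)$, the M-convex exchange property \eqref{mconvexZ} applied to $F$ yields some $t\in\suppm(x-y)$ with $F(x)+F(y)\geq F(x-\chi_{s}+\chi_{t})+F(y+\chi_{s}-\chi_{t})$. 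Since $x,y$ minimize $F$ and each term on the right is at least $\min F$, the inequality must be an equality term by term, so $x-\chi_{s}+\chi_{t}$ and $y+\chi_{s}-\chi_{t}$ both belong to $\mathcal{P}$. This is the exchange property \eqref{mconvexZ} for $\mathcal{P}$.

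There is no genuine technical obstacle once these ingredients are in place; the only subtlety worth flagging is the appeal to full M-convexity (not merely M$^\natural$-convexity) of $F=\Phi+\delta$. The sharper conclusion that $\mathcal{P}$ is M-convex (rather than just M$_{2}^{\natural}$-convex as in Proposition~\ref{PRfencminmaxoptsetLM}) rests on this fact, in parallel with the contrast pointed out in Remark~\ref{RMm2convex} between base-polyhedra and general g-polymatroids.
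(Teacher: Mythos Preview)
Your proof is correct, but it takes a genuinely different route from the paper's. The paper argues via the explicit optimality characterization of Proposition~\ref{PRminmaxgensepBaseOPT}: it fixes a dual optimum $\hat\pi$ and uses the representation \eqref{dualoptsetBase} to see that $\mathcal{D}$ is cut out by the box constraints \eqref{pisubgradBase} together with the potential-difference constraints \eqref{piminzerBase}, hence is L$^{\natural}$-convex; and it uses \eqref{primaloptsetBase2} to express $\mathcal{P}$ as the intersection of an integral box with the set of $\hat\pi$-minimizers in $\odotZ{B}$ (a face, hence M-convex), so $\mathcal{P}$ is M-convex.

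Your argument instead bypasses the optimality conditions entirely and works directly from the discrete-convexity of the primal and dual objectives, showing that the argmin of an M-convex function inherits the exchange property and the argmax of an L$^{\natural}$-concave function inherits discrete midpoint closure. This is cleaner and requires less preliminary machinery. The paper's approach, on the other hand, yields the concrete decomposition $\mathcal{P} = (\text{box}) \cap (\text{face of } B)$, which is the structural ingredient actually used downstream (e.g., in Proposition~\ref{PRstrPdecmin}) to conclude that the dec-min elements form a \emph{matroidal} M-convex set, not merely an M-convex set.
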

\begin{proof}
The representation \eqref{dualoptsetBase}
shows that $\mathcal{D}$ is described by the inequalities in 
\eqref{pisubgradBase} and \eqref{piminzerBase}.
Hence $\mathcal{D}$ is {\rm L}$\sp{\natural}$-convex.
(The {\rm L}$\sp{\natural}$-convexity of $\mathcal{D}$ can also be obtained from 
Proposition \ref{PRfencminmaxoptsetLM}.)
In the representation \eqref{primaloptsetBase2} of $\mathcal{P}$, the first set
\ $\{ x \in \dom \Phi: \mbox{\rm \eqref{pisubgradBase} holds with $\pi = \hat\pi$}  \}$ \ 
is a box of integers (the set of integers in an integral box),
while the set of $\hat\pi$-minimizers in $\odotZ{B}$
is an {\rm M}-convex set.
Therefore, $\mathcal{P}$ is an {\rm M}-convex set.
\end{proof}

\subsection{Dual optimal solutions to square-sum minimization}
\label{SCsqsumoptdual}

The min-max formula (\ref{minmaxSqSum-KM2}) for the  square-sum minimization
is a special case of the min-max formula (\ref{minmaxgensepBase}) with
$\varphi_{s}(k)=\varphi(k)=k\sp{2}$ and 
$\psi_{s}(\ell) = \psi(\ell) 
= \left\lfloor {\ell}/{2} \right\rfloor \cdot \left\lceil {\ell}/{2} \right\rceil$
for $k, \ell \in \ZZ$
(cf., \eqref{squareconj}).
Accordingly, we can apply the general results
(Proposition~\ref{PRminmaxgensepBaseOPT}, in particular) 
for the analysis of the optimal solutions in the min-max formula (\ref{minmaxSqSum-KM2}).
In this section we consider the dual solutions,
whereas the primal solutions are treated in Section \ref{SCsqsumoptprimal}.

\medskip

The function $g(\pi) = \hat p(\pi) - \sum [\psi(\pi(s)):  s\in S]$
to be maximized in  (\ref{minmaxSqSum-KM2})
is L$\sp{\natural}$-concave
by Proposition~\ref{PRdualobjlnat},
and the maximizers of an L$\sp{\natural}$-concave function
form an {\rm L}$\sp{\natural}$-convex set \cite[Theorem 7.17]{Mdcasiam}.
Therefore, the set $\Pi$ 
of dual optimal solutions is an {\rm L}$\sp {\natural}$-convex set,
which is the first statement of Proposition~\ref{PRoptdualsetlnatKM}.
The L$\sp{\natural}$-convexity of $\Pi$ implies that 
there exists a unique smallest element of $\Pi$.
The second statement of Proposition~\ref{PRoptdualsetlnatKM}
shows that this smallest element is given by $\pi\sp{*}$,
but this fact is not easily shown by general arguments 
from discrete convex analysis.

Next we consider Theorem~\ref{THoptdualsetKM},
which gives a representation of $\Pi$.
According to the general result stated in Proposition~\ref{PRminmaxgensepBaseOPT} (3),
we can obtain another representation of $\Pi$ 
by choosing any dec-min element $\hat x$ of $\odotZ{B}$,
which is a primal optimal solution for (\ref{minmaxSqSum-KM2}).
In this case the condition \eqref{pisubgradBase} reads
\begin{equation} \label{subgrfm}
 2 x(s) - 1 \leq \pi(s) \leq 2 x(s) + 1  
\qquad (s \in S) ,   
\end{equation}
since
$\varphi(k) - \varphi(k-1) =  k\sp{2} - (k-1)\sp{2} = 2k - 1$
and 
$\varphi(k+1) - \varphi(k) = (k+1)\sp{2} - k\sp{2} = 2k + 1$.

\begin{proposition} \label{PRstrD2}
Let $m$ be any dec-min element of $\odotZ{B}$.
The set $\Pi$ of dual optimal solutions to {\rm (\ref{minmaxSqSum-KM2})}
is represented as
$\Pi = I(m) \cap P(m)$, where
\begin{align*}
I(m) &= \{ \pi \in \ZZ\sp{S}  :  
 2 m(s) - 1 \leq \pi(s) \leq 2 m(s) + 1  \mbox{\rm \  for all $s \in S$} \},
\\
P(m) &= \{ \pi \in \ZZ\sp{S}  :  
\mbox{\rm $\pi(s) \geq \pi(t)$ 
\  \  for every $(s,t)$ \ with \ $x + \chi_{s} - \chi_{t} \in \odotZ{B}$}
\}.
\end{align*}
Hence $\Pi$ is an {\rm L}$\sp {\natural}$-convex set.
\finbox
\end{proposition}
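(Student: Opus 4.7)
The plan is to derive Proposition \ref{PRstrD2} as an immediate specialization of Proposition \ref{PRminmaxgensepBaseOPT}(3), which describes the set of dual optimal solutions in the general min-max formula \eqref{minmaxgensepBase} as the locus of $\pi \in \mathcal{D}_{0}$ satisfying \eqref{pisubgradBase} and \eqref{piminzerBase} at any fixed primal optimal $\hat x$. The key observation is that for the square-sum problem we have $\varphi_{s}=\varphi$ with $\varphi(k)=k^{2}$ for every $s\in S$, so the general theorem applies once we choose an appropriate primal optimum.

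First I would justify that a dec-min element $m$ of $\odotZ{B}$ is a primal optimal solution for \eqref{minmaxSqSum-KM2}. This is exactly the content of Theorem~\ref{THdecminPhistrict2} applied to the strictly convex $\varphi(k)=k^{2}$; hence $m\in\mathcal{P}$, so $m$ can legitimately play the role of $\hat x$ in Proposition \ref{PRminmaxgensepBaseOPT}(3). Note also that the primal and dual feasibility conditions are trivially satisfied: $\dom\varphi=\ZZ$ makes every $x\in\odotZ{B}$ primal feasible, and $\dom\psi=\ZZ$ together with the finiteness of $\hat p$ on $\ZZ^{S}$ (since $B$ is a base-polyhedron) makes every $\pi\in\ZZ^{S}$ dual feasible.

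Next I would specialize the two optimality conditions. For the conjugate pair $\varphi(k)=k^{2}$ and $\psi(\ell)=\lfloor\ell/2\rfloor\lceil\ell/2\rceil$ derived in \eqref{squareconj}, one has $\varphi(k)-\varphi(k-1)=2k-1$ and $\varphi(k+1)-\varphi(k)=2k+1$. Substituting $x=m$ into \eqref{pisubgradBase} yields
\[
  2m(s)-1 \ \leq \ \pi(s) \ \leq \ 2m(s)+1 \qquad (s\in S),
\]
which is precisely the defining condition of $I(m)$. On the other hand, condition \eqref{piminzerBase} with $x=m$ is verbatim the defining condition of $P(m)$. Combining these via Proposition \ref{PRminmaxgensepBaseOPT}(3) gives $\Pi=I(m)\cap P(m)$.

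Finally, the {\rm L}$^{\natural}$-convexity of $\Pi$ can be obtained in two equivalent ways: either by invoking Proposition \ref{PRminmaxgensepoptsetLM} (which already asserts that the dual optimum set of \eqref{minmaxgensepBase} is an {\rm L}$^{\natural}$-convex set), or by noting that $I(m)$ is an integer box (trivially {\rm L}$^{\natural}$-convex) and $P(m)$ is defined by difference inequalities $\pi(s)-\pi(t)\geq 0$, which is also an {\rm L}$^{\natural}$-convex condition; the intersection of two {\rm L}$^{\natural}$-convex sets is {\rm L}$^{\natural}$-convex. No genuine obstacle is anticipated: the proof is essentially a substitution into the already established Proposition \ref{PRminmaxgensepBaseOPT}, and the only bookkeeping is the computation of the forward and backward differences of $k^{2}$.
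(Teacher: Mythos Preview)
Your proposal is correct and follows essentially the same route as the paper: the text preceding Proposition~\ref{PRstrD2} derives it precisely by invoking Proposition~\ref{PRminmaxgensepBaseOPT}(3) with a dec-min element $m$ as the primal optimum $\hat x$, computing $\varphi(k+1)-\varphi(k)=2k+1$ and $\varphi(k)-\varphi(k-1)=2k-1$ to identify \eqref{pisubgradBase} with $I(m)$, and noting that \eqref{piminzerBase} is $P(m)$. One minor imprecision: $\hat p$ need not be finite on all of $\ZZ^{S}$ when $B$ is unbounded, but this is harmless since primal feasibility alone suffices for Proposition~\ref{PRminmaxgensepBaseOPT} to apply.
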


Let us compare the representations of $\Pi$
in Proposition~\ref{PRstrD2} and Theorem~\ref{THoptdualsetKM}.
Roughly speaking, $I(m)$ corresponds to 
the first two conditions (\ref{Si-Fi-KM}) and (\ref{Fi-KM})
in Theorem~\ref{THoptdualsetKM} 
and $P(m)$ to the third condition (\ref{pispit-KM}). 
However, there is an essential difference between 
Proposition~\ref{PRstrD2} and Theorem~\ref{THoptdualsetKM}.
Namely, each of $I(m)$ and $P(m)$ 
varies with the choice of $m$, 
while their intersection is uniquely determined and equal to $\Pi$.
In this sense, the description of $\Pi$ 
in Proposition~\ref{PRstrD2} is not canonical.
Theorem~\ref{THoptdualsetKM}
is a much stronger statement, giving a canonical description of $\Pi$
without reference to a particular primal optimal solution.

\begin{remark} \rm \label{RMoptdualO1O2}
Proposition~\ref{PRstrD2} above is equivalent to Proposition~\RefPartI{6.11} of Part~I \cite{FM18part1},
though in a slightly different form.
Recall the optimality criteria there:%
\footnote{
For a given vector $\pi$ in ${\bf R}\sp S$, we call a nonempty set
$X\subseteq S$ a {\bf $\pi$-top set} 
if $\pi(u)\geq \pi(v)$ holds whenever $u\in X$ and $v\in S-X$.  
If $\pi(u)>\pi(v)$ holds whenever $u\in X$ and $v\in S-X$, we speak of a {\bf strict $\pi$-top set}.  
We call a subset $X\subseteq S$ 
{\bf $m$-tight} with respect to $p$ if $\widetilde m(X)=p(X)$.  
} 
\begin{align*}
{\rm (O1)} &  \qquad
m(s) \in \{ \left\lfloor \pi(s)/2 \right\rfloor,  \left\lceil \pi(s)/2 \right\rceil  \}
\mbox{\  for each \  } s \in S,
\\
{\rm (O2)} & \qquad
\mbox{each strict $\pi$-top-set is $m$-tight with respect to $p$.}
\end{align*}
The set $I(m)$ corresponds to the first optimality criterion (O1),
since 
$2 m(s) - 1 \leq \pi(s) \leq 2 m(s) + 1$
if and only if
$m(s) \in \{ \left\lfloor \pi(s)/2 \right\rfloor,  \left\lceil \pi(s)/2 \right\rceil  \}$.
The equivalence of $P(m)$ 
to the second criterion (O2)
is a well-known characterization of a minimum weight base. 
\finbox
\end{remark}

\subsection{Primal optimal solutions to square-sum minimization}
\label{SCsqsumoptprimal}

We now turn to the primal problem of (\ref{minmaxSqSum-KM2}), namely, the square-sum minimization.

Let  ${\rm dm}(\odotZ{B})$ denote the set of the dec-min elements
of $\odotZ{B}$.
By Theorem~\ref {THdecminPhistrict2},
${\rm dm}(\odotZ{B})$ coincides with
the set of primal optimal solutions for (\ref{minmaxSqSum-KM2}).
According to the general result in Proposition~\ref{PRminmaxgensepBaseOPT} (2),
a representation of ${\rm dm}(\odotZ{B})$ 
can be obtained by choosing any dual optimal solution $\hat \pi$.
In this case the condition \eqref{pisubgradBase}
is simplified to  \eqref{subgrfm}, which can be rewritten as 
\begin{equation} \label{subgrfconj}
x(s) \in \{ 
\left\lfloor \pi(s)/2 \right\rfloor,  \left\lceil \pi(s)/2 \right\rceil  \} 
\qquad (s \in S).  
\end{equation}

Thus the following representation of the set of dec-min elements is obtained.

\begin{proposition} \label{PRstrPdecmin}
Let $\hat \pi$ be any dual optimal solution to {\rm (\ref{minmaxSqSum-KM2})}.
The set ${\rm dm}(\odotZ{B})$ of dec-min elements of $\odotZ{B}$
is represented as
${\rm dm}(\odotZ{B}) = T(\hat \pi) \cap \odotZ{B\sp{\circ}}(\hat \pi)$, where
\begin{align*}
T(\hat \pi) &=  \{ m \in  \ZZ\sp{S} :  m(s) \in \{ 
   \left\lfloor \hat \pi(s)/2 \right\rfloor,
\left\lceil \hat \pi(s)/2 \right\rceil \} \ (s \in S)  \},
\\ 
\odotZ{B\sp{\circ}}(\hat \pi) &= 
\{ m \in \odotZ{B} :  \mbox{\rm $m$ is a minimum $\hat \pi$-weight element of $\odotZ{B}$} \}.
\end{align*}
Hence ${\rm dm}(\odotZ{B})$ is a matroidal M-convex set.
\finbox
\end{proposition}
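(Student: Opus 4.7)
The plan is to specialize Proposition~\ref{PRminmaxgensepBaseOPT}(2) to the square-sum case, i.e., to $\varphi_{s}(k) = k\sp{2}$ for all $s \in S$, whose conjugate is $\psi(\ell) = \lfloor \ell/2 \rfloor \cdot \lceil \ell/2 \rceil$ by \eqref{squareconj}. Since $\Phi(x) = \sum_{s \in S} x(s)\sp{2}$ is strictly convex, Theorem~\ref{THdecminPhistrict2} identifies ${\rm dm}(\odotZ{B})$ with the set $\mathcal{P}$ of primal optimal solutions in \eqref{minmaxSqSum-KM2}. By Proposition~\ref{PRminmaxgensepBaseOPT}(2), for any fixed dual optimal $\hat\pi$, an element $m \in \odotZ{B}$ lies in $\mathcal{P}$ if and only if $m$ satisfies \eqref{pisubgradBase} and \eqref{piminzerBase} with $\pi = \hat\pi$.

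First I would translate condition \eqref{pisubgradBase}: with $\varphi(k) = k\sp{2}$ we have $\varphi(k) - \varphi(k-1) = 2k-1$ and $\varphi(k+1) - \varphi(k) = 2k+1$, so \eqref{pisubgradBase} becomes $2m(s) - 1 \leq \hat\pi(s) \leq 2m(s) + 1$ for each $s \in S$, which is equivalent to $m(s) \in \{\lfloor \hat\pi(s)/2 \rfloor, \lceil \hat\pi(s)/2 \rceil\}$, i.e., $m \in T(\hat\pi)$. Next I would translate condition \eqref{piminzerBase}: it reads $\hat\pi(s) \geq \hat\pi(t)$ whenever $m + \chi_{s} - \chi_{t} \in \odotZ{B}$. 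By the standard no-profitable-swap characterization of minimum weight elements in a base-polyhedron (essentially Edmonds' greedy optimality criterion), this is exactly the statement that $m$ minimizes $\langle \hat\pi, x \rangle$ over $x \in \odotZ{B}$, namely $m \in \odotZ{B\sp{\circ}}(\hat\pi)$. Combining these two translations yields the claimed equality ${\rm dm}(\odotZ{B}) = T(\hat\pi) \cap \odotZ{B\sp{\circ}}(\hat\pi)$.

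For the concluding assertion that ${\rm dm}(\odotZ{B})$ is a matroidal M-convex set, I would invoke Proposition~\ref{PRminmaxgensepoptsetLM}, which guarantees that $\mathcal{P}$ is M-convex. Moreover, since $\lceil \hat\pi(s)/2 \rceil - \lfloor \hat\pi(s)/2 \rfloor \in \{0,1\}$ for every $s$, the box $T(\hat\pi)$ is a unit box; as ${\rm dm}(\odotZ{B}) \subseteq T(\hat\pi)$, any two dec-min elements $m, m'$ satisfy $\|m - m'\|_{\infty} \leq 1$. By the definition recalled in the footnote to Theorem~\ref{THmatroideltoltKM}, this makes ${\rm dm}(\odotZ{B})$ matroidal M-convex.

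The verification is mostly routine once Proposition~\ref{PRminmaxgensepBaseOPT} is available; the only step requiring a pause is recognizing that \eqref{piminzerBase} at $\pi = \hat\pi$ is the full min-weight characterization (not just a local exchange inequality), which follows from the well-known fact that local optimality with respect to one-step exchanges certifies global min-weight optimality over a base-polyhedron. Compared to Theorem~\ref{THmatroideltoltKM}, this proposition is weaker in that it depends on a choice of $\hat\pi$ and does not exhibit the explicit direct-sum matroid $M\sp{*}$, but it has the advantage of being a transparent consequence of the general Fenchel-type machinery.
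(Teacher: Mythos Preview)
Your proof is correct and follows essentially the same route as the paper: specialize Proposition~\ref{PRminmaxgensepBaseOPT}(2) with $\varphi(k)=k^{2}$, use Theorem~\ref{THdecminPhistrict2} to identify ${\rm dm}(\odotZ{B})$ with the primal optimum set, translate \eqref{pisubgradBase} into the two-value condition defining $T(\hat\pi)$ via \eqref{subgrfm}--\eqref{subgrfconj}, and recognize \eqref{piminzerBase} as the $\hat\pi$-minimizer condition (this equivalence is already built into \eqref{primaloptsetBase2}). Your matroidal-M-convex argument via Proposition~\ref{PRminmaxgensepoptsetLM} plus the unit-box observation matches the paper's reasoning that the intersection of an M-convex set with a unit box is matroidal M-convex.
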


Again, each of $T(\hat \pi)$ and $\odotZ{B\sp{\circ}}(\hat \pi)$  
varies with the choice of $\hat \pi$, but their intersection 
is uniquely determined and is equal to ${\rm dm}(\odotZ{B})$.
Here, $\odotZ{B\sp{\circ}}(\hat \pi)$ is  the integral elements of a face of $B$,
and is an M-convex set.
As for $T(\hat \pi)$, note that, for each $s \in S$,
the two numbers
$\left\lfloor \hat \pi(s)/2 \right\rfloor$
and
$\left\lceil \hat \pi(s)/2 \right\rceil$
are the same integer or consecutive integers.
Therefore, ${\rm dm}(\odotZ{B})$ is a matroidal M-convex set. 
In other words, there exist a matroid $\hat M$ 
and a translation vector $\hat \Delta \in \ZZ\sp{S}$
such that
\[
{\rm dm}(\odotZ{B}) = T(\hat \pi) \cap \odotZ{B\sp{\circ}}(\hat \pi)
= \{ \chi_L+ \hat \Delta  :  \mbox{$L$ is a basis of $\hat M$}  \}.
\]
In this construction both $\hat M$ and $\hat \Delta$
depend on the chosen $\hat \pi$; in particular,
$\hat \Delta = \left\lfloor \hat \pi/2 \right\rfloor$.

Theorem~\ref{THmatroideltoltKM} is significantly 
stronger than Proposition~\ref{PRstrPdecmin},
in that it gives a concrete description of the matroid $\hat M$
by referring to the canonical chain.
The translation vector $\Delta \sp{*}$ 
in Theorem~\ref{THmatroideltoltKM} 
corresponds to the choice of
$\hat \pi = \pi\sp{*}$; note that we indeed have the relation 
$\Delta \sp{*} = \left\lfloor \pi\sp{*}/2 \right\rfloor$.

\begin{remark} \rm  \label{RMnonMunitbox}
Proposition~\ref{PRstrPdecmin} implies, in particular, that the dec-min elements
of an M-convex set is contained in a 
small box (unit box).
Note that such a property does not hold 
for an arbitrary integral polyhedron. 
To see this, consider the line segment $P$ in $\RR\sp{3}$
 connecting two points $(2,1,0)$ and $(1,0,2)$.
This $P$ is an integral polyhedron,
$\odotZ{P} = \{ (2,1,0), (1,0,2) \}$,
and $\odotZ{P}$ is not an M-convex set.
Both $(2,1,0)$ and $(1,0,2)$
are dec-min in $\odotZ{P}$, but there exists no 
small box (unit box) 
containing them,
since their third components differ by 2.
In Part III 
prove that this 
small box (unit box) 
property also holds for network flows.
\finbox
\end{remark} 



\section{Comparison of continuous and discrete cases}
\label{SCcompRZ}

While our present study is focused on the discrete case for an M-convex set $\odotZ{B}$,
the continuous case for a base-polyhedron $B$ 
was investigated by Fujishige \cite{Fuj80} around 1980
under the name of lexicographically optimal bases,
as a generalization of lexicographically optimal maximal flows
considered by Megiddo \cite{Meg74}.
Lexicographically optimal bases are discussed in detail in  
\cite[Section 9]{Fuj05book}.
Later in game theory  Dutta--Ray \cite{DR89} treated majorization ordering 
in the continuous case
under the name of egalitarian allocation; see also Dutta \cite{Dut90}. 
See also the survey of related papers in Appendix \ref{SCprevwksurvey}.

Section~\ref{SCcompRZsummary} offers comparisons of major ingredients in discrete and continuous cases.
These comparisons show that the discrete case
is significantly different from the continuous case,
being endowed with a number of intriguing combinatorial structures 
on top of the geometric structures known in the continuous case. 
Section~\ref{SCprinpatR} is devoted to a review of
the principal partition (adapted to a supermodular function),
Section~\ref{SCcanopat} gives an alternative characterization of the canonical partition,
and Section~\ref{SCcanopatprinpat} clarifies their relationship.
Algorithmic implications are discussed in Section~\ref{SCalgRZ}.

\subsection{Summary of comparisons}
\label{SCcompRZsummary}

The continuous case is referred to as Case $\RR$
and the discrete case as Case $\ZZ$.
We use notation $m_{\RR}$ and $m_{\ZZ}$  
for the dec-min element in Case $\RR$ and Case $\ZZ$, respectively.

\paragraph{Underlying set}
In Case $\RR$ we consider a base-polyhedron $B$ described by 
a real-valued supermodular function $p$ or a submodular function $b$.
In Case $\ZZ$ we consider the set $\odotZ{B}$  of integral members of 
an integral base-polyhedron $B$ described by 
an integer-valued $p$ or $b$.

\paragraph{Terminology}

In Case $\RR$ the terminology of 
``lexicographically optimal base''
(or ``lexico-optimal base'')
is used in \cite{Fuj80,Fuj05book}.
A lexico-optimal base is the same as an inc-max element 
in our terminology,
whereas a dec-min element is called a
``co-lexicographically optimal base'' in \cite{Fuj05book}.

\paragraph{Weighting}
In Case $\RR$ a weight vector is introduced 
to define and analyze lexico-optimality,
while this is not the case in this paper for Case $\ZZ$.
In the following comparisons we always assume that no weighting is introduced 
in Cases $\RR$ and $\ZZ$.
In a forthcoming paper, 
we consider
discrete decreasing minimality with respect to a weight vector.

\paragraph{Decreasing minimality and increasing maximality}
In Case $\ZZ$ decreasing minimality in $\odotZ{B}$ is equivalent to increasing maximality.
This statement is also true in Case $\RR$.
That is, an element of $B$ is dec-min in $B$ if and only if it is inc-max in $B$.
Moreover, a least majorized element exists in $\odotZ{B}$ (in Case $\ZZ$)
and in $B$ (in Case $\RR$).

\paragraph{Square-sum minimization}
In both Cases $\ZZ$ and $\RR$,
a dec-min element is characterized as a minimizer of
square-sum of the components
$W(x) = \sum [x(s)\sp{2}:  s\in S]$.
In Case $\RR$, 
the minimizer is unique, and is often referred to as 
the minimum norm point.

\paragraph{Uniqueness}
The structures of dec-min elements
have a striking difference in Cases $\RR$ and $\ZZ$. 
In Case $\RR$ the dec-min element of $B$
is uniquely determined, and is given by the minimum norm point of $B$.
In Case $\ZZ$ 
the dec-min elements of $\odotZ{B}$
are endowed with the structure of basis family of a matroid, as formulated in 
Theorem \ref{THmatroideltoltKM}.
The minimum norm point of $B$
can be expressed as a convex combination of 
the dec-min elements of $\odotZ{B}$ (cf., Theorem \ref{THmnormconvcombdmS}).

\paragraph{Proximity}
Every dec-min element $m_{\ZZ}$ of $\odotZ{B}$ is located near 
the minimum norm point $m_{\RR}$ of $B$, satisfying
$\left\lfloor m_{\RR} \right\rfloor \leq m_{\ZZ} \leq  \left\lceil m_{\RR} \right\rceil$
(cf., Theorem~\ref{THdecminproxS}).
However, not every integer vector $m_{\ZZ}$ in $B$ 
satisfying $\left\lfloor m_{\RR} \right\rfloor \leq m_{\ZZ} \leq  \left\lceil m_{\RR} \right\rceil$
is a dec-min element of $\odotZ{B}$,
which is demonstrated by the following example.

\begin{example} \rm \label{EX4dimC4}
Let $\odotZ{B}$ be an M-convex set consisting of five vectors%
\footnote{
$\odotZ{B}$ is obtained from  
$\{ (1,0,1,0), \ (1,0,0,1), \  (0,1,1,0), \ (0,1,0,1), \ (1,1,0,0) \}$ 
(basis family of rank 2 matroid) by a translation with $(1,1,0,0)$.
} 
\[
 m_{1}=(2,1,1,0), \quad
 m_{2}=(2,1,0,1), \quad
 m_{3}=(1,2,1,0), \quad
 m_{4}=(1,2,0,1), \quad
 m_{5}=(2,2,0,0)
\]
and $B$ be its convex hull.
The dec-min elements of $\odotZ{B}$  are 
$m_{1}$, $m_{2}$, $m_{3}$, and $m_{4}$,
whereas 
$ m_{5}=(2,2,0,0)$ is not dec-min.
The minimum norm point of the base-polyhedron $B$ 
is $m_{\RR} = (3/2, 3/2, 1/2, 1/2 )$, for which 
$\left\lfloor m_{\RR} \right\rfloor = (1,1,0,0)$ and 
$\left\lceil m_{\RR} \right\rceil = (2,2,1,1)$.
The point $m_{5}=(2,2,0,0)$
satisfies $\left\lfloor m_{\RR} \right\rfloor \leq m_{5} \leq  \left\lceil m_{\RR} \right\rceil$
but it is not a dec-min element.
\finbox
\end{example}

\paragraph{Min-max formula}
In Case $\ZZ$ we have the min-max identity (\ref{minmaxSqSum-KM}):
\[ 
\min \{ \sum [m(s)\sp{2}:  s\in S]:  m\in \odotZ{B} \} 
= \max \{\hat p(\pi ) - \sum _{s\in S} 
 \left\lfloor {\pi (s) \over 2}\right\rfloor 
 \left\lceil {\pi (s) \over 2}\right\rceil : 
 \pi \in \ZZ\sp S \}.  
\]
In Case $\RR$ the corresponding formula is
\begin{equation}  \label{minmaxsqrsumR}
\min \{ \sum [x(s)\sp{2}:  s\in S]:  x\in B \} 
= \max \{\hat p(\pi ) - \sum _{s\in S} 
 \left( {\pi (s) \over 2}\right)\sp{2} : \pi \in \RR \sp S \} ,
\end{equation}
which may be regarded as an adaptation of the standard quadratic programming duality
to the case where the feasible region is a base-polyhedron.
To the best knowledge of the authors, 
the formula (\ref{minmaxsqrsumR}) has never been shown in the literature.

\paragraph{Principal partition vs canonical partition}
The canonical partition for Case $\ZZ$ is closely related to 
the principal partition for Case $\RR$.
The principal partition (adapted to a supermodular function) 
is described in Section~\ref{SCprinpatR} 
and the following relations are established in Sections \ref{SCcanopat} and \ref{SCcanopatprinpat}.
We denote the canonical partition by
$\{ S_{1}, S_{2}, \ldots, S_{q} \}$
and the principal partition by
$\{ \hat S_{1}, \hat S_{2}, \ldots, \hat S_{r} \}$.
They are constructed from the canonical chain 
$C_{1} \subset  C_{2} \subset  \cdots \subset  C_{q}$
and the principal chain 
$\hat C_{1} \subset \hat C_{2} \subset  \cdots \subset \hat C_{r}$,
respectively, as the families of difference sets:
$S_{j}=C_{j} -  C_{j-1}$ for $j=1,2,\ldots, q$ and
$\hat S_{i}=\hat C_{i} -  \hat C_{i-1}$ for $i=1,2,\ldots, r$,
where $C_{0} = \hat C_{0} = \emptyset$.
We denote the essential values by $\beta_{1} > \beta_{2} > \cdots > \beta_{q}$
and the critical values by
$\lambda_{1} > \lambda_{2} >  \cdots >  \lambda_{r}$.

\begin{itemize}
\item
An integer $\beta$ is an essential value for Case $\ZZ$ if and only if there exists 
a critical value $\lambda$ for Case $\RR$ 
satisfying $\beta  \geq \lambda > \beta -1$.
The essential values $\beta_{1} > \beta_{2} > \cdots > \beta_{q}$
are obtained 
from the critical values 
$\lambda_{1} > \lambda_{2} >  \cdots >  \lambda_{r}$
as the distinct members of the rounded-up integers
$\lceil \lambda_{1} \rceil \geq \lceil \lambda_{2} \rceil \geq \cdots \geq \lceil \lambda_{r} \rceil$.

\item
The canonical partition 
$\{ S_{1}, S_{2}, \ldots, S_{q} \}$
is obtained from the principal partition 
$\{ \hat S_{1}, \hat S_{2}, \ldots, \hat S_{r} \}$
as an aggregation; we have
$S_{j} = \bigcup_{i \in I(j)} \hat S_{i}$,
where $I(j) = \{ i :  \lceil \lambda_{i} \rceil  = \beta_{j} \}$.

\item
The canonical chain $\{ C_{j} \}$ is a subchain of the principal chain  $\{ \hat C_{i} \}$;
we have $C_{j} = \hat C_{i}$ for $i= \max I(j)$.

\item
In Case $\RR$, the dec-min element $m_{\RR}$ of $B$ is uniform 
on each member $\hat S_{i}$ of the principal partition, i.e., 
$m_{\RR}(s) = \lambda_{i}$ if $s \in  \hat S_{i}$, where $i=1,2,\ldots, r$
(cf., Proposition \ref{PRlexoptbaseR}).
In Case $\ZZ$, the dec-min element $m_{\ZZ}$ of $\odotZ{B}$ is near-uniform 
on each member $S_{j}$ of the canonical partition, i.e., 
$m_{\ZZ}(s) \in \{  \beta_{j}, \beta_{j} -1 \}$ 
if $s \in S_{j}$, where $j=1,2,\ldots,q$
(cf., Theorem \RefPartI{5.1} of Part~I \cite{FM18part1}).
\end{itemize}

\paragraph{Algorithm}
In Case $\ZZ$ we have developed a strongly polynomial algorithm 
for finding a dec-min element of $\odotZ{B}$ 
(Section~\RefPartI{7} of Part~I \cite{FM18part1}).
In Case $\RR$ the decomposition algorithm of Fujishige \cite{Fuj80}
finds the minimum norm point $m_{\RR}$
in strongly polynomial time.
Our proximity result (Theorem~\ref{THdecminproxS})
leads to the following ``continuous relaxation'' approach.
Let
$\ell = \left\lfloor m_{\RR} \right\rfloor$ and $u= \left\lceil m_{\RR} \right\rceil$,
and 
let $\odotZ{B_{\ell}\sp{u}}$
denote 
the intersection of $\odotZ{B}$ with  the 
box (interval) $[\ell, u]= [\ell, u]_{\RR}$ 
(or $T(\ell, u)$ in the notation of Part~I).
The dec-min element of $\odotZ{B_{\ell}\sp{u}}$
is also a dec-min element of $\odotZ{B}$,
since the box $[\ell, u]$ contains all dec-min elements of $\odotZ{B}$
by Theorem~\ref{THdecminproxS}.
Since $0 \leq u(s) - \ell(s) \leq 1$ for all $s \in S$,
$\odotZ{B_{\ell}\sp{u}}$ can be regarded as a matroid translated by $\ell$,
i.e.,
$\odotZ{B_{\ell}\sp{u}} = \{ \ell + \chi_{L} : L \mbox{ is a base of $M$} \}$,
where $M$ is a matroid.
Therefore,
the dec-min element of $\odotZ{B_{\ell}\sp{u}}$ 
can be computed as the minimum weight base  of matroid $M$
with respect to the weight vector $w$ defined by
$w(s) = u(s)\sp{2} - \ell(s)\sp{2}$  ($s \in S$).
By the greedy algorithm we can find the minimum weight base of $M$ in strongly polynomial time.
Thus the total running time of this algorithm is bounded by strongly polynomial time. 
Variants of such continuous relaxation algorithm are given in Section~\ref{SCalgRZ}.
In the literature 
\cite{Fuj05book,Gro91,Hoc07,KSI13}
we can find continuous relaxation algorithms
that are strongly polynomial for special classes of base-polyhedra;
see Appendix \ref{SCprevwksurvey} for details.

\subsection{Review of the principal partition}
\label{SCprinpatR}

As is pointed out by Fujishige \cite{Fuj80},
the dec-min element in the continuous case
is closely related to the principal partition.
The principal partition is 
the central concept 
in a structural theory for submodular functions developed mainly in Japan;
Iri gives an early survey in \cite{Iri79} and 
Fujishige provides a comprehensive historical and technical account
in \cite{Fuj09bonn}.
In this section we summarize the results 
that are relevant to the analysis of the dec-min element in the continuous case.
Originally \cite{Fuj80}, the results are stated for a real-valued submodular function,
and the present version is a translation for a real-valued supermodular function 
$p: 2\sp{S} \to \RR \cup \{ -\infty \}$.

For any real number $\lambda$,
let $\mathcal{L}(\lambda)$ denote the family of all maximizers of
$p(X) - \lambda |X|$.
Then
$\mathcal{L}(\lambda)$ is a ring family (lattice), and we denote 
its smallest member by $L(\lambda)$.
That is, $L(\lambda)$ denotes the smallest maximizer of $p(X) - \lambda |X|$.

The following is a  well-known basic fact.
The proof is included for completeness.

\begin{proposition} \label{PRbasicPrinPatR}
Let $\lambda > \lambda'$.
If $X \in \mathcal{L}(\lambda)$ and $Y \in \mathcal{L}(\lambda')$, 
then  $X \subseteq Y$.
In particular, $L(\lambda) \subseteq L(\lambda')$.
\end{proposition}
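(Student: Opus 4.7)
The plan is to exploit supermodularity of $p$ through the standard intersection/union swap between $X$ and $Y$. Since $X$ maximizes $p(Z) - \lambda |Z|$, comparing with the candidate $X \cap Y$ gives
\[
 p(X) - \lambda |X| \geq p(X \cap Y) - \lambda |X \cap Y|,
\]
which rearranges to $p(X) - p(X \cap Y) \geq \lambda \, |X \setminus Y|$. Analogously, since $Y$ maximizes $p(Z) - \lambda' |Z|$, comparing with the candidate $X \cup Y$ gives
\[
 p(Y) - \lambda' |Y| \geq p(X \cup Y) - \lambda' |X \cup Y|,
\]
which rearranges to $p(X \cup Y) - p(Y) \leq \lambda' \, |X \setminus Y|$, using $|X \cup Y| - |Y| = |X \setminus Y|$.

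The key link is the supermodular inequality $p(X) + p(Y) \leq p(X \cap Y) + p(X \cup Y)$, equivalently $p(X) - p(X \cap Y) \leq p(X \cup Y) - p(Y)$. Chaining the three inequalities above yields
\[
 \lambda \, |X \setminus Y| \leq p(X) - p(X \cap Y) \leq p(X \cup Y) - p(Y) \leq \lambda' \, |X \setminus Y|.
\]
Since $\lambda > \lambda'$, this forces $|X \setminus Y| = 0$, that is, $X \subseteq Y$. The ``in particular'' clause is immediate: apply the statement to the specific maximizers $X = L(\lambda) \in \mathcal{L}(\lambda)$ and $Y = L(\lambda') \in \mathcal{L}(\lambda')$.

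There is essentially no serious obstacle; the only point requiring care is the direction of the supermodular inequality (which is reversed from the submodular version in \eqref{submodineq}) and the associated bookkeeping of signs when rearranging the maximality inequalities. One should also note that the argument uses the cardinality identity $|X| + |Y| = |X \cap Y| + |X \cup Y|$ implicitly through both rearrangements, so the linear term $-\lambda|Z|$ interacts cleanly with the modular swap.
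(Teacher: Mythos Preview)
Your proof is correct and follows essentially the same approach as the paper's: both combine the supermodular inequality $p(X)+p(Y)\le p(X\cap Y)+p(X\cup Y)$ with the maximality of $X$ (against $X\cap Y$) and of $Y$ (against $X\cup Y$) to squeeze $|X\setminus Y|$ between $\lambda$- and $\lambda'$-multiples and conclude it is zero. The paper packages these three ingredients into a single displayed inequality and then argues equality, while you chain the three separately; the content is the same.
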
 
\begin{proof}
Let $X \in \mathcal{L}(\lambda)$ and $Y \in \mathcal{L}(\lambda')$.
We have
\begin{align}
 p(X) +  p(Y)  & \leq p(X \cap Y) + p(X \cup Y),
\nonumber \\
 \lambda |X| + \lambda' |Y|  
& =  \lambda |X \cap Y| + \lambda' |X \cup Y| 
+ (\lambda - \lambda') |X - Y|
\nonumber \\ & 
\geq
  \lambda |X \cap Y| + \lambda' |X \cup Y| .
\label{pripatprf}
\end{align}
It follows from these inequalities that
\[
 ( p(X) - \lambda |X| ) +  ( p(Y) - \lambda' |Y| ) \leq 
 ( p(X \cap Y) - \lambda |X \cap Y| ) +  ( p(X \cup Y) - \lambda' |X \cup Y| ) .
\]
Here the reverse inequality $\geq$ is also true by 
$X \in \mathcal{L}(\lambda)$ and $Y \in \mathcal{L}(\lambda')$.
Therefore, we have equality in (\ref{pripatprf}),
which implies $|X - Y|=0$, i.e.,  $X \subseteq Y$.
\end{proof}

There are finitely many numbers $\lambda$ for which
$| \mathcal{L}(\lambda)| \geq 2$.
We denote such numbers as
$\lambda_{1} > \lambda_{2} > \cdots > \lambda_{r}$,
which are called the {\bf critical values}.
It is easy to see that $\lambda$ is a critical value if and only if
$L(\lambda) \not= L(\lambda - \varepsilon)$
for any $\varepsilon > 0$.

The {\bf principal partition}
$\{ \hat S_{1}, \hat S_{2}, \ldots, \hat S_{r} \}$
is defined by 
\begin{equation}  \label{prinpatRdef1}
 \hat S_{i} = \max \mathcal{L}(\lambda_{i}) - \min \mathcal{L}(\lambda_{i})
\qquad (i=1,2,\ldots, r),
\end{equation}
which says that $\hat S_{i}$ is the difference 
of the largest and the smallest element of $\mathcal{L}(\lambda_{i})$.
Alternatively, 
\begin{equation}  \label{prinpatRdef2}
 \hat S_{i}   = L(\lambda_{i} - \varepsilon) - L(\lambda_{i})
\end{equation}
for a sufficiently small $\varepsilon > 0$.

By defining 
$\hat C_{i} = \hat S_{1} \cup \hat S_{2} \cup \cdots \cup  \hat S_{i}$
for $i=1,2,\ldots, r$
we obtain a chain: 
$\hat C_{1} \subset  \hat C_{2} \subset  \cdots \subset \hat C_{r}$,
where $\hat C_{1} \not= \emptyset$ and $\hat C_{r} = S$;
we also define $\hat C_{0} = \emptyset$.
Then the chain
$(\emptyset =) \hat C_{0} \subset  \hat C_{1} \subset  \hat C_{2} 
\subset  \cdots \subset \hat C_{r} \ (=S)$
is a maximal chain of the lattice $\bigcup_{\lambda \in \RR} \mathcal{L}(\lambda)$.
In this paper we call this chain the {\bf principal chain}.
By slight abuse of terminology 
the principal chain sometime means the chain 
$\hat C_{1} \subset  \hat C_{2} \subset  \cdots \subset \hat C_{r} \ (=S)$
without $\hat C_{0}$ ($=\emptyset)$.

Let $m_{\RR} \in \RR\sp{S}$ be the  minimum norm point of $B$,
which is the unique dec-min element of $B$.
The critical values are exactly those numbers that appear as component values
of $m_{\RR}$.  Moreover,
the vector $m_{\RR}$ is uniform on each member $\hat S_{i}$.

\begin{proposition}[Fujishige \cite{Fuj80}] \label{PRlexoptbaseR}
$m_{\RR}(s) = \lambda_{i}$ if $s \in  \hat S_{i}$, where $i=1,2,\ldots, r$. 
\finbox
\end{proposition}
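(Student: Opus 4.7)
The plan is to identify the strict level sets of $m_{\RR}$ with the principal chain and the distinct values of $m_{\RR}$ with the critical values $\lambda_{i}$. Since $m_{\RR}$ is the unique minimizer of the strictly convex $W(x)=\sum_{s}x(s)^{2}$ on $B$, the continuous analogue of the Fenchel-type duality of Section~\ref{SCderminmaxsquaresum} (applied to $\varphi(k)=k^{2}$, with conjugate $\psi(\ell)=(\ell/2)^{2}$) supplies a dual optimum $\pi^{\ast}\in\RR^{S}$ with $m_{\RR}(s)=\pi^{\ast}(s)/2$ and with $m_{\RR}$ a minimum $\pi^{\ast}$-weight element of $B$; this is exactly the identity \eqref{minmaxsqrsumR}. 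Thus $\pi^{\ast}=2m_{\RR}$, and the standard (greedy-algorithm) characterization of minimum-weight elements of a base-polyhedron guarantees that every strict $\pi^{\ast}$-top set is $m_{\RR}$-tight. Equivalently, for the distinct values $v_{1}>v_{2}>\cdots >v_{k}$ of $m_{\RR}$ and the upper level sets $L_{j}:=\{s:m_{\RR}(s)\geq v_{j}\}$, we have $\widetilde{m_{\RR}}(L_{j})=p(L_{j})$ for every $j$.

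Next I would show that each $v_{j}$ is a critical value and that $L(v_{j})=L_{j-1}$ and $\max\mathcal{L}(v_{j})=L_{j}$. The defining inequality $p(X)\leq \widetilde{m_{\RR}}(X)$ yields $p(X)-v_{j}|X|\leq \sum_{s\in X}(m_{\RR}(s)-v_{j})$, whose right-hand side is maximized precisely by those $X$ with $L_{j-1}\subseteq X\subseteq L_{j}$ (the contributions are strictly positive on $L_{j-1}$, zero on $L_{j}\setminus L_{j-1}$, and strictly negative outside $L_{j}$), with common maximum $\sum_{s\in L_{j-1}}(m_{\RR}(s)-v_{j})$. Since $L_{j-1}$ and $L_{j}$ are $m_{\RR}$-tight they attain equality throughout and hence both belong to $\mathcal{L}(v_{j})$; as they differ, $v_{j}$ is a critical value, and tracing through both equalities pins $\mathcal{L}(v_{j})$ down as the family of $m_{\RR}$-tight sets sandwiched between $L_{j-1}$ and $L_{j}$.

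To rule out other critical values I would run the same calculation for any $v\in (v_{j},v_{j-1})$ (setting $v_{0}:=+\infty$, $v_{k+1}:=-\infty$): now each $s\in L_{j}\setminus L_{j-1}$ contributes strictly negatively to $\sum_{s\in X}(m_{\RR}(s)-v)$, so its unique maximizer is $X=L_{j-1}$, forcing $\mathcal{L}(v)=\{L_{j-1}\}$. Hence the critical values are precisely $v_{1},\dots,v_{k}$, so $k=r$, $v_{j}=\lambda_{j}$, $\hat S_{j}=L_{j}\setminus L_{j-1}$, and $m_{\RR}(s)=\lambda_{j}$ for $s\in \hat S_{j}$, as claimed. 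The main obstacle is the first step, i.e.\ producing the dual certificate $\pi^{\ast}$ in the $\RR$-setting: the DCA duality of Section~\ref{SCfencsepar} is stated over $\ZZ$, so one must invoke the classical continuous Fenchel--Young inequality together with Edmonds' greedy algorithm to verify the min-max identity \eqref{minmaxsqrsumR}. With this in hand the remainder is a clean unwinding of tightness combined with the ring-family structure of $\mathcal{L}(\lambda)$ recorded in Proposition~\ref{PRbasicPrinPatR}.
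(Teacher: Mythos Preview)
Your argument is correct. Note, however, that the paper does not actually prove this proposition: it is stated as a known result attributed to Fujishige \cite{Fuj80} and marked with \finbox, so there is no ``paper's own proof'' to compare against. You are supplying a self-contained proof where the paper simply cites the literature.

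One simplification worth pointing out: the ``main obstacle'' you flag --- producing the dual certificate $\pi^{\ast}$ via a continuous Fenchel-type duality --- can be bypassed. All you actually use from that step is that $m_{\RR}$ is a $\pi^{\ast}$-minimizer in $B$ for $\pi^{\ast}=2m_{\RR}$, equivalently that $m_{\RR}$ minimizes $\langle m_{\RR},x\rangle$ over $x\in B$. This is the elementary first-order optimality condition for the minimum-norm point: since $m_{\RR}$ minimizes the convex function $\|x\|^{2}$ over the convex set $B$, we have $\langle m_{\RR}, x-m_{\RR}\rangle\geq 0$ for every $x\in B$. No appeal to \eqref{minmaxsqrsumR} or to the machinery of Section~\ref{SCfencsepar} is needed. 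With this in hand, the remainder of your argument --- deducing that the strict level sets $L_{j}$ are $m_{\RR}$-tight, and then using the inequality $p(X)-v|X|\leq\sum_{s\in X}(m_{\RR}(s)-v)$ together with the sign pattern of $m_{\RR}(s)-v$ to pin down the critical values and the members of $\mathcal{L}(v)$ --- goes through cleanly.
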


\subsection{New characterization of the canonical partition}
\label{SCcanopat}

For the discrete case,
the canonical partition describes the structure of dec-min elements.
In particular, a dec-min element is near-uniform on each member of the canonical partition.%
\footnote{
That is, $|m_{\ZZ}(s) - m_{\ZZ}(t)| \leq 1$ if $\{ s, t \} \subseteq S_{j}$ for some $S_{j}$ 
 (cf., Theorem \RefPartI{5.1} of Part~I \cite{FM18part1}).
} 
In Part~I \cite{FM18part1}, 
the canonical partition has been defined iteratively using contractions.
In this section we give a non-iterative construction of this canonical partition,
which reflects the underlying structure more directly.
This alternative construction enables us to reveal the precise relation 
between the discrete and continuous cases in Section \ref{SCcanopatprinpat}.

We first recall the iterative construction from Section~\RefPartI{5} of Part~I \cite{FM18part1}.
Let $p: 2\sp{S} \to \ZZ \cup \{ -\infty \}$
be an integer-valued supermodular function
with $p(\emptyset)=0$ and $p(S) > -\infty$, and $C_{0} = \emptyset$.
For $j=1,2,\ldots, q$,	define
\begin{align}
\beta_{j} &= \max \left\{  \left\lceil  \frac{p(X \cup C_{j-1}) - p(C_{j-1})}{|X|}  \right\rceil :  
         \emptyset \not= X \subseteq \overline{C_{j-1}}   \right\},
\label{betajdef}
\\
h_{j}(X) &= p(X \cup C_{j-1}) - (\beta_{j} - 1) |X| - p(C_{j-1})    
\qquad
(X \subseteq \overline{C_{j-1}}),
\label{hjdef}
\\
S_{j} &=  \mbox{smallest subset of $\overline{C_{j-1}}$ maximizing $h_{j}$},
\label{Sjdef}
\\
C_{j} &=  C_{j-1} \cup S_{j} ,
\label{Cjdef}
\end{align}
where $\overline{C_{j-1}} = S  - C_{j-1}$ and
 the index $q$ is determined by the condition that
$C_{q-1} \not= S$ and $C_{q} = S$.

According to the above definitions, we have that
\begin{equation} \label{CjsmallestmaxCj1}
\mbox{$C_{j}$ is the smallest maximizer of 
$p(X) - (\beta_{j}-1) |X|$
among all $Z \supseteq C_{j-1}$}.
\end{equation}
We will show in Proposition \ref{PRbasicitera} below 
that  $C_{j}$ is, in fact,  the smallest maximizer of 
$p(X) - (\beta_{j}-1) |X|$
among all subsets $X$ of $S$.

For any integer $\beta$,
let $\mathcal{L}(\beta)$ denote the family of all maximizers of $p(X) - \beta |X|$,
and  $L(\beta)$ be the smallest element of $\mathcal{L}(\beta)$,
where  the smallest element exists in $\mathcal{L}(\beta)$ since 
$\mathcal{L}(\beta)$ is a lattice (ring family).
(These notations are consistent with the ones introduced in Section~\ref{SCprinpatR}.) \

\begin{proposition} \label{PRbasicitera}
\quad

\noindent
{\rm (1)}
 $\beta_{1} > \beta_{2} > \cdots > \beta_{q}$.

\noindent
{\rm (2)}
For each $j$ with $1 \leq j \leq q$, $C_{j}$ is 
the smallest maximizer of $p(X) - (\beta_{j}-1) |X|$
among all subsets $X$ of $S$.
\end{proposition}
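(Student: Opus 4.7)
The plan is to prove parts (1) and (2) simultaneously by induction on $j$, based on two ingredients. The first ingredient is the observation, implicit in the construction \eqref{hjdef}--\eqref{Cjdef}, that maximizing $h_j(X)$ over $X \subseteq \overline{C_{j-1}}$ is equivalent, up to an additive constant, to maximizing $p(Z) - (\beta_j - 1)|Z|$ over $Z \supseteq C_{j-1}$; hence $C_j = C_{j-1} \cup S_j$ is the smallest maximizer of $p(X) - (\beta_j - 1)|X|$ among $Z \supseteq C_{j-1}$, as recorded in \eqref{CjsmallestmaxCj1}. The second ingredient is the analog of Proposition \ref{PRbasicPrinPatR} for integer-valued $p$: if $\lambda > \lambda'$, $X$ maximizes $p(Z) - \lambda |Z|$, and $Y$ maximizes $p(Z) - \lambda' |Z|$, then $X \subseteq Y$. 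The proof of Proposition \ref{PRbasicPrinPatR} applies verbatim, relying solely on the supermodularity of $p$.

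For the base case $j = 1$, we have $C_0 = \emptyset$, so the condition ``$Z \supseteq C_0$'' in \eqref{CjsmallestmaxCj1} is vacuous and part (2) for $j = 1$ follows directly. For the inductive step, assume (2) holds for some $j \geq 1$, i.e., $C_j$ is the smallest maximizer of $p(X) - (\beta_j - 1)|X|$ over all subsets of $S$. To deduce part (1) for the pair $(j, j+1)$, apply this maximality to the sets $C_j \cup X$ with nonempty $X \subseteq \overline{C_j}$; the inequality $p(C_j \cup X) - (\beta_j - 1)|C_j \cup X| \leq p(C_j) - (\beta_j - 1)|C_j|$ rearranges to $p(C_j \cup X) - p(C_j) \leq (\beta_j - 1)|X|$, so $\lceil (p(X \cup C_j) - p(C_j))/|X| \rceil \leq \beta_j - 1$ and hence $\beta_{j+1} \leq \beta_j - 1 < \beta_j$. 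To deduce (2) for $j+1$, take an arbitrary maximizer $Y$ of $p(X) - (\beta_{j+1} - 1)|X|$ over subsets of $S$ and apply the second ingredient with $\lambda = \beta_j - 1 > \beta_{j+1} - 1 = \lambda'$, using $C_j$ as the $\lambda$-maximizer, to obtain $C_j \subseteq Y$. Combined with \eqref{CjsmallestmaxCj1}, which furnishes $C_{j+1}$ as the smallest maximizer among $Z \supseteq C_j$, this yields $C_{j+1} \subseteq Y$, so $C_{j+1}$ is the smallest maximizer over all subsets.

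The main obstacle is simply ordering the mutual induction correctly: (2) at step $j$ supplies the global maximality needed for (1) between steps $j$ and $j+1$, which in turn is needed to invoke the comparison lemma when deducing (2) at step $j+1$. Once the schedule ``(2) for $j = 1$; then alternately (1) for $(j, j+1)$ and (2) for $j+1$'' is in place, every implication reduces to supermodularity of $p$ and the defining maximality of $C_j$.
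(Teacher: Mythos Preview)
Your proof is correct and follows essentially the same route as the paper's. The only difference is organizational: the paper proves (1) in its entirety first, using only the constrained maximality \eqref{CjsmallestmaxCj1} (since $X \cup C_{j-1} \supseteq C_{j-2}$ already places the comparison set in the admissible range), and then proves (2) by induction using (1); you instead run a simultaneous induction, invoking the stronger global maximality from (2) at step $j$ to obtain $\beta_{j+1} < \beta_j$. Both arguments rest on the same two ingredients---\eqref{CjsmallestmaxCj1} and Proposition~\ref{PRbasicPrinPatR}---and the difference is purely in scheduling.
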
 
\begin{proof}
(1)
The monotonicity of the $\beta$-values 
is already shown in Section~\RefPartI{5} of Part~I \cite{FM18part1},
but we give an alternative proof here.
Let $j \geq 2$.
By (\ref{betajdef}), we have
$\beta_{j-1} > \beta_{j}$ 
if and only if
\begin{equation} \label{prf1betaj1betaj}
\beta_{j-1} > \left\lceil \frac{p(X \cup C_{j-1}) - p(C_{j-1})}{|X|} \right\rceil 
\end{equation}
for every $X$ with $\emptyset \not= X \subseteq \overline{C_{j-1}}$.
Furthermore,
\begin{align*}
\eqref{prf1betaj1betaj}
& \iff
\beta_{j-1} -1  \geq   \frac{p(X \cup C_{j-1}) - p(C_{j-1})}{|X|}  
\\ & \iff
p(X \cup C_{j-1}) - p(C_{j-1}) \leq (\beta_{j-1} -1 ) |X|
\\ & \iff
p(X \cup C_{j-1}) - (\beta_{j-1} -1 ) |X  \cup C_{j-1}| \leq  p(C_{j-1}) - (\beta_{j-1} -1 ) |C_{j-1}|.
\end{align*}
The last inequality holds, since
the set $X \cup C_{j-1}$ contains $C_{j-2}$,
whereas $C_{j-1}$ is the (smallest) maximizer of 
$p(X) - (\beta_{j-1}-1) |X|$
among all $X$ containing $C_{j-2}$.
We have thus shown $\beta_{j-1} > \beta_{j}$.

(2)
We prove $C_{j}= L(\beta_{j} -1)$ for $j=1,2,\ldots,q$ by induction on $j$. 
This holds for $j=1$ by definition.
Let $j \geq 2$.
By Proposition \ref{PRbasicPrinPatR}
for $\lambda = \beta_{j-1}-1$ and $\lambda' = \beta_{j}-1$,
the smallest maximizer of 
$p(X) - (\beta_{j}-1) |X|$
is a superset of $L(\beta_{j-1} -1)$,
where $L(\beta_{j-1} -1) =C_{j-1}$
by the induction hypothesis.
Combining this with \eqref{CjsmallestmaxCj1}, we obtain $C_{j}= L(\beta_{j} -1)$.
\end{proof}

We now give an alternative characterization of the essential value-sequence 
$\beta_{1} > \beta_{2} > \cdots > \beta_{q}$
defined by (\ref{betajdef})--(\ref{Cjdef}).
We consider the family $\{ L(\beta) : \beta \in \ZZ \}$
of the smallest maximizers of $p(X) - \beta |X|$ for all integers $\beta$.
Each $C_{j}$ is a member of this family, since
$C_{j} = L(\beta_{j} - 1)$ ($j=1,2,\ldots, q$)
by Proposition~\ref{PRbasicitera}(2).

\begin{proposition} \label{PRsequenceLbeta}
As $\beta$ is decreased from $+\infty$ to $-\infty$
(or from $\beta_{1}$ to $\beta_{q}-1$), 
the smallest maximizer $L(\beta)$
is monotone nondecreasing.
We have $L(\beta) \not= L(\beta -1)$ if and only if
$\beta$ is equal to an essential value.
Therefore, the essential value-sequence 
$\beta_{1} > \beta_{2} > \cdots > \beta_{q}$
is characterized by the property%
\footnote{
Recall that \ ``$\subset$'' \  means \ ``$\subseteq$ \  and \ $\not=$.''
} 
\begin{align}
\emptyset= L(\beta_{1}) \subset L(\beta_{1}-1) 
= \cdots = L(\beta_{2}) \subset L(\beta_{2}-1)
= \cdots = L(\beta_{q}) \subset L(\beta_{q}-1) = S.
\end{align}
\end{proposition}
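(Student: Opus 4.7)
The plan is to combine the monotonicity statement of Proposition~\ref{PRbasicPrinPatR} with the identification $L(\beta_j-1)=C_j$ established in Proposition~\ref{PRbasicitera}(2), and fill in the ``missing pieces'' between consecutive essential values. Applying Proposition~\ref{PRbasicPrinPatR} with $\lambda=\beta$ and $\lambda'=\beta-1$ gives $L(\beta)\subseteq L(\beta-1)$, so $L(\beta)$ is monotone nondecreasing as $\beta$ decreases, which is the first assertion.

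For the two extreme values I would argue as follows. The definition \eqref{betajdef} of $\beta_1$ implies $p(X)\leq \beta_1|X|$ for every nonempty $X\subseteq S$, hence $p(X)-\beta_1|X|\leq 0=p(\emptyset)-\beta_1|\emptyset|$, making $\emptyset$ the (smallest) maximizer and so $L(\beta_1)=\emptyset=C_0$. On the right, $L(\beta_q-1)=C_q=S$ is immediate from Proposition~\ref{PRbasicitera}(2), and monotonicity then forces $L(\beta)=S$ for all $\beta\leq\beta_q-1$.

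The main step is to show $L(\beta_{j+1})=C_j$ for each $j=1,\ldots,q-1$, which together with the previous observations will pin down $L(\beta)$ as $C_j$ on every integer in $[\beta_{j+1},\beta_j-1]$. Monotonicity together with $L(\beta_j-1)=C_j$ gives the inclusion $C_j\subseteq L(\beta_{j+1})$ for free. For the reverse inclusion, the idea is to rewrite the iterative recipe \eqref{betajdef} as the statement that $p(Y)-\beta_{j+1}|Y|\leq p(C_j)-\beta_{j+1}|C_j|$ for every $Y\supseteq C_j$; applied to $Y=L(\beta_{j+1})$, which contains $C_j$, and combined with the global maximality of $L(\beta_{j+1})$, this forces equality, so $C_j$ is itself a maximizer of $p(X)-\beta_{j+1}|X|$. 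Minimality of $L(\beta_{j+1})$ then yields $L(\beta_{j+1})\subseteq C_j$, hence equality.

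Assembling these inputs with monotonicity, $L(\beta)$ is constant equal to $C_j$ on the interval $[\beta_{j+1},\beta_j-1]$ and jumps strictly precisely as $\beta$ decreases through $\beta_j$ (for $j=1,\ldots,q$), which is the displayed chain and identifies the essential values as the jump points. The step I expect to demand the most care is the reverse inclusion $L(\beta_{j+1})\subseteq C_j$: turning the ceiling-based iterative definition of $\beta_{j+1}$ into a statement about maximizers of $p(X)-\beta_{j+1}|X|$ over \emph{all} subsets (not merely over supersets of $C_j$) is where the discrete structure really bites, and one must lean on the global maximality of $L(\beta_{j+1})$ to promote the ``conditional'' maximum at $C_j$ to an unconditional one.
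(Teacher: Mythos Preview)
Your argument is correct and essentially parallels the paper's, but your reverse-inclusion step for $L(\beta_{j+1})\subseteq C_j$ takes a cleaner route. The paper proves the equivalent statement $L(\beta_j)\subseteq C_{j-1}$ by a supermodular uncrossing: writing $A=L(\beta_j)$ and $C=C_{j-1}$, it uses $h(A)+h(C)\leq h(A\cup C)+h(A\cap C)$ together with $h(C)\geq h(A\cup C)$ (from the iterative definition) to get $h(A)\leq h(A\cap C)$, forcing $A\subseteq C$. You instead exploit the already-established inclusion $C_j\subseteq L(\beta_{j+1})$ (from monotonicity and $C_j=L(\beta_j-1)$) to plug $Y=L(\beta_{j+1})$ directly into the conditional inequality $p(Y)-\beta_{j+1}|Y|\leq p(C_j)-\beta_{j+1}|C_j|$, which promotes $C_j$ to a global maximizer without touching supermodularity explicitly. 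This is a genuine simplification: the supermodularity has already been spent in Proposition~\ref{PRbasicPrinPatR}, and you avoid invoking it a second time.

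One small point you leave implicit: the strictness of the jump at each $\beta_j$ amounts to $C_{j-1}\subsetneq C_j$, which is part of the definition of the canonical chain from Part~I (each $S_j$ is nonempty). The paper's proof re-derives this here as its part (iii) by taking a nonempty $Z$ attaining the ceiling in \eqref{betajdef} and showing $C_{j-1}$ is not a maximizer at level $\beta_j-1$; you are entitled to cite the chain property instead, but it would not hurt to say so explicitly.
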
 
\begin{proof}
The monotonicity of $L(\beta)$  follows from Proposition \ref{PRbasicPrinPatR}.
We will show 
(i) $L(\beta_{1}) = \emptyset$,
(ii) $L(\beta_{j-1} -1) = L(\beta_{j})$
for $j=2,\ldots, q$, and
(iii) $L(\beta_{j}) \subset L(\beta_{j}-1)$ for $j=1,2,\ldots, q$.

(i)
Since 
$\beta_{1} = \max \left\{  \left\lceil  p(X)/|X|  \right\rceil  :  X  \not = \emptyset  \right\}$,
we have
$p(X) - \beta_{1} |X| \leq  0$ for all $X \not= \emptyset$,
whereas 
$p(X) - \beta_{1} |X| =  0$ for $X = \emptyset$.
Therefore, $L(\beta_{1}) = \emptyset$.

(ii)
Let $2 \leq j \leq q$.
For short we write $C = C_{j-1}$.
Define $h(Y) = p(Y) - \beta_{j} |Y|$ for any subset $Y$ of $S$,
and let $A$ be the smallest  maximizer of $h$, which means 
$A = L(\beta_{j})$.
For any nonempty subset $X$ of $\overline{C} \ (= S - C)$
we have
\begin{align*}
& \beta_{j}
\geq    \left\lceil  \frac{p(X \cup C) - p(C)}{|X|}  \right\rceil 
\geq   \frac{p(X \cup C) - p(C)}{|X|} ,
\end{align*}
which implies
$p(X \cup C) - \beta_{j} |X  \cup C| \leq  p(C) - \beta_{j} |C|$,
that is,
\begin{align}
h(Y) \leq  h(C)
\qquad \mbox{for all \  $Y \supseteq C$}.
\label{prfhYhC}
\end{align}
By supermodularity of $p$ we have
$ h(A)  + h(C) \leq  h(A \cup C)  + h(A \cap C)$, 
whereas $h(C) \geq  h(A \cup C)$
by (\ref{prfhYhC}).
Therefore,
$ h(A)  \leq  h(A \cap C)$.
Since $A$ is the smallest maximizer of $h$,
this implies that $A = A \cap C$, i.e., $A \subseteq C$.
Recalling
$A = L(\beta_{j})$ and $C = C_{j-1}= L(\beta_{j-1} -1)$,
we obtain 
$L(\beta_{j}) \subseteq L(\beta_{j-1} -1)$.
We also have
$L(\beta_{j}) \supseteq L(\beta_{j-1} -1)$
by the monotonicity.
Therefore,
$L(\beta_{j}) = L(\beta_{j-1} -1)$.

(iii)
Let $1 \leq j \leq q$.
We continue to write $C = C_{j-1}$.
Take a nonempty subset $Z$ of $\overline{C}$ which
gives the maximum in the definition of $\beta_{j}$, i.e., 
\begin{align*}
& \beta_{j}
= \max \left\{  \left\lceil  \frac{p(X \cup C) - p(C)}{|X|}  \right\rceil    
 :  \emptyset \not= X \subseteq \overline{C}   \right\} 
 = \left\lceil  \frac{p(Z \cup C) - p(C)}{|Z|}  \right\rceil .
\end{align*}
Then we have
\begin{align*}
\frac{p(Z \cup C) - p(C)}{|Z|}  > \beta_{j} -1 ,
\end{align*}
which implies
\begin{align*}
p(Z \cup C) - (\beta_{j}-1) |Z \cup C| >  p(C) -  (\beta_{j}-1) |C|.
\end{align*}
This shows that 
$C = C_{j-1}= L(\beta_{j-1} -1)$
is not a maximizer of $p(Y) - (\beta_{j}-1) |Y|$,
and hence  $L(\beta_{j-1} -1) \not= L(\beta_{j}-1)$.
On the other hand, we have 
$L(\beta_{j-1} -1) = L(\beta_{j})$ by (ii) and 
$L(\beta_{j}) \subseteq L(\beta_{j}-1)$
by the monotonicity in Proposition \ref{PRbasicPrinPatR}.
Therefore,
$L(\beta_{j}) \subset L(\beta_{j}-1)$.
\end{proof}

Proposition \ref{PRsequenceLbeta} justifies the following alternative definition
of the essential value-sequence, the canonical chain, and the canonical partition:

\begin{quote}
Consider the smallest maximizer $L(\beta)$ of $p(X) - \beta |X|$
 for all integers $\beta$.
There are finitely many $\beta$ for which
$L(\beta) \not= L(\beta -1)$.
Denote such integers as
$\beta_{1} > \beta_{2} > \cdots > \beta_{q}$
and call them the {\bf essential value-sequence}.
Furthermore, define 
$C_{j} = L(\beta_{j}-1)$
for $j=1,2,\ldots, q$
to obtain a chain: 
$C_{1} \subset  C_{2} \subset  \cdots \subset  C_{q}$.
Call this  the {\bf canonical chain}.
Finally define a partition  $\{ S_{1},  S_{2}, \ldots, S_{q} \}$
of $S$ by $S_{j}=C_{j} -  C_{j-1}$ for $j=1,2,\ldots, q$,
where $C_{0} = \emptyset$,
and call this the {\bf canonical partition}. 
\end{quote}

This alternative construction clearly exhibits the parallelism between 
the canonical partition in Case $\ZZ$ and the principal partition in Case $\RR$.
In particular, the essential value-sequence is exactly
the discrete counterpart of the critical values.
This is discussed in the next section.

\subsection{Canonical partition from the principal partition}
\label{SCcanopatprinpat}

The characterization of the canonical partition shown in Section~\ref{SCcanopat}
enables us to obtain 
the canonical partition for Case $\ZZ$ 
from the principal partition for Case $\RR$ as follows.

\begin{theorem} \label{THrelRZpartition}
\quad

\noindent
{\rm (1)}
An integer $\beta$ is an essential value if and only if there exists 
a critical value $\lambda$ satisfying $\beta  \geq \lambda > \beta -1$.

\noindent
{\rm (2)}
The essential values $\beta_{1} > \beta_{2} > \cdots > \beta_{q}$
are obtained 
from the critical values 
$\lambda_{1} > \lambda_{2} >  \cdots >  \lambda_{r}$
as the distinct members of the rounded-up integers
$\lceil \lambda_{1} \rceil \geq \lceil \lambda_{2} \rceil \geq \cdots \geq \lceil \lambda_{r} \rceil$.
Let $I(j) = \{ i :  \lceil \lambda_{i} \rceil  = \beta_{j} \}$
for $j=1,2,\ldots, q$.

\noindent
{\rm (3)}
The canonical partition 
$\{ S_{1}, S_{2}, \ldots, S_{q} \}$
is obtained from the principal partition 
$\{ \hat S_{1}, \hat S_{2}, \ldots, \hat S_{r} \}$
as an aggregation; it is given as 
\begin{equation} \label{canpataggreprinpat}
 S_{j} = \bigcup_{i \in I(j)} \hat S_{i} 
\qquad (j=1,2,\ldots, q).
\end{equation}

\noindent
{\rm (4)}
The canonical chain
$\{ C_{j} \}$
is a subchain of the principal chain  $\{ \hat C_{i} \}$;
it is given as $C_{j} = \hat C_{i}$ for $i= \max I(j)$.
\finbox
\end{theorem}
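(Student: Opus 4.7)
The plan is to exploit the uniform way the canonical data (Case $\ZZ$) and the principal data (Case $\RR$) are both built from one and the same object: the smallest-maximizer function $L(\lambda) := \min \mathcal{L}(\lambda)$ of $p(X) - \lambda |X|$ viewed as a function of the real parameter $\lambda$. By Proposition~\ref{PRsequenceLbeta}, an integer $\beta$ is an essential value iff $L(\beta) \neq L(\beta - 1)$, and $C_j = L(\beta_j - 1)$; by \eqref{prinpatRdef2} the critical values are those real $\lambda$ at which $L$ jumps, and $\hat S_i = L(\lambda_i - \varepsilon) - L(\lambda_i)$. All four assertions will therefore follow from a clear description of $L$ on $\RR$ restricted to integer arguments.

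The key intermediate step is a \emph{staircase lemma}: with the conventions $\hat C_0 = \emptyset$ and $\lambda_{r+1} = -\infty$,
\[
 L(\lambda) = \hat C_k \qquad \text{for every } \lambda \in [\lambda_{k+1}, \lambda_k) \text{ and every } k = 0, 1, \ldots, r.
\]
To prove this I would combine Proposition~\ref{PRbasicPrinPatR} (monotonicity: $L(\lambda) \supseteq L(\lambda')$ for $\lambda \leq \lambda'$) with the observation that the family of maximizers depends on $\lambda$ through the sign pattern of finitely many affine functions $p(X) - \lambda|X|$, hence $L$ is piecewise constant and can only jump at critical values. At each critical $\lambda_k$, the value $L(\lambda_k) = \min \mathcal{L}(\lambda_k)$ coincides with $L(\lambda)$ for $\lambda$ slightly larger than $\lambda_k$ (a small increase of $\lambda$ penalises larger sets and selects the smallest of the previous maximizers), while $L(\lambda_k - \varepsilon) = \max \mathcal{L}(\lambda_k)$. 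Combining these identities with $\hat S_k = L(\lambda_k - \varepsilon) - L(\lambda_k)$ and inducting on $k$ gives the staircase formula.

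Granted the staircase, the four assertions fall out by bookkeeping. For (1), $L(\beta) \neq L(\beta - 1)$ happens iff $\beta$ and $\beta - 1$ lie in different steps, iff some critical $\lambda_i$ sits in the half-open interval $(\beta - 1, \beta]$, which for integer $\beta$ is the same as $\lceil \lambda_i \rceil = \beta$; (2) is just (1) collected over $i$. For (4), fix $j$ and put $i = \max I(j)$: since the sequence $\lceil \lambda_1 \rceil \geq \cdots \geq \lceil \lambda_r \rceil$ is nonincreasing, the definition of $I(j)$ forces $\lceil \lambda_i \rceil = \beta_j$ and $\lceil \lambda_{i+1} \rceil < \beta_j$ (or $i = r$), whence $\lambda_{i+1} \leq \beta_j - 1 < \lambda_i$, so the staircase yields $C_j = L(\beta_j - 1) = \hat C_i$. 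Statement (3) is then immediate by subtraction: the blocks $I(1), \ldots, I(q)$ partition $\{1, \ldots, r\}$ into consecutive integer intervals, so $\max I(j-1) + 1 = \min I(j)$ and $S_j = C_j - C_{j-1} = \hat C_{\max I(j)} - \hat C_{\max I(j-1)} = \bigcup_{i \in I(j)} \hat S_i$. The main delicate point I expect is the behavior of $L$ at a critical value that happens to be an integer: one must verify that the left endpoint of the step $[\lambda_{i+1}, \lambda_i)$ is correctly assigned to that step rather than to the previous one, which is exactly the content of $L(\lambda_k) = \min \mathcal{L}(\lambda_k)$.
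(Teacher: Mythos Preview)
Your proposal is correct and follows exactly the route the paper intends: the paper states Theorem~\ref{THrelRZpartition} without an explicit proof (it ends with \finbox), treating it as a direct consequence of the new characterization in Proposition~\ref{PRsequenceLbeta} combined with the principal-partition description in Section~\ref{SCprinpatR}, both of which are phrased in terms of the smallest-maximizer function $L(\lambda)$. Your ``staircase lemma'' simply makes explicit the common bridge---that $L$ is right-continuous and piecewise constant with value $\hat C_k$ on $[\lambda_{k+1},\lambda_k)$---and your derivations of (1)--(4) from it are accurate, including the boundary check that $L(\lambda_k)=\min\mathcal{L}(\lambda_k)$ so integer critical values are assigned to the correct step.
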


In Case $\RR$, the dec-min element $m_{\RR}$ of $B$ is uniform 
on each member $\hat S_{i}$ of the principal partition, i.e., 
$m_{\RR}(s) = \lambda_{i}$ if $s \in  \hat S_{i}$, where $i=1,2,\ldots, r$
(cf., Proposition \ref{PRlexoptbaseR}).
In Case $\ZZ$, the dec-min element $m_{\ZZ}$ of $\odotZ{B}$ is near-uniform 
on each member $S_{j}$ of the canonical partition, i.e., 
$m_{\ZZ}(s) \in \{  \beta_{j}, \beta_{j} -1 \}$ 
if $s \in S_{j}$, where $j=1,2,\ldots,q$
(cf., Theorem \RefPartI{5.1} of Part~I \cite{FM18part1}).
Combining these results with Theorem~\ref{THrelRZpartition} above
we can obtain a (strong) proximity theorem for dec-min elements.

\begin{theorem}[Proximity] \label{THdecminproxS}
Let $m_{\RR}$ be the minimum norm point of $B$.
Then every dec-min element $m_{\ZZ}$ of $\odotZ{B}$ satisfies
$\left\lfloor m_{\RR} \right\rfloor \leq m_{\ZZ} \leq  \left\lceil m_{\RR} \right\rceil$.
\end{theorem}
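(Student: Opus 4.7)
The approach is to localize the two-sided bound $\lfloor m_\RR\rfloor\leq m_\ZZ\leq \lceil m_\RR\rceil$ to each $s\in S$ by combining the explicit descriptions of $m_\RR$ and $m_\ZZ$ on the pieces of the principal and canonical partitions, together with the aggregation relation of Theorem \ref{THrelRZpartition}. Fix $s\in S$ and let $i$ and $j$ be the unique indices with $s\in \hat S_i\subseteq S_j$. Proposition \ref{PRlexoptbaseR} gives $m_\RR(s)=\lambda_i$, Theorem \RefPartI{5.1} of Part~I gives $m_\ZZ(s)\in \{\beta_j-1,\beta_j\}$, and Theorem \ref{THrelRZpartition}(3) gives $\lceil\lambda_i\rceil=\beta_j$, i.e., $\beta_j-1<\lambda_i\leq \beta_j$. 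The upper bound $m_\ZZ(s)\leq \lceil m_\RR(s)\rceil=\beta_j$ follows immediately, and the lower bound $m_\ZZ(s)\geq\lfloor m_\RR(s)\rfloor$ is trivial whenever $\lambda_i\notin\ZZ$, since then $\lfloor\lambda_i\rfloor=\beta_j-1$.

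The crux is the integer case $\lambda_i=\beta_j$, where $\lfloor m_\RR(s)\rfloor=\lceil m_\RR(s)\rceil=\beta_j$ and $m_\ZZ(s)=\beta_j-1$ must be excluded. Because $\lambda_i=\beta_j$ attains the right endpoint of $(\beta_j-1,\beta_j]$, the index $i$ is necessarily the smallest element of $I(j)$, and then Theorem \ref{THrelRZpartition}(4) (applied to $j-1$, with the convention $\hat C_0=C_0=\emptyset$ when $j=1$) gives $\hat C_{i-1}=C_{j-1}$. Since $\lambda_i$ is a critical value, both $\hat C_{i-1}$ and $\hat C_i$ lie in the lattice $\mathcal{L}(\lambda_i)=\mathcal{L}(\beta_j)$ of maximizers of $p(X)-\beta_j|X|$, so comparing their values yields the key identity
\[
 p(\hat C_i)-p(C_{j-1})\;=\;\beta_j\bigl(|\hat C_i|-|C_{j-1}|\bigr)\;=\;\beta_j\,|\hat S_i|.
\]

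The argument is then completed by combining the defining inequality $\widetilde{m_\ZZ}(\hat C_i)\geq p(\hat C_i)$ for $m_\ZZ\in B=B'(p)$ with the tightness $\widetilde{m_\ZZ}(C_{j-1})=p(C_{j-1})$ of the canonical chain set at any dec-min element (a structural property from Part~I). Subtracting gives $\widetilde{m_\ZZ}(\hat S_i)\geq \beta_j\,|\hat S_i|$, while the near-uniformity bound $m_\ZZ(s)\leq \beta_j$ on $\hat S_i\subseteq S_j$ forces the reverse inequality. Equality pointwise yields $m_\ZZ(s)=\beta_j$ for every $s\in \hat S_i$, as required.

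The main obstacle will be this integer case. It hinges on two ingredients meeting at an integer critical value, namely the tightness of canonical chain sets for every dec-min element (imported from Part~I) together with the observation that both $\hat C_{i-1}$ and $\hat C_i$ belong to $\mathcal{L}(\beta_j)$, which is where the discrete and continuous structures genuinely intersect. The remaining case analysis is routine.
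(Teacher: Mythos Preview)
Your proof is correct, but the paper takes a shorter route. The paper proves only the upper bound $m_\ZZ\le\lceil m_\RR\rceil$ directly (exactly as you do), and then obtains the lower bound by a symmetry trick: apply the already-proved upper bound to the base-polyhedron $-B$. Since $-m_\RR$ is the minimum-norm point of $-B$ and $-m_\ZZ$ is dec-min (equivalently inc-max) in $-\odotZ{B}$, one gets $-m_\ZZ\le\lceil -m_\RR\rceil$, i.e., $m_\ZZ\ge\lfloor m_\RR\rfloor$. This avoids any case analysis.

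Your direct treatment of the lower bound is also valid. The non-integer case is indeed trivial, and your handling of the integer case $\lambda_i=\beta_j$ is sound: the identification $\hat C_{i-1}=C_{j-1}$ follows from $i=\min I(j)$ together with Theorem~\ref{THrelRZpartition}(4), and both $\hat C_{i-1}$ and $\hat C_i$ lie in $\mathcal{L}(\lambda_i)$ since they are precisely $\min\mathcal{L}(\lambda_i)$ and $\max\mathcal{L}(\lambda_i)$ in the principal-partition construction. Combining the tightness $\widetilde{m_\ZZ}(C_{j-1})=p(C_{j-1})$ (from Part~I) with the base inequality $\widetilde{m_\ZZ}(\hat C_i)\ge p(\hat C_i)$ and the near-uniformity bound on $S_j$ does force $m_\ZZ\equiv\beta_j$ on $\hat S_i$. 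Your argument thus yields a sharper structural statement than the paper's proof makes explicit: at an integer critical value, every dec-min element is exactly constant on the corresponding principal block. The trade-off is length; the paper's symmetry argument is two lines, while yours needs the tightness import and the $\mathcal{L}(\lambda_i)$ membership to be spelled out.
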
 
\begin{proof}
For $s \in S$ let $\hat S_{i}$ denote the member of the principal partition 
containing $s$, and $\lambda_{i}$ be the associated critical value.
We have $m_{\RR}(s) = \lambda_{i}$ by Proposition \ref{PRlexoptbaseR}.
Let $\beta_{j} = \lceil \lambda_{i} \rceil$.
This is an essential value, and the corresponding member $S_{j}$ of the canonical partition
contains the element $s$ by Theorem~\ref{THrelRZpartition}.
We have $m_{\ZZ}(s) \in \{  \beta_{j}, \beta_{j} -1 \}$ 
by Theorem \RefPartI{5.1} of Part~I \cite{FM18part1}.
Therefore, 
$m_{\ZZ} \leq  \left\lceil m_{\RR} \right\rceil$.

Next we apply the above argument to $-B$, which is an integral base-polyhedron.
Since $-m_{\RR}$ is the minimum norm point of $-B$ and 
$-m_{\ZZ}$ is a dec-min (=inc-max) element for $-\odotZ{B}$, we obtain
$-m_{\ZZ} \leq  \left\lceil -m_{\RR} \right\rceil$, which is equivalent to  
$m_{\ZZ} \geq  \left\lfloor  m_{\RR} \right\rfloor$.
\end{proof}

\begin{remark} \rm  \label{RMdecminproxW} 
Theorem~\ref{THdecminproxS} implies a weaker statement that 
\begin{equation} \label{decminproxW}
\mbox{There exists a dec-min element $m_{\ZZ}$ of $\odotZ{B}$ satisfying
$\left\lfloor m_{\RR} \right\rfloor \leq m_{\ZZ} \leq  \left\lceil m_{\RR} \right\rceil$,}
\end{equation}
where $m_{\RR}$ is the minimum norm point of $B$.
This statement (\ref{decminproxW}) 
should not be confused with 
Proposition \ref{PRdecminproxG} in Section~\ref{SCalgRZ},
which is another proximity statement referring 
to a minimizer of the piecewise extension of the quadratic function,
not to the minimum norm point (minimizer of the quadratic function itself). 
\finbox
\end{remark}


\begin{theorem} \label{THmnormconvcombdmS}
The minimum norm point of $B$
can be represented as a convex combination of 
the dec-min elements of $\odotZ{B}$.
\end{theorem}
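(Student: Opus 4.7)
The strategy is to exhibit an integral polytope $P\sp{\star}$ that contains $m_{\RR}$ and whose integer points are precisely the dec-min elements of $\odotZ{B}$; since an integral polytope equals the convex hull of its integer points, this will express $m_{\RR}$ as the desired convex combination. A natural choice of $P\sp{\star}$ is suggested by combining Proposition \ref{PRstrPdecmin} with Theorem \ref{THoptdualsetKM}: setting $P\sp{\star}:=B\sp{\circ}(\pi\sp{*})\cap[\Delta\sp{*},\Delta\sp{*}+\mathbf{1}]$, where $B\sp{\circ}(\pi\sp{*})$ is the face of $B$ of $\pi\sp{*}$-minimizing bases, one sees from the shape of $\pi\sp{*}$ (namely $\pi\sp{*}(s)=2\beta_{j}-1$ on $S_{j}$, so $\lfloor\pi\sp{*}(s)/2\rfloor=\beta_{j}-1=\Delta\sp{*}(s)$ and $\lceil\pi\sp{*}(s)/2\rceil=\beta_{j}$) that the set $T(\pi\sp{*})$ of Proposition \ref{PRstrPdecmin} is exactly the set of integer points of the unit box $[\Delta\sp{*},\Delta\sp{*}+\mathbf{1}]$. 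Therefore Proposition \ref{PRstrPdecmin} identifies the integer points of $P\sp{\star}$ with the dec-min elements of $\odotZ{B}$, and $P\sp{\star}$ is integral because $B\sp{\circ}(\pi\sp{*})$, being a face of the integer base-polyhedron $B$, is itself an integer base-polyhedron, and intersection with an integer box preserves integrality.

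Placing $m_{\RR}$ in $P\sp{\star}$ reduces to two verifications. The box condition $\Delta\sp{*}\leq m_{\RR}\leq \Delta\sp{*}+\mathbf{1}$ follows from Proposition \ref{PRlexoptbaseR} and Theorem \ref{THrelRZpartition}: for $s\in\hat S_{i}\subseteq S_{j}$ one has $m_{\RR}(s)=\lambda_{i}$ and $\beta_{j}=\lceil\lambda_{i}\rceil$, whence $\beta_{j}-1<\lambda_{i}\leq\beta_{j}$. The face condition $m_{\RR}\in B\sp{\circ}(\pi\sp{*})$ amounts to $\pi\sp{*}\cdot m_{\RR}=\hat p(\pi\sp{*})$. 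For this, Theorem \ref{THrelRZpartition}(4) says that the canonical chain $C_{1}\subset\cdots\subset C_{q}$ is a subchain of the principal chain, every member of which is $m_{\RR}$-tight in the sense $\widetilde{m_{\RR}}(\hat C_{i})=p(\hat C_{i})$ (a standard consequence of Fujishige's characterization of the minimum-norm point recalled in Section \ref{SCprinpatR}); hence $\sum_{s\in S_{j}}m_{\RR}(s)=p(C_{j})-p(C_{j-1})$ for each $j$. Since $\pi\sp{*}$ is constant on each $S_{j}$, ordering the ground set by decreasing $\pi\sp{*}$-value produces exactly the prefixes $C_{1},\ldots,C_{q}$, and a direct application of the Lov\'asz-extension formula \eqref{lovextdef} yields $\hat p(\pi\sp{*})=\sum_{j}(2\beta_{j}-1)(p(C_{j})-p(C_{j-1}))=\pi\sp{*}\cdot m_{\RR}$.

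The main (essentially bookkeeping) obstacle is the integrality of $P\sp{\star}$, which is what legitimises the last leap from ``$m_{\RR}\in P\sp{\star}$'' to ``$m_{\RR}$ is a convex combination of the integer points of $P\sp{\star}$''; this rests on standard box-intersection properties of integral base-polyhedra but must be invoked cleanly. The essential conceptual ingredient driving the whole argument is the coupling between the continuous and discrete cases established in Section \ref{SCcanopatprinpat}: the canonical chain is a subchain of the principal chain, and it is precisely this refinement that simultaneously lands $m_{\RR}$ on the canonical face $B\sp{\circ}(\pi\sp{*})$ and confines each component of $m_{\RR}$ to the unit interval $[\Delta\sp{*}(s),\Delta\sp{*}(s)+1]$.
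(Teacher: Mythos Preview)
Your proof is correct and follows essentially the same route as the paper's. The paper's proof works with the polytope $B^{\bullet}=B^{\oplus}\cap T^{*}$ from Part~I, where $B^{\oplus}$ is the face of $B$ defined by the canonical chain and $T^{*}=\{x:\beta_{j}-1\le x(s)\le\beta_{j}\ \text{for}\ s\in S_{j}\}$; your $P^{\star}=B^{\circ}(\pi^{*})\cap[\Delta^{*},\Delta^{*}+\mathbf{1}]$ is the very same polytope (since $\pi^{*}$ is constant on each $S_{j}$, the $\pi^{*}$-minimizing face is exactly $B^{\oplus}$, and $[\Delta^{*},\Delta^{*}+\mathbf{1}]=T^{*}$), and both arguments place $m_{\RR}$ in it via Theorem~\ref{THrelRZpartition}(4) and the $m_{\RR}$-tightness of the principal chain, then invoke integrality. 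The only cosmetic difference is that you identify the integer points of $P^{\star}$ with ${\rm dm}(\odotZ{B})$ through Proposition~\ref{PRstrPdecmin} (a Part~II result), whereas the paper cites the Part~I construction of $B^{\bullet}$ directly.
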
 
\begin{proof}
On one hand, it was shown in Section \RefPartI{5.1} of Part~I \cite{FM18part1}
that the dec-min elements of $\odotZ{B}$
lie on the face $B\sp{\oplus}$ of $B$ 
defined by the canonical chain 
$C_{1} \subset  C_{2} \subset  \cdots \subset  C_{q}$.
This face is the intersection of $B$ with the hyperplanes 
$\{x\in {\bf R}\sp{S}:  \widetilde x(C_{j}) =  p(C_{j}) \}$
$(j=1,2,\ldots, q)$.
On the other hand, 
it is known (\cite{Fuj80}, \cite[Section 9.2]{Fuj05book}) that
the minimum norm point $m_{\RR}$ of $B$
lies on the face of $B$ 
defined by the principal chain 
$\hat C_{1} \subset  \hat C_{2} \subset  \cdots \subset \hat C_{r}$,
which is the intersection of $B$ with the hyperplanes 
$\{x\in {\bf R}\sp{S}:  \widetilde x(\hat C_{i}) =  p(\hat C_{i}) \}$
$(i=1,2,\ldots, r)$.
Since the principal chain is a refinement of
the canonical chain (Theorem~\ref{THrelRZpartition}),
the latter face is a face of $B\sp{\oplus}$.
Therefore, 
$m_{\RR}$ belongs to $B\sp{\oplus}$. 
The point $m_{\RR}$ also belongs to 
$T\sp{*}= \{x\in {\bf R}\sp{S}:  \ \beta_{j}-1\leq x(s)\leq \beta_{j} 
  \ \ \hbox{whenever}\ s\in S_{j} \ (j=1,2,\dots ,q)\}$,
since
$m_{\RR}(s) = \lambda_{i}$ for $s \in  \hat S_{i}$
(Proposition \ref{PRlexoptbaseR}) and 
$ S_{j}
 = \bigcup \{ \hat S_{i}:   \lceil \lambda_{i} \rceil  = \beta_{j} \}$ 
(Theorem~\ref{THrelRZpartition}).
Therefore,  $m_{\RR}$ is a member of $B\sp{\bullet}= B\sp{\oplus}\cap T\sp{*}$.
By recalling that $B\sp{\bullet}$ is an integral base-polyhedron
whose vertices are precisely the dec-min elements of $\odotZ{B}$,
we conclude that $m_{\RR}$
can be represented as a convex combination of the dec-min elements of $\odotZ{B}$.
\end{proof}


The following two examples illustrate Theorem~\ref{THrelRZpartition}.

\begin{example} \rm \label{EX2dimB1ppRZ}
Let $S = \{ s_1 , s_2 \}$ and $\odotZ{B} = \{ (0,3), (1,2), (2,1) \}$,
where $B$ is the line segment connecting $(0,3)$ and $(2,1)$.
For $\odotZ{B}$ there are two dec-min elements:
$m_{\ZZ}\sp{(1)}=(1,2)$ and $m_{\ZZ}\sp{(2)}=(2,1)$.
The minimum norm point (dec-min element) of $B$ is
$m_{\RR}=(3/2,3/2)$.
The supermodular function $p$ is given by
\[
 p(\emptyset)= 0,  
\quad
 p(\{ s_{1} \})= 0,  
\quad
 p(\{ s_{2} \})= 1,  
\quad
 p(\{ s_{1},s_{2} \}) = 3,
\]
and we have
\begin{equation*} \label{}  
 p(X) - \lambda |X| = 
   \left\{  \begin{array}{ll}
    0   & (X = \emptyset),  \\
    -\lambda  & (X = \{ s_{1} \}),  \\
    1-\lambda  & (X = \{ s_{2} \}),  \\
    3- 2\lambda  & (X = \{ s_{1}, s_{2} \}).  \\
             \end{array}  \right.
\end{equation*}
There is only one ($r=1$) critical value $\lambda_{1} = 3/2$ and
the associated sublattice is $\mathcal{L}(\lambda_{1}) = \{ \emptyset, S \}$.
The principal partition is a trivial partition $\{ S \}$.
Since $\lceil \lambda_{1} \rceil = 2$, we have
$\beta_{1} = 2$ with $q=1$, and the (only) member $S_{1}$
in the canonical partition is given by 
$S_{1} = L(\beta_{1}-1)= L(1) = S$.
Accordingly,  the canonical chain consists of only one member ${C}_{1} = S$.
\finbox
\end{example}

\begin{example} \rm \label{EX4dimC4pat}
We consider Example \ref{EX4dimC4} again.
We have $S = \{ s_1, s_2, s_3, s_4  \}$ and
$\odotZ{B}$ consists of five vectors:
$ m_{1}=(2,1,1,0)$, \
$m_{2}=(2,1,0,1)$, \
$m_{3}=(1,2,1,0)$, \
$m_{4}=(1,2,0,1)$, and 
$m_{5}=(2,2,0,0)$,
of which the first four members, $m_{1}$ to $m_{4}$,
are the dec-min elements.
The supermodular function $p$ is given by
\begin{align*}
& p(\emptyset)= 0, \quad
p(\{ s_{1} \})= p(\{ s_{2} \})= 1, \quad  
p(\{ s_{3} \})= p(\{ s_{4} \})= 0,  
\\ & 
p(\{ s_{1},s_{2} \}) = 3, \quad  
p(\{ s_{3},s_{4} \}) = 0, \quad
p(\{ s_{1},s_{3} \}) =p(\{ s_{2},s_{3} \}) =p(\{ s_{1},s_{4} \}) =p(\{ s_{2},s_{4} \}) = 1,
\\ & 
p(\{ s_{1},s_{2}, s_{3} \}) =
p(\{ s_{1},s_{2}, s_{4} \}) = 3, \quad
p(\{ s_{1},s_{3}, s_{4} \}) =
p(\{ s_{2},s_{3}, s_{4} \}) = 2, \quad
\\ & 
p(\{ s_{1},s_{2},s_{3}, s_{4} \}) = 4.
\end{align*}
We have
\begin{equation*} \label{}  
 \max\{ p(X) - \lambda |X| : X \subseteq S \} = 
 \max\{ 0, \   1- \lambda, \  3- 2\lambda,  \   3- 3\lambda,  \   4- 4\lambda \}.  
\end{equation*}
There are two ($r=2$) critical values $\lambda_{1} = 3/2$ and $\lambda_{2} = 1/2$,
with the associated sublattices
$\mathcal{L}(\lambda_{1}) = \{ \emptyset, \{ s_{1},s_{2} \} \}$
and
$\mathcal{L}(\lambda_{2}) = \{\{ s_{1},s_{2} \}, S \}$.
The principal chain is given by
$\emptyset \subset \{ s_{1},s_{2} \} \subset  S$,
and the principal partition is a bipartition with 
$\hat S_{1} = \{ s_{1},s_{2} \}$
and
$\hat S_{2} = \{ s_{3},s_{4} \}$.
The minimum norm point of the base-polyhedron $B$ 
is given by $m_{\RR} = (3/2, 3/2, 1/2, 1/2 )$
by Proposition \ref{PRlexoptbaseR}.
Since
$\lceil \lambda_{1} \rceil = 2$ and
$\lceil \lambda_{2} \rceil = 1$, we have
$\beta_{1} = 2$ and 
$\beta_{2} = 1$ with $q=2$.
The canonical chain consists of two members 
$C_{1} = L(\beta_{1}-1)= L(1) = \{ s_{1},s_{2} \}$
and
$C_{2} = L(\beta_{2}-1)= L(0) = S$.
Accordingly,  the canonical partition is given by 
$S_{1} = \{ s_{1},s_{2} \}$
and
$S_{2} = \{ s_{3},s_{4} \}$.
\finbox
\end{example}

\subsection{Continuous relaxation algorithms}
\label{SCalgRZ}

In Section~\RefPartI{7} of Part~I \cite{FM18part1}, 
we have presented a strongly polynomial algorithm  
for finding a dec-min element of $\odotZ{B}$ as well as 
for finding the canonical partition.
This is based on an iterative approach
to construct a dec-min element along the canonical chain.

By making use of the relation between Case $\RR$ and Case $\ZZ$,
we can construct continuous relaxation algorithms,
which first compute a real (fractional) vector that is guaranteed to be close 
to an integral dec-min element,
and then find the integral dec-min element by solving 
a linearly weighted matroid optimization problem.

In our continuous relaxation algorithms,
we first apply some algorithm for Case $\RR$
to find two integer vectors 
$\ell$ and $u$ such that 
$\bm{0} \leq u - \ell \leq \bm{1}$,
(i.e., $0 \leq u(s) - \ell(s) \leq 1$ for all $s \in S$)
and the box $[\ell, u]$
contains at least one dec-min element of $\odotZ{B}$,
i.e., 
\begin{equation}  \label{smallboxdecmin}
 \ell \leq m_{\ZZ} \leq u
\end{equation}
for some dec-min element $m_{\ZZ}$ of $\odotZ{B}$.
We denote the intersection of $\odotZ{B}$ and $[\ell, u]$ by $\odotZ{B_{\ell}\sp{u}}$.
Then the dec-min element of $\odotZ{B_{\ell}\sp{u}}$ is a dec-min element of $\odotZ{B}$.
Since 
$\bm{0} \leq u - \ell \leq \bm{1}$,
$\odotZ{B_{\ell}\sp{u}}$ can be regarded as a matroid translated by $\ell$,
i.e.,
$\odotZ{B_{\ell}\sp{u}} = \{ \ell + \chi_{L} : L \mbox{ is a base of $M$} \}$
for some matroid $M$.
Therefore,
the dec-min element of $\odotZ{B_{\ell}\sp{u}}$ 
can be computed as the minimum weight base  of matroid $M$
with respect to the weight vector $w$ defined by
$w(s) = u(s)\sp{2} - \ell(s)\sp{2}$  ($s \in S$).
By the greedy algorithm we can find the minimum weight base of $M$ in strongly polynomial time.

We can conceive two different algorithms for finding vectors $\ell$ and $u$.

\subsubsection*{(a) Using the minimum norm point}

In Theorem~\ref{THdecminproxS} we have shown
that every dec-min element $m_{\ZZ}$ of $\odotZ{B}$ satisfies
$\left\lfloor m_{\RR} \right\rfloor \leq m_{\ZZ} \leq  \left\lceil m_{\RR} \right\rceil$
for the minimum norm point $m_{\RR}$ of $B$.
Therefore, we can choose
$\ell = \left\lfloor m_{\RR} \right\rfloor$ and $u= \left\lceil m_{\RR} \right\rceil$
in (\ref{smallboxdecmin}).
With this choice of $(\ell, u)$, 
$\odotZ{B_{\ell}\sp{u}}$ contains all dec-min elements of $\odotZ{B}$.
The decomposition algorithm of Fujishige \cite{Fuj80}
(see also \cite[Section 8.2]{Fuj05book})
finds the minimum norm point $m_{\RR}$ in strongly polynomial time.
Therefore, the continuous relaxation algorithm 
using the minimum norm point
is a strongly polynomial algorithm.

\begin{example} \rm \label{EX4dimC4relax}
We continue with Example \ref{EX4dimC4pat}, where
$\odotZ{B}$ consists of five vectors:
$ m_{1}=(2,1,1,0)$, \
$m_{2}=(2,1,0,1)$, \
$m_{3}=(1,2,1,0)$, \
$m_{4}=(1,2,0,1)$, and 
$m_{5}=(2,2,0,0)$.
From the minimum norm point  $m_{\RR} = (3/2, 3/2, 1/2, 1/2 )$,
we obtain $\ell = (1,1,0,0)$ and $u=(2,2,1,1)$, and hence $w = (3,3,1,1)$. 
Since 
$W(m_{i})=10$ for $i=1,\ldots,4$ and $W(m_{5})=12$,
the dec-min elements are given by $m_{1}$ to $m_{4}$.
\finbox
\end{example}

\subsubsection*{(b) Using the piecewise-linear extension}

The algorithm of Groenevelt \cite{Gro91}
(see also \cite[Section 8.3]{Fuj05book})
employs a piecewise-linear extension of the objective function.
For the quadratic function
$\varphi (k)=k\sp{2}$,
the piecewise-linear extension $\overline{\varphi}: \RR \to \RR$
is given by:
$\overline{\varphi}(t)
 = (2k-1) t - k (k-1)$
if $k-1 \leq |t| \leq k$ for $k \in \ZZ$.

The following proximity property is a special case of 
an observation of Groenevelt \cite{Gro91}
(see also \cite[Theorem 8.3]{Fuj05book}).

\begin{proposition}[Groenevelt \cite{Gro91}] \rm \label{PRdecminproxG}
For any minimizer $\overline{m}_{\RR} \in \RR\sp{S}$ of the function
$\overline{\Phi}(x) = \sum_{s \in S} \overline{\varphi} ( x(s) )$ over $B$,
there exists a minimizer $m_{\ZZ} \in \ZZ\sp{S}$ of 
$\Phi(x) = \sum_{s \in S} x(s)\sp{2}$ 
over $\odotZ{B}$ satisfying
$\lfloor \overline{m}_{\RR} \rfloor \leq m_{\ZZ}  \leq  \lceil \overline{m}_{\RR} \rceil$.
\end{proposition}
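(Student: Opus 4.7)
The plan is to reduce the continuous minimization of $\overline{\Phi}$ over $B$ to a linear optimization over an integral sub-polytope that automatically delivers the desired integer minimizer lying in the required unit box.

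First, I would establish the value identity
\[
 \min_{y \in B} \overline{\Phi}(y) \;=\; \min_{x \in \odotZ{B}} \Phi(x).
\]
The inequality ``$\leq$'' is immediate since $\odotZ{B} \subseteq B$ and $\overline{\Phi}$ agrees with $\Phi$ on integer points. For the reverse inequality, the integral base-polyhedron $B$ is the convex hull of $\odotZ{B}$, so any $y \in B$ admits a representation $y = \sum_{i} \lambda_{i} x_{i}$ with $x_{i} \in \odotZ{B}$, $\lambda_{i} \geq 0$, $\sum_{i} \lambda_{i} = 1$. Convexity of $\overline{\Phi}$ on $\RR\sp{S}$ (which follows componentwise from the discrete convexity of $\varphi$) then yields $\overline{\Phi}(y) \geq \sum_{i} \lambda_{i} \overline{\Phi}(x_{i}) = \sum_{i} \lambda_{i} \Phi(x_{i}) \geq \min_{\odotZ{B}} \Phi$.

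Next I would localize to the unit box. Since $\overline{\varphi}$ is a piecewise-linear interpolation of $\varphi(k)=k\sp{2}$ with breakpoints at the integers, $\overline{\varphi}$ is affine on each interval $[k-1,k]$. Consequently $\overline{\Phi}$ is affine on the integer box $K := [\lfloor \overline{m}_{\RR} \rfloor,\, \lceil \overline{m}_{\RR} \rceil]$, say $\overline{\Phi}(y) = w\sp{\top} y + c$ on $K$ for an appropriate integer vector $w$ (with $w(s) = 2\lceil \overline{m}_{\RR}(s) \rceil - 1$ whenever $\overline{m}_{\RR}(s) \notin \ZZ$) and a constant $c$. Set $B' := B \cap K$; this set is nonempty because it contains $\overline{m}_{\RR}$, and it is in fact an integral base-polyhedron, because intersecting an integral base-polyhedron with an integer box preserves integrality.

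Because $\overline{\Phi}\big|_{B'}$ is linear and $B'$ is an integral polytope, the linear program $\min_{y \in B'} (w\sp{\top} y + c)$ attains its optimum at an integer vertex $m_{\ZZ} \in \odotZ{B'}$. From $\overline{m}_{\RR} \in B'$ being a minimizer of $\overline{\Phi}$ over all of $B$, we get $\overline{\Phi}(m_{\ZZ}) \leq \overline{\Phi}(\overline{m}_{\RR}) = \min_{B} \overline{\Phi}$. Combined with $m_{\ZZ} \in \odotZ{B}$ and the value identity, this yields
\[
 \Phi(m_{\ZZ}) \;=\; \overline{\Phi}(m_{\ZZ}) \;=\; \min_{B} \overline{\Phi} \;=\; \min_{\odotZ{B}} \Phi ,
\]
so $m_{\ZZ}$ is a minimizer of $\Phi$ over $\odotZ{B}$; and the inclusion $m_{\ZZ} \in B' \subseteq K$ is precisely the desired proximity $\lfloor \overline{m}_{\RR} \rfloor \leq m_{\ZZ} \leq \lceil \overline{m}_{\RR} \rceil$.

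The main obstacle is the structural step that $B' = B \cap K$ is integral, i.e.\ that box-truncation of an integral base-polyhedron by an integer box again gives an integral base-polyhedron. This is a classical but nontrivial polymatroid fact (closely tied to the total dual integrality of Edmonds' defining system); all the remaining ingredients are routine assembly of convexity of $\overline{\Phi}$, its piecewise linearity on the unit box, and the value identity relating continuous and discrete optima.
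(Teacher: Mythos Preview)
Your argument is correct and rests on exactly the same two ingredients as the paper's proof: (i) the intersection $B' = B \cap [\lfloor \overline{m}_{\RR}\rfloor,\lceil \overline{m}_{\RR}\rceil]$ is an integral polytope, and (ii) $\overline{\Phi}$ is affine on that box. The only cosmetic difference is the wrap-up: the paper writes $\overline{m}_{\RR}$ as a convex combination $\sum_i \lambda_i z_i$ of integer points $z_i \in B'$ and uses the averaging identity $\overline{\Phi}(\overline{m}_{\RR}) = \sum_i \lambda_i \Phi(z_i)$ together with $\Phi(z_i)\ge \overline{\Phi}(\overline{m}_{\RR})$ to conclude that \emph{every} $z_i$ is optimal, whereas you pick one integer vertex via linear programming on $B'$. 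Your separate ``value identity'' paragraph is not really needed once you localize to $B'$, since $\Phi(m_{\ZZ}) = \overline{\Phi}(m_{\ZZ}) \le \overline{\Phi}(\overline{m}_{\RR}) \le \overline{\Phi}(z) = \Phi(z)$ for every $z\in\odotZ{B}$ already gives optimality of $m_{\ZZ}$ directly; but it does no harm.
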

\begin{proof}
(We give a proof for completeness, though it is easy and standard.)
By the integrality of $B$,
we can express $\overline{m}_{\RR}$ as a convex combination of
integral member $z_{1}, z_{2}, \ldots, z_{k}$ of $B$
satisfying
$\lfloor \overline{m}_{\RR} \rfloor \leq z_{i}  \leq  \lceil \, \overline{m}_{\RR} \rceil$
$(i=1,2,\ldots,k)$, where
$\overline{m}_{\RR} = \sum_{i=1}\sp{k} \lambda_{i} z_{i}$
with
$\sum_{i=1}\sp{k} \lambda_{i} =1$
and $\lambda_{i} > 0 $ $(i=1,2,\ldots,k)$.
Since 
$\overline{\Phi}$ is piecewise-linear,
we have
$\overline{\Phi}(\overline{m}_{\RR}) = 
\sum_{i=1}\sp{k} \lambda_{i} \Phi(z_{i})$,
in which
$\Phi(z_{i}) = \overline{\Phi}(z_{i}) \geq \overline{\Phi}(\overline{m}_{\RR})$.
Therefore, 
$z_{1}, z_{2}, \ldots, z_{k}$ are the minimizers of $\Phi$ on $\odotZ{B}$.
We can take any $z_{i}$ as  $m_{\ZZ}$.
\end{proof}

By Proposition~\ref{PRdecminproxG} we can take
$\ell = \lfloor \overline{m}_{\RR} \rfloor$ and $u = \lceil \overline{m}_{\RR} \rceil$
in (\ref{smallboxdecmin}).
In this case, however,
$\odotZ{B_{\ell}\sp{u}}$ may not contain all dec-min elements of $\odotZ{B}$.
The complexity of computing $\overline{m}_{\RR}$ 
is not fully analyzed in the literature 
\cite{Fuj05book,Gro91,KSI13}.
See also  Remark \ref{RMdecminproxW}.

\begin{remark}\rm  \label{RMcontrelalgKSI} 
Minimization of a separable convex function on a base-polyhedron 
has been investigated in the literature of resource allocation
under the name of ``resource allocation problems under submodular constraints''
(Hochbaum \cite{Hoc07}, Ibaraki--Katoh \cite{IK88}, Katoh--Ibaraki \cite{KI98},
Katoh--Shioura--Ibaraki \cite{KSI13}).
The continuous relaxation approach for the case of discrete variables
is considered, e.g., by Hochbaum \cite{Hoc94} and Hochbaum--Hong \cite{HH95}.
A more recent paper by Moriguchi--Shioura--Tsuchimura \cite{MST11Mrelax}
discusses this approach in a more general context of M-convex function minimization
in discrete convex analysis.
It is known (\cite{HH95,MST11Mrelax}, \cite[Theorem 23]{KSI13})
that a convex quadratic function
$\sum a_{i} x_{i}\sp{2}$ 
in discrete variables can be minimized over an integral base-polyhedron
in strongly polynomial time
if the base-polyhedron has a special structure
like ``Nested'', ``Tree,'' or ``Network''
in the terminology of \cite{KSI13}.
\finbox
\end{remark}



\section{Min-max formulas for separable convex functions in DCA}
\label{SCdcaflowM2}

The objective of this section is to pave the way of DCA approach to 
discrete decreasing minimization on other discrete structures
such as the intersection of M-convex sets, network flows, submodular flows.
Min-max formulas for separable convex functions 
on the intersection of M-convex sets
and ordinary/submodular flows are presented.

In Section \ref{SCfencsepar} we have considered  the min-max formula
\begin{equation} \label{minmaxgensep-NI}
 \min \{  \sum_{s \in S} \varphi_{s} ( x(s) ) :  x \in \odotZ{B}  \}
 = \max\{ \hat p(\pi) - \sum_{s \in S} \psi_{s}(\pi(s)) :  \pi \in \ZZ\sp{S} \} 
\end{equation}
for a separable convex function on an M-convex set.
Here, $p$ is an integer-valued (fully) supermodular function on $S$,
$B$ is the base-polyhedron defined by $p$,
$\odotZ{B}$ is the set of integral points of $B$,  and 
$\hat p$ is the linear extension (Lov{\'a}sz extension) of $p$.
For each $s \in S$,
$\varphi_{s}: \ZZ \to \ZZ \cup \{ +\infty \}$
is an integer-valued (discrete) convex function 
and $\psi_{s}$ is the conjugate function of $\varphi_{s}$.
Furthermore, the sets of primal and dual optimal solutions 
of (\ref{minmaxgensep-NI}) are described in Section \ref{SCfencoptsetsepar}.
These results have been used for the DCA-based proofs of some key results 
on decreasing minimization on an M-convex set
in Sections \ref{SCderminmaxsquaresum}, \ref{SCsqsumoptdual}, and \ref{SCsqsumoptprimal}.

The min-max formula \eqref{minmaxSqSum-KM} for the square-sum has been obtained as 
a special case of (\ref{minmaxgensep-NI}) where the conjugate functions
can be given explicitly.
To emphasize  the role of explicit forms of conjugate functions,
we offer in Section \ref{SCexplicitconj} several examples of (discrete) convex functions that admit
explicit expressions of conjugate functions.
These worked-out examples of conjugate functions
and min-max formulas 
will hopefully trigger other applications of discrete convex analysis.

\subsection{Examples of explicit conjugate functions}
\label{SCexplicitconj}

In this section we offer several examples of (discrete) convex functions 
whose conjugate functions can be given explicitly.
An explicit representation of the conjugate function
renders an easily checkable certificate of optimality 
in the min-max formulas such as (\ref{minmaxgensep-NI}).

For an integer-valued discrete convex function
$\varphi: \ZZ \to \ZZ \cup \{ +\infty \}$,
we denote its conjugate function 
$\varphi\sp{\bullet}$ by $\psi$.
That is, function 
$\psi: \ZZ \to \ZZ \cup \{ +\infty \}$
is defined by
\begin{equation} \label{phiconjdef-NI}
 \psi(\ell)  = \max\{ k \ell  -  \varphi(k)  :  k \in \ZZ \}
\qquad
(\ell \in \ZZ).
\end{equation}
Obviously, we have 
\begin{equation}  \label{youngineq}
\varphi (k) + \psi ( \ell ) \geq k \ell 
\qquad (k,\ell \in \ZZ),
\end{equation}
which is known as the Fenchel--Young inequality,
and the equality holds in \eqref{youngineq}
if and only if
\begin{equation}  \label{youngsubgrad}
\varphi(k) - \varphi(k-1) \leq \ell \leq \varphi(k+1) - \varphi(k).
\end{equation}
It is worth noting that,  for $a, b, c \in \ZZ$, 
the conjugate of the function 
$\varphi_{a,b,c}(k) = \varphi (k-a) + bk + c$
is given by
$\psi(\ell - b) + a (\ell -b) -c$. With abuse of notation we express this as
\begin{equation} \label{phitransconj}
\big( \varphi (k-a) + bk + c \big)\sp{\bullet} = \psi(\ell - b) + a (\ell -b) -c.
\end{equation}

In what follows we demonstrate 
how to calculate the conjugate functions 
for piecewise-linear functions, $\ell_{1}$-distances, quadratic functions, 
power products, and exponential functions.

\subsubsection{Piecewise-linear functions}

Let $a$ be an integer.
For a piecewise-linear function $\varphi$ defined by
\begin{equation} \label{phipos-NI}
\varphi(k) = (k-a)\sp{+} = \max \{ 0, k-a\}
\qquad (k \in \ZZ),
\end{equation}
the conjugate function $\psi$ is given by
\begin{eqnarray} \label{psipos-NI}
\psi(\ell) &=&
   \left\{  \begin{array}{ll}
    0         &   (\ell = 0) ,     \\
    a        &   (\ell = 1) ,     \\
   +\infty     &   (\ell \not\in  \{ 0,1  \}) .     \\
                     \end{array}  \right.
\end{eqnarray}
This explicit form can be used in the DCA-based proof of Theorem \ref{THtotalexcess}; 
see Remark \ref{RMdcaprftotalex}.

For another piecewise-linear function $\varphi$ defined by
\begin{equation} \label{phipiecelin2-NI}
\varphi(k) = 
   \left\{  \begin{array}{ll}
    0         &   (0 \leq k \leq a) ,     \\
    \lambda (k - a)        &   (a \leq k \leq b) , \\
   +\infty     &   (k \leq -1 \mbox{ or } k \geq b+1)      \\
                     \end{array}  \right.
\end{equation}
for $a, b, \lambda \in \ZZ$ with  $0 \leq a \leq b$ and $\lambda \geq 0$,
the conjugate function $\psi$ is given by
\begin{eqnarray} \label{psipiecelin2-NI}
\psi(\ell) &=&
   \left\{  \begin{array}{ll}
    0         &   (\ell \leq 0) ,     \\
    a \ell         &   (0 \leq \ell \leq \lambda ) ,     \\
    b \ell - (b - a) \lambda   &   (\ell \geq \lambda ) .     \\
                     \end{array}  \right.
\end{eqnarray}

\subsubsection{$\ell_{1}$-distances}

Let $a$ be an integer.
For function $\varphi$ defined by
\begin{equation} \label{phi=dist1}
\varphi(k) = | k  - a | 
\qquad (k \in \ZZ),
\end{equation}
the conjugate function $\psi$ is given by
\begin{equation} \label{conj=dist1}
\psi( \ell ) =
\left\{
\begin{array}{ll}
a \ell  & (\ell=-1,0,+1) , \\
+\infty & (\mbox{otherwise}) .
\end{array}
\right.
\end{equation}

A min-max relation for the minimum $\ell_{1}$-distance between an integer point of $B$ 
and a given integer point $c$
can be obtained from the min-max formula (\ref{minmaxgensep-NI}). 
Recall that $p$ and $b$ are, respectively, the supermodular and submodular functions 
associated with $B$, and our convention
$\widetilde c(X) =  \sum\{c(s): s \in X \}$.

\begin{proposition} \label{PRminmaxL1cent}
For $c \in \ZZ\sp{S}$,
\begin{align} \label{minmaxL1cent}
 & \min \{ \sum_{s \in S} |x(s) - c(s)|  : x \in \odotZ{B}  \}
\nonumber \\
 &= \max\{ p(X) - b(Y) 
- \widetilde c(X) + \widetilde c(Y) 
   :   X, Y \subseteq S; \  X \cap Y = \emptyset \} .
\end{align}
\end{proposition}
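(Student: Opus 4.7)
The plan is to derive this as a direct specialization of the separable convex min-max formula (\ref{minmaxgensep-NI}) with $\varphi_s(k) = |k - c(s)|$ for each $s \in S$. Primal feasibility is automatic, since $\varphi_s$ is everywhere finite and $\odotZ{B}$ is nonempty, so the hypotheses of Theorem \ref{THminmaxgensep} hold.

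First I would invoke the conjugate computation \eqref{conj=dist1} together with the translation rule \eqref{phitransconj}: since $\varphi_s(k) = |k|$ shifted by $a = c(s)$, its conjugate is
\begin{equation*}
\psi_s(\ell) = \begin{cases} c(s)\,\ell & (\ell \in \{-1,0,+1\}), \\ +\infty & (\mbox{otherwise}). \end{cases}
\end{equation*}
Consequently the supremum on the right-hand side of \eqref{minmaxgensep-NI} can be restricted to $\pi \in \{-1,0,+1\}^{S}$, and every such $\pi$ is uniquely encoded by the disjoint pair
$X = \{s : \pi(s) = 1\}$, $Y = \{s : \pi(s) = -1\}$, i.e.\ $\pi = \chi_X - \chi_Y$. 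For this $\pi$ we immediately get $\sum_{s \in S} \psi_s(\pi(s)) = \widetilde c(X) - \widetilde c(Y)$.

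The main computation is to show
\begin{equation*}
\hat p(\chi_X - \chi_Y) = p(X) - b(Y) \qquad (X \cap Y = \emptyset),
\end{equation*}
after which the desired min-max identity follows by substitution. To establish this I would apply the definition \eqref{lovextdef} of the Lov\'asz extension: order $S$ so that the components of $\pi$ are nonincreasing, which places the elements of $X$ first (value $+1$), then those of $S \setminus (X \cup Y)$ (value $0$), then those of $Y$ (value $-1$). The differences $\pi(s_j) - \pi(s_{j+1})$ are zero except at the two transition indices $j = |X|$ and $j = |X| + |S \setminus (X \cup Y)|$, with $I_{|X|} = X$ and $I_{|X|+|S \setminus (X \cup Y)|} = S \setminus Y$, while the boundary term is $p(S)\pi(s_n) = -p(S)$. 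In the generic case this yields $\hat p(\pi) = -p(S) + p(X) + p(S \setminus Y)$, and using the standard base-polyhedron identity $p(S \setminus Y) = p(S) - b(Y)$ gives $p(X) - b(Y)$.

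The main obstacle — really a bookkeeping nuisance rather than a difficulty — is to verify the edge cases $X = \emptyset$, $Y = \emptyset$, or $S = X \cup Y$, where some of the ``transitions'' in the Lov\'asz extension collapse or the jump has size $2$ instead of $1$. Each of these is handled by direct inspection of \eqref{lovextdef} and yields the same formula $p(X) - b(Y)$ (with the conventions $p(\emptyset) = b(\emptyset) = 0$), so the identity is uniform in the combinatorial configuration. Substituting back, the right-hand side of \eqref{minmaxgensep-NI} becomes
\begin{equation*}
\max\{ p(X) - b(Y) - \widetilde c(X) + \widetilde c(Y) : X, Y \subseteq S, \ X \cap Y = \emptyset \},
\end{equation*}
which is exactly \eqref{minmaxL1cent}.
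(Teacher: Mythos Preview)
Your proposal is correct and follows essentially the same route as the paper: specialize \eqref{minmaxgensep-NI} with $\varphi_s(k)=|k-c(s)|$, use the conjugate \eqref{conj=dist1} to restrict to $\pi\in\{-1,0,+1\}^S$, write $\pi=\chi_X-\chi_Y$, and identify $\hat p(\pi)=p(X)-b(Y)$ and $\sum_s\psi_s(\pi(s))=\widetilde c(X)-\widetilde c(Y)$. The paper simply asserts the identity $\hat p(\chi_X-\chi_Y)=p(X)-b(Y)$ without justification, whereas you supply the Lov\'asz-extension computation and the edge-case check; this extra detail is fine and does not constitute a different approach.
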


\begin{proof}
We choose $\varphi_{s}(k) = | k  - c(s)|$ in (\ref{minmaxgensep-NI}).
By \eqref{conj=dist1}, 
we may assume 
$\pi \in \{ -1,0,+1 \}\sp{S}$
on the right-hand side of (\ref{minmaxgensep-NI}).
On representing $\pi = \chi_{X} - \chi_{Y}$ with disjoint subsets $X$ and $Y$,
we obtain $\hat p(\pi) = p(X) - b(Y)$ and 
$\sum_{s \in S} \psi_{s} (\pi(s)) = \widetilde c(X) - \widetilde c(Y)$.
Therefore the right-hand side of 
(\ref{minmaxgensep-NI}) coincides with that of (\ref{minmaxL1cent}).
\end{proof}

Let $a$ and $b$ be integers with $a \leq b$,
and define $\varphi$ by
\begin{equation} \label{phi=dist2}
\varphi (k)
 = \min \{ | k - z | : a \leq z \leq b \}
 = \max \{ a - k, 0, k - b  \}
\qquad (k \in \ZZ).
\end{equation}
This function represents the distance from an integer $k$ 
to the integer interval $[a,b]_{\ZZ} := \{ z \in \ZZ : a \leq z  \leq b \}$.
The conjugate function $\psi$ is given by
\begin{equation} \label{conj=dist2}
\psi( \ell ) =
\left\{
\begin{array}{ll}
- a  & (\ell=-1), \\
0   & (\ell=0) ,\\
  b   & (\ell=+1), \\
+\infty & (\mbox{otherwise}).
\end{array}
\right.
\end{equation}

A min-max relation for the minimum $\ell_{1}$-distance between an integer point of $B$ 
and a given integer interval
$[c,d]_{\ZZ} := \{ y \in \ZZ\sp{S} : c(s) \leq y(s)  \leq d(s) \ (s \in S) \}$
can be obtained from the min-max formula (\ref{minmaxgensep-NI}),
where $c, d \in \ZZ\sp{S}$ and $c \leq d$.

\begin{proposition} \label{PRminmax=dist2}
For $c, d \in \ZZ\sp{S}$ with $c \leq d$,
\begin{align} \label{minmaxL1intvl}
 & \min \{ \| x - y \|_{1} : x \in \odotZ{B}, \ y \in [c,d]_{\ZZ}  \}
\nonumber \\ &
= \max\{ p(X) - b(Y) 
- \widetilde d(X) + \widetilde c(Y) 
   :   X, Y \subseteq S; \  X \cap Y = \emptyset \} .
\end{align}
\end{proposition}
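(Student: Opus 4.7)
The plan is to reduce Proposition~\ref{PRminmax=dist2} directly to the Fenchel-type min-max formula \eqref{minmaxgensep-NI} of Theorem~\ref{THminmaxgensep}, mirroring the proof of Proposition~\ref{PRminmaxL1cent} but with the single-point center $c(s)$ replaced by the integer interval $[c(s),d(s)]_{\ZZ}$. Concretely, I would choose, for each $s \in S$,
\[
 \varphi_{s}(k) \;=\; \min\{\,|k-y| : c(s) \leq y \leq d(s)\,\} \;=\; \max\{\,c(s)-k,\ 0,\ k-d(s)\,\},
\]
which is the distance-to-interval function from \eqref{phi=dist2} (shifted so $a=c(s)$, $b=d(s)$).

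The first step is to identify the left-hand side of \eqref{minmaxgensep-NI} with that of \eqref{minmaxL1intvl}. For any fixed $x \in \odotZ{B}$, the inner minimization $\min_{y \in [c,d]_{\ZZ}} \|x-y\|_{1}$ separates componentwise, with each component minimized by the projection of $x(s)$ onto $[c(s),d(s)]_{\ZZ}$; hence the optimal value is exactly $\sum_{s} \varphi_{s}(x(s))$. Thus
\[
 \min\{\,\|x-y\|_{1} : x \in \odotZ{B},\ y \in [c,d]_{\ZZ}\,\}
 \;=\; \min\{\,\textstyle\sum_{s}\varphi_{s}(x(s)) : x \in \odotZ{B}\,\}.
\]
Primal feasibility of \eqref{minmaxgensep-NI} is immediate since $\dom\varphi_{s}=\ZZ$ for every $s$ (we also assume $\odotZ{B}\neq\emptyset$, else both sides are $+\infty$ trivially).

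The second step handles the right-hand side via the explicit conjugate given in \eqref{conj=dist2}: $\psi_{s}(-1) = -c(s)$, $\psi_{s}(0)=0$, $\psi_{s}(+1)=d(s)$, and $\psi_{s}(\ell) = +\infty$ otherwise. This restricts the maximizing $\pi$ to $\{-1,0,+1\}^{S}$, so I write $\pi = \chi_{X} - \chi_{Y}$ with disjoint $X,Y \subseteq S$. Then
\[
 \sum_{s\in S}\psi_{s}(\pi(s)) \;=\; \widetilde d(X) \;-\; \widetilde c(Y),
\]
and a routine evaluation of the Lovász extension \eqref{lovextdef} at the $\{-1,0,+1\}$-vector $\chi_{X}-\chi_{Y}$ yields $\hat p(\chi_{X}-\chi_{Y}) = p(X) + p(S\setminus Y) - p(S) = p(X) - b(Y)$, using the standard identity $b(Y) = p(S) - p(S\setminus Y)$ that links the supermodular and submodular descriptions of the same base-polyhedron. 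Substituting these into the right-hand side of \eqref{minmaxgensep-NI} produces exactly $\max\{p(X)-b(Y)-\widetilde d(X)+\widetilde c(Y) : X \cap Y = \emptyset\}$, which is the right-hand side of \eqref{minmaxL1intvl}.

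There is no real obstacle here; the proposition is essentially an exercise in specializing the Fenchel-type duality theorem to a piecewise-linear objective. The only step requiring any care is the bookkeeping for the Lovász extension at a signed $\{0,\pm 1\}$-indicator, but that is entirely analogous to the computation already used in the proof of Proposition~\ref{PRminmaxL1cent}.
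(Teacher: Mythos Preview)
Your proposal is correct and follows essentially the same approach as the paper's own proof: both specialize the Fenchel-type min-max formula \eqref{minmaxgensep-NI} with $\varphi_{s}$ taken to be the distance-to-interval function \eqref{phi=dist2}, use the explicit conjugate \eqref{conj=dist2} to restrict $\pi$ to $\{-1,0,+1\}^{S}$, and evaluate $\hat p(\chi_{X}-\chi_{Y})=p(X)-b(Y)$ and $\sum_{s}\psi_{s}(\pi(s))=\widetilde d(X)-\widetilde c(Y)$. Your write-up is in fact slightly more explicit than the paper's in justifying the identity $\hat p(\chi_{X}-\chi_{Y})=p(X)+p(S\setminus Y)-p(S)=p(X)-b(Y)$ via the Lov\'asz-extension formula and the complementarity relation between $p$ and $b$.
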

\begin{proof}
With reference to  \eqref{phi=dist2}, 
we define $\varphi_{s}(k) = \min \{ | k - z | : c(d) \leq z \leq d(s) \}$.
Then
\begin{align*}
& \min \{ \ \| x - y \|_{1} : x \in \odotZ{B}, \ y \in [c,d]_{\ZZ}  \}
\\ & = 
 \min \{ \ \sum_{s \in S} | x(s) - y(s) |  : x \in \odotZ{B}, \ y \in [c,d]_{\ZZ}  \}
\\ & = 
 \min \{ \ \min \{ \sum_{s \in S}  | x(s) - y(s) | : c(d) \leq y(s) \leq d(s) \ (s \in S) \} 
         : x \in \odotZ{B}  \}\\ & = 
 \min \{ \ \sum_{s \in S} 
      \min \{ \ | x(s) - y(s) | : c(d) \leq y(s) \leq d(s) \} : x \in \odotZ{B}  \}
\\ & = 
 \min \{ \ \sum_{s \in S} \varphi_{s}(x(s)) : x \in \odotZ{B}  \}.
\end{align*}
Thus the left-hand side of \eqref{minmaxL1intvl} is in the form of
the left-hand side of the  min-max formula (\ref{minmaxgensep-NI}).
By \eqref{conj=dist2}, 
we may assume 
$\pi \in \{ -1,0,+1 \}\sp{S}$
on the right-hand side of (\ref{minmaxgensep-NI}).
On representing $\pi = \chi_{X} - \chi_{Y}$ with disjoint subsets $X$ and $Y$,
we obtain $\hat p(\pi) = p(X) - b(Y)$ and 
$\sum_{s \in S} \psi_{s} (\pi(s))
= \widetilde d(X) - \widetilde c(Y)$.
Therefore the right-hand side of 
(\ref{minmaxgensep-NI}) coincides with that of (\ref{minmaxL1intvl}).
\end{proof}

\subsubsection{Quadratic functions}

For a quadratic function $\varphi$ defined by
\begin{equation} \label{phiquadgen}
\varphi(k)= a k\sp{2}
\qquad (k \in \ZZ)
\end{equation}
with a positive integer $a$, 
the conjugate function $\psi$
is given (cf., Remark \ref{RMsquareconjWtd}) by 
\begin{equation} \label{squareconjWtd1}
 \psi(\ell)  =
\left\lfloor \frac{1}{2} \left( \frac{\ell}{a} + 1 \right) \right\rfloor 
\left( 
 \ell - a \left\lfloor \frac{1}{2} \left( \frac{\ell}{a} + 1 \right) \right\rfloor 
\right), \ 
\end{equation}
which admits the following alternative expressions:
\begin{align}
 \psi(\ell) & =
\left\lceil \frac{1}{2} \left( \frac{\ell}{a} - 1 \right) \right\rceil 
\left( 
 \ell - a \left\lceil \frac{1}{2} \left( \frac{\ell}{a} - 1 \right) \right\rceil 
\right), \ 
\label{squareconjWtd2}
\\
 \psi(\ell)  
 &= \max \left\{  \  
\left\lfloor \frac{\ell}{2 a} \right\rfloor 
\left( 
 \ell - a \left\lfloor \frac{\ell}{2 a} \right\rfloor 
\right), \ \  
\left\lceil \frac{\ell}{2 a } \right\rceil  
\left( 
 \ell - a \left\lceil \frac{\ell}{2 a} \right\rceil 
\right) \ 
\right\}.
\label{squareconjWtd3}
\end{align}
If $a=1$, these expressions reduce to 
$\psi(\ell) = \left\lfloor {\ell}/{2} \right\rfloor \cdot \left\lceil {\ell}/{2} \right\rceil$
in \eqref{squareconj}.

The min-max formula (\ref{minmaxSqSum-KM}) for the square-sum
can be extended for a nonsymmetric quadratic function
$\sum_{s \in S} c(s) x(s)\sp{2}$,
where $c(s)$ is a positive integer for each $s \in S$.

\begin{theorem} \label{THminmaxSqSumWtd}
For an integer vector $c \in \ZZ\sp{S}$ with $c(s) \geq 1$ for every $s \in S$,
\begin{align}  
& \min \{ \sum_{s \in S} c(s) x(s)\sp{2} : x \in \odotZ{B}  \}
\notag \\ &
= \max\{ \hat p(\pi) 
 - \sum_{s \in S}
\left\lfloor \frac{1}{2} \left( \frac{\pi(s)}{c(s) } + 1 \right) \right\rfloor 
\left( 
 \pi(s) - c(s)  \left\lfloor \frac{1}{2} \left( \frac{\pi(s)}{c(s) } + 1 \right) \right\rfloor 
\right) :
\pi \in \ZZ\sp{S} \} .
\label{minmaxSqSumWtd}
\end{align}
\vspace{-1.5\baselineskip} \\
\finbox
\end{theorem}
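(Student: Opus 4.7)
}
The plan is to deduce the formula as a direct specialization of the Fenchel-type min-max formula
\eqref{minmaxgensep} in Theorem~\ref{THminmaxgensep}, applied to the separable convex function
$\Phi(x) = \sum_{s \in S} c(s) x(s)\sp{2}$ on the M-convex set $\odotZ{B}$.
With the choice $\varphi_{s}(k) = c(s) k\sp{2}$ for each $s \in S$, the objective on the left-hand side of
\eqref{minmaxgensep} coincides with the left-hand side of \eqref{minmaxSqSumWtd}.
Since $\varphi_{s}(k) < +\infty$ for every $k \in \ZZ$ and $\odotZ{B} \not= \emptyset$,
primal feasibility (condition (i) of Theorem~\ref{THminmaxgensep}) is automatic, so the min-max identity applies.
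All that remains is to identify the conjugate $\psi_{s}$ of $\varphi_{s}$ in closed form and substitute.

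The computational core is therefore the single-variable conjugate of $\varphi(k) = a k\sp{2}$
with $a \in \ZZ_{>0}$, namely the identity \eqref{squareconjWtd1}.
By definition,
\[
\psi(\ell) = \max \{ k \ell - a k\sp{2} : k \in \ZZ \}
\qquad (\ell \in \ZZ).
\]
The function $k \mapsto k \ell - a k\sp{2}$ is a strictly concave parabola in $k \in \RR$,
attaining its real maximum at $k = \ell/(2a)$.
Hence the integer maximum is attained at a nearest integer to $\ell/(2a)$,
which may be taken as
\[
k\sp{*} = \left\lfloor \frac{\ell}{2a} + \frac{1}{2} \right\rfloor
       = \left\lfloor \frac{1}{2}\left( \frac{\ell}{a} + 1 \right) \right\rfloor .
\]
Substituting this $k\sp{*}$ into $k\ell - a k\sp{2} = k(\ell - a k)$ yields exactly \eqref{squareconjWtd1};
the two alternative forms \eqref{squareconjWtd2} and \eqref{squareconjWtd3} are obtained by symmetry
(note that ties, which occur precisely when $\ell/a$ is an odd integer, give equal values at
$\lfloor \ell/(2a) \rfloor$ and $\lceil \ell/(2a) \rceil$, explaining the ``$\max$'' formulation).

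With the conjugate $\psi_{s}(\ell) = \left\lfloor \tfrac{1}{2}(\ell/c(s) + 1) \right\rfloor
\bigl( \ell - c(s) \left\lfloor \tfrac{1}{2}(\ell/c(s) + 1) \right\rfloor \bigr)$ in hand,
substituting into the right-hand side of \eqref{minmaxgensep} produces the right-hand side of
\eqref{minmaxSqSumWtd}, and the theorem follows.
There is no real obstacle here beyond the bookkeeping with floor functions:
the only subtle point is the tie-breaking in the location of the integer maximizer $k\sp{*}$,
but this does not affect the value of $\psi(\ell)$ and is already absorbed into
the equivalent expressions \eqref{squareconjWtd1}--\eqref{squareconjWtd3}.
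The unweighted case $c \equiv 1$ recovers \eqref{minmaxSqSum-KM} via $\psi(\ell) = \lfloor \ell/2 \rfloor \lceil \ell/2 \rceil$,
showing that Theorem~\ref{THminmaxSqSumWtd} is a genuine extension of \eqref{minmaxSqSum-KM}.
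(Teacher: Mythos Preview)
Your proposal is correct and follows essentially the same route as the paper: specialize Theorem~\ref{THminmaxgensep} with $\varphi_{s}(k)=c(s)k^{2}$, verify primal feasibility, and plug in the explicit conjugate \eqref{squareconjWtd1}. The only cosmetic difference is that the paper (in Remark~\ref{RMsquareconjWtd}) derives the conjugate via the discrete optimality condition $a(2k-1)\le \ell \le a(2k+1)$, whereas you argue directly from the real maximizer $\ell/(2a)$ of the concave parabola and round to the nearest integer; both arguments yield the same $k^{*}=\lfloor \tfrac{1}{2}(\ell/a+1)\rfloor$.
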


In the basic case where $c(s)=1$ for all $s \in S$,
we had a combinatorial constructive proof in Part~I \cite{FM18part1}.
Such a direct combinatorial proof,
not relying on the Fenchel-type discrete duality in DCA,
 for the general case of \eqref{minmaxSqSumWtd}
would be an interesting topic.

\begin{remark} \rm  \label{RMsquareconjWtd}
We derive \eqref{squareconjWtd1}, \eqref{squareconjWtd2}, and \eqref{squareconjWtd3}.
Since $\varphi$ is discrete convex, 
the maximum in the definition \eqref{phiconjdef-NI} of $\psi(\ell)$
is attained by $k$ satisfying 
\begin{equation} \label{squareconjWtdprf1}
 \varphi(k) - \varphi(k-1) \leq \ell \leq  \varphi(k+1) - \varphi(k) . 
\end{equation}
For $\varphi(k)= a k\sp{2}$ this condition reads
$a (2k-1) \leq \ell \leq  a (2k +1 )$, 
or equivalently
\[ 
 \frac{1}{2} \left( \frac{\ell}{a} - 1 \right)  \leq k \leq 
\frac{1}{2} \left( \frac{\ell}{a} + 1 \right) .
\] 
Therefore, the maximum in \eqref{phiconjdef-NI} is attained by 
$k= \left\lfloor \frac{1}{2} \left( \frac{\ell}{a} + 1 \right) \right\rfloor $
and also by
$k= \left\lceil \frac{1}{2} \left( \frac{\ell}{a} - 1 \right) \right\rceil $.
This gives \eqref{squareconjWtd1} and \eqref{squareconjWtd2}, respectively.

To derive \eqref{squareconjWtd3} 
we consider $\varphi(t)= a t\sp{2}$  in $t \in \RR$
and its derivative $\varphi'(t)= 2a t$.  
Let $k_{\ell}$ be the integer satisfying 
\begin{equation} \label{squareconjWtdprf4}
\varphi'(k_{\ell}) \leq \ell < \varphi'(k_{\ell} +1) .
\end{equation}
Then the maximum in the definition \eqref{phiconjdef-NI} of $\psi(\ell)$
is attained 
by $k=k_{\ell}$
if $\varphi'(k_{\ell}) = \ell$,
and otherwise by $k=k_{\ell}$ or $k_{\ell} +1$.
For $\varphi(k)= a k\sp{2}$,
we have
$k_{\ell}=\left\lfloor \frac{\ell}{2 a} \right\rfloor$
and 
the maximum is attained by
$k = \left\lfloor \frac{\ell}{2 a} \right\rfloor$
or 
$k = \left\lceil \frac{\ell}{2 a } \right\rceil$.  
Hence we have \eqref{squareconjWtd3}.
\finbox
\end{remark}

\subsubsection{Power products}

For function $\varphi$ defined by
\begin{equation} \label{phipower}
\varphi(k)= a \  k\sp{2b}
\qquad (k \in \ZZ)
\end{equation}
with positive integers $a$ and $b$, 
the conjugate function $\psi$
is given (cf., Remark \ref{RMpowerconj}) by 
\begin{align} \label{powerconj}
 \psi(\ell)  
 &= \max \left\{  \  
\ell
\left\lfloor K(\ell) \right\rfloor 
- a \left\lfloor K(\ell) \right\rfloor \sp{2b} 
, \ \ 
\ell
\left\lceil K(\ell) \right\rceil
- a \left\lceil K(\ell) \right\rceil \sp{2b} 
\right\},
\end{align}
where 
\[
 K(\ell) =  \left( \frac{\ell}{2 a b} \right)\sp{1/(2b-1)}.
\]
By choosing $a=1$ and $b=2$, for example, we 
obtain a min-max formula
\begin{align}  
& \min \{ \sum_{s \in S}  x(s)\sp{4} : x \in \odotZ{B}  \}
\notag \\ &
= \max\{ \hat p(\pi) 
 - \sum_{s \in S}
\max \left\{  \  
\pi(s)
\left\lfloor ( \pi(s)/4 )\sp{1/3} \right\rfloor 
-  \left\lfloor ( \pi(s)/4 )\sp{1/3} \right\rfloor \sp{4} 
, \ \
\right.
\notag \\ &
\phantom{AAAAAAAAAAAAAAA}
\left.
\pi(s)
\left\lceil ( \pi(s)/4 )\sp{1/3} \right\rceil
-  \left\lceil ( \pi(s)/4 )\sp{1/3} \right\rceil \sp{4} 
\right\}
: \pi \in \ZZ\sp{S} \} .
\label{minmaxQuartSum}
\end{align}

\begin{remark} \rm  \label{RMpowerconj}
We derive \eqref{powerconj} on the basis of \eqref{squareconjWtdprf4}
for $\varphi(t)= a \, t\sp{2b}$
and  $\varphi'(t)= 2ab \,  t\sp{2b-1}$.  
We have
$k_{\ell}=
\left\lfloor \left( \frac{\ell}{2 a b} \right)\sp{1/(2b-1)} \right\rfloor$,
and the maximum in \eqref{phiconjdef-NI} is attained by
$k = \left\lfloor \left( \frac{\ell}{2 a b} \right)\sp{1/(2b-1)} \right\rfloor$
or 
$k = \left\lceil \left( \frac{\ell}{2 a b} \right)\sp{1/(2b-1)} \right\rceil$.  
Hence follows \eqref{powerconj}.
\finbox
\end{remark}

\subsubsection{Exponential functions}

For an exponential function $\varphi$ defined by
\begin{equation} \label{phiexpon2}
\varphi(k)= 
\left\{
\begin{array}{ll}
2\sp{k}  & (k \geq 0) , \\
+\infty & (\mbox{otherwise}) ,
\end{array}
\right.
\end{equation}
the conjugate function $\psi$
is given (cf., Remark \ref{RMexponconj}) by 
\begin{equation} \label{expon2conj}
 \psi(\ell)  = \ell \left\lceil \log_{2} \ell \right\rceil
 - 2 \sp{ \lceil \log_{2}  \ell \rceil } .
\end{equation}
More generally, for function $\varphi$ defined by
\begin{equation} \label{phiexpongen}
\varphi(k)= 
\left\{
\begin{array}{ll}
a\, b\sp{k}  & (k \geq 0) , \\
+\infty & (\mbox{otherwise}) 
\end{array}
\right.
\end{equation}
with integers $a \geq 1$ and $b \geq 2$, 
the conjugate function $\psi$
is given (cf., Remark \ref{RMexponconj}) by 
\begin{equation} \label{exponconjgen}
 \psi(\ell)  = 
  \ell  \left\lceil \log_{b} \left( \frac{\ell}{a(b-1)} \right) \right\rceil
 - a\, b\sp{  \left\lceil \log_{b} \left( \frac{\ell}{a(b-1)} \right) \right\rceil  } .
\end{equation}

\begin{theorem} \label{THminmax=expon}
Assume that $B$ is contained in the nonnegative orthant $\ZZ_{+}\sp{S}$.
Then
\begin{align}  
& \min \{ \sum_{s \in S} 2\sp{x(s)} : x \in \odotZ{B}  \}
\notag \\ &
= \max\{ \hat p(\pi) 
 - \sum_{s \in S}
 \left(   \pi(s)  \left\lceil \log_{2}  \pi(s)  \right\rceil
 - 2 \sp{  \left\lceil \log_{2}  \pi(s) \right\rceil  } 
  \right)
: \pi \in \ZZ\sp{S} \} .
\label{minmax=expon2}
\end{align}
More generally, for an integer vector $c, d \in \ZZ\sp{S}$ with 
$c(s) \geq 1$, $d(s) \geq 2$ $(s \in S)$,
\begin{align}  
& \min \{ \sum_{s \in S} c(s) \, d(s)\sp{x(s)} : x \in \odotZ{B}  \}
\notag \\ &
= \max\{ \hat p(\pi) 
 - \sum_{s \in S} \left(
  \pi(s)  \left\lceil   K(\ell)  \right\rceil
 - c(s) \, d(s)\sp{  \left\lceil K(\ell) \right\rceil  } 
                  \right)
 : \pi \in \ZZ\sp{S} \} ,
\label{minmax=expongen}
\end{align}
where
\[
 K(\ell) =    \log_{d(s)} \left(  \frac{\pi(s)}{c(s)(d(s)-1)}   \right) .
\]
\vspace{-1.5\baselineskip} \\
\finbox
\end{theorem}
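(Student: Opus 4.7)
The plan is to derive both \eqref{minmax=expon2} and \eqref{minmax=expongen} as specializations of the Fenchel-type min-max formula \eqref{minmaxgensep} in Theorem \ref{THminmaxgensep}; since \eqref{minmax=expon2} is the case $c(s) = 1$, $d(s) = 2$ of \eqref{minmax=expongen}, I concentrate on the general statement. I would set
\[
\varphi_{s}(k) = \begin{cases} c(s)\, d(s)^{k} & (k \geq 0), \\ +\infty & (k \leq -1), \end{cases}
\]
so that the primal objective $\sum_{s \in S} c(s)\, d(s)^{x(s)}$ on $\odotZ{B}$ agrees with $\sum_{s \in S} \varphi_{s}(x(s))$.

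First, I would verify the hypotheses of Theorem \ref{THminmaxgensep}. Discrete convexity of each $\varphi_{s}$ reduces to the elementary identity
\[
\varphi_{s}(k-1) + \varphi_{s}(k+1) - 2\varphi_{s}(k) = c(s)\, d(s)^{k-1}(d(s) - 1)^{2} \geq 0 \qquad (k \geq 1),
\]
which holds since $d(s) \geq 2$. Primal feasibility is immediate from the hypothesis $B \subseteq \ZZ_{+}^{S}$: for every $x \in \odotZ{B}$ we have $x(s) \geq 0$, hence $\varphi_{s}(x(s)) < +\infty$ for all $s \in S$.

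Next, I would compute the conjugate $\psi_{s}(\ell) = \max\{k\ell - c(s)\, d(s)^{k} : k \in \ZZ,\ k \geq 0\}$ explicitly, following the same recipe used for the polynomial case in Remark \ref{RMpowerconj}. By the discrete subgradient characterization \eqref{youngsubgrad}, the smallest maximizer $k$ is determined by $\varphi_{s}(k) - \varphi_{s}(k-1) \leq \ell \leq \varphi_{s}(k+1) - \varphi_{s}(k)$, which for $k \geq 1$ reads $c(s)\, d(s)^{k-1}(d(s)-1) \leq \ell \leq c(s)\, d(s)^{k}(d(s)-1)$. Solving for $k$ gives
\[
k^{*}(\ell) = \left\lceil \log_{d(s)}\!\left( \frac{\ell}{c(s)(d(s) - 1)} \right) \right\rceil,
\]
and hence $\psi_{s}(\ell) = k^{*}(\ell)\, \ell - c(s)\, d(s)^{k^{*}(\ell)}$. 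Substituting this expression and the Lov\'asz extension $\hat p(\pi)$ into \eqref{minmaxgensep} yields precisely the right-hand side of \eqref{minmax=expongen}.

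The main obstacle is the careful handling of the dual domain. The expression $\lceil \log_{d(s)}(\,\cdot\,) \rceil$ is well-defined only when its argument is positive, whereas the formula in the theorem ranges over all $\pi \in \ZZ^{S}$; when $\pi(s) \leq c(s)(d(s)-1)/d(s)$ the true maximizer is $k = 0$ (a boundary effect caused by $\varphi_{s} = +\infty$ on $\ZZ_{<0}$), giving $\psi_{s}(\pi(s)) = -c(s)$, which is not captured by the displayed formula. To justify the statement as written, I would invoke Proposition \ref{PRminmaxgensepBaseOPT}\,(3): fixing any primal optimum $x^{*} \in \odotZ{B}$, the subgradient inequality \eqref{pisubgradBase} forces every dual optimum $\pi^{*}$ to satisfy $\pi^{*}(s) \geq c(s)(d(s)-1)\, d(s)^{x^{*}(s)-1}$ whenever $x^{*}(s) \geq 1$, with an analogous lower estimate handling the boundary case $x^{*}(s) = 0$. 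This restricts the search for a dual maximizer to the region where the displayed conjugate formula is valid, and the min-max identity in \eqref{minmax=expongen} follows from \eqref{minmaxgensep} on this region.
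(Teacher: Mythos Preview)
Your approach is exactly the paper's: the theorem is marked \finbox{} (no explicit proof), and the intended argument is simply to substitute the exponential $\varphi_{s}$ and its conjugate from \eqref{phiexpongen}--\eqref{exponconjgen} (derived in Remark~\ref{RMexponconj}) into the general min-max formula \eqref{minmaxgensep}. Your verification of discrete convexity and of primal feasibility via $B \subseteq \ZZ_{+}^{S}$ is precisely what is needed and is left implicit in the paper.

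You are more careful than the paper on the domain issue, and your concern is legitimate: the displayed conjugate formula requires the argument of the logarithm to be positive, and for small or nonpositive $\pi(s)$ the true conjugate is $\psi_{s}(\pi(s)) = -c(s)$ (attained at $k=0$), not the displayed expression. The paper simply does not address this; Remark~\ref{RMexponconj} tacitly assumes $\ell$ large enough that the ceiling lands in $\ZZ_{\geq 0}$. Your proposed fix via Proposition~\ref{PRminmaxgensepBaseOPT}\,(3) is the right idea but does not close the gap as stated: when $x^{*}(s)=0$ the lower subgradient bound in \eqref{pisubgradBase} is $\varphi_{s}(0)-\varphi_{s}(-1)=-\infty$, so there is no lower bound on $\pi^{*}(s)$, and a dual optimum can have $\pi^{*}(s)\leq 0$. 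The cleanest repair is not to restrict the dual variable but to regard \eqref{minmax=expongen} as shorthand for \eqref{minmaxgensep} with the understanding that the bracketed term equals $-c(s)$ whenever $\lceil K(\ell)\rceil \leq 0$; this is a defect of the theorem's phrasing rather than of your argument.
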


\begin{remark} \rm  \label{RMexponconj}
We derive \eqref{exponconjgen} on the basis of \eqref{squareconjWtdprf1}.
For $\varphi(k)= a \, b\sp{k}$, the condition \eqref{squareconjWtdprf1} reads
$ a (b-1) b\sp{k-1} \leq \ell \leq a (b-1) b\sp{k}$, 
or equivalently
\[ 
 k-1  \ \leq \ 
 \log_{b} \left( \frac{\ell}{a(b-1)} \right) 
 \ \leq \  k .
\] 
Therefore, the maximum in \eqref{phiconjdef-NI} is attained by 
$k=  \left\lceil \log_{b} \left( \frac{\ell}{a(b-1)} \right) \right\rceil$.
Hence follows \eqref{exponconjgen}.
By setting $a=1$ and $b=2$ in \eqref{exponconjgen},
we obtain \eqref{expon2conj}.
\finbox
\end{remark}


\subsection{Separable convex functions on the intersection of M-convex sets}
\label{SCfencseparInter}

The duality formula \eqref{minmaxgensep-NI} for separable convex functions
on an M-convex set
admits an extension to separable convex functions
on the intersection of two M-convex sets.
This extension serves as a basis of the study of decreasing-minimality 
in the intersection of two M-convex sets (integral base-polyhedra).

Let $B_{1}$ and $B_{2}$ be two integral base-polyhedra,
and $p_{1}$ and $p_{2}$ be the associated (integer-valued) supermodular functions. 
For $i=1,2$, the set of integer points of $B_{i}$ is denoted as $\odotZ{B_{i}}$,
and the linear extension (Lov{\'a}sz extension) of $p_{i}$ as $\hat p_{i}$.
For each $s \in S$,
let $\varphi_{s}: \ZZ \to \ZZ \cup \{ +\infty \}$
be an integer-valued discrete convex function.
As before we denote the conjugate function of $\varphi_{s}$ 
by $\psi_{s}: \ZZ \to \ZZ \cup \{  +\infty \}$,
which is defined by (\ref{phiconjdef}).
Recall notation
\ $\dom \Phi =  \{ x  \in \ZZ\sp{S} :  x(s) \in \dom \varphi_{s} \mbox{ for each } s \in S \}$.

The following theorem gives
a duality formula for separable discrete convex functions on 
the intersection of two M-convex sets. 
We introduce notations for feasible vectors:
\begin{align} 
 \mathcal{P}_{0} &= \{  x \in \odotZ{B_{1}} \cap \odotZ{B_{2}} :
 x(s) \in \dom \varphi_{s} \mbox{ for each } s \in S \} 
\ = \odotZ{B_{1}} \cap \odotZ{B_{2}} \cap \dom \Phi ,
\label{primalfeasInter}
\\
 \mathcal{D}_{0} &= \{  (\pi_{1}, \pi_{2}) \in \ZZ\sp{S} \times \ZZ\sp{S}:
\pi_{i} \in \dom \hat p_{i} \ (i=1,2), \
\pi_{1}(s) + \pi_{2}(s) \in \dom \psi_{s}  \mbox{ for each } s \in S \} .
\label{dualfeasInter}
\end{align}

\begin{theorem} \label{THminmaxgensepInter}
Assume that $\mathcal{P}_{0} \not= \emptyset$ 
(primal feasibility) or 
$\mathcal{D}_{0} \not= \emptyset$ 
(dual feasibility) holds.
Then we have the min-max relation:%
\footnote{
The unbounded case with both sides of (\ref{minmaxgensepInter})
being equal to $-\infty$ or $+\infty$ is also a possibility.
}
\begin{align} 
 & \min \{  \sum_{s \in S} \varphi_{s} ( x(s) ) : 
   x \in \odotZ{B_{1}} \cap \odotZ{B_{2}} \} 
\notag \\ &
= \max\{ \hat p_{1}(\pi_{1}) + \hat p_{2}(\pi_{2}) 
 - \sum_{s  \in S}  \psi_{s} (\pi_{1}(s) + \pi_{2}(s) ) :  \pi_{1}, \pi_{2} \in \ZZ\sp{S} \} .
\label{minmaxgensepInter}
\end{align}
\end{theorem}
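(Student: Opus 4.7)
The plan is to reduce the statement to Theorem \ref{THminmaxgensep} (the min-max formula on a single M-convex set) by applying the Fenchel-type discrete duality theorem (Theorem \ref{THmlfencdual}) to a carefully chosen pair $(f,h)$. Specifically, I would take
\[
 f(x) = \Phi(x) + \delta_1(x), \qquad h(x) = -\delta_2(x),
\]
where $\delta_i$ denotes the indicator function of $\odotZ{B_i}$ for $i=1,2$. Since $\delta_1$ is an M-convex function (being the indicator of an M-convex set) and $\Phi$ is separable convex, the sum $f = \Phi + \delta_1$ is M-convex, hence M$^\natural$-convex, as noted in Section \ref{SCseparDCA}; similarly $h$ is M-concave. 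Moreover $f(x)-h(x) = \Phi(x) + \delta_1(x) + \delta_2(x)$ equals $\Phi(x)$ when $x \in \odotZ{B_1} \cap \odotZ{B_2}$ and is $+\infty$ otherwise, so the left-hand side of the Fenchel identity \eqref{fencminmaxM} already matches the left-hand side of \eqref{minmaxgensepInter}.

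The next step is to identify the two conjugates. The concave conjugate of $h$ is immediate: $h^\circ(\tau) = \min\{\langle \tau, x\rangle : x \in \odotZ{B_2}\} = \hat p_2(\tau)$. For $f^\bullet$ I would write
\[
 f^\bullet(\tau) = \max\{\langle \tau, x\rangle - \Phi(x) : x \in \odotZ{B_1}\}
 = -\min\{\tilde\Phi(x) : x \in \odotZ{B_1}\},
\]
where $\tilde\Phi(x) = \sum_{s \in S} \tilde\varphi_s(x(s))$ with $\tilde\varphi_s(k) = \varphi_s(k) - \tau(s) k$. The inner minimum is itself a separable convex minimization over a single M-convex set, so Theorem \ref{THminmaxgensep} applies directly; combining it with the shift identity $\tilde\psi_s(\ell) = \psi_s(\ell + \tau(s))$ (a case of \eqref{phitransconj}) gives
\[
 f^\bullet(\tau) = -\max_{\pi_1 \in \ZZ\sp{S}}\Big\{\hat p_1(\pi_1) - \sum_{s \in S}\psi_s(\pi_1(s) + \tau(s))\Big\}.
\]
Substituting these two expressions into $\max_\tau\{h^\circ(\tau) - f^\bullet(\tau)\}$ and renaming $\tau = \pi_2$ produces precisely the right-hand side of \eqref{minmaxgensepInter}.

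The remaining task is to match the feasibility hypotheses of Theorem \ref{THmlfencdual}. One checks directly that $\dom f \cap \dom h = \odotZ{B_1} \cap \odotZ{B_2} \cap \dom \Phi = \mathcal{P}_0$, so primal feasibility in the sense of Theorem \ref{THminmaxgensepInter} matches that of Theorem \ref{THmlfencdual}. The more delicate point, which I expect to be the main obstacle, is identifying $\dom f\sp{\bullet} \cap \dom h\sp{\circ}$ with the projection of $\mathcal{D}_0$ onto its $\pi_2$-coordinate; this requires applying the single-set formula of Theorem \ref{THminmaxgensep} to read off when the inner minimum defining $f^\bullet(\tau)$ is finite, namely when some $\pi_1 \in \dom \hat p_1$ satisfies $\pi_1(s) + \tau(s) \in \dom \psi_s$ for every $s$. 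Once this correspondence is established, the hypotheses of Theorem \ref{THmlfencdual} hold under either of the two alternatives in the theorem statement, and the same reasoning handles the unbounded cases in which both sides equal $\pm\infty$.
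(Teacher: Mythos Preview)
Your proposal is correct and takes essentially the same approach as the paper: apply the Fenchel-type discrete duality (Theorem~\ref{THmlfencdual}) to an indicator/separable pair and evaluate the nontrivial conjugate via the single-set formula (Theorem~\ref{THminmaxgensep}). The only cosmetic difference is that the paper bundles $\Phi$ with $\delta_{2}$ (taking $f=\delta_{2}+\Phi$, $h=-\delta_{1}$) whereas you bundle it with $\delta_{1}$; by the symmetry of the statement in $B_{1}$ and $B_{2}$ this is immaterial, and the paper in fact omits the feasibility-matching discussion that you correctly flag as the residual detail.
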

\begin{proof}
We give a proof based on an iterative application of the Fenchel duality
theorem (Theorem \ref{THmlfencdual}),
while the weak duality ($\min \geq \max$) is demonstrated in Remark \ref{RMinterWeakInter}.

We denote the indicator functions 
of $\odotZ{B}_{1}$ and $\odotZ{B}_{2}$ 
by $\delta_{1}$ and $\delta_{2}$,
respectively,
and use the notation
$\Phi(x) = \sum [\varphi_{s}(x(s)):  s\in S]$. 
In the Fenchel-type discrete duality
\begin{equation} \label{fencminmaxMinter}
\min\{ f(x) - h(x)  :  x \in \ZZ\sp{S}  \}
 = \max\{ h\sp{\circ}(\pi) - f\sp{\bullet}(\pi)  :  \pi \in \ZZ\sp{S} \}
\end{equation}
in  (\ref{fencminmaxM}),
we choose $f = \delta_{2} + \Phi$ and $h= -\delta_{1}$.
Since $f - h = \Phi + \delta_{1} + \delta_{2}$,
the left-hand side of (\ref{fencminmaxMinter})
coincides with the left-hand side of (\ref{minmaxgensepInter}).

The conjugate function $f\sp{\bullet}$ can be computed as follows.
For $\pi \in \ZZ\sp{S}$ we define 
$\varphi_{s}\sp{\pi}(k) =  \varphi_{s}(k) - \pi(s) k $
for $k \in \ZZ$ and $s \in S$.
Then the conjugate function $\psi_{s}\sp{\pi}$ of function $\varphi_{s}\sp{\pi}$
is given as
\begin{align*} 
 \psi_{s}\sp{\pi}(\ell)  
&= \max\{ k \ell  -  \varphi_{s}\sp{\pi}(k)  :  k \in \ZZ \}
\\ &
= \max\{ k ( \ell + \pi(s) )  -  \varphi_{s}(k) :  k \in \ZZ \}
\\ &
= \psi_{s}(\ell + \pi(s) )   
\qquad (\ell \in \ZZ).
\end{align*}
Using this expression  and 
the min-max formula (\ref{minmaxgensep-NI}) 
for $B_{2}$ and $\varphi_{s}\sp{\pi}$,
we obtain
\begin{align}
 f\sp{\bullet}(\pi) 
 &= \max\{  \langle \pi, x \rangle - \delta_{2}(x) - \sum_{s \in S} \varphi_{s}(x(s)) :  x \in \ZZ\sp{S} \}
\nonumber \\ 
 &= \max\{  \sum_{s \in S} \big[ \pi(s) x(s) -  \varphi_{s}(x(s)) \big] - \delta_{2}(x):  x \in \ZZ\sp{S} \}
\nonumber \\ 
 &= - \min\{  \sum_{s \in S} \varphi_{s}\sp{\pi}(x(s)) :  x \in \odotZ{B_{2}} \}
\nonumber \\ 
 &= - \max\{ \hat p_{2}(\pi') - \sum_{s \in S} \psi_{s}\sp{\pi}(\pi'(s)) :  \pi' \in \ZZ\sp{S} \} 
\nonumber \\ 
 &= - \max\{ \hat p_{2}(\pi') - \sum_{s \in S} \psi_{s}(\pi(s) + \pi'(s)) :  \pi' \in \ZZ\sp{S} \} 
\qquad ( \pi \in \ZZ\sp{S}) .
\label{fconjInter}
\end{align}
On the other hand,  the conjugate function $h\sp{\circ}$ of $h= -\delta_{1}$ is
equal to $\hat p_{1}$ by (\ref{deltaBconj}), i.e.,
\begin{equation}
 h\sp{\circ}(\pi) =\hat p_{1}(\pi)
\qquad ( \pi \in \ZZ\sp{S}).
\label{deltaBconj2}
\end{equation}
The substitution of (\ref{fconjInter}) and (\ref{deltaBconj2})
into $h\sp{\circ} - f\sp{\bullet}$
shows that 
the right-hand side of (\ref{fencminmaxMinter})
coincides with the right-hand side of (\ref{minmaxgensepInter}).
\end{proof}

\begin{remark} \rm  \label{RMinterWeakInter}
The weak duality ($\min \geq \max$) in \eqref{minmaxgensepInter}  is shown here.
Let $x \in \mathcal{P}_{0}$ and  
$(\pi_{1},\pi_{2}) \in \mathcal{D}_{0}$.
Then, using 
the Fenchel--Young inequality \eqref{youngineq}
for $(\varphi_{s}, \psi_{s})$ 
 as well as \eqref{plovextmin} for $p=p_{i}$ $(i=1,2)$, we obtain
\begin{align}
& \sum_{s \in S} \varphi_{s} ( x(s) ) 
 -
  \left( \hat p_{1}(\pi_{1}) + \hat p_{2}(\pi_{2}) 
 - \sum_{s  \in S}  \psi_{s} (\pi_{1}(s) + \pi_{2}(s) ) 
  \right)
\notag  \\ &  
  =  \sum_{s \in S}   \big[ \varphi_{s} ( x(s) ) +  \psi_{s}(\pi_{1}(s) + \pi_{2}(s) ) \big] 
  \ - \hat p_{1}(\pi_{1}) - \hat p_{2}(\pi_{2})
\notag  \\ &  
\geq
  \sum_{s \in S}    x(s) (\pi_{1}(s) + \pi_{2}(s) )  
  \ - \hat p_{1}(\pi_{1}) - \hat p_{2}(\pi_{2})
\label{weakd1Inter} 
\\ &  
=
  \sum_{s \in S}  \pi_{1}(s) x(s) +   \sum_{s \in S}  \pi_{2}(s) x(s) 
  \ - \hat p_{1}(\pi_{1}) - \hat p_{2}(\pi_{2})
\notag \\ &  
\geq
 \min \{ \pi_{1} z : z \in \odotZ{B_1} \}
 + \min \{ \pi_{2} z : z \in \odotZ{B_2} \}
  \ - \hat p_{1}(\pi_{1}) - \hat p_{2}(\pi_{2})
\ =  0,
\label{weakd2Inter}
\end{align}
showing the weak duality.
The optimality conditions can be obtained as the conditions
for the inequalities in 
\eqref{weakd1Inter}
and \eqref{weakd2Inter}
to be equalities, as stated in Proposition \ref{PRminmaxgensepInterOPT} below.
\finbox
\end{remark}

In the min-max formula \eqref{minmaxgensepInter} 
we  denote the set of the minimizers $x$  by $\mathcal{P}$ 
and the set of the maximizers $(\pi_{1},\pi_{2})$ by $\mathcal{D}$.
The following proposition follows from
the combination of 
Theorem \ref{THminmaxgensepInter} and Remark \ref{RMinterWeakInter}.
We remark that this proposition is a special case of Theorem~\ref{THfencoptprimaldual}.

\begin{proposition} \label{PRminmaxgensepInterOPT}
Assume that both $\mathcal{P}_{0}$ and $\mathcal{D}_{0}$ 
in \eqref{primalfeasInter}--\eqref{dualfeasInter} are nonempty.


{\rm (1)}
Let $x \in \mathcal{P}_{0}$.
Then  $x \in \mathcal{P}$
if and only if 
there exists $(\pi_{1},\pi_{2}) \in \mathcal{D}_{0}$ such that
\begin{align}
 & 
\varphi_{s}(x(s)) - \varphi_{s}(x(s)-1) 
\leq \pi_{1}(s) + \pi_{2}(s) \leq 
\varphi_{s}(x(s)+1) - \varphi_{s}(x(s))
\qquad (s \in S),  
\label{pisubgradInter}
\\ 
 & \mbox{ 
  \rm $\pi_{1}(s) \geq \pi_{1}(t)$ 
  \quad  for every $(s,t)$ \ with \ $x + \chi_{s} - \chi_{t} \in \odotZ{B_{1}}$},
\label{pi1minzerInter}
\\
 & \mbox{ 
  \rm $\pi_{2}(s) \geq \pi_{2}(t)$ 
  \quad  for every $(s,t)$ \ with \ $x + \chi_{s} - \chi_{t} \in \odotZ{B_{2}}$}.
\label{pi2minzerInter}
\end{align}

{\rm (2)}
Let $(\pi_{1},\pi_{2}) \in \mathcal{D}_{0}$.
Then $(\pi_{1},\pi_{2}) \in \mathcal{D}$
if and only if 
there exists $x \in \mathcal{P}_{0}$ that satisfies
\eqref{pisubgradInter}, \eqref{pi1minzerInter}, and \eqref{pi2minzerInter}.

{\rm (3)}
For any $(\hat\pi_{1},\hat\pi_{2}) \in \mathcal{D}$, we have
\begin{align}  
\mathcal{P} &=  \{ x \in \mathcal{P}_{0}: 
\mbox{\rm \eqref{pisubgradInter}, \eqref{pi1minzerInter}, \eqref{pi2minzerInter}  
 hold with $(\pi_{1},\pi_{2}) = (\hat\pi_{1},\hat\pi_{2})$}  \} 
\label{primaloptsetInter1}
\\
& =  \{ x \in \dom \Phi:  
\mbox{\rm \eqref{pisubgradInter} holds with $(\pi_{1},\pi_{2}) = (\hat\pi_{1},\hat\pi_{2})$}  \} 
\notag \\  & \phantom{AA} 
 \cap 
\{ x \in \odotZ{B_{1}}:
 \mbox{\rm $x$ is a $\hat\pi_{1}$-minimizer in $\odotZ{B_{1}}$ } \} 
\notag \\  & \phantom{AA} 
 \cap 
\{ x \in \odotZ{B_{2}}:
 \mbox{\rm $x$ is a $\hat\pi_{2}$-minimizer in $\odotZ{B_{2}}$ } \} .
 \label{primaloptsetInter2}
\end{align}

{\rm (4)}
For any $\hat x \in \mathcal{P}$, we have
\begin{equation}  \label{dualoptsetInter}
\mathcal{D} =  \{ (\pi_{1},\pi_{2}) \in \mathcal{D}_{0} : 
\mbox{\rm \eqref{pisubgradInter}, \eqref{pi1minzerInter}, \eqref{pi2minzerInter} 
 hold with $x = \hat x$}  \} .
\end{equation}
\end{proposition}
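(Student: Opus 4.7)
The proof plan is to derive all four assertions from the equality analysis of the weak duality chain displayed in Remark~\ref{RMinterWeakInter}, combined with the strong duality already established in Theorem~\ref{THminmaxgensepInter}. The guiding observation is that, for $x \in \mathcal{P}_{0}$ and $(\pi_{1},\pi_{2}) \in \mathcal{D}_{0}$, the gap between the primal value and the dual value is the sum of two nonnegative quantities: the aggregate Fenchel--Young slack in \eqref{weakd1Inter}, and the two $\pi_{i}$-minimization slacks in \eqref{weakd2Inter}. Hence $x$ is primal optimal and $(\pi_{1},\pi_{2})$ is dual optimal simultaneously if and only if all three slacks vanish. Since $\mathcal{P}_{0},\mathcal{D}_{0}\neq \emptyset$ implies, by Theorem~\ref{THminmaxgensepInter}, that optima do exist on both sides and the common value is finite, the characterization of $\mathcal{P}$ reduces to pinning down the equality conditions slack by slack.

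The first slack, arising from \eqref{youngineq} applied to each pair $(\varphi_{s},\psi_{s})$, is zero if and only if $\varphi_{s}(x(s)) + \psi_{s}(\pi_{1}(s)+\pi_{2}(s)) = x(s)(\pi_{1}(s)+\pi_{2}(s))$ for every $s \in S$; by \eqref{youngsubgrad} this is precisely \eqref{pisubgradInter}. The remaining two slacks vanish if and only if $x$ is a $\pi_{i}$-minimizer in $\odotZ{B_{i}}$ for $i=1,2$. Here I would invoke the M-optimality criterion (Theorem~\ref{THmopt}) applied to the M-convex function $\delta_{i}(z) - \langle \pi_{i}, z\rangle$ on $\odotZ{B_{i}}$: a point $x \in \odotZ{B_{i}}$ minimizes $\langle \pi_{i},\cdot\rangle$ over $\odotZ{B_{i}}$ exactly when $\pi_{i}(s) - \pi_{i}(t) \leq 0$ for all pairs $(s,t)$ with $x+\chi_{s}-\chi_{t} \in \odotZ{B_{i}}$, i.e., \eqref{pi1minzerInter} for $i=1$ and \eqref{pi2minzerInter} for $i=2$. (Alternatively, this is the standard greedy/minimum-weight base certificate for base-polyhedra.)

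Assembling these three equivalences, part~(1) follows by two implications: if $x$ admits a dual witness $(\pi_{1},\pi_{2}) \in \mathcal{D}_{0}$ satisfying \eqref{pisubgradInter}--\eqref{pi2minzerInter}, then the gap is zero and weak duality forces $x \in \mathcal{P}$; conversely, if $x \in \mathcal{P}$, then any $(\pi_{1},\pi_{2}) \in \mathcal{D}$ (which exists by Theorem~\ref{THminmaxgensepInter}) must satisfy the three conditions, since the common zero gap forces each individual slack to vanish. Part~(2) is entirely symmetric, with the roles of $x$ and $(\pi_{1},\pi_{2})$ interchanged. For parts~(3) and~(4), fix $(\hat\pi_{1},\hat\pi_{2}) \in \mathcal{D}$ (resp.~$\hat x \in \mathcal{P}$); then the zero-gap reasoning shows that $x \in \mathcal{P}$ iff $x$ satisfies the three conditions against $(\hat\pi_{1},\hat\pi_{2})$, giving \eqref{primaloptsetInter1} and, by recognizing \eqref{pi1minzerInter} and \eqref{pi2minzerInter} as the $\hat\pi_{i}$-minimizer conditions on $\odotZ{B_{i}}$, the decomposition \eqref{primaloptsetInter2}; and analogously for \eqref{dualoptsetInter}.

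The step I expect to require the most care is the translation of the $\pi_{i}$-minimization equality into the local exchange condition \eqref{pi1minzerInter}--\eqref{pi2minzerInter}. The M-optimality criterion delivers this cleanly, but one must note that the linear function $-\langle \pi_{i},\cdot\rangle$ on the M-convex set $\odotZ{B_{i}}$ is M-convex (as the sum of an M-convex indicator and a separable linear term), so only the base-exchange condition \eqref{mnatfnlocmin1} is needed, and \eqref{mnatfnlocmin2}--\eqref{mnatfnlocmin3} can be dropped. Everything else is a bookkeeping of which slack corresponds to which condition.
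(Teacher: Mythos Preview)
Your approach is essentially identical to the paper's: analyze the slacks in the weak-duality chain \eqref{weakd1Inter}--\eqref{weakd2Inter}, identify the Fenchel--Young equality via \eqref{youngsubgrad} and the $\pi_i$-minimizer equality via the local exchange condition, then invoke Theorem~\ref{THminmaxgensepInter} for existence. One sign slip to correct: to minimize $\langle \pi_i,\cdot\rangle$ over $\odotZ{B_i}$ the relevant M-convex function is $\delta_i(z)+\langle \pi_i,z\rangle$ (not $-\langle\pi_i,z\rangle$), and the resulting local condition is $\pi_i(s)-\pi_i(t)\geq 0$ when $x+\chi_s-\chi_t\in\odotZ{B_i}$, which is indeed \eqref{pi1minzerInter}--\eqref{pi2minzerInter}.
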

\begin{proof}
The inequality \eqref{weakd1Inter}
turns into an equality 
if and only if,
for each $s \in S$, we have
$\varphi_{s} (k) + \psi_{s} ( \ell ) = k \ell$ 
for $k= x(s)$ and $\ell = \pi_{1}(s) + \pi_{2}(s)$.
The latter condition is equivalent to
\eqref{pisubgradInter} by \eqref{youngsubgrad}.
The other inequality \eqref{weakd2Inter}
turns into an equality 
if and only if
$x$ is a $\pi_{i}$-minimizer in $\odotZ{B_{i}}$
for $i=1,2$, 
that is, \eqref{pi1minzerInter} and \eqref{pi2minzerInter} hold.
Finally, we see from Theorem~\ref{THminmaxgensepInter} that
the two inequalities in \eqref{weakd1Inter} and  \eqref{weakd2Inter}
simultaneously turn into equality for some $x$ and $(\pi_{1},\pi_{2})$.
\end{proof}

\begin{proposition} \label{PRminmaxgensepoptsetInterLM}
In the min-max relation \eqref{minmaxgensepInter}
for a separable convex function on 
the intersection of two M-convex sets, the set 
\ $\mathcal{D}' := \{ (\pi_{1},-\pi_{2}) : (\pi_{1},\pi_{2}) \in \mathcal{D} \}$ \ 
corresponding to the maximizers is an L$\sp{\natural}$-convex set and 
the set $\mathcal{P}$ of the minimizers is an M$_{2}\sp{\natural}$-convex set.
\end{proposition}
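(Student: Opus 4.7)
The plan is to establish the two structural claims by the same strategy used in the proof of Proposition \ref{PRminmaxgensepoptsetLM}: exploit the explicit descriptions \eqref{primaloptsetInter2} and \eqref{dualoptsetInter} of the optimal sets provided by Proposition \ref{PRminmaxgensepInterOPT}, and recognize each building block as a standard object from DCA.

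For the M$_{2}\sp{\natural}$-convexity of $\mathcal{P}$, I would fix any $(\hat\pi_{1}, \hat\pi_{2}) \in \mathcal{D}$ and invoke the representation \eqref{primaloptsetInter2}. The first factor $\{x \in \dom \Phi :  \eqref{pisubgradInter} \text{ holds with } (\pi_{1},\pi_{2})=(\hat\pi_{1},\hat\pi_{2}) \}$ is, by discrete convexity of each $\varphi_{s}$, the set of integer points in an integral box and is therefore M$\sp{\natural}$-convex.  The $\hat\pi_{i}$-minimizer sets in $\odotZ{B_{i}}$ are faces of the integral base-polyhedra $B_{i}$ and hence M-convex.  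Intersecting the box with the $\hat\pi_{1}$-minimizer set yields again an M-convex set, since the sum of an M-convex (indicator) function and a separable convex (box-indicator) function is M-convex, as noted in Section \ref{SCseparDCA}.  Thus $\mathcal{P}$ is exhibited as the intersection of two M-convex sets and is therefore an M$_{2}\sp{\natural}$-convex set by definition.

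For the L$\sp{\natural}$-convexity of $\mathcal{D}'$, I would substitute $\tilde\pi_{2} = -\pi_{2}$ and re-express the dual objective in the new variables $(\pi_{1}, \tilde\pi_{2})$ as $\hat p_{1}(\pi_{1}) + \hat p_{2}(-\tilde\pi_{2}) - \sum_{s\in S} \psi_{s}(\pi_{1}(s) - \tilde\pi_{2}(s))$.  The term $\hat p_{1}(\pi_{1})$ is L$\sp{\natural}$-concave because it is the Lovász extension of a supermodular function, and $\hat p_{2}(-\tilde\pi_{2})$ is L$\sp{\natural}$-concave because L$\sp{\natural}$-concavity is preserved under the sign flip $\pi \mapsto -\pi$ (the operations $\lceil \cdot /2 \rceil$ and $\lfloor \cdot /2 \rfloor$ interchange under negation).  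Each summand of the coupling term, i.e.\ the function $(a,b) \mapsto \psi_{s}(a-b)$, is L$\sp{\natural}$-convex in the pair $(a,b)$: setting $u = \lceil (a_{1}+a_{2})/2 \rceil - \lceil (b_{1}+b_{2})/2 \rceil$ and $v = \lfloor (a_{1}+a_{2})/2 \rfloor - \lfloor (b_{1}+b_{2})/2 \rfloor$, one checks $u+v = (a_{1}-b_{1}) + (a_{2}-b_{2})$ and $|u-v| \leq 1$, so the convexity of $\psi_{s}$ delivers the discrete midpoint inequality.  Since L$\sp{\natural}$-convex functions are closed under summation, the full coupling term is L$\sp{\natural}$-convex in $(\pi_{1}, \tilde\pi_{2})$, and so the dual objective is L$\sp{\natural}$-concave in those variables.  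Its set of maximizers, which is precisely $\mathcal{D}'$, is therefore an L$\sp{\natural}$-convex set by \cite[Theorem 7.17]{Mdcasiam}.

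The main technical obstacle is the coupling term, which is the only ingredient that mixes the two coordinate blocks, and it also explains why the sign flip in the definition of $\mathcal{D}'$ is essential: in the original coordinates the corresponding term $\psi_{s}(\pi_{1}(s) + \pi_{2}(s))$ fails to be L$\sp{\natural}$-convex in $(\pi_{1}(s), \pi_{2}(s))$ because the behavior of $\lceil \cdot/2 \rceil$ and $\lfloor \cdot/2 \rfloor$ under addition of coordinates does not match discrete midpoint convexity, whereas the sign flip converts the sum into a difference, which is exactly the form for which the midpoint inequality goes through.
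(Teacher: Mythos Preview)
Your treatment of $\mathcal{P}$ matches the paper's: both use the representation \eqref{primaloptsetInter2}, note that the first factor is an integral box and each $\hat\pi_{i}$-minimizer set is a face of $B_{i}$ (hence M-convex), and conclude that $\mathcal{P}$ is the intersection of two M-convex sets.

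For $\mathcal{D}'$ your route is correct but differs from the paper's. The paper works directly from the inequality description \eqref{dualoptsetInter}: for a fixed $\hat x \in \mathcal{P}$, the conditions \eqref{pisubgradInter}--\eqref{pi2minzerInter}, rewritten in the variables $(\pi_{1},\tilde\pi_{2})$ with $\tilde\pi_{2}=-\pi_{2}$, are each of the form ``a difference of two components is bounded above/below by a constant,'' which is precisely the form that characterizes an L$\sp{\natural}$-convex set. You instead show that the dual objective is L$\sp{\natural}$-concave in $(\pi_{1},\tilde\pi_{2})$ by verifying discrete midpoint convexity of the coupling term $(a,b)\mapsto\psi_{s}(a-b)$, and then invoke \cite[Theorem 7.17]{Mdcasiam}. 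Your approach is somewhat longer but has the merit of making transparent \emph{why} the sign flip is necessary: $\psi_{s}(a-b)$ is L$\sp{\natural}$-convex in $(a,b)$ whereas $\psi_{s}(a+b)$ is not. The paper's approach is shorter because it leverages the optimality-condition description already established in Proposition~\ref{PRminmaxgensepInterOPT}, so no further analysis of the objective is needed.
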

\begin{proof}
The representation \eqref{dualoptsetInter}
shows that $\mathcal{D}$ is described by the inequalities in 
\eqref{pisubgradInter}, \eqref{pi1minzerInter}, and \eqref{pi2minzerInter}.
Hence $\mathcal{D}'$ is {\rm L}$\sp{\natural}$-convex.
In the representation \eqref{primaloptsetInter2} of $\mathcal{P}$, the first set
\ $\{ x \in \dom \Phi: \mbox{\rm \eqref{pisubgradInter} holds with 
$(\pi_{1},\pi_{2}) = (\hat\pi_{1},\hat\pi_{2})$}  \}$ \ 
is a 
box of integers (the set of integers in an integral box),
while for each $i=1,2$, the set of $\hat\pi_{i}$-minimizers
in $\odotZ{B_{i}}$  is an {\rm M}-convex set.
Therefore, $\mathcal{P}$ is an M$_{2}\sp{\natural}$-convex set.
\end{proof}

When specialized to a symmetric function, 
the min-max formula (\ref{minmaxgensepInter}) is simplified to 
\begin{align} 
 & \min \{  \sum_{s \in S} \varphi ( x(s) ) : 
   x \in \odotZ{B_{1}} \cap \odotZ{B_{2}} \} 
\notag \\ &
= \max\{ \hat p_{1}(\pi_{1}) + \hat p_{2}(\pi_{2}) 
 - \sum_{s  \in S}  \psi (\pi_{1}(s) + \pi_{2}(s) ) :  \pi_{1}, \pi_{2} \in \ZZ\sp{S} \} ,
\label{minmaxsymsepInter}
\end{align}
where $\varphi: \ZZ \to \ZZ \cup \{ +\infty \}$
is any integer-valued discrete convex function
and $\psi: \ZZ \to \ZZ \cup \{ +\infty \}$
is the conjugate of $\varphi$.
The identity (\ref{minmaxsymsepInter}) will play a key role
in the study of discrete decreasing minimization on the intersection of two M-convex sets,
just as  (\ref{minmaxsymsep}) did for an M-convex set.

As an example of (\ref{minmaxsymsepInter}) 
with explicit forms of $\varphi$ and $\psi$,
we mention a min-max formula
for the minimum square-sum of components on the intersection of two M-convex sets,
which is an extension of (\ref{minmaxSqSum-KM}) for an M-convex set.

\begin{theorem} \label{THintersquareminmax}
\begin{align} 
& \min \{ \sum_{s \in S} x(s)\sp{2} : x \in \odotZ{B_{1}} \cap \odotZ{B_{2}} \}
\notag \\ &
= \max\{ \hat p_{1}(\pi_{1}) + \hat p_{2}(\pi_{2}) -
 \sum_{s \in S}  \left\lfloor \frac{\pi_{1}(s) + \pi_{2}(s)}{2} \right\rfloor 
               \cdot \left\lceil \frac{\pi_{1}(s) + \pi_{2}(s)}{2} \right\rceil  
 : \pi_{1}, \pi_{2} \in \ZZ\sp{S} \} .
\label{minmaxintersquare}
\end{align}
\end{theorem}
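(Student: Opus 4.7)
The plan is to deduce Theorem~\ref{THintersquareminmax} as a direct specialization of the symmetric-function min-max formula \eqref{minmaxsymsepInter} for the intersection of two M-convex sets, applied to the discrete convex function $\varphi(k) = k^2$ on $\ZZ$.

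First I would verify that $\varphi(k) = k^2$ is a (strictly) integer-valued discrete convex function with $\dom \varphi = \ZZ$, so in particular $\dom \Phi = \ZZ^{S}$. This makes the primal feasibility condition $\mathcal{P}_{0} = \odotZ{B_{1}} \cap \odotZ{B_{2}} \cap \dom \Phi = \odotZ{B_{1}} \cap \odotZ{B_{2}}$; when the intersection is empty the identity holds trivially as $+\infty = +\infty$ (both sides are unbounded), and otherwise primal feasibility is satisfied so \eqref{minmaxsymsepInter} applies.

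Next I would compute the conjugate of $\varphi(k) = k^2$. This was already carried out in \eqref{squareconj} in Section~\ref{SCderminmaxsquaresum}: since $k\ell - k^2$ is a concave quadratic in $k \in \ZZ$ attaining its maximum at the two nearest integers to $\ell/2$, namely $\lfloor \ell/2 \rfloor$ and $\lceil \ell/2 \rceil$, the conjugate is
\begin{equation*}
\psi(\ell) = \max\{ k\ell - k^{2} : k \in \ZZ \} = \left\lfloor \frac{\ell}{2} \right\rfloor \cdot \left\lceil \frac{\ell}{2} \right\rceil \qquad (\ell \in \ZZ).
\end{equation*}
In particular, $\dom \psi = \ZZ$, so evaluating $\psi$ at the integer vector $\pi_{1} + \pi_{2}$ presents no issue.

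Finally, substituting $\varphi(k) = k^{2}$ and the above expression for $\psi$ into \eqref{minmaxsymsepInter} gives
\begin{align*}
& \min \{ \sum_{s \in S} x(s)^{2} : x \in \odotZ{B_{1}} \cap \odotZ{B_{2}} \} \\
&= \max \{ \hat p_{1}(\pi_{1}) + \hat p_{2}(\pi_{2}) - \sum_{s \in S} \left\lfloor \frac{\pi_{1}(s)+\pi_{2}(s)}{2} \right\rfloor \cdot \left\lceil \frac{\pi_{1}(s)+\pi_{2}(s)}{2} \right\rceil : \pi_{1}, \pi_{2} \in \ZZ^{S} \},
\end{align*}
which is exactly \eqref{minmaxintersquare}. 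The main thing to check, beyond the substitution, is the feasibility hypothesis of Theorem~\ref{THminmaxgensepInter} underlying \eqref{minmaxsymsepInter}; this is the only step where any care is needed, and it reduces to the trivial observation that $\dom \varphi = \dom \psi = \ZZ$ for the square function. No obstacle of substance is expected.
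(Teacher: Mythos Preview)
Your proposal is correct and matches the paper's own proof essentially verbatim: the paper simply states that this is a special case of \eqref{minmaxsymsepInter} with $\varphi(k)=k^{2}$ and $\psi(\ell)=\lfloor \ell/2\rfloor\cdot\lceil \ell/2\rceil$, citing \eqref{squareconj} for the conjugate. Your additional remarks on feasibility and the empty-intersection case are reasonable elaborations but not required for the argument.
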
 
\begin{proof}
This is a special case of (\ref{minmaxsymsepInter}) with
$\varphi(k) = k \sp{2}$ and 
$\psi(\ell) = \left\lfloor {\ell}/{2} \right\rfloor 
               \cdot \left\lceil {\ell}/{2} \right\rceil$
(cf., (\ref{squareconj})).
\end{proof}

If $\odotZ{B_{1}} \cap \odotZ{B_{2}} \not= \emptyset$,
both sides of (\ref{minmaxintersquare}) are finite-valued, and the minimum and the maximum are attained.
If $\odotZ{B_{1}} \cap \odotZ{B_{2}} = \emptyset$,
the left-hand side of (\ref{minmaxintersquare}) is equal to $+\infty$ by convention
and the right-hand side is unbounded above (hence equal to $+\infty$). 
Note also that
$\odotZ{B_{1}} \cap \odotZ{B_{2}} \not= \emptyset$ if and only if $B_{1} \cap B_{2} \not= \emptyset$.

We can also formulate a min-max formula for a nonsymmetric quadratic function
$\sum_{s \in S} c(s) x(s)\sp{2}$,
where $c(s)$ is a positive integer for each $s \in S$.
On recalling the conjugate function in (\ref{squareconjWtd1}),
we obtain the following min-max formula.

\begin{theorem} \label{THminmaxSqSumWtdInter}
For an integer vector $c \in \ZZ\sp{S}$ with $c(s) \geq 1$ for all $s \in S$,
\begin{align} 
& \min \{ \sum_{s \in S} c(s) x(s)\sp{2} : x \in \odotZ{B_{1}} \cap \odotZ{B_{2}} \}
\notag \\ &
= \max\{ \hat p_{1}(\pi_{1}) + \hat p_{2}(\pi_{2})
 - \sum_{s \in S}
\left\lfloor \frac{1}{2} \left( \frac{\pi(s)}{c(s) } + 1 \right) \right\rfloor 
\left( 
 \pi(s)
 - c(s)  \left\lfloor \frac{1}{2} \left( \frac{\pi(s)}{c(s) } + 1 \right) \right\rfloor 
\right) :
\notag \\ & 
\phantom{ \max\{ \quad}
\pi = \pi_{1}+ \pi_{2}, \ \ 
\pi_{1}, \pi_{2} \in \ZZ\sp{S} \} .
\label{minmaxintersquareW}
\end{align}
\vspace{-1.5\baselineskip} \\
\finbox
\end{theorem}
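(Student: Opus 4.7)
The plan is to deduce Theorem \ref{THminmaxSqSumWtdInter} from the general min-max formula \eqref{minmaxgensepInter} of Theorem \ref{THminmaxgensepInter} by choosing the separable convex objective
\[
\Phi(x) = \sum_{s \in S} \varphi_{s}(x(s)) \quad \text{with} \quad \varphi_{s}(k) = c(s)\, k^{2}.
\]
Since each $c(s) \geq 1$ is a positive integer, every $\varphi_{s}$ is an integer-valued, strictly convex function on $\dom \varphi_{s} = \ZZ$, so $\Phi$ fits the hypotheses of Theorem \ref{THminmaxgensepInter}. The left-hand side of \eqref{minmaxgensepInter} then becomes exactly $\min \{ \sum_{s \in S} c(s)\, x(s)^{2} : x \in \odotZ{B_{1}} \cap \odotZ{B_{2}} \}$, which matches the left-hand side of \eqref{minmaxintersquareW}.

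The substantive step is to transcribe the conjugate $\psi_{s} = \varphi_{s}^{\bullet}$. The computation has essentially been carried out already in Remark \ref{RMsquareconjWtd}: applying the optimality condition \eqref{squareconjWtdprf1} to $\varphi_{s}(k) = c(s)\, k^{2}$ gives $c(s)(2k-1) \leq \ell \leq c(s)(2k+1)$, whose integer solution is $k = \left\lfloor \tfrac{1}{2}(\tfrac{\ell}{c(s)} + 1) \right\rfloor$, so that
\[
\psi_{s}(\ell) \;=\; \left\lfloor \tfrac{1}{2}\!\left(\tfrac{\ell}{c(s)} + 1\right) \right\rfloor \left( \ell - c(s) \left\lfloor \tfrac{1}{2}\!\left(\tfrac{\ell}{c(s)} + 1\right) \right\rfloor \right)
\]
exactly as in \eqref{squareconjWtd1}. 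Since $\varphi_{s}$ depends on $s$ only through the weight $c(s)$, nothing new is needed relative to the unweighted case; the conjugation is purely coordinatewise. Substituting $\varphi_{s}$ and this $\psi_{s}$ into \eqref{minmaxgensepInter}, and writing $\pi(s) := \pi_{1}(s) + \pi_{2}(s)$ in the argument of $\psi_{s}$, produces precisely the right-hand side of \eqref{minmaxintersquareW}.

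It remains to address feasibility. Because $\dom \varphi_{s} = \ZZ$ for every $s$, the primal feasibility condition $\mathcal{P}_{0} \neq \emptyset$ in Theorem \ref{THminmaxgensepInter} is equivalent to $\odotZ{B_{1}} \cap \odotZ{B_{2}} \neq \emptyset$; dual feasibility holds trivially since $\veczero \in \mathcal{D}_{0}$. When $\odotZ{B_{1}} \cap \odotZ{B_{2}} \neq \emptyset$, both sides of \eqref{minmaxintersquareW} are finite integers and the minimum and maximum are attained; when $\odotZ{B_{1}} \cap \odotZ{B_{2}} = \emptyset$, the left side is $+\infty$ by convention and the unbounded case of Theorem \ref{THminmaxgensepInter} supplies $+\infty$ on the right. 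No real obstacle is anticipated: the whole argument is a direct specialization of Theorem \ref{THminmaxgensepInter} together with the explicit conjugate already recorded in Remark \ref{RMsquareconjWtd}, so the only mild care is in managing the $\pm\infty$ convention in the infeasible case.
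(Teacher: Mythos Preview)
Your proposal is correct and matches the paper's approach: the paper presents Theorem~\ref{THminmaxSqSumWtdInter} as an immediate consequence of Theorem~\ref{THminmaxgensepInter} obtained by substituting $\varphi_{s}(k)=c(s)k^{2}$ and the conjugate \eqref{squareconjWtd1}, exactly as you do. Your added discussion of primal and dual feasibility (in particular the observation $(\veczero,\veczero)\in\mathcal{D}_{0}$) is a welcome elaboration that the paper leaves implicit.
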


We have obtained the min-max formulas \eqref{minmaxintersquare} and \eqref{minmaxintersquareW} 
as special cases of the Fenchel-type discrete duality in DCA.
Direct algorithmic proofs, not relying on the DCA machinery,
would be an interesting research topic.


\subsection{Separable convex functions on network flows}
\label{SCmodflow}

Let $D=(V,A)$ be a digraph, and
suppose that we are given a finite integer-valued function $m$ on $V$ for which
$\widetilde m(V)=0$.  
A {\bf flow} means simply a function on $A$,
and we are interested in flow
$x$ that satisfies
\begin{equation} \label{modFflowdemand} 
  \varrho _x(v)-\delta _x(v)=m(v) 
\qquad \mbox{\rm for each node $v\in V$},
\end{equation}
where
\[
\varrho _x(v):= \sum [x(uv):uv\in A],
\qquad
\delta_x(v):= \sum [x(vu):  vu\in A].
\]
A flow $x$ satisfying \eqref{modFflowdemand} 
will be referred to as an {\bf $m$-flow}.

We consider a convex cost integer flow problem.
For each edge $e \in A$, 
an integer-valued (discrete) convex function
$\varphi_{e}: \ZZ \to \ZZ \cup \{ +\infty \}$
is given,
and we seek an integral flow $x$ that minimizes the sum of the edge costs
$\Phi(x) =  \sum_{e \in A} \varphi_{e} ( x(e) )$
subject to the constraint \eqref{modFflowdemand}. 
For the function value $\Phi(x)$ to be finite, we must have
\begin{equation} \label{modFdomphi1}
 x(e) \in \dom \varphi_{e} \quad \mbox{ for each edge $e \in A$},
\end{equation}
and therefore,
capacity constraints, if any, can be represented (implicitly)
in terms of the cost function $\varphi_{e}$.
A flow $x$ is called {\bf feasible}
if it satisfies the conditions \eqref{modFflowdemand} and \eqref{modFdomphi1}. 

\medskip

\noindent
\qquad {\bf Convex cost flow problem (1)}:%
\begin{eqnarray}
\mbox{Minimize \ } & & 
 \Phi(x) =  \sum_{e \in A} \varphi_{e} ( x(e) )
\label{modFprimalobj1} \\
 \mbox{subject to \ } 
  &  &  \varrho _x(v)-\delta _x(v)=m(v)  \qquad (v \in V), 
 \label{modFflowdemand1} \\
  &  &  x(e) \in \ZZ
     \qquad (e \in A) .
 \label{modFflowint1} 
\end{eqnarray}

\medskip

The dual problem, in its integer version, is as follows
(cf., e.g., \cite{AHO03}, \cite{Iri69book}, \cite{Mdcasiam}, \cite{Roc84}).
For each $e \in A$,  let
$\psi_{e}: \ZZ \to \ZZ \cup \{ +\infty \}$
denote the conjugate of $\varphi_{e}$,
that is,
\begin{equation} \label{modFphiconjdef}
 \psi_{e}(\ell)  = \max\{ k \ell  -  \varphi_{e}(k)  :  k \in \ZZ \}
\qquad
(\ell \in \ZZ),
\end{equation}
which is also an integer-valued (discrete) convex function.
This function $\psi_{e}$ represents the (dual) cost function associated with edge $e \in A$.
The decision variable in the dual problem is  an integer-valued potential $\pi: V \to \ZZ$
defined on the node-set $V$.
Recall the notation $\pi m = \sum_{v \in V}  \pi(v) m(v)$.

\medskip

\noindent
\qquad {\bf Dual to the convex cost flow problem (1)}:%
\begin{eqnarray}
\mbox{Maximize \ } & & 
 \Psi(\pi) = \pi m
  -  \sum_{e =uv \in A}    \psi_{e} ( \pi(v) - \pi(u) ) 
\label{modFdualobj1} \\
 \mbox{subject to \ } 
  &  &  \pi(v) \in \ZZ
     \qquad (v \in V) .
\label{modFpotentialint1} 
\end{eqnarray}

We introduce notations for feasible flows and potentials:
\begin{align} 
 \mathcal{P}_{0} &= \{  x \in \ZZ\sp{A} :
 \mbox{\rm $x$ satisfies \eqref{modFdomphi1} and  \eqref{modFflowdemand1}} \} ,
\label{primalfeasmodF1}
\\
 \mathcal{D}_{0} &= \{  \pi \in \ZZ\sp{V}:   
 \pi(v) - \pi(u) \in \dom \psi_{e} \ \mbox{\rm for each  } \  e=uv \in A
 \} .
\label{dualfeasmodF1}
\end{align}

\begin{theorem} \label{THminmaxmodF1}
Assume primal feasibility ($\mathcal{P}_{0} \not= \emptyset$) 
 or dual feasibility ($\mathcal{D}_{0} \not= \emptyset$).
Then we have the min-max relation:
\begin{equation} \label{modFminmax1}
 \min \{  \Phi(x) : 
 x \in \ZZ\sp{A} \ \mbox{\rm \  satisfies \eqref{modFflowdemand1}}   \}  
= \max\{ \Psi(\pi) :  \pi \in \ZZ\sp{V} \} .
\end{equation}
The unbounded case with both sides 
being equal to $-\infty$ or $+\infty$ is also a possibility.
\finbox
\end{theorem}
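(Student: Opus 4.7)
The plan is to first establish weak duality by direct computation using the Fenchel--Young inequality, and then to establish strong duality via a reduction to the Fenchel-type discrete duality in DCA.

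For weak duality, fix any $x \in \ZZ\sp{A}$ satisfying \eqref{modFflowdemand1} with $x(e) \in \dom \varphi_{e}$ for all $e$, and any $\pi \in \ZZ\sp{V}$ with $\pi(v)-\pi(u) \in \dom \psi_{e}$ for all $e = uv$. The Fenchel--Young inequality \eqref{youngineq} applied to each edge $e = uv$ with $k = x(e)$ and $\ell = \pi(v) - \pi(u)$ gives $\varphi_{e}(x(e)) + \psi_{e}(\pi(v) - \pi(u)) \geq x(e)\,(\pi(v) - \pi(u))$. Summing over $e \in A$, the right-hand side telescopes by the flow conservation law:
\[
 \sum_{e = uv \in A} x(e)\,(\pi(v) - \pi(u))
 = \sum_{v \in V} \pi(v)\,(\varrho_{x}(v) - \delta_{x}(v))
 = \sum_{v \in V} \pi(v)\, m(v) = \pi m.
\]
Rearranging yields $\Phi(x) \geq \Psi(\pi)$, and hence $\min \geq \max$ in \eqref{modFminmax1}.

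For strong duality, the idea is to realize the set of $m$-flows as the intersection of two M$\sp{\natural}$-convex sets on the ground set $A$ and to apply the intersection min-max formula \eqref{minmaxgensepInter}. The natural reduction uses a bipartite expansion of the digraph: introduce two copies $V\sp{-}$ and $V\sp{+}$ of $V$ and view each $e = uv \in A$ as a bipartite edge from $u\sp{-}$ to $v\sp{+}$. For each $v$, let $\delta\sp{+}(v)$ and $\delta\sp{-}(v)$ denote the outgoing and incoming stars; the families $\{\delta\sp{+}(v)\}_{v \in V}$ and $\{\delta\sp{-}(v)\}_{v \in V}$ are both partitions of $A$. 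Define g-polymatroids (M$\sp{\natural}$-convex sets) $B_{1}, B_{2} \subseteq \ZZ\sp{A}$ by letting $B_{1}$ be the free-sum g-polymatroid whose star sums $\sum_{e \in \delta\sp{+}(v)} x(e)$ are unconstrained (and similarly for $B_{2}$ with $\delta\sp{-}$), and then cut them down by the coupling $\sum_{e \in \delta\sp{-}(v)} x(e) - \sum_{e \in \delta\sp{+}(v)} x(e) = m(v)$. Equivalently, one can express the set $F_{m}$ of $m$-flows as $\odotZ{B_{1}} \cap \odotZ{B_{2}}$ where each $B_{i}$ is a product (over $v \in V$) of one-dimensional free polyhedra attached to a star; the key point is that the outgoing-star partition and the incoming-star partition yield two genuine M$\sp{\natural}$-convex structures whose intersection is $F_{m}$.

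Applying the intersection min-max from Theorem \ref{THminmaxgensepInter} (extended to g-polymatroids via Section \ref{SCgpolymKM}) to the separable convex function $\Phi$ on $\ZZ\sp{A}$ produces dual variables $\pi_{1}, \pi_{2} \in \ZZ\sp{A}$. Because $B_{1}$ has product structure over the outgoing stars $\delta\sp{+}(v)$, its support function (the Lov\'asz extension of the associated supermodular function) is linear on each star, forcing any maximizer $\pi_{1}$ to be constant on $\delta\sp{+}(v)$; symmetrically $\pi_{2}$ is constant on $\delta\sp{-}(v)$. Collecting these constants gives a single node potential $\pi \in \ZZ\sp{V}$ with $\pi_{1}(e) + \pi_{2}(e) = \pi(v) - \pi(u)$ for $e = uv$ (up to the sign convention of the construction), and telescoping the contributions $\hat p_{1}(\pi_{1}) + \hat p_{2}(\pi_{2})$ collapses them to $\pi m$. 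Substituting into \eqref{minmaxgensepInter} recovers the formula $\max_{\pi \in \ZZ\sp{V}} \Psi(\pi)$ and finishes the proof; the unbounded $\pm\infty$ cases follow from the corresponding conventions in Theorem \ref{THminmaxgensepInter}.

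The main obstacle will be cleanly setting up the two M$\sp{\natural}$-convex structures $B_{1}, B_{2}$ so that their integer intersection really equals $F_{m}$ without capacity assumptions (since $\dom \varphi_{e}$ may be all of $\ZZ$), which forces the use of g-polymatroids rather than base-polyhedra and requires careful handling of unbounded directions. A second subtle point is verifying that the feasibility hypothesis of \eqref{modFminmax1} ($\mathcal{P}_{0} \neq \emptyset$ or $\mathcal{D}_{0} \neq \emptyset$) transfers to the primal/dual feasibility needed by Theorem \ref{THminmaxgensepInter}, and that the reduction of $(\pi_{1},\pi_{2})$ to a single node potential does not lose any dual optima.
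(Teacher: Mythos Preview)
Your weak duality argument is clean and correct.

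The strong duality argument, however, has a genuine gap that you yourself flag but do not resolve. The set $F_{m}$ of integral $m$-flows on a general digraph is \emph{not} the intersection of two M$^{\natural}$-convex sets (g-polymatroids) on the arc set $A$. A g-polymatroid is described by inequalities of the form $p(X)\le \widetilde x(X)\le b(X)$ for subsets $X\subseteq A$, so its intersection with another g-polymatroid is still cut out by constraints on coordinate \emph{sums}. But already for the $3$-cycle with $m=\bm{0}$ the circulation space is the line $\{(t,t,t):t\in\ZZ\}$, defined by the \emph{difference} constraints $x_{1}=x_{2}=x_{3}$, which cannot be obtained this way. Your bipartite picture (splitting each node $v$ into $v^{-},v^{+}$) does capture out-degrees and in-degrees separately, but the flow constraint only fixes their difference $\varrho_{x}(v)-\delta_{x}(v)=m(v)$, not each individually; any pair $B_{1},B_{2}$ constraining out-stars and in-stars separately will cut out a strictly smaller set than $F_{m}$. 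The decomposition works for genuine bipartite transportation problems, and it could be salvaged for general digraphs by \emph{adding} new arcs $v^{+}\!\to v^{-}$ with appropriate constraints (node splitting), but then the ground set changes and one must check that the extra arcs can be eliminated from both the primal and the dual without loss.

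The paper itself does not prove this theorem; it is stated with a reference to the classical literature (Rockafellar's monotropic programming, Iri, and \cite{Mdcasiam}). Within the DCA framework of the paper, the intended route is not via Theorem~\ref{THminmaxgensepInter} but via the submodular flow duality (Theorem~\ref{THminmaxsbmF1}): ordinary $m$-flows are the special case where the base-polyhedron on $V$ degenerates to the single point $\{m\}$, so that $\hat p(\pi)=\pi m$, and the submodular flow min-max formula is in turn equivalent to the Fenchel-type duality of Theorem~\ref{THmlfencdual} (see \cite[Section~9.1.4]{Mdcasiam}). If you want a self-contained DCA proof, that is the reduction to pursue.
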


As an example of \eqref{modFminmax1} 
with explicit forms of $\Phi$ and $\Psi$,
we mention a min-max formula
for the minimum square-sum of components of an integral $m$-flow,
where no capacity constrains are imposed.

\begin{proposition} \label{PRflowsquareminmax}
For a digraph $D=(V,A)$ and 
 an integer vector $m$ on $V$ with $\widetilde m(V)=0$, we have
\begin{align} 
& \min \{ \sum_{e \in A} x(e)\sp{2} : 
  \mbox{\rm $x$ is an integral $m$-flow}  \}
\notag \\ &
= \max\{ \sum_{v \in V} \pi(v) m(v) -
 \sum_{uv \in A}  \left\lfloor \frac{ \pi(v) - \pi(u) }{2} \right\rfloor 
               \cdot \left\lceil \frac{ \pi(v) - \pi(u) }{2} \right\rceil  
 : \pi \in \ZZ\sp{V} \} .
\label{minmaxflowsquare}
\end{align}
\end{proposition}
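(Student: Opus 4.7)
My plan is to derive Proposition \ref{PRflowsquareminmax} as a direct specialization of the general convex cost flow duality in Theorem \ref{THminmaxmodF1}, by setting $\varphi_e(k) = k^2$ for every edge $e \in A$ and exploiting the explicit conjugate $\psi_e(\ell) = \lfloor \ell/2 \rfloor \cdot \lceil \ell/2 \rceil$ already computed in \eqref{squareconj}. Since the structure of this proof mirrors exactly how \eqref{minmaxSqSum-KM} was obtained from \eqref{minmaxsymsep} in Section \ref{SCderminmaxsquaresum}, no new DCA machinery is needed.

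The first step is to note that for $\varphi_e(k) = k^2$ we have $\dom \varphi_e = \ZZ$, so the primal domain condition \eqref{modFdomphi1} is vacuous, and the conjugate $\psi_e$ satisfies $\dom \psi_e = \ZZ$ as well. This guarantees that $\pi \equiv 0$ lies in $\mathcal{D}_0$, so the dual feasibility hypothesis of Theorem \ref{THminmaxmodF1} is automatically satisfied, and the min-max relation \eqref{modFminmax1} holds unconditionally (the two sides being finite simultaneously, or both equal to $+\infty$). The second step is routine substitution: plug $\psi_e(\ell) = \lfloor \ell/2 \rfloor \cdot \lceil \ell/2 \rceil$ into the formula
\begin{equation*}
\Psi(\pi) = \pi m - \sum_{e=uv \in A} \psi_e(\pi(v) - \pi(u))
\end{equation*}
to recognize the right-hand side of \eqref{minmaxflowsquare}, and observe that the primal objective $\Phi(x) = \sum_{e \in A} x(e)^2$ with the constraint \eqref{modFflowdemand} ranging over integral flows is exactly the left-hand side.

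The one point that deserves a careful check, and is the only potential obstacle, is the unbounded case. If no integral $m$-flow exists (which may happen even under the assumption $\widetilde m(V) = 0$ when the digraph is disconnected and $\widetilde m$ fails to vanish on some connected component), then the primal minimum is $+\infty$ by convention. One must then confirm that the dual supremum is also $+\infty$, so the identity \eqref{minmaxflowsquare} remains valid. This follows from choosing $\pi$ to be a constant multiple of the indicator of a connected component $C$ on which $\widetilde m(C) \neq 0$: every edge $uv$ stays within a component, so $\pi(v) - \pi(u) = 0$ on all edges (giving zero contribution from the $\psi_e$ terms), while $\pi m = \lambda \widetilde m(C)$ can be driven to $+\infty$ by suitable choice of scalar $\lambda$. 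Hence the dual is unbounded above, matching the primal.

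I do not expect any genuine technical obstacle, since Theorem \ref{THminmaxmodF1} already supplies the duality machinery and \eqref{squareconj} supplies the required conjugate in closed form. The only item demanding mild care is the bookkeeping for the infeasible case described above; everything else is a transparent substitution.
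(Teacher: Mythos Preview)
Your proposal is correct and takes essentially the same approach as the paper: the paper's proof is a one-liner stating that this is a special case of \eqref{modFminmax1} with $\varphi_e(k)=k^2$ and $\psi_e(\ell)=\lfloor\ell/2\rfloor\lceil\ell/2\rceil$ from \eqref{squareconj}. Your additional care in verifying dual feasibility via $\pi\equiv 0$ and in handling the primal-infeasible case explicitly goes beyond what the paper records, but is entirely in the same spirit.
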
 

\begin{proof}
This is a special case of (\ref{modFminmax1}) with
$\varphi_{e}(k) = k \sp{2}$ and 
$\psi_{e}(\ell) = \left\lfloor {\ell}/{2} \right\rfloor 
               \cdot \left\lceil {\ell}/{2} \right\rceil$
(cf., (\ref{squareconj})).
\end{proof}

In using this min-max relation in Part~III \cite{FM18part3}
it is convenient to introduce capacity constraints explicitly.
We denote 
the integer-valued lower and upper bound functions 
on $A$ by $f$ and $g$, for which $f\leq g$ is assumed,
and impose the capacity constraint
\begin{equation} \label{modFcap1} 
  f(e) \leq x(e) \leq g(e) 
\qquad \mbox{\rm for each edge $e \in A$} .
\end{equation}
With this explicit form of capacity constraints, 
a flow $x$ is called {\bf feasible}
if it satisfies the conditions 
\eqref{modFflowdemand},
\eqref{modFdomphi1} and \eqref{modFcap1}. 
The primal problem reads as follows.

\medskip

\noindent
\qquad {\bf Convex cost flow problem (2)}:%
\begin{eqnarray}
\mbox{Minimize \ } & & 
 \Phi(x) =  \sum_{e \in A} \varphi_{e} ( x(e) )
\label{modFprimalobj2} \\
 \mbox{subject to \ } 
  &  &  \varrho _x(v)-\delta _x(v)=m(v)  \qquad (v \in V), 
 \label{modFflowdemand2} \\
  & &  f(e) \leq x(e) \leq g(e)  \qquad (e \in A),
 \label{modFflowcapconst2} \\
  &  &  x(e) \in \ZZ
     \qquad (e \in A) .
 \label{modFflowint2} 
\end{eqnarray}

\medskip

The corresponding dual problem can be given as follows
(cf., Remark \ref{RMmodFdualderivation}),
where the decision variables consist of
an integer-valued potential $\pi: V \to \ZZ$ on $V$
and integer-valued functions $\tau_{1}, \tau_{2}: A \to \ZZ$ on $A$.
The constraint \eqref{modFpotdifftension2} below says that 
the tension (potential difference) is split into two parts $\tau_{1}$ and $\tau_{2}$.

\medskip

\noindent
\qquad {\bf Dual to the convex cost flow problem (2)}:%
\begin{eqnarray}
\mbox{Maximize \ } & & 
 \Psi(\pi,\tau_{1}, \tau_{2}) = \pi m
  -  \sum_{e \in A}  \bigg( \  
      \psi_{e} ( \tau_{1}(e) ) + \max \{ f(e) \tau_{2}(e) , \ g(e) \tau_{2}(e) \}
           \  \bigg)
 \label{modFdualobj2} \\
 \mbox{subject to \ } 
  &  &  \pi(v) - \pi(u) = \tau_{1}(e) + \tau_{2}(e) 
  \qquad (e  = uv \in A),
 \label{modFpotdifftension2} \\
  &  &  \pi(v) \in \ZZ
     \qquad (v \in V) ,
 \label{modFpotentialint2}  \\
  &  &  \tau_{1}(e), \tau_{2}(e) \in \ZZ
     \qquad (e \in A) .
 \label{modFtensionint2} 
\end{eqnarray}

We introduce notations for feasible flows and potentials/tensions:
\begin{align} 
 \mathcal{P}_{0} &= \{  x \in \ZZ\sp{A} :
 \mbox{\rm $x$ satisfies \eqref{modFdomphi1}, \eqref{modFflowdemand2}, \eqref{modFflowcapconst2} }
 \} ,
\label{primalfeasmodF2}
\\
 \mathcal{D}_{0} &= \{  (\pi, \tau_{1}, \tau_{2}) \in \ZZ\sp{V} \times \ZZ\sp{A}  \times \ZZ\sp{A}:  
 \eqref{modFpotdifftension2},   \  
 \tau_{1}(e) \in \dom \psi_{e} \ \mbox{\rm for each  } \  e \in A
 \} .
\label{dualfeasmodF2}
\end{align}

\begin{theorem} \label{THminmaxmodF2}
Assume primal feasibility ($\mathcal{P}_{0} \not= \emptyset$) 
 or dual feasibility ($\mathcal{D}_{0} \not= \emptyset$).
Then we have the min-max relation:
\begin{align}
& \min \{  \Phi(x) : 
 x \in \ZZ\sp{A} \ \mbox{\rm \  satisfies 
\eqref{modFflowdemand2} and \eqref{modFflowcapconst2}} \}  
\notag \\ 
 &= \max\{ \Psi(\pi, \tau_{1}, \tau_{2}) : 
\pi \in \ZZ\sp{V} \ \mbox{\rm and }  \tau_{1},  \tau_{2}  \in \ZZ\sp{A} \ 
\mbox{\rm \  satisfy \eqref{modFpotdifftension2}}  \} .
\label{modFminmax2}
\end{align}
The unbounded case with both sides 
being equal to $-\infty$ or $+\infty$ is also a possibility.
\finbox
\end{theorem}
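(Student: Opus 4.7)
The plan is to reduce Theorem \ref{THminmaxmodF2} to the already established Theorem \ref{THminmaxmodF1} by absorbing the capacity constraints into the edge cost functions. For each $e\in A$, define
\[
 \tilde\varphi_{e}(k) =
   \begin{cases} \varphi_{e}(k) & (f(e)\leq k \leq g(e)), \\ +\infty & (\text{otherwise}), \end{cases}
\]
which is again an integer-valued discrete convex function on $\ZZ$. The unconstrained convex cost flow problem (Problem (1)) with cost $\tilde\Phi(x)=\sum_{e\in A}\tilde\varphi_{e}(x(e))$ then coincides with Problem (2), since the $+\infty$ penalty enforces $f(e)\leq x(e)\leq g(e)$ exactly. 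So the left-hand side of \eqref{modFminmax2} equals the left-hand side of the min-max \eqref{modFminmax1} associated with $\tilde\varphi_{e}$.

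The key computation is then to identify the conjugate $\tilde\psi_{e}$ of $\tilde\varphi_{e}$. Writing $\tilde\varphi_{e}=\varphi_{e}+\iota_{e}$, where $\iota_{e}$ is the indicator function of the integer interval $[f(e),g(e)]_{\ZZ}$, one immediately gets
\[
 \iota_{e}\sp{\bullet}(\ell) = \max\{f(e)\ell,\ g(e)\ell\}.
\]
I would then invoke the univariate integer Fenchel infimal convolution identity
\[
 \tilde\psi_{e}(\ell) = \min\{ \psi_{e}(\ell_{1}) + \iota_{e}\sp{\bullet}(\ell_{2})  :  \ell_{1}+\ell_{2}=\ell,\ \ell_{1},\ell_{2}\in\ZZ\},
\]
which is the conjugacy rule for sums of one-dimensional integer-valued discrete convex functions (the dimension-one specialization of the Fenchel-type discrete duality already in the paper). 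Substituting this expression into the dual formula of Theorem \ref{THminmaxmodF1} applied to $\tilde\varphi_{e}$, and reading $\tau_{1}(e):=\ell_{1}$, $\tau_{2}(e):=\ell_{2}$ on each edge, gives precisely $\Psi(\pi,\tau_{1},\tau_{2})$ from \eqref{modFdualobj2} subject to $\tau_{1}(e)+\tau_{2}(e)=\pi(v)-\pi(u)$, i.e.\ \eqref{modFpotdifftension2}. Hence the right-hand side of \eqref{modFminmax2} equals the right-hand side of \eqref{modFminmax1} for $\tilde\varphi_{e}$.

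To close the argument I must check that the hypothesis of Theorem \ref{THminmaxmodF1} is met by $\tilde\varphi_{e}$ whenever the hypothesis of Theorem \ref{THminmaxmodF2} is met. Primal feasibility transfers verbatim, since the feasible $x$ for Problem (2) is exactly an $x$ with finite $\tilde\Phi(x)$ satisfying \eqref{modFflowdemand2}. For dual feasibility, given $(\pi,\tau_{1},\tau_{2})\in\mathcal{D}_{0}$ of Problem (2), we have $\tau_{1}(e)\in\dom\psi_{e}$ and $\iota_{e}\sp{\bullet}$ is finite everywhere, so $\tilde\psi_{e}(\pi(v)-\pi(u))$ is finite; that is, $\pi$ is a dual-feasible potential for the unconstrained modified problem. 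The unbounded $\pm\infty$ regimes carry over by the same substitution.

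The main technical hurdle is the integer infimal convolution identity for $\tilde\psi_{e}$. In continuous Fenchel theory this is routine, but in the integer setting one must confirm that the infimum is attained at integer $(\ell_{1},\ell_{2})$. This follows because $\dom\iota_{e}$ is a bounded integer interval, which forces $\dom\iota_{e}\sp{\bullet}=\ZZ$ and makes the minimization in $\ell_{1}$ a discrete convex minimization over $\ZZ$ with bounded-below integer-valued objective; alternatively, one verifies it directly from the Fenchel--Young inequalities \eqref{youngineq}--\eqref{youngsubgrad} for $\varphi_{e}$ and $\iota_{e}$. Once this one-dimensional identity is in hand, the rest of the proof is bookkeeping.
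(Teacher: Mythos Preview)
Your proposal is correct and follows essentially the same approach as the paper, which derives the dual problem in Remark~\ref{RMmodFdualderivation} by setting $\tilde\varphi_{e}=\varphi_{e}+\delta_{e}$ (with $\delta_{e}$ the indicator of $[f(e),g(e)]_{\ZZ}$) and applying Theorem~\ref{THminmaxmodF1}. For the infimal-convolution identity $\tilde\psi_{e}=\psi_{e}\,\Box\,\delta_{e}^{\bullet}$ the paper simply cites \cite[Theorem~8.36]{Mdcasiam}, whereas you sketch a direct verification; otherwise the arguments coincide, including your explicit check that primal/dual feasibility transfers to the modified problem.
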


As an example of \eqref{modFminmax2} 
with explicit forms of $\Phi$ and $\Psi$,
we can obtain the capacitated version of 
the min-max formula \eqref{minmaxflowsquare}
in Proposition \ref{PRflowsquareminmax}.

\begin{proposition} \label{PRflowsquareminmax3}
For a digraph $D=(V,A)$, 
an integer vector $m$ on $V$ with $\widetilde m(V)=0$, 
and integer-valued functions $f$ and $g$ on $A$ with $f\leq g$,
we have
\begin{align} 
& \min \{ \sum_{e \in A} x(e)\sp{2} : 
  \mbox{\rm $x$ is an integral $m$-flow satisfying $f \leq x \leq g$}  \}
\notag \\ &
= \max\{ \sum_{v \in V} \pi(v) m(v) -
 \sum_{e \in A} \left(  \left\lfloor \frac{ \tau_{1}(e) }{2} \right\rfloor 
               \cdot \left\lceil \frac{ \tau_{1}(e) }{2} \right\rceil  
 +  \max \{ f(e) \tau_{2}(e) , \ g(e) \tau_{2}(e) \} \right)
\notag \\ & \phantom{=\max\ }
 :  \pi(v) - \pi(u) = \tau_{1}(e) + \tau_{2}(e) \  (e=uv), \ 
   \pi \in \ZZ\sp{V}, \     \tau_{1}, \tau_{2} \in \ZZ\sp{A}
 \} .
\label{minmaxflowsquare3}
\end{align}
\end{proposition}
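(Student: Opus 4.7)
The plan is to specialize Theorem \ref{THminmaxmodF2} to the symmetric cost function $\varphi_e(k) = k^2$ for every edge $e \in A$, while keeping the capacity constraint $f(e) \leq x(e) \leq g(e)$ in its explicit form \eqref{modFflowcapconst2} rather than absorbing it into the effective domain. With this choice $\dom \varphi_e = \ZZ$, so the primal domain constraint \eqref{modFdomphi1} is vacuous, and the primal side of \eqref{modFminmax2} reduces exactly to the left-hand side of \eqref{minmaxflowsquare3}.

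For the dual side, I would first compute the univariate conjugate $\psi_e$ of $\varphi_e(k) = k^2$. This is precisely the calculation already carried out in \eqref{squareconj}, yielding
\[
\psi_e(\ell) = \left\lfloor \frac{\ell}{2} \right\rfloor \cdot \left\lceil \frac{\ell}{2} \right\rceil \qquad (\ell \in \ZZ),
\]
with $\dom \psi_e = \ZZ$. Consequently, the dual feasibility set $\mathcal{D}_0$ in \eqref{dualfeasmodF2} consists of all triples $(\pi,\tau_1,\tau_2)$ satisfying only the tension-splitting identity \eqref{modFpotdifftension2}, and this set is trivially nonempty (take $\pi \equiv 0$, $\tau_1 \equiv 0$, $\tau_2 \equiv 0$). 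The dual-feasibility hypothesis of Theorem \ref{THminmaxmodF2} is therefore satisfied unconditionally, so the theorem applies, and substituting the explicit formula for $\psi_e$ into the dual objective \eqref{modFdualobj2} produces exactly the right-hand side of \eqref{minmaxflowsquare3}.

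The one point I would handle carefully is the unbounded case. If no integral $m$-flow with $f \leq x \leq g$ exists, the primal minimum is $+\infty$ by convention, and one has to verify that the dual supremum is then also $+\infty$. This should follow from Hoffman's circulation theorem: a violated cut $W \subseteq V$ yields a characteristic potential $\pi = \chi_W$ and a suitable choice of $\tau_1, \tau_2$ along which the $\tau_2$-linear term on the right-hand side of \eqref{minmaxflowsquare3} can be driven to $-\infty$, making the dual objective tend to $+\infty$. When a feasible flow exists, both sides are finite integers and the min and max are attained by the integrality assertion in Theorem \ref{THminmaxmodF2}. Since Theorem \ref{THminmaxmodF2} is itself a packaged form of the Fenchel-type discrete duality of Section \ref{SCfencml}, the only genuinely computational step is the conjugate identity \eqref{squareconj}; I expect the unbounded case to be the main piece of bookkeeping, but not a genuine obstacle.
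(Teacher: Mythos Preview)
Your proposal is correct and follows essentially the same route as the paper: specialize Theorem~\ref{THminmaxmodF2} with $\varphi_e(k)=k^2$, invoke the conjugate computation \eqref{squareconj}, and read off \eqref{minmaxflowsquare3}. Your extra care about the unbounded case is not needed, since Theorem~\ref{THminmaxmodF2} already asserts the min-max equality in that regime as well; once dual feasibility is verified (which you do), the theorem delivers the full statement without a separate Hoffman-type argument.
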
 

\begin{proof}
For each $e \in A$, let $\varphi_{e}(k) = k \sp{2}$, whose conjugate function
is 
$\psi_{e}(\ell) = \left\lfloor {\ell}/{2} \right\rfloor 
               \cdot \left\lceil {\ell}/{2} \right\rceil$
by (\ref{squareconj}).
Then 
\eqref{minmaxflowsquare3} follows from \eqref{modFminmax2}. 
\end{proof}

As a special case of \eqref{minmaxflowsquare3} 
we can obtain a min-max formula for non-negative flows.

\begin{proposition} \label{PRflowsquareminmax2}
For a digraph $D=(V,A)$ and 
 an integer vector $m$ on $V$ with $\widetilde m(V)=0$, we have
\begin{align} 
& \min \{ \sum_{e \in A} x(e)\sp{2} : 
  \mbox{\rm $x$ is a non-negative integral $m$-flow}  \}
\notag \\ &
= \max\{ \sum_{v \in V} \pi(v) m(v) -
 \sum_{uv \in A}  \left\lfloor \frac{ (\pi(v) - \pi(u))\sp{+} }{2} \right\rfloor 
               \cdot \left\lceil \frac{ (\pi(v) - \pi(u))\sp{+} }{2} \right\rceil  
 : \pi \in \ZZ\sp{V} \} ,
\label{minmaxflowsquare2}
\end{align}
where $(\pi(v) - \pi(u))\sp{+} = \max \{ 0,  \pi(v) - \pi(u) \}$.
\end{proposition}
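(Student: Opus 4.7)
The plan is to obtain Proposition \ref{PRflowsquareminmax2} as a direct specialization of the uncapacitated duality in Theorem \ref{THminmaxmodF1}, where the non-negativity constraint on the flow is absorbed into the effective domain of the edge cost function rather than being imposed as an explicit capacity bound. This avoids any limiting argument from the capacitated version \eqref{minmaxflowsquare3}.

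Concretely, I would first define, for each edge $e \in A$, the one-sided squared cost
\begin{equation*}
\varphi_{e}(k) =
\left\{
\begin{array}{ll}
 k^{2} & (k \geq 0), \\
 +\infty & (k \leq -1),
\end{array}
\right.
\end{equation*}
which is an integer-valued discrete convex function. With this choice, the primal objective $\Phi(x) = \sum_{e \in A} \varphi_{e}(x(e))$ in Theorem \ref{THminmaxmodF1} equals $\sum_{e \in A} x(e)^{2}$ exactly when $x \geq 0$, and $+\infty$ otherwise; consequently, the left-hand side of \eqref{modFminmax1} coincides with the left-hand side of \eqref{minmaxflowsquare2}.

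The key computational step is to determine the conjugate
\[
\psi_{e}(\ell) = \max\{ k\ell - k^{2} : k \in \ZZ,\ k \geq 0 \}.
\]
I would do a short case analysis on the sign of $\ell$. For $\ell \leq 0$, both summands $k\ell$ and $-k^{2}$ are non-positive on $k \geq 0$, so the maximum is $0$, attained at $k=0$. For $\ell \geq 1$, the unconstrained integer maximizers $\lfloor \ell/2 \rfloor$ and $\lceil \ell/2 \rceil$ are both non-negative, so the constraint $k \geq 0$ is inactive and the max equals the unrestricted conjugate of $k^{2}$, namely $\lfloor \ell/2 \rfloor \lceil \ell/2 \rceil$, by \eqref{squareconj}. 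These two cases amalgamate into the uniform expression
\[
 \psi_{e}(\ell) = \left\lfloor \frac{\ell^{+}}{2} \right\rfloor \left\lceil \frac{\ell^{+}}{2} \right\rceil
\qquad (\ell \in \ZZ),
\]
with $\ell^{+} = \max\{0,\ell\}$. In particular $\dom \psi_{e} = \ZZ$, so dual feasibility $\mathcal{D}_{0} \neq \emptyset$ in \eqref{dualfeasmodF1} holds trivially (the zero potential works).

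Having verified dual feasibility, I would invoke Theorem \ref{THminmaxmodF1} to conclude that the min-max identity \eqref{modFminmax1} is valid; substituting the above closed form of $\psi_{e}$ into $\Psi(\pi) = \pi m - \sum_{e = uv \in A} \psi_{e}(\pi(v)-\pi(u))$ immediately yields the right-hand side of \eqref{minmaxflowsquare2}. The unbounded case in which no non-negative integral $m$-flow exists is handled simultaneously by Theorem \ref{THminmaxmodF1}: both sides become $+\infty$ by the convention stated there. There is no real obstacle in this plan; the only delicate point is the one-sided conjugate computation, where one must check that the integer maximizers of the unrestricted quadratic already lie in $\ZZ_{\geq 0}$ whenever $\ell > 0$, so that the truncation $\ell \mapsto \ell^{+}$ correctly captures the effect of the non-negativity constraint.
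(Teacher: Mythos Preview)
Your proof is correct and takes a genuinely different route from the paper. The paper derives Proposition~\ref{PRflowsquareminmax2} as a corollary of the capacitated formula \eqref{minmaxflowsquare3} (Proposition~\ref{PRflowsquareminmax3}) by setting $f(e)=0$, $g(e)=+\infty$, and then optimizing out the auxiliary tension variables $\tau_{1},\tau_{2}$: forcing $\tau_{2}(e)\leq 0$ to keep the objective finite, deducing $\tau_{1}(e)\geq \pi(v)-\pi(u)$, and minimizing $\lfloor \tau_{1}(e)/2\rfloor\lceil \tau_{1}(e)/2\rceil$ over that half-line to obtain $\lfloor (\pi(v)-\pi(u))^{+}/2\rfloor\lceil (\pi(v)-\pi(u))^{+}/2\rceil$. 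You instead bypass the capacitated framework entirely by folding the constraint $x\geq 0$ into the effective domain of $\varphi_{e}$ and computing the one-sided conjugate directly; this is cleaner and avoids the tension-splitting dual. The paper's detour has the modest advantage of exhibiting \eqref{minmaxflowsquare2} as a specialization of the general capacitated identity, but your approach is the more economical stand-alone proof.
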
 

\begin{proof}
Let $f(e)=0$ and $g(e)=+\infty$ for all $e \in A$.
In the maximization on the right-hand side of (\ref{minmaxflowsquare3}), 
we may assume $\tau_{2}(e) \leq 0$ for all $e \in A$,
since otherwise  
$\max \{ f(e) \tau_{2}(e) , \ g(e) \tau_{2}(e) \} = +\infty$.
Then $\tau_{1}(e) \geq  \pi(v) - \pi(u)$.
Since
\[
\min\{ 
 \left\lfloor \frac{ \tau_{1}(e) }{2} \right\rfloor 
               \cdot \left\lceil \frac{ \tau_{1}(e) }{2} \right\rceil  
: \tau_{1}(e) \geq  \pi(v) - \pi(u) \}
= \left\lfloor \frac{ (\pi(v) - \pi(u))\sp{+} }{2} \right\rfloor 
               \cdot \left\lceil \frac{ (\pi(v) - \pi(u))\sp{+} }{2} \right\rceil  ,
\]
the right-hand side of 
of (\ref{minmaxflowsquare3}) 
reduces to that of \eqref{minmaxflowsquare2}.
\end{proof}

For the min-max formula \eqref{modFminmax2}
we can obtain the following optimality criterion,
where we denote the set of the minimizers $x$  by $\mathcal{P}$ 
and the set of the maximizers $(\pi, \tau_{1},\tau_{2})$ by $\mathcal{D}$.

\begin{proposition} \label{PRminmaxmodF2OPT}
Assume that both $\mathcal{P}_{0}$ and $\mathcal{D}_{0}$ 
in \eqref{primalfeasmodF2}--\eqref{dualfeasmodF2} are nonempty.

{\rm (1)}
Let $x \in \mathcal{P}_{0}$.
Then 
$x \in \mathcal{P}$ 
if and only if
there exists
$(\pi, \tau_{1}, \tau_{2}) \in \mathcal{D}_{0}$
such that
\begin{align}
 & 
\varphi_{e}(x(e)) - \varphi_{e}(x(e)-1) 
\leq \tau_{1}(e) \leq 
\varphi_{e}(x(e)+1) - \varphi_{e}(x(e))
\qquad (e  \in A),  
\label{pisubgradmodF2}
\\
 & 
 \tau_{2}(e) 
   \left\{  \begin{array}{ll}
    =0   & \mbox{\rm if \ $f(e) +1 \leq x(e) \leq g(e) - 1$},  \\
    \leq 0   & \mbox{\rm if \ $x(e) =  f(e)$},  \\
    \geq 0   & \mbox{\rm if \  $x(e) =  g(e)$}  \\
             \end{array}  \right.
\qquad (e  \in A).
\label{capaslackmodF2}
\end{align}

{\rm (2)}
Let 
$(\pi, \tau_{1}, \tau_{2}) \in \mathcal{D}_{0}$.
Then 
$(\pi,\tau_{1},\tau_{2}) \in \mathcal{D}$
if and only if
there exists
$x \in \mathcal{P}_{0}$
that satisfies
\eqref{pisubgradmodF2} and \eqref{capaslackmodF2}.

{\rm (3)}
For any $(\hat\pi,\hat\tau_{1},\hat\tau_{2}) \in \mathcal{D}$, we have
\begin{equation}  \label{primaloptsetmodF2}
\mathcal{P} =  \{ x \in \mathcal{P}_{0}: 
 \mbox{\rm \eqref{pisubgradmodF2} and \eqref{capaslackmodF2} 
 hold with $(\pi,\tau_{1},\tau_{2}) = (\hat\pi,\hat\tau_{1},\hat\tau_{2})$}  \} ,
\end{equation}
where the conditions 
in \eqref{pisubgradmodF2} and \eqref{capaslackmodF2} 
can be rewritten as
\begin{align}
 & 
 x(e) \in \argmin_{k} \{ \varphi_{e}(k) - \tau_{1}(e) k \}  
\qquad (e  \in A),  
\label{pisubgradmodF2B}
\\
 & 
   \left\{ \begin{array}{ll}
    x(e) = f(e)   & \mbox{\rm if} \ \   \tau_{2}(e) < 0,  \\
    f(e) \leq x(e) \leq g(e)  & \mbox{\rm if} \ \   \tau_{2}(e) =0,  \\
    x(e) = g(e)   & \mbox{\rm if} \ \   \tau_{2}(e) > 0  \\
             \end{array}  \right.
\qquad (e  \in A).
\label{capaslackmodF2B}
\end{align}

{\rm (4)}
For any $\hat x \in \mathcal{P}$, we have
\begin{equation}  \label{dualoptsetmodF2}
\mathcal{D} =  \{ (\pi,\tau_{1},\tau_{2}) \in \mathcal{D}_{0} : 
 \mbox{\rm \eqref{pisubgradmodF2} and \eqref{capaslackmodF2} 
 hold with $x = \hat x$}  \} .
\end{equation}
\end{proposition}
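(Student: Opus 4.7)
The plan is to proceed along the lines of the proofs of Propositions \ref{PRminmaxgensepBaseOPT} and \ref{PRminmaxgensepInterOPT}, namely to establish weak duality via a termwise decomposition and then read off the equality conditions edge by edge. Fix a primal-feasible $x\in\mathcal{P}_{0}$ and a dual-feasible $(\pi,\tau_{1},\tau_{2})\in\mathcal{D}_{0}$. Using the flow conservation law \eqref{modFflowdemand2} and the tension-splitting constraint \eqref{modFpotdifftension2}, I would first rewrite
\begin{align*}
\pi m &= \sum_{v\in V}\pi(v)\bigl(\varrho_{x}(v)-\delta_{x}(v)\bigr)
     = \sum_{e=uv\in A} x(e)\bigl(\pi(v)-\pi(u)\bigr) \\
     &= \sum_{e\in A} x(e)\tau_{1}(e) + \sum_{e\in A} x(e)\tau_{2}(e),
\end{align*}
so that
\begin{align*}
\Phi(x)-\Psi(\pi,\tau_{1},\tau_{2})
&= \sum_{e\in A}\bigl[\varphi_{e}(x(e))+\psi_{e}(\tau_{1}(e))-x(e)\tau_{1}(e)\bigr] \\
&\phantom{{}={}}+ \sum_{e\in A}\bigl[\max\{f(e)\tau_{2}(e),g(e)\tau_{2}(e)\}-x(e)\tau_{2}(e)\bigr].
\end{align*}

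Both bracketed summands are nonnegative. The first is nonnegative by the Fenchel--Young inequality \eqref{youngineq} for the conjugate pair $(\varphi_{e},\psi_{e})$, with equality if and only if the subgradient condition \eqref{youngsubgrad} holds, that is, if and only if \eqref{pisubgradmodF2} is satisfied at $e$. The second is nonnegative because $f(e)\le x(e)\le g(e)$ forces $x(e)\tau_{2}(e)\le\max\{f(e)\tau_{2}(e),g(e)\tau_{2}(e)\}$, and equality requires $x(e)=g(e)$ when $\tau_{2}(e)>0$, $x(e)=f(e)$ when $\tau_{2}(e)<0$, and imposes no constraint when $\tau_{2}(e)=0$; this is precisely the complementary-slackness condition \eqref{capaslackmodF2}. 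Thus weak duality $\Phi(x)\ge\Psi(\pi,\tau_{1},\tau_{2})$ holds, and equality is equivalent to the simultaneous validity of \eqref{pisubgradmodF2} and \eqref{capaslackmodF2} on every edge.

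Given this edgewise characterization of a zero duality gap, each of (1)--(4) follows from Theorem \ref{THminmaxmodF2}, which guarantees that the common value in \eqref{modFminmax2} is attained by some feasible pair. For (1), if $x\in\mathcal{P}$ and $(\pi,\tau_{1},\tau_{2})\in\mathcal{D}$ both attain the common min-max value, the total gap is zero, so \eqref{pisubgradmodF2} and \eqref{capaslackmodF2} must hold edgewise; conversely, these edge conditions force the gap to vanish, so both $x$ and $(\pi,\tau_{1},\tau_{2})$ are optimal. Statement (2) is symmetric. For (3), fixing any dual optimum $(\hat\pi,\hat\tau_{1},\hat\tau_{2})$, the set $\mathcal{P}$ coincides with the set of primal-feasible $x$ making the duality gap relative to this fixed dual vanish, which is exactly \eqref{primaloptsetmodF2}. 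The reformulations \eqref{pisubgradmodF2B} and \eqref{capaslackmodF2B} are direct rewrites: \eqref{pisubgradmodF2B} is the univariate M-optimality criterion (a one-dimensional instance of Theorem \ref{THmopt}) for $k\mapsto\varphi_{e}(k)-\hat\tau_{1}(e)k$, while \eqref{capaslackmodF2B} is the contrapositive of \eqref{capaslackmodF2}. Statement (4) is dual to (3), obtained by fixing any primal optimum $\hat x$ and recognizing that a dual-feasible triple is optimal exactly when the edgewise equality conditions hold at $\hat x$.

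The main obstacle is essentially bookkeeping; the conceptual content (Fenchel--Young combined with the capacity-bound complementary slackness) is routine once the decomposition of $\Phi-\Psi$ into a sum of two nonnegative edge sums has been set up cleanly. The only step that requires a touch of care is the identification of $\max\{f(e)\tau_{2}(e),g(e)\tau_{2}(e)\}$ appearing in \eqref{modFdualobj2} as the discrete Legendre conjugate of the indicator function of the interval $[f(e),g(e)]$ evaluated at $\tau_{2}(e)$, which is what justifies the sign analysis producing \eqref{capaslackmodF2} and its equivalent form \eqref{capaslackmodF2B}.
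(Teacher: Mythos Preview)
Your proof is correct and follows essentially the same weak-duality decomposition that the paper uses. The only difference is organizational: the paper's proof of this proposition is a single sentence deferring to Proposition~\ref{PRminmaxsbmF2OPT} (the submodular-flow case), and the proof of that more general proposition carries out precisely your argument---splitting $\Phi(x)-\Psi(\pi,\tau_{1},\tau_{2})$ into the Fenchel--Young gap and the capacity kilter gap, then reading off the equality conditions edge by edge. In the ordinary-flow setting the extra submodular term $\hat p(\pi)$ collapses to $\pi m$ and contributes no further optimality condition, so your direct treatment is exactly the specialization one would obtain.
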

\begin{proof}
This is a special case of Proposition \ref{PRminmaxsbmF2OPT} in Section \ref{SCsbmflow}.
\end{proof}

The condition
\eqref{capaslackmodF2}, or equivalently \eqref{capaslackmodF2B},
expresses the so-called kilter condition
for flow $x(e)$ and tension $\tau_{2}(e)$,
whereas the condition 
\eqref{pisubgradmodF2}, or equivalently \eqref{pisubgradmodF2B}, 
can be regarded as a nonlinear version thereof
for flow $x(e)$ and tension $\tau_{1}(e)$.

\begin{remark} \rm  \label{RMmodFdualderivation}
Here we derive the dual problem 
\eqref{modFdualobj2}--\eqref{modFtensionint2} 
from the basic case in 
\eqref{modFdualobj1}--\eqref{modFpotentialint1}. 
For each $e \in A$, 
let $\delta_{e}$ denote the indicator function of the integer interval 
$[f(e),g(e)]_{\ZZ}$,
define $\tilde \varphi_{e} :=  \varphi_{e} + \delta_{e}$,
and let $\tilde \psi_{e}$ be the conjugate function of $\tilde \varphi_{e}$.
By the claim below we obtain the following expression
\[
\tilde \psi_{e}( \pi(v) - \pi(u) )  =
\min \bigg\{
  \psi_{e} ( \ell_{1} ) + \max \{ f(e) \ell_2 , g(e) \ell_2 \}:
   \ell_{1}, \ell_{2} \in \ZZ, \    \ell_{1} + \ell_{2}  =  \pi(v) - \pi(u)  
  \bigg\}.
\]
The substitution of this expression into \eqref{modFdualobj1}
results in \eqref{modFdualobj2}--\eqref{modFtensionint2}.

{\bf Claim}: 
Let 
$\varphi: \ZZ \to \ZZ \cup \{ +\infty \}$
be a (discrete) convex function,
$\delta: \ZZ \to \ZZ \cup \{ +\infty \}$  
the indicator function of an integer interval $[a,b]_{\ZZ}$ with $a \leq b$.
Then the conjugate function 
$( \varphi + \delta )\sp{\bullet}$
of 
$\varphi + \delta $ is given by
\begin{equation} \label{modFconjsum}
 ( \varphi + \delta )\sp{\bullet}(\ell) = 
\min \bigg\{
  \varphi\sp{\bullet} ( \ell_1 ) + \max \{ a \ell_2 , b \ell_2 \}:
   \ell_{1}, \ell_{2} \in \ZZ, \    \ell_{1} + \ell_{2} = \ell  
  \bigg\}.
\end{equation}

\noindent
(Proof) 
By Theorem 8.36 of \cite{Mdcasiam},
$( \varphi + \delta )\sp{\bullet}$ is equal to
the infimum convolution of 
$ \varphi\sp{\bullet}$ and $\delta\sp{\bullet}$,
that is,
\begin{equation*} 
 ( \varphi + \delta )\sp{\bullet}(\ell) = 
\min \bigg\{
  \varphi\sp{\bullet} ( \ell_1 ) + \delta\sp{\bullet} (\ell_{2}): 
   \ell_{1}, \ell_{2} \in \ZZ, \ 
   \ell_{1} + \ell_{2} = \ell  
  \bigg\}.
\end{equation*}
Here  we have
\[
 \delta\sp{\bullet}(\ell) 
 = \max \{ k\ell - \delta(k) \} 
 = \max \{ k\ell : a \leq k \leq b \} 
 = \max \{ a \ell, b \ell \} .
\]
Hence follows \eqref{modFconjsum}.
\finbox
\end{remark}

\begin{remark} \rm  \label{RMmodFfeasible}
The feasibility of the primal problems
can be expressed by a variant of the Hoffman-condition. 
Denote the integer interval of $\dom \varphi_{e}$ by $[ f'(e), g'(e) ]_{\ZZ}$
with $f'(e) \in \ZZ \cup \{ -\infty \}$ and $g'(e) \in \ZZ \cup \{ +\infty \}$.
Then, by Hoffman's theorem,
there exists a feasible flow for the basic problem 
\eqref{modFprimalobj1}--\eqref{modFflowint1}
if and only if 
\begin{equation}
\varrho_{g'}(Z)-\delta_{f'}(Z) \geq \widetilde m(Z) 
\qquad 
\hbox{for all}
\quad  Z\subseteq V
\label{modFhoffman} 
\end{equation}
is satisfied.
For the problem 
\eqref{modFprimalobj2}--\eqref{modFflowint2}
with explicit capacity constraints,
we replace 
$f'(e)$ and $g'(e)$ by 
$\max \{ f(e), f'(e) \}$ and $\min \{ g(e), g'(e) \}$, respectively.
\finbox
\end{remark}


\subsection{Separable convex functions on submodular flows}
\label{SCsbmflow}

Let $D=(V,A)$ be a digraph, and
suppose that we are given an integral base-polyhedron $B$
with ground-set $V$.
We assume that $B$ is  described as $B = B'(p)$ in \eqref{basepolysupermod} 
by an integer-valued (fully) supermodular function 
$p: 2\sp{V} \to \ZZ \cup \{ -\infty \}$ with $p(V)=0$,
which is equivalent to saying that 
$B$ is  described as $B = B(b)$ in \eqref{basepolysubmod} 
by an integer-valued (fully) submodular function 
$b: 2\sp{V} \to \ZZ \cup \{ +\infty \}$ with $b(V)=0$,
where $b$ is the complementary function of $p$.

Here we are interested in an integral flow $x: A \to \ZZ$ 
such that the net-in-flow vector
$(\varrho _x(v)-\delta _x(v) : v \in V )$ belongs to $B$,
which we express  as 
\begin{equation} \label{sbmFflowdemand} 
  (\varrho _x(v)-\delta _x(v) : v \in V ) \in \odotZ{B} . 
\end{equation}
Such a flow $x$ is called a {\bf submodular flow}.
The constraint \eqref{modFflowdemand}
for the ordinary flow  problem in Section \ref{SCmodflow}
is a (very) special case of \eqref{sbmFflowdemand}
where the bounding submodular function $b$ 
(or the supermodular function $p$) is a  modular function $\widetilde m$
defined by the vector $m$.

We consider a convex cost integer submodular flow problem.
For each edge $e \in A$, 
an integer-valued (discrete) convex function
$\varphi_{e}: \ZZ \to \ZZ \cup \{ +\infty \}$
is given,
and we seek an integral flow $x$ that minimizes the sum of the edge costs
$ \Phi(x) =  \sum_{e \in A} \varphi_{e} ( x(e) )$
subject to the submodular constraint \eqref{sbmFflowdemand}. 
For the function value $\Phi(x)$ to be finite, we must have
\begin{equation} \label{sbmFdomphi1}
 x(e) \in \dom \varphi_{e} \quad \mbox{ for each edge $e \in A$},
\end{equation}
and therefore,
capacity constraints, if any, can be represented (implicitly)
in terms of the cost function $\varphi_{e}$.
A {\bf feasible submodular flow} means
a flow $x$ that satisfies the conditions \eqref{sbmFflowdemand} and \eqref{sbmFdomphi1}.

\medskip

\noindent
\qquad {\bf Convex cost submodular flow problem (1)}:%
\begin{eqnarray}
\mbox{Minimize \ } & & 
 \Phi(x) =  \sum_{e \in A} \varphi_{e} ( x(e) )
\label{sbmFprimalobj1} \\
 \mbox{subject to \ } 
  &  &  (\varrho _x(v)-\delta _x(v) : v \in V ) \in \odotZ{B}, 
 \label{sbmFflowdemand1} \\
  &  &  x(e) \in \ZZ
     \qquad (e \in A) .
 \label{sbmFflowint1} 
\end{eqnarray}

\medskip

In discrete convex analysis,
a systematic study of convex-cost submodular flows
has been conducted 
in a more general framework called the {\bf M-convex submodular flow problem},
where particular emphasis is laid on duality theorems (Murota \cite{Msbmfl99,Mdcasiam}).

The decision variable in the dual problem is  an integer-valued potential $\pi: V \to \ZZ$.
The objective function 
$\Psi(\pi)$ involves 
the linear extension (Lov{\'a}sz extension) 
 $\hat{p}(\pi)$ of the supermodular function $p$
defining $B$ as well as
the conjugate function $\psi_{e}$ of $\varphi_{e}$ 
for all $e \in A$.
It is worth noting that $\pi m$ in \eqref{modFdualobj1} 
is replaced by $\hat{p}(\pi)$ in \eqref{sbmFdualobj1}.

\medskip

\noindent
\qquad {\bf Dual to the convex cost submodular flow problem (1)}:%
\begin{eqnarray}
\mbox{Maximize \ } & & 
 \Psi(\pi) = \hat{p}(\pi) 
  -  \sum_{e = uv \in A}    \psi_{e} ( \pi(v) - \pi(u) ) 
 \label{sbmFdualobj1} \\
 \mbox{subject to \ } 
  &  &  \pi(v) \in \ZZ
     \qquad (v \in V) .
 \label{sbmFpotentialint1}  
\end{eqnarray}

The following min-max formula can be derived as a special case of
a min-max formula \cite[(9.83), page 270]{Mdcasiam} for M-convex submodular flows,
while the weak duality ($\min \geq \max$)  is demonstrated in Remark \ref{RMsmbFminmaxderivation} below.
We also mention that the min-max formula \eqref{sbmFminmax1} below can be regarded as 
being equivalent to the Fenchel-type discrete duality theorem (Theorem \ref{THmlfencdual});
see \cite[Section 9.1.4]{Mdcasiam} for the detail of this equivalence.
We introduce notations for feasible flows and potentials:
\begin{align} 
 \mathcal{P}_{0} &= \{  x \in \ZZ\sp{A} :
 \mbox{\rm $x$ satisfies \eqref{sbmFdomphi1} and  \eqref{sbmFflowdemand1}} \} ,
\label{primalfeassbmF1}
\\
 \mathcal{D}_{0} &= \{  \pi \in \ZZ\sp{V}:  \pi \in \dom \hat{p}, \ \  
 \pi(v) - \pi(u) \in \dom \psi_{e} \ \mbox{\rm for each  } \  e=uv \in A
 \} .
\label{dualfeassbmF1}
\end{align}

\begin{theorem} \label{THminmaxsbmF1}
Assume primal feasibility ($\mathcal{P}_{0} \not= \emptyset$) 
 or dual feasibility ($\mathcal{D}_{0} \not= \emptyset$).
Then we have the min-max relation:
\begin{equation} \label{sbmFminmax1}
 \min \{  \Phi(x) : 
 x \in \ZZ\sp{A}  \ \mbox{\rm \  satisfies \eqref{sbmFflowdemand1}} \}  
= \max\{ \Psi(\pi) :  \pi \in \ZZ\sp{V} 
 \} .
\end{equation}
The unbounded case with both sides 
being equal to $-\infty$ or $+\infty$ is also a possibility.
\finbox
\end{theorem}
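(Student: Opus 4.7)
The plan is to derive this min-max formula as an application of the Fenchel-type discrete duality theorem (Theorem \ref{THmlfencdual}) on the node space $\ZZ^V$, mirroring the derivation of Theorem \ref{THminmaxgensep} but with the richer constraint that the boundary vector lies in the base-polyhedron $B$ rather than equals a fixed vector $m$. Writing $\partial x \in \ZZ^V$ for the boundary vector defined by $(\partial x)(v) = \varrho_x(v) - \delta_x(v)$, the reformulation
\begin{equation*}
\min\{\Phi(x) : \partial x \in \odotZ{B}\} \;=\; \min_{y \in \ZZ^V}\{\; f(y) - h(y)\;\}
\end{equation*}
is achieved by introducing the \emph{network-induced} function
\begin{equation*}
f(y) \;:=\; \min\{\Phi(x) : x \in \ZZ^A,\; \partial x = y\} \qquad (y \in \ZZ^V)
\end{equation*}
together with the concave indicator $h(y) := -\delta_{\odotZ{B}}(y)$ of the M-convex set $\odotZ{B}$.

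Once $f$ is established to be M$^\natural$-convex on $\ZZ^V$ (see below), Theorem \ref{THmlfencdual} applies, since the feasibility conditions $\mathcal{P}_0 \ne \emptyset$ and $\mathcal{D}_0 \ne \emptyset$ in \eqref{primalfeassbmF1}--\eqref{dualfeassbmF1} translate exactly to $\dom f \cap \dom h \ne \emptyset$ and $\dom f^\bullet \cap \dom h^\circ \ne \emptyset$, respectively. For the dual side I will compute the conjugates explicitly. The identity $h^\circ(\pi) = \hat p(\pi)$ is immediate from \eqref{deltaBconj}. For $f^\bullet$, using the key identity $\langle \pi, \partial x \rangle = \sum_{e = uv \in A} x(e)(\pi(v) - \pi(u))$, the maximization separates across edges and yields
\begin{equation*}
f^\bullet(\pi) \;=\; \max_{x \in \ZZ^A}\Big\{\sum_{e=uv} \bigl[\, x(e)(\pi(v)-\pi(u)) - \varphi_e(x(e))\,\bigr]\Big\} \;=\; \sum_{e=uv \in A} \psi_e(\pi(v) - \pi(u)),
\end{equation*}
so $h^\circ(\pi) - f^\bullet(\pi) = \Psi(\pi)$ and the right-hand side of the Fenchel identity matches $\max_\pi \Psi(\pi)$. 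The unbounded cases are handled by the standard conventions already set up around \eqref{fencminmaxMweak4}.

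The main obstacle, and the one point that cannot be bypassed, is the M$^\natural$-convexity of $f$. This is the classical ``network-induced'' transformation in DCA: the infimal convolution of a separable convex function along the boundary operator of a directed graph preserves M$^\natural$-convexity (this can be established by an exchange-type argument on arcs, exploiting that augmenting paths realize basis exchanges in the space of boundary vectors). Once this structural fact is in hand, the rest of the argument is a routine conjugate computation. As a sanity check, specializing $p = \widetilde{m}$ (modular) collapses $\odotZ{B}$ to the singleton $\{m\}$ and recovers Theorem \ref{THminmaxmodF1}, and specializing the trivial graph recovers Theorem \ref{THminmaxgensep}, confirming that the proof structure fits inside the same Fenchel-duality framework used throughout Section \ref{SCdcaflowM2}.
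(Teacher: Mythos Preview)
Your approach is correct and is exactly the route the paper alludes to: the paper does not give a self-contained proof but states that \eqref{sbmFminmax1} is a special case of the M-convex submodular-flow duality \cite[(9.83)]{Mdcasiam} and is equivalent to the Fenchel-type duality (Theorem~\ref{THmlfencdual}) via \cite[Section~9.1.4]{Mdcasiam}. Your reduction---applying Theorem~\ref{THmlfencdual} on $\ZZ^{V}$ with the network-induced $f(y)=\min\{\Phi(x):\partial x=y\}$ and $h=-\delta_{\odotZ{B}}$, together with the conjugate computation $f^{\bullet}(\pi)=\sum_{e=uv}\psi_{e}(\pi(v)-\pi(u))$---is precisely how that equivalence is established, and the M$^{\natural}$-convexity of $f$ you flag as the main point is the standard network-transformation theorem in DCA.
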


\begin{remark} \rm  \label{RMsmbFminmaxderivation}
The weak duality 
$\Phi(x) \geq  \Psi(\pi)$
is shown here.
Let $x$ and $\pi$ be primal and dual feasible solutions. Then, using 
the Fenchel--Young inequality \eqref{youngineq}
for $(\varphi_{e}, \psi_{e})$ 
and the feasibility condition \eqref{sbmFflowdemand1}
as well as
the expression \eqref{plovextmin} for $\hat{p}(\pi)$, we obtain
\begin{align}
  \Phi(x) -  \Psi(\pi)
&  =  \sum_{e = uv \in A}   [ \varphi_{e} ( x(e) ) +  \psi_{e} ( \pi(v) - \pi(u) ) ] 
  \ -\hat{p}(\pi) 
\notag  \\ &  
\geq
  \sum_{e = uv \in A}    x(e) ( \pi(v) - \pi(u) )  \   -\hat{p}(\pi) 
\label{sbmF1weakd1} 
\\ &  
=
  \sum_{v \in V}    \pi(v) (\varrho _x(v)-\delta _x(v))
  \   -\hat{p}(\pi) 
\notag \\ &  
\geq
 \min \{ \pi z : z \in \odotZ{B} \}
  \   -\hat{p}(\pi) 
\ =  0.
\label{sbmF1weakd2}
\end{align}
This shows the weak duality.
The optimality conditions can be obtained as the conditions
for the inequalities in \eqref{sbmF1weakd1} and \eqref{sbmF1weakd2}
to be equalities.
See Proposition \ref{PRminmaxsbmF1OPT} below.
\finbox
\end{remark} 

In the min-max formula \eqref{sbmFminmax1}
we denote the set of the minimizers $x$ by $\mathcal{P}$ 
and the set of the maximizers $\pi$ by $\mathcal{D}$.

\begin{proposition} \label{PRminmaxsbmF1OPT}
Assume that both $\mathcal{P}_{0}$
and $\mathcal{D}_{0}$ 
in \eqref{primalfeassbmF1}--\eqref{dualfeassbmF1} are nonempty.

{\rm (1)}
Let $x \in \mathcal{P}_{0}$.
Then  $x \in \mathcal{P}$
if and only if 
there exists $\pi \in \mathcal{D}_{0}$ such that
\begin{align}
 &
\varphi_{e}(x(e)) - \varphi_{e}(x(e)-1) 
\leq \pi(v) - \pi(u) \leq 
\varphi_{e}(x(e)+1) - \varphi_{e}(x(e))
\qquad (e = uv \in A),  
\label{pisubgradsbmF1}
\\
 & 
\mbox{\rm Net-in-flow vector $(\varrho _x(v)-\delta _x(v) : v \in V )$ 
 is a $\pi$-minimizer in $\odotZ{B}$}.
\label{piminzersbmF1}
\end{align}

{\rm (2)}
Let 
$\pi \in \mathcal{D}_{0}$. 
Then $\pi \in \mathcal{D}$ 
if and only if 
there exists $x \in \mathcal{P}_{0}$
that satisfies
\eqref{pisubgradsbmF1} and \eqref{piminzersbmF1}.

{\rm (3)}
For any $\hat\pi \in \mathcal{D}$, we have
\begin{equation}  \label{primaloptsetsbmF1}
\mathcal{P} =  \{ x \in \mathcal{P}_{0}: 
\mbox{\rm  \eqref{pisubgradsbmF1} and \eqref{piminzersbmF1}   
 hold with $\pi = \hat\pi$}  \} ,
\end{equation}
\end{proposition}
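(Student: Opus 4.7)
The plan is to extract the optimality conditions from the weak-duality chain presented in Remark \ref{RMsmbFminmaxderivation} and invoke the strong duality in Theorem \ref{THminmaxsbmF1}. The whole argument hinges on the identity
\begin{equation*}
\Phi(x) - \Psi(\pi) = \sum_{e=uv \in A} \big[\varphi_{e}(x(e)) + \psi_{e}(\pi(v)-\pi(u)) - x(e)(\pi(v)-\pi(u))\big] + \Big(\sum_{v \in V} \pi(v)(\varrho_x(v)-\delta_x(v)) - \hat p(\pi)\Big),
\end{equation*}
each of whose two bracketed terms is nonnegative, the first by the Fenchel--Young inequality \eqref{youngineq} applied edgewise, and the second by \eqref{plovextmin} together with the primal feasibility \eqref{sbmFflowdemand1} that places the net-in-flow vector in $\odotZ{B}$.

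First I would characterize when each of the two nonnegative terms vanishes. Using \eqref{youngsubgrad}, the edgewise Fenchel--Young gap at edge $e=uv$ is zero precisely when $\pi(v)-\pi(u)$ lies in the discrete subdifferential of $\varphi_{e}$ at $x(e)$, which is exactly condition \eqref{pisubgradsbmF1}. The second term vanishes precisely when the net-in-flow vector attains $\min\{\pi z : z \in \odotZ{B}\} = \hat p(\pi)$, that is, exactly when condition \eqref{piminzersbmF1} holds. Hence
\begin{equation*}
\Phi(x) = \Psi(\pi) \iff \text{both \eqref{pisubgradsbmF1} and \eqref{piminzersbmF1} hold for the pair } (x, \pi).
\end{equation*}

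Given this equivalence, the three parts of the proposition follow. For (1), if $x \in \mathcal{P}$, Theorem \ref{THminmaxsbmF1} (strong duality) furnishes some $\pi \in \mathcal{D} \subseteq \mathcal{D}_{0}$ with $\Phi(x)=\Psi(\pi)$, so the two conditions hold for $(x,\pi)$. Conversely, if some $\pi \in \mathcal{D}_{0}$ satisfies the two conditions together with $x$, then $\Phi(x)=\Psi(\pi)$, and weak duality forces $x$ and $\pi$ to be optimal on their respective sides. Part (2) is the completely symmetric statement: if $\pi \in \mathcal{D}$, pick any primal optimal $x \in \mathcal{P}$ (which exists by Theorem \ref{THminmaxsbmF1}) and verify that the conditions hold; the converse is the same equality argument. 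For (3), taking $\hat\pi \in \mathcal{D}$ as fixed, the forward inclusion in \eqref{primaloptsetsbmF1} is precisely what (1) gives (with the witness $\pi = \hat\pi$ serving for every $x \in \mathcal{P}$ because both sides of the equality $\Phi(x)=\Psi(\hat\pi)$ are then attained), while the reverse inclusion is immediate from the characterization above.

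There is no serious obstacle; the one point to handle with care is part (3), where the nontrivial direction is that \emph{the same} $\hat\pi$ works uniformly for every primal optimum $x \in \mathcal{P}$. This is not automatic from (1) alone, which only provides \emph{some} dual certificate for each $x$, but it follows because for any $x \in \mathcal{P}$ and any $\hat\pi \in \mathcal{D}$ one has $\Phi(x)=\Psi(\hat\pi)$ by strong duality, so the equality criterion above supplies the two conditions for the pair $(x,\hat\pi)$. The analogous observation would give the symmetric representation $\mathcal{D} = \{\pi \in \mathcal{D}_{0} : \text{\eqref{pisubgradsbmF1} and \eqref{piminzersbmF1} hold with } x = \hat x\}$ for any fixed $\hat x \in \mathcal{P}$, in parallel with the general pattern established in Theorem \ref{THfencoptprimaldual}.
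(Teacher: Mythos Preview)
The proposal is correct and follows essentially the same approach as the paper: decompose $\Phi(x) - \Psi(\pi)$ into the two nonnegative pieces from the weak-duality chain in Remark~\ref{RMsmbFminmaxderivation}, identify equality in each via \eqref{youngsubgrad} and \eqref{plovextmin}, and then invoke strong duality (Theorem~\ref{THminmaxsbmF1}) to conclude. Your write-up is more explicit than the paper's terse proof, in particular in handling the uniformity issue in part~(3), but the underlying argument is the same.
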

where the condition in \eqref{pisubgradsbmF1} can be rewritten as
\begin{equation}
 x(e) \in \argmin_{k} \{ \varphi_{e}(k) - ( \pi(v) - \pi(u) ) k \}  
\qquad (e =(u,v)  \in A).  
\label{pisubgradsbmF1B}
\end{equation}

{\rm (4)}
For any $\hat x \in \mathcal{P}$, we have
\begin{equation}  \label{dualoptsetsbmF1}
\mathcal{D} =  \{ \pi \in \mathcal{D}_{0} : 
\mbox{\rm  \eqref{pisubgradsbmF1} and \eqref{piminzersbmF1}   
 hold with $x = \hat x$}  \} .
\end{equation}

\begin{proof}
The inequality \eqref{sbmF1weakd1}
turns into an equality 
if and only if,
for each $e = uv \in A$, we have
$\varphi_{e} (k) + \psi_{e} ( \ell ) = k \ell$ 
for $k= x(e)$ and $\ell = \pi(v) - \pi(u)$.
The latter condition is equivalent to
\eqref{pisubgradsbmF1} by \eqref{youngsubgrad}.
The other inequality \eqref{sbmF1weakd2} is an equality 
if and only if
\eqref{piminzersbmF1} holds.
\end{proof}

In applications it is often convenient to introduce capacity constraints explicitly as 
\begin{equation} \label{sbmFcap1} 
  f(e) \leq x(e) \leq g(e) 
\qquad \mbox{\rm for each edge $e \in A$}.
\end{equation}
With this explicit form of capacity constraints, 
a flow $x$ is called a {\bf feasible submodular flow}
if it satisfies the conditions
\eqref{sbmFflowdemand} and \eqref{sbmFcap1}
as well as \eqref{sbmFdomphi1}. 
The primal problem reads as follows.

\medskip

\noindent
\qquad {\bf Convex cost submodular flow problem (2)}:%
\begin{eqnarray}
\mbox{Minimize \ } & & 
 \Phi(x) =  \sum_{e \in A} \varphi_{e} ( x(e) )
\label{sbmFprimalobj2} \\
 \mbox{subject to \ } 
  &  &  (\varrho _x(v)-\delta _x(v) : v \in V ) \in \odotZ{B}, 
 \label{sbmFflowdemand2} \\
  & &  f(e) \leq x(e) \leq g(e)  \qquad (e \in A),
 \label{sbmFflowcapconst2} \\
  &  &  x(e) \in \ZZ
     \qquad (e \in A) .
 \label{sbmFflowint2} 
\end{eqnarray}

\medskip

The corresponding dual problem can be derived
from \eqref{sbmFdualobj1}--\eqref{sbmFpotentialint1}
 by the technique described in Remark \ref{RMmodFdualderivation}.
The decision variables of the resulting dual problem consist of
an integer-valued potential $\pi: V \to \ZZ$ on $V$
and integer-valued functions $\tau_{1}, \tau_{2}: A \to \ZZ$ on $A$.
The constraint \eqref{sbmFpotdifftension2} below says that 
the tension (potential difference) is split into two parts $\tau_{1}$ and $\tau_{2}$.

\medskip

\noindent
\qquad {\bf Dual to the convex cost submodular flow problem (2)}:%
\begin{eqnarray}
\mbox{Maximize \ } & & 
 \Psi(\pi,\tau_{1}, \tau_{2}) = \hat{p}(\pi) 
  -  \sum_{e \in A}  \bigg( \  
      \psi_{e} ( \tau_{1}(e) ) + \max \{ f(e) \tau_{2}(e) , \ g(e) \tau_{2}(e) \}
           \  \bigg)
 \label{sbmFdualobj2} \\
 \mbox{subject to \ } 
  &  &  \pi(v) - \pi(u) = \tau_{1}(e) + \tau_{2}(e) 
  \qquad (e  = uv \in A),
 \label{sbmFpotdifftension2} \\
  &  &  \pi(v) \in \ZZ
     \qquad (v \in V) ,
 \label{sbmFpotentialint2}  \\
  &  &  \tau_{1}(e), \tau_{2}(e) \in \ZZ
     \qquad (e \in A) .
 \label{sbmFtensionint2} 
\end{eqnarray}

We introduce notations for feasible flows and potentials/tensions:
\begin{align} 
 \mathcal{P}_{0} &= \{  x \in \ZZ\sp{A} :
 \mbox{\rm $x$ satisfies \eqref{sbmFdomphi1}, \eqref{sbmFflowdemand2}, \eqref{sbmFflowcapconst2} }
 \} ,
\label{primalfeassbmF2}
\\
 \mathcal{D}_{0} &= \{  (\pi, \tau_{1}, \tau_{2}) \in \ZZ\sp{V} \times \ZZ\sp{A}  \times \ZZ\sp{A}:  
 \eqref{sbmFpotdifftension2},  \ \pi \in \dom \hat{p}, \ \  
 \tau_{1}(e) \in \dom \psi_{e} \ \mbox{\rm for each } \  e \in A
 \} .
\label{dualfeassbmF2}
\end{align}

\begin{theorem} \label{THminmaxsbmF2}
Assume primal feasibility ($\mathcal{P}_{0} \not= \emptyset$) 
 or dual feasibility ($\mathcal{D}_{0} \not= \emptyset$).
Then we have the min-max relation:
\begin{align}
& \min \{  \Phi(x) : 
 x \in \ZZ\sp{A} \ \mbox{\rm \  satisfies \eqref{sbmFflowdemand2} and \eqref{sbmFflowcapconst2}} \}  
\notag \\ 
&= \max\{ \Psi(\pi, \tau_{1}, \tau_{2}) : 
\pi \in \ZZ\sp{V} \ \mbox{\rm and } \tau_{1},  \tau_{2}  \in \ZZ\sp{A} \ 
\mbox{\rm \  satisfy \eqref{sbmFpotdifftension2}} \} .
 \label{sbmFminmax2}
\end{align}
The unbounded case with both sides 
being equal to $-\infty$ or $+\infty$ is also a possibility.
\finbox
\end{theorem}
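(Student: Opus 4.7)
The plan is to derive Theorem \ref{THminmaxsbmF2} from the capacity-free version Theorem \ref{THminmaxsbmF1} by absorbing the box constraints \eqref{sbmFflowcapconst2} into the edge cost functions. For each $e \in A$, let $\delta_e : \ZZ \to \ZZ \cup \{+\infty\}$ be the indicator function of the integer interval $[f(e), g(e)]_{\ZZ}$, and set $\tilde\varphi_e := \varphi_e + \delta_e$. Then $\tilde\varphi_e$ is again an integer-valued discrete convex function, and the capacitated primal problem \eqref{sbmFprimalobj2}--\eqref{sbmFflowint2} coincides exactly with the uncapacitated problem \eqref{sbmFprimalobj1}--\eqref{sbmFflowint1} in which $\varphi_e$ is replaced by $\tilde\varphi_e$. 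Applying Theorem~\ref{THminmaxsbmF1} to the modified problem immediately yields
\[
\min\{\Phi(x) : \eqref{sbmFflowdemand2},\,\eqref{sbmFflowcapconst2}\} = \max\{\hat p(\pi) - \sum_{e=uv\in A} \tilde\psi_e(\pi(v)-\pi(u)) : \pi \in \ZZ\sp{V}\},
\]
where $\tilde\psi_e$ denotes the integer conjugate of $\tilde\varphi_e$.

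The next step is to rewrite $\tilde\psi_e$ in a form that exposes the decomposition of the tension into the two variables $\tau_1, \tau_2$ on the right-hand side of \eqref{sbmFdualobj2}. This is precisely the claim established in Remark~\ref{RMmodFdualderivation}: via the integer infimum-convolution identity $(\varphi_e + \delta_e)\sp{\bullet} = \varphi_e\sp{\bullet} \conv \delta_e\sp{\bullet}$ from \cite[Theorem~8.36]{Mdcasiam}, together with the direct computation $\delta_e\sp{\bullet}(\ell_2) = \max\{f(e)\ell_2, g(e)\ell_2\}$, one obtains
\[
\tilde\psi_e(\pi(v)-\pi(u)) = \min\bigl\{\psi_e(\tau_1(e)) + \max\{f(e)\tau_2(e), g(e)\tau_2(e)\} : \tau_1(e)+\tau_2(e) = \pi(v)-\pi(u)\bigr\}.
\]
Substituting this expression into the uncapacitated dual and moving the minimum over $(\tau_1(e), \tau_2(e))$ outside (using the independence across edges) converts the maximization over $\pi \in \ZZ\sp V$ into the joint maximization over $(\pi, \tau_1, \tau_2)$ subject to \eqref{sbmFpotdifftension2}, which is exactly $\max \Psi(\pi, \tau_1, \tau_2)$ in \eqref{sbmFminmax2}.

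Finally, the feasibility hypotheses must be matched. Primal feasibility of the capacitated problem is, by construction, equivalent to primal feasibility of the modified uncapacitated problem, since $\dom \tilde\varphi_e = \dom \varphi_e \cap [f(e),g(e)]_{\ZZ}$. Dual feasibility is checked similarly: $(\pi,\tau_1,\tau_2) \in \mathcal{D}_0$ of \eqref{dualfeassbmF2} produces a $\pi \in \dom \hat p$ with $\pi(v)-\pi(u) \in \dom \tilde\psi_e$ via the above infimum-convolution representation, and conversely any $\pi$ with $\pi(v)-\pi(u) \in \dom \tilde\psi_e$ admits a decomposition $(\tau_1, \tau_2)$ attaining the minimum. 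The unbounded cases ($\pm\infty$) are handled by the conventions already fixed in Section \ref{SCfencml}.

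The main technical point, and the only place where care is really needed, is the integer infimum-convolution identity: we must ensure that the minimization over $(\ell_1, \ell_2) \in \ZZ\times\ZZ$ is attained and that $\tilde\psi_e$ is indeed $\ZZ$-valued on its domain. This is guaranteed by \cite[Theorem~8.36]{Mdcasiam} once we note that both $\varphi_e\sp{\bullet}$ and $\delta_e\sp{\bullet}$ are integer-valued discrete convex functions in one variable, so their inf-convolution is again an integer-valued discrete convex function with attained minimum. With this one auxiliary fact in hand, the remainder of the argument is a transparent substitution, and no new ingredient is required beyond Theorem~\ref{THminmaxsbmF1}.
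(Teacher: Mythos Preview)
Your proposal is correct and follows exactly the approach indicated in the paper: absorb the capacity constraints into the edge costs via $\tilde\varphi_e = \varphi_e + \delta_e$, apply Theorem~\ref{THminmaxsbmF1}, and then rewrite the conjugate $\tilde\psi_e$ using the infimum-convolution identity from Remark~\ref{RMmodFdualderivation} (the paper states this explicitly just before \eqref{sbmFdualobj2}). The paper does not spell out a separate proof for Theorem~\ref{THminmaxsbmF2} beyond this reduction, so your write-up is precisely what is intended.
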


In the min-max formula \eqref{sbmFminmax2} 
we  denote the set of the minimizers $x$  by $\mathcal{P}$ 
and the set of the maximizers $(\pi, \tau_{1},\tau_{2})$ by $\mathcal{D}$.
The optimality criterion in Proposition \ref{PRminmaxsbmF1OPT} can be adapted for \eqref{sbmFminmax2}
as follows.

\begin{proposition} \label{PRminmaxsbmF2OPT}
Assume that both $\mathcal{P}_{0}$ and $\mathcal{D}_{0}$ 
in \eqref{primalfeassbmF2}--\eqref{dualfeassbmF2} are nonempty.

{\rm (1)}
Let $x \in \mathcal{P}_{0}$.
Then
$x \in \mathcal{P}$ 
if and only if there exists 
$(\pi, \tau_{1}, \tau_{2}) \in \mathcal{D}_{0}$
such that
\begin{align}
 & 
\varphi_{e}(x(e)) - \varphi_{e}(x(e)-1) 
\leq \tau_{1}(e) \leq 
\varphi_{e}(x(e)+1) - \varphi_{e}(x(e))
\qquad (e  \in A),  
\label{pisubgradsbmF2}
\\
 & 
 \tau_{2}(e) 
   \left\{  \begin{array}{ll}
    =0   & \mbox{\rm if \ $f(e) +1 \leq x(e) \leq g(e) - 1$},  \\
    \leq 0   & \mbox{\rm if \ $x(e) =  f(e)$},  \\
    \geq 0   & \mbox{\rm if \  $x(e) =  g(e)$}  \\
             \end{array}  \right.
\qquad (e  \in A),  
\label{capaslacksbmF2}
\\
 & 
\mbox{\rm Net-in-flow vector $(\varrho _x(v)-\delta _x(v) : v \in V )$ 
 is a $\pi$-minimizer in $\odotZ{B}$}.
\label{pi1minzersbmF2}
\end{align}

{\rm (2)}
Let 
$(\pi, \tau_{1}, \tau_{2}) \in \mathcal{D}_{0}$.
Then
$(\pi,\tau_{1},\tau_{2}) \in \mathcal{D}$
if and only if there exists
$x \in \mathcal{P}_{0}$ 
that satisfies
\eqref{pisubgradsbmF2}, \eqref{capaslacksbmF2}, and \eqref{pi1minzersbmF2}.

{\rm (3)}
For any $(\hat\pi,\hat\tau_{1},\hat\tau_{2}) \in \mathcal{D}$, we have
\begin{equation}  \label{primaloptsetsbmF2}
\mathcal{P} =  \{ x \in \mathcal{P}_{0}: 
 \mbox{\rm \eqref{pisubgradsbmF2}, \eqref{capaslacksbmF2}, \eqref{pi1minzersbmF2} 
 hold with $(\pi,\tau_{1},\tau_{2}) = (\hat\pi,\hat\tau_{1},\hat\tau_{2})$}  \} ,
\end{equation}
where the conditions 
in \eqref{pisubgradsbmF2} and \eqref{capaslacksbmF2} 
can be rewritten as
\begin{align}
 & 
 x(e) \in \argmin_{k} \{ \varphi_{e}(k) - \tau_{1}(e) k \}  
\qquad (e  \in A),  
\label{pisubgradsbmF2B}
\\
 & 
   \left\{ \begin{array}{ll}
    x(e) = f(e)   & \mbox{\rm if} \ \   \tau_{2}(e) < 0,  \\
    f(e) \leq x(e) \leq g(e)  & \mbox{\rm if} \ \   \tau_{2}(e)=0,  \\
    x(e) = g(e)   & \mbox{\rm if} \ \   \tau_{2}(e) > 0  \\
             \end{array}  \right.
\qquad (e  \in A).
\label{capaslacksbmF2B}
\end{align}

{\rm (4)}
For any $\hat x \in \mathcal{P}$, we have
\begin{equation}  \label{dualoptsetsbmF2}
\mathcal{D} =  \{ (\pi,\tau_{1},\tau_{2}) \in \mathcal{D}_{0} : 
\mbox{\rm \eqref{pisubgradsbmF2}, \eqref{capaslacksbmF2}, \eqref{pi1minzersbmF2} 
 hold with $x = \hat x$}  \} .
\end{equation}
\end{proposition}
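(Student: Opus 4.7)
The plan is to mimic the proof of Proposition \ref{PRminmaxsbmF1OPT} by deriving the three optimality conditions from the three inequalities that yield weak duality, and to invoke Theorem \ref{THminmaxsbmF2} for the existence direction. Specifically, for a primal feasible $x \in \mathcal{P}_0$ and a dual feasible $(\pi, \tau_1, \tau_2) \in \mathcal{D}_0$, I would split the gap $\Phi(x) - \Psi(\pi, \tau_1, \tau_2)$ into three nonnegative pieces: the sum over $e \in A$ of the Fenchel--Young slack $\varphi_e(x(e)) + \psi_e(\tau_1(e)) - x(e)\tau_1(e) \geq 0$; the sum over $e \in A$ of the capacity slack $\max\{f(e)\tau_2(e), g(e)\tau_2(e)\} - x(e)\tau_2(e) \geq 0$ (valid since $f(e) \leq x(e) \leq g(e)$); and the submodular slack $\sum_{v\in V} \pi(v)(\varrho_x(v)-\delta_x(v)) - \hat{p}(\pi) \geq 0$, which follows from \eqref{plovextmin} together with $(\varrho_x(v)-\delta_x(v) : v \in V) \in \odotZ{B}$. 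Telescoping via the tension decomposition \eqref{sbmFpotdifftension2} confirms that the sum of the three pieces is exactly $\Phi(x) - \Psi(\pi, \tau_1, \tau_2)$.

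Next I would identify each equality condition. The Fenchel--Young slack vanishes precisely when \eqref{youngsubgrad} holds, which is \eqref{pisubgradsbmF2} (equivalently \eqref{pisubgradsbmF2B}). The capacity slack vanishes precisely when \eqref{capaslacksbmF2} holds, by a three-case analysis on the sign of $\tau_2(e)$ (equivalently \eqref{capaslacksbmF2B}). The submodular slack vanishes precisely when the net-in-flow vector is a $\pi$-minimizer in $\odotZ{B}$, which is \eqref{pi1minzersbmF2}. Since the three pieces are individually nonnegative, their total vanishes if and only if all three conditions hold simultaneously, and this is the case if and only if $x$ and $(\pi, \tau_1, \tau_2)$ are both optimal, by Theorem \ref{THminmaxsbmF2}.

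This establishes assertions (1) and (2) in one direction: optimality implies the three conditions. For the converse in (1), assume $x \in \mathcal{P}_0$ together with some $(\pi, \tau_1, \tau_2) \in \mathcal{D}_0$ satisfies all three conditions; then $\Phi(x) = \Psi(\pi, \tau_1, \tau_2)$, and by weak duality $x$ is primal optimal (and the triple is dual optimal). The converse for (2) is symmetric. Assertions (3) and (4) then follow immediately: for any fixed dual optimal $(\hat\pi, \hat\tau_1, \hat\tau_2)$, the set $\mathcal{P}$ coincides with the set of $x \in \mathcal{P}_0$ that together with this fixed triple satisfy \eqref{pisubgradsbmF2}, \eqref{capaslacksbmF2}, \eqref{pi1minzersbmF2}, and similarly for $\mathcal{D}$ relative to fixed $\hat x \in \mathcal{P}$.

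The main obstacle is bookkeeping rather than depth: one must verify that the rearrangement
\begin{equation*}
\sum_{e=uv \in A} x(e)(\tau_1(e) + \tau_2(e)) = \sum_{e=uv \in A} x(e)(\pi(v)-\pi(u)) = \sum_{v \in V} \pi(v)(\varrho_x(v)-\delta_x(v))
\end{equation*}
goes through cleanly under \eqref{sbmFpotdifftension2}, and that the $\max\{f(e)\tau_2(e), g(e)\tau_2(e)\}$ term in $\Psi$ is indeed the tight upper bound on $x(e)\tau_2(e)$ over $[f(e),g(e)]$. Once these are in place, the proof is a direct specialization of Proposition \ref{PRminmaxsbmF1OPT}: the new ingredient is simply that the explicit capacity constraint gets its own Lagrangian term with slack encoded by the kilter-type condition \eqref{capaslacksbmF2B}, while the Fenchel--Young slack and the submodular slack are handled exactly as in the uncapacitated case.
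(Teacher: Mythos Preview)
Your proposal is correct and follows essentially the same approach as the paper: both split the duality gap $\Phi(x)-\Psi(\pi,\tau_{1},\tau_{2})$ into the Fenchel--Young slack, the capacity (kilter) slack, and the submodular slack, and then read off the three optimality conditions as the equality cases, invoking Theorem~\ref{THminmaxsbmF2} for the existence direction. The only cosmetic difference is that the paper carries out this weak-duality computation explicitly rather than framing it as a specialization of Proposition~\ref{PRminmaxsbmF1OPT}.
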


\begin{proof}
Rather than translating the conditions in Proposition \ref{PRminmaxsbmF1OPT} 
for the present case,
we prove the claim by considering the weak duality 
$\Phi(x) \geq  \Psi(\pi,\tau_{1},\tau_{2})$ directly for this case.

Let $x$ and $(\pi, \tau_{1},\tau_{2})$ be primal and dual feasible solutions. Then, using 
the Fenchel--Young inequality \eqref{youngineq}
for $(\varphi_{e}, \psi_{e})$
and \eqref{sbmFpotdifftension2},
we obtain
\begin{align}
 & \Phi(x) -  \Psi(\pi,\tau_{1},\tau_{2})
\notag  \\ &  
  = 
 \sum_{e  \in A}   [ \varphi_{e} ( x(e) ) +  \psi_{e} ( \tau_{1}(e) ) ] 
 + \sum_{e  \in A}  \max \{ f(e) \tau_{2}(e) , \ g(e) \tau_{2}(e) \}
  \ -\hat{p}(\pi) 
\notag  \\ &  
\geq
  \sum_{e \in A}    x(e) \tau_{1}(e)
 + \sum_{e  \in A}  \max \{ f(e) \tau_{2}(e) , \ g(e) \tau_{2}(e) \}
  \   -\hat{p}(\pi) 
\label{sbmF2weakd1} 
\\ &  
=   \sum_{e=uv \in A}    x(e) ( \pi(v) - \pi(u) - \tau_{2}(e) )
+ \sum_{e  \in A}  \max \{ f(e) \tau_{2}(e) , \ g(e) \tau_{2}(e) \}
  \   -\hat{p}(\pi) 
\notag  \\ &  
=  \sum_{e = uv \in A}    x(e) ( \pi(v) - \pi(u) ) 
+ \sum_{e  \in A} \big[
   \max \{ f(e) \tau_{2}(e) , \ g(e) \tau_{2}(e) \} - x(e) \tau_{2}(e) 
                 \big]
  \   -\hat{p}(\pi) 
\notag \\ &  
=  \left( \sum_{v \in V}    \pi(v) (\varrho _x(v)-\delta _x(v))
  \   -\hat{p}(\pi)  \right) 
+ \sum_{e  \in A} \big[
   \max \{ f(e) \tau_{2}(e) , \ g(e) \tau_{2}(e) \} - x(e) \tau_{2}(e) 
                 \big] .
\notag 
\end{align}
For the former part of this expression we have
\begin{align}
   \sum_{v \in V}    \pi(v) (\varrho _x(v)-\delta _x(v))
  \   -\hat{p}(\pi)  
\geq 
\min \{ \pi z : z \in \odotZ{B} \}  \   -\hat{p}(\pi) = 0
\label{sbmF2weakd2}
\end{align}
by the feasibility condition \eqref{sbmFflowdemand1}
and the expression \eqref{plovextmin} for $\hat{p}(\pi)$,
whereas,  for each summand in the latter part we have
\begin{align}
   \max \{ f(e) \tau_{2}(e) , \ g(e) \tau_{2}(e) \} - x(e) \tau_{2}(e) \geq 0
\label{sbmF2weakd3}
\end{align}
since $f(e) \leq x(e) \leq g(e)$ 
by the capacity constraint \eqref{sbmFflowcapconst2}.
Thus the weak duality is established.
The optimality conditions can be obtained as the conditions
for the inequalities in \eqref{sbmF2weakd1},  \eqref{sbmF2weakd2} and \eqref{sbmF2weakd3}
to be equalities, as in the proof of Proposition \ref{PRminmaxsbmF1OPT}.
\end{proof}

\begin{remark} \rm  \label{RMsbmFfeasible}
The feasibility of the primal problems
can be expressed in terms of the submodular function $b$ 
as follows,
where we denote the integer interval of $\dom \varphi_{e}$ by $[ f'(e), g'(e) ]_{\ZZ}$
with $f'(e) \in \ZZ \cup \{ -\infty \}$ and $g'(e) \in \ZZ \cup \{ +\infty \}$.
Then there exists a feasible flow for the basic problem 
\eqref{sbmFprimalobj1}--\eqref{sbmFflowint1}
if and only if 
\begin{equation}
\varrho_{f'}(Z)-\delta_{g'}(Z) \leq b(Z)
\qquad 
\hbox{for all}
\quad  Z\subseteq V
\label{sbmFhoffman} 
\end{equation}
is satisfied.
For the problem 
\eqref{sbmFprimalobj2}--\eqref{sbmFflowint2}
with explicit capacity constraints,
we replace 
$f'(e)$ and $g'(e)$ by 
$\max \{ f(e), f'(e) \}$ and $\min \{ g(e), g'(e) \}$, respectively.
\finbox
\end{remark}



\section{Survey of early papers}
\label{SCprevwksurvey}

This appendix offers a brief survey of earlier papers and  books
that deal with topics closely related to decreasing minimization
on base-polyhdera.
To be specific, we mention the following:
Veinott \cite{Vei71}  (1971),
Megiddo  \cite{Meg74} (1974),
Fujishige \cite{Fuj80} (1980),
Groenevelt \cite{Gro91} (1985, 1991),
Federgruen--Groenevelt \cite{FG86} (1986), 
Ibaraki--Katoh \cite{IK88} (1988),
Dutta--Ray \cite{DR89} (1989),
Fujishige \cite{Fuj05book}  (1991, 2005),
Hochbaum \cite{Hoc94} (1994),
and Tamir \cite{Tami95}  (1995).

Similar notions and terms are scattered in the literature such as 
``egalitarian,''
``lexicographically optimal,''
``least majorized,''
``least weakly submajorized,''
``decreasingly minimal (dec-min),'' and
``increasingly maximal (inc-max).''
Unfortunately, these notions are discussed often independently
in different context, without proper mutual recognition.
The term ``least majorized'' is used in Veinott \cite{Vei71} and
``Least weakly submajorized'' is used in Tamir \cite{Tami95}.
These terms are not used in Marshall--Olkin--Arnold \cite{MOA11}.
Dutta--Ray \cite{DR89} uses ``egalitarian'' and does not use ``majorization.''
The term ``lexicographically optimal'' in 
Veinott \cite{Vei71}, Megiddo \cite{Meg74,Meg77}, and Fujishige \cite{Fuj80,Fuj05book}
means ``increasingly maximal (inc-max).''

Three notions ``dec-min'',  ``inc-max'', and ``least majorized'' are different in general.
Generally, ``least majorized'' implies ``dec-min'' and  ``inc-max'', but the
converse is not true (see Section \ref{SCmajordecmin}).
In base-polyhedron (in $\RR$ and $\ZZ$), however,
the three notions coincide (see Section \ref{SCmajorbasepoly}).

Another important aspect in majorization is minimization of symmetric separable convex functions.
An element is least majorized if and only if
it simultaneously minimizes all symmetric separable convex functions
(see Proposition \ref{PRmajorchar}).
Therefore, if a least majorized is known to exist, then it can be computed
as a minimizer of the square-sum.

\subsubsection*{Veinott (1971) \cite{Vei71}}

This paper deals with a network flow problem. 
The ground set is a star of arcs, i.e., the set of arcs incident to a single node.
This amounts to considering a special case of a base-polyhedron.
The main result is the  unique  existence of a least majorized element in Case $\RR$. 

The computational aspect is also discussed.
The problem is reduced to separable quadratic network flow problem.
Then the paper describes an algorithm for nonlinear convex cost minimum flow problem.
It also defines the dual problem using the conjugate function.
Complexity of the algorithm is not discussed.

Case $\ZZ$ is also treated.  Theorem 2 (1)
shows the existence of an integral element that simultaneously minimizes
all symmetric separable convex functions.  
The proof is based on rounding argument (continuous relaxation). 
That is, for a discrete convex function in integers, 
its piecewise-linear extension is considered and the integrality theorem
is used to derive the existence of an integral minimizer.
Thus the existence of a least majorized element is shown 
for the network flow in Case $\ZZ$.

\subsubsection*{Megiddo (1974) \cite{Meg74}}
This paper deals with a network flow problem. 
The ground set is the set of multi-terminals.
This is more general than a star considered in Veinott \cite{Vei71}, 
but the difference is not really essential.
The paper defines the notions of ``sink-optimality'' and ``source-optimality,''
which are increasing-maximality for vectors on the sink and source terminals, respectively. 
This paper considers Case $\RR$ only.
The main result is the characterization of an inc-max element 
using a chain of cuts in the network (Theorem~4.6).
The computational aspect is discussed in the companion paper \cite{Meg77},
which gives an algorithm of complexity $O(n\sp{5}$).

\subsubsection*{Fujishige (1980) \cite{Fuj80}}

This is the first paper that deals with base-polyhedra, 
beyond network flows. 
It considers Case $\RR$ only.
Lexicographic optimality with respect to a weight vector is defined.
The lexicographically optimal base with respect to a uniform weight
coincides with the inc-max element of the base-polyhedron.
The relation to weighted square-sum  minimization is investigated in detail 
and the minimum norm point is highlighted.
The principal partition for base-polyhedra is introduced, 
as a generalization of the known construction for matroids.
The principal partition determines the lexico-optimal base.
The proposed decomposition algorithm
finds the lexico-optimal base 
as well as the principal partition in strongly polynomial time.
While this paper covers various aspects of the lexico-optimal base,
the majorization viewpoint is missing.
In particular, it is not stated that the minimum norm point 
is actually a minimizer of all symmetric separable convex functions.

\subsubsection*{Groenevelt (1985, 1991) \cite{Gro91}}

The technical report appeared in 1985, and the journal version in 1991.
Already the technical report was influential, 
cited by  \cite[1st ed.]{Fuj05book},  \cite{Hoc94}, and \cite{IK88}.

The main concern of this paper is separable convex minimization
(not restricted to symmetric separable convex functions)
on base-polyhedra.
Both continuous variables (Case $\RR$) and 
discrete variables (Case $\ZZ$) are treated.
In particular, this is the first paper that addressed minimization of 
separable convex functions
on base-polyhedra in discrete variables.
One of the results says that,
in any integral base-polyhedron,
there exists an integral element that 
is a (simultaneous) minimizer of all symmetric separable convex functions. 
This paper does not discuss implications of this result
to inc-maximality, dec-minimality, or majorization,
though the result does imply the existence of a least majorized element
by virtue of the well-known fact 
(Proposition \ref{PRmajorchar}) about majorization.

The paper presents two kinds of algorithms,
the marginal allocation algorithm (of incremental type)
and the decomposition algorithm (DA).
Concerning complexity, the author argues that 
the algorithms are polynomial if the  base-polyhedron 
are of some special types (tree-structured polymatroids, generalized symmetric polymatroids,
network polymatroids).
We quote the following statements from \cite[p.234, journal version]{Gro91},
where $E$ denotes the ground set of a base-polyhedron and $N$ is the associated submodular function,
which is integer-valued in Case $\ZZ$:
\begin{quote}
The total complexity of DA is thus
${\rm O}(|E| (\tau_{1} + \tau_{2} ))$,
where $\tau_{1}=$ the number of operations needed to solve a single
constraint problem, and $\tau_{2}=$ the number of operations
needed to perform one pass through Steps 2 and 3. 
It is well-known that in the discrete case 
$\tau_{1} = {\rm O}(|E| \log (N(E)/|E|))$
(see Frederickson and Johnson (1982)), and in the continuous case
$\tau_{1} = {\rm O}(|E| \log |E| +\chi)$,
where $\chi$ is the time needed to solve a certain type of
non-linear equation (see Zipkin, 1980).
\end{quote}

This paper was written in 1985 
and at that time, no strongly polynomial algorithm
for submodular function minimization was known;
the strongly polynomial algorithm (using the ellipsoid method)
first appeared in 1993  \cite[2nd edition]{GLS93}.

\subsubsection*{Federgruen--Groenevelt (1986) \cite{FG86}}
This paper deals with base-polyhedra in Case $\ZZ$.
Main concern of this paper is to offer a general framework in which a greedy procedure
called the marginal allocation algorithm (MAA) works.
The concept of concave order is introduced as a class of admissible
objective functions for which the greedy procedure works.
The main result (Corollary 1 in Sec.3) states, roughly,
that the MAA gives an optimal solution for every weakly concave order on polymatroids.

\subsubsection*{Ibaraki--Katoh (1988) \cite{IK88}}

This is the first comprehensive book for algorithmic aspects of
the resource allocation problem and its extensions.
Chapter~9, entitled ``Resource allocation problems under submodular constrains''
presents the fundamental and up-to-date results at that time,
including those by Fujishige \cite{Fuj80}, Groenevelt \cite{Gro91},
and Federgruen--Groenevelt \cite{FG86}.
In particular, Theorem 9.2.2 \cite[p.156]{IK88}
states that the decomposition algorithm runs in polynomial time in $|E|$ and $\log M$,
where 
$E$ is the ground set and
$M$ is an upper bound on $r(E)$
for the submodular function $r$ expressing the submodular constraint.

The contents of Chapter 9 of this book are updated in a handbook chapter
by Ibaraki--Katoh \cite{KI98} in 1998.
Its revised version by Katoh--Shioura--Ibaraki \cite{KSI13} in 2013
incorporates the views from discrete convex analysis.

\subsubsection*{Dutta--Ray (1989) \cite{DR89}}
This paper deals with base-polyhedra in the context of game theory. 
Recall that the core of a convex game is nothing but the base-polyhedron.
Naturally this paper deals exclusively with Case $\RR$. 
According to Tamir \cite{Tami95},
this is the first paper proving
the existence of a least majorized element in a base-polyhedron.
Technically speaking, 
this result could be obtained  from a combination of the results of Groenevelt \cite{Gro91}
(which was written in 1985 and published in 1991)
and a well-known fact 
``least majorized element $\Leftrightarrow$ simultaneous minimizer of all symmetric separable convex functions''
(see Proposition \ref{PRmajorchar}).
However, Dutta--Ray \cite{DR89} and Groenevelt \cite{Gro91} were unaware
of each other; see Table \ref{TBreferrelation} at the end of Appendix.
We also note that Fujishige \cite{Fuj80} deals with quadratic functions only,
and hence the results of \cite{Fuj80} do not imply the existence of a least majorized element.

\subsubsection*{Fujishige (1st ed., 1991; 2nd ed. 2005) \cite{Fuj05book}}

This book offers a comprehensive exposition of the results of Fujishige \cite{Fuj80} about
the lexico-optimal (inc-max) element of a base-polyhedron in Case $\RR$.
There is an explicit statement at the beginning of Section 9 
that the argument is not applicable to Case $\ZZ$.

For separable convex minimization, both Cases $\RR$ and $\ZZ$ are treated.
In particular, the results of Groenevelt \cite{Gro91} are described 
in a manner consistent with the other part of this book. 
It is stated that the decomposition algorithm works for Cases $\RR$ and $\ZZ$,
but complexity analysis is explicit only for Case $\RR$.
It is shown that the decomposition algorithm is strong polynomial for Case $\RR$.
As a natural consequence of the fact 
that lexico-optimal bases in Case $\ZZ$ are not considered in this book,
no connection is made between 
separable convex minimization and lexico-optimality (inc-max, dec-min)
in Case $\ZZ$.

Majorization concept is not treated in the first edition,
whereas in the second edition
the definition is given in Section 2.3 (p.~44)
and a reference to 
Dutta--Ray \cite{DR89} is added in Section 9.2 (p.~264).

\subsubsection*{Hochbaum (1994) \cite{Hoc94}}

This paper shows that there exist no strongly polynomial time algorithms
to solve the resource allocation problem with a separable convex cost function.
Subsequently, Hochbaum and her coworkers made significant contributions
to resource allocation problems in discrete variables, 
dealing with important special cases and showing improved complexity bounds
for the special cases (e.g., Hochbaum--Hong \cite{HH95}).
The survey paper by Hochbaum \cite{Hoc07} is informative and useful.

\subsubsection*{Tamir  (1995) \cite{Tami95}}

This papers deals with g-polymatroids in Case $\RR$ and Case $\ZZ$.
The relationship between majorization and decreasing-minimality 
is discussed explicitly.

The main result is the existence of a least weakly submajorized element
in a g-polymatroid.
The following sentences concerning Case $\RR$ in pages 585--585 are informative:
\begin{quote}
 Fujishige (1980) extends the results of Megiddo to a general polymatroid and presents
an algorithm to find a lexicographically optimal base of the polymatroid 
with respect to an arbitrary positive weight vector $d$.
This weighted model is closely related to the concept of $d$-majorization
introduced by Veinott (1971).
 Neither Megiddo nor Fujishige relate their results on lexicographically optimal bases 
to the stronger concept of majorization.
 (From Proposition 2.1 we note that if an arbitrary set has a least majorized element 
it is clearly lexicographically optimal. 
However, every convex and compact set $S$ has a unique lexicographically maximum element, 
but might not have a least majorized element.) 
The fact that a polymatroid 
has a least majorized base is shown by Dutta and Ray (1989).
They consider the core of a convex game as defined by Shapley (1971), 
which corresponds to a polymatroid. (Strictly speaking the former is defined as a contra-polymatroid; 
see next section.) We will extend and unify the above results 
by proving that a bounded generalized polymatroid contains 
both least submajorized and least supermajorized elements. 
\end{quote}

For the complexity of finding the unique minimizer $x^{*} \in \RR\sp{n}$ 
of the square-sum over a g-polymatroid (Case $\RR$),
the following statement can be found in page 587:
\begin{quote}
$x^{*}$ can be found in strongly polynomial time by modifying the procedure in 
Fujishige (1980) and Groenevelt (1991) which is applicable to polymatroids.
The latter procedure can now be implemented to solve any convex separable quadratic over a polymatroid 
in a strongly polynomial time since its complexity is dominated by the efforts to minimize
a (strongly) polynomial number of submodular functions.
\end{quote}
There is no statement about the complexity in Case $\ZZ$.

\medskip

\begin{table}[h]
\begin{center}
\caption{Referencing relations between papers}
\label{TBreferrelation}

\medskip

\addtolength{\tabcolsep}{-3pt}%
\begin{tabular}{l|cccccccccc}
  & Vei & Meg & Fuj & Gro & F-G & I-K & D-R & Fuj & Hoc & Tam 
\\ 
& \cite{Vei71} & \cite{Meg74} & \cite{Fuj80}  & \cite{Gro91} & \cite{FG86} & \cite{IK88} & \cite{DR89} & \cite{Fuj05book} & \cite{Hoc94} & \cite{Tami95}  
\\ \hline
Veinott 1971 
& $\cdot$ & -- & -- & -- & -- & -- & -- & -- & -- & --  
\\
Megiddo 1974 
& -- & $\cdot$ & -- & -- & -- & -- & -- & -- & -- & --  
\\
Fujishige 1980 
& -- & R & $\cdot$ & -- & -- & -- & -- & -- & -- & --  
\\
Groenevelt 1985/91 
& -- & R & R & $\cdot$ &  R & -- & -- & -- & -- & --  
\\
{\small Federgruen--Groenevelt 1986}
& -- & R & R & -- & $\cdot$ & -- & -- & -- & -- & --  
\\ 
Ibaraki--Katoh 1988 
& -- & R & R & R & R & $\cdot$  & -- & -- & -- & --  
\\
Dutta--Ray 1989  
& -- & -- & -- & -- & -- & -- & $\cdot$ & -- & -- & --  
\\
Fujishige 1991 (1st ed.) 
& -- & R & R & R & -- & R  & R$\sp{\rm 2nd}$ & $\cdot$ & -- & --  
\\
Hochbaum 1994 
& -- & -- & -- & R & R & R  & -- & -- & $\cdot$ & --  
\\
Tamir 1995
& R & R & R & R & -- & -- & R & R & -- & $\cdot$   
\\
\hline
 \multicolumn{11}{l}{\qquad Paper at the left refers to papers marked \ R \  in the same row}
\\
\multicolumn{11}{l}{\qquad R$\sp{\rm 2nd}$ means that reference is made in the 2nd edition (2005) only}
\end{tabular}
\addtolength{\tabcolsep}{3pt}%
\\
\end{center}
\end{table}



\newpage






\end{document}